\DeclareTextFontCommand{\emph}{\color{RoyalBlue}\em} 
\DeclareFontFamily{U}{dutchcal}{\skewchar \font =45}
\DeclareFontShape{U}{dutchcal}{m}{n}{<-> dutchcal-r}{}
\DeclareFontShape{U}{dutchcal}{b}{n}{<-> dutchcal-b}{}
\DeclareMathAlphabet{\dcal}{U}{dutchcal}{m}{n}
\SetMathAlphabet{\dcal}{bold}{U}{dutchcal}{b}{n}
\newcommand{\RR}{\mathbb{R}}
\newcommand{\NN}{\mathbb{N}}
\newcommand{\ZZ}{\mathbb{Z}}
\newcommand{\QQ}{\mathbb{Q}}
\newcommand{\TT}{\mathbb{T}}
\newcommand{\A}{\mathcal{A}}
\newcommand{\Class}{\mathcal{C}}
\newcommand{\germ}{\dcal{g}}
\newcommand{\sell}{\ell^0}
\newcommand{\simeqd}{\mathrel{\rotatebox[origin=c]{-90}{$\simeq$}}}
\newcommand{\bfrac}[2]{{
  \raisebox{0.1em}{\scalebox{0.85}{$#1$}}/
  \raisebox{-0.1em}{\scalebox{0.85}{$#2$}}
}}
\newcommand{\lsfrac}[2]{{\scalebox{1.3}{$\frac{#1}{#2}$}}}
\DeclareMathOperator{\symbolehmod}{mod}
\newcommand{\symbolelmod}{\mathop{\symbolehmod}\limits^{\mkern-10mu\smash{\raisebox{-3.5pt}{\rule{1.5em}{0.4pt}}}}}
\newcommand{\symbolesmalllmod}{\mathop{\symbolehmod}\limits^{\mkern-6mu\smash{\raisebox{-2.5pt}{\rule{1.1em}{0.4pt}}}}}
\newcommand{\symbolermod}{\underline{\symbolehmod}}
\newcommand{\hmod}[1]{\,\symbolehmod(\hspace{-1px}{#1}\hspace{-1px})}
\newcommand{\lmod}[1]{\,\symbolelmod(\hspace{-1px}{#1}\hspace{-1px})}
\newcommand{\rmod}[1]{\,\symbolermod(\hspace{-1px}{#1}\hspace{-1px})}
\newcommand{\prmod}[1]{\symbolermod(\hspace{-1px}{#1}\hspace{-1px})}
\newcommand{\srmod}{\symbolermod}
\newcommand{\vlmod}[1]{\symbolesmalllmod(\hspace{-1px}{#1}\hspace{-1px})}
\newcommand{\smod}[1]{\,(\mathrm{mod}#1)}
\newcommand{\PS}{\mathcal{PS}}
\DeclareMathOperator{\pL}{pL}
\newcommand{\vertiii}[1]{{\left\vert\kern-0.25ex\left\vert\kern-0.25ex\left\vert #1 
    \right\vert\kern-0.25ex\right\vert\kern-0.25ex\right\vert}}
\newcommand{\wb}{\overline}
\newcommand{\wt}{\widetilde}
\newcommand{\wh}{\widehat}
\newcommand{\lint}{\llbracket}
\newcommand{\rint}{\rrbracket}
\newcommand{\intint}[1]{{\lint #1 \rint}}        
\def\@tvsp{\mathchoice{{}\mkern-4.5mu}{{}\mkern-4.5mu}{{}\mkern-2.5mu}{}}
\def\ltrivert{\left|\@tvsp\left|\@tvsp\left|}
\def\rtrivert{\right|\@tvsp\right|\@tvsp\right|}
\def\ldrivert{\left|\@tvsp\left|}
\def\rdrivert{\right|\@tvsp\right|}
\newcommand{\recto}{\rightsquigarrow}              
\newcommand{\rectot}{\leftrightsquigarrow}
\newcommand{\Rec}{\mathcal{R}}      
\newcommand{\arrow}{\circlearrowleft}
\newcommand{\col}{\hspace{0.1em}{:}\hspace{0.1em}}
\DeclareMathOperator{\id}{id}
\DeclareMathOperator{\tor}{tors}
\DeclareMathOperator{\rad}{rad}
\DeclareMathOperator{\im}{im}
\DeclareMathOperator{\len}{len}
\DeclareMathOperator{\diam}{diam}
\DeclareMathOperator{\e}{e}
\DeclareMathOperator{\clos}{clos}
\DeclareMathOperator{\Leb}{Leb}             
\newtheoremstyle{colorplain}%
{\topsep}   
{\topsep}   
{\itshape}  
{0pt}       
{} 
{.}         
{5pt plus 1pt minus 1pt} 
{\textbf{\textcolor{RoyalBlue}{\textbf{\thmname{#1} \thmnumber{#2}}}}\thmnote{ (#3)}}
{}
\newtheoremstyle{colorremark}%
{\topsep}   
{\topsep}   
{}  
{0pt}       
{\itshape} 
{.}         
{5pt plus 1pt minus 1pt} 
{\textcolor{RoyalBlue}{\thmname{#1} \thmnumber{#2}}\thmnote{ (#3)}}
{}
\newtheoremstyle{colordefinition}%
{\topsep}   
{\topsep}   
{}  
{0pt}       
{} 
{.}         
{5pt plus 1pt minus 1pt} 
{\textcolor{RoyalBlue}{\textbf{\thmname{#1} \thmnumber{#2}}}\thmnote{ (#3)}}
{}
\theoremstyle{colorplain}
\newtheorem{theorem}{Theorem}
\numberwithin{theorem}{section}
\newtheorem{maintheorem}{Theorem}
\newtheorem{remark}[theorem]{Remark}
\newtheorem{example}[theorem]{Example}
\newtheorem{lemma}[theorem]{Lemma}
\newtheorem{proposition}[theorem]{Proposition}
\newtheorem{corollary}[theorem]{Corollary}
\theoremstyle{colorremark}
\newtheorem{question}{Question}
\newenvironment{proofabstract}[1][\proofname]{
  \par
  \pushQED{\qed}%
  \normalfont \topsep6\p@\@plus6\p@\relax
  \trivlist
  \item\relax
  {\itshape
    #1\@addpunct{.}}\hspace\labelsep\ignorespaces
}{%
  \popQED\endtrivlist\@endpefalse
}
\renewenvironment{proof}[1][Proof]{
  \setcounter{claim}{0}
  \setcounter{claimproof}{0}
  \par
  \pushQED{\qed}%
  \normalfont\topsep6\p@\@plus6\p@\relax
  \trivlist
  \item\relax
  {\itshape\color{RoyalBlue}#1\@addpunct{.}}\hspace\labelsep\ignorespaces
}{%
  \popQED\endtrivlist\@endpefalse
}
\newcounter{claimproof} 
\theoremstyle{colordefinition}
\newtheorem{definition}[theorem]{Definition}
\title{Partial section II: classification \\for general flows}
\author{Théo Marty}
\date{}
\begin{document}

\maketitle
\begin{abstract}
	This is the second article in a series that aims at classifying partial sections of flows, that is a general family of transverse surfaces.
	In this part, we classify partial cross-sections for all continuous flows, in the spirit of Schwartzman-Fried-Sullivan theory.

	We give a dynamical criterion for the existence of partial cross-sections, which is a direct consequence of part I of the series. Then we describe all partial cross-sections using a cohomological criterion, resembling Fried's criterion. We also characterize the cardinality of the set of partial cross-sections in a given cohomology class.
\end{abstract}

\section*{Introduction}
\addcontentsline{toc}{section}{Introduction}

This is the second paper in a series that aims at giving a global picture on the existence of partial cross-sections, the first being \cite{martyPS1}.

Surfaces transverse to flows received renewed attention in recent years, to help characterize dynamical and topological properties of flows. Transverse surfaces exist with several flavors, the most well-understood are the \emph{global cross-sections}: a compact hypersurface transverse to the flow that intersects every flow line. Fried\footnote{Schwartzman \cite{Schwartzman57} and Sullivan \cite{Sullivan76} contributed to the classification too.} \cite{Fried1982} classified the set of global cross-sections, up to isotopy along the flow, using a cohomological criterion. 
When we allow the surface to not necessarily intersect every flow line, partial results have been shown in restricted contexts, mostly for Anosov and pseudo-Anosov flows (see for instance \cite{Mosher1989, Mosher1990,Landry24}).

Let $M$ be a compact manifold and $\varphi$ be a continuous flow on $M$. A \emph{partial cross-section} is a compact hypersurface $S\subset M$ with $\partial S\subset\partial M$ which is transverse to the flow. 
Every partial cross-section admits a cohomology class~$\alpha$ in $H^1(M,\ZZ)$. For global cross-sections, Fried defined a set $D_\varphi\subset H_1(M,\RR)$, called the set of asymptotic directions, that satisfies the following. A class~$\alpha$ represents a global cross-section if and only if it satisfies the homological criterion $\alpha(D_\varphi)>0$. Additionally, two global cross-sections are cohomologous if and only if they are isotopic along the flow.
The case of partial cross-sections is more delicate. 
We are interested in the two following question. 


\begin{question}
	How can we characterize the set of partial cross-sections cohomologous to~$\alpha$, up to isotopy along the flow?
\end{question}

In the present paper, we completely answer the question. Note that the case $\alpha=0$ need a specific treatment. Our results often distinguish when~$\alpha$ is zero or not. For instance, in the case~$\alpha=0$, the existence of partial cross-sections is a direct application of Conley's theory. 
When~$\alpha$ is not zero, we developed the adapted theory in the first paper \cite{martyPS1} of the series. 
Say that~$\alpha$ is \emph{quasi-Lyapunov} if, roughly speaking, no pseudo-orbit $\gamma$ with small enough jump satisfies $\alpha([\gamma])<0$. 

\begin{maintheorem}\label{mainthm-existence}
	Let $M$ be a compact connected manifold, $\varphi$ be a continuous flow on $M$ and~$\alpha$ be in $H^1(M,\ZZ)$. There exists a partial cross-section cohomologous to~$\alpha$ if and only if either we have $\alpha\neq 0$ and $-\alpha$ is quasi-Lyapunov, or if we have $\alpha=0$ and $\varphi$ is not chain-recurrent.
\end{maintheorem}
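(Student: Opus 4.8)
The plan is to prove the two directions of the equivalence separately, and within each to split according to whether $\alpha$ is zero.

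\medskip

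\textbf{The case $\alpha = 0$.} Here the statement is essentially a reformulation of Conley's fundamental theorem of dynamical systems. First I would recall that a partial cross-section cohomologous to $0$ is (up to isotopy along the flow) the same data as a nonempty compact hypersurface $S$ with $\partial S \subset \partial M$ such that $M \setminus S$ has a well-defined ``positive side'' near $S$ — equivalently, a decomposition of $M$ into two manifolds-with-corners glued along $S$ with the flow crossing from one to the other. If $\varphi$ is not chain-recurrent, then by Conley theory there is an attractor-repeller pair, hence a Lyapunov function $f \colon M \to \RR$ which is strictly decreasing along non-constant orbit segments outside the chain-recurrent set and which has a regular value $c$; a neighborhood of $f^{-1}(c)$ can be pushed to a smooth hypersurface transverse to the flow, giving a partial cross-section with $[S] = 0$ since $f^{-1}(c)$ separates. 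Conversely, if $\varphi$ is chain-recurrent then every point lies on a $\varepsilon$-pseudo-orbit returning to itself for all $\varepsilon$, and a short argument shows a transverse separating hypersurface cannot exist: a pseudo-orbit can be perturbed to cross $S$ always in the same direction, contradicting that it closes up. This last perturbation lemma is where I would be most careful, but it is standard.

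\medskip

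\textbf{The case $\alpha \neq 0$, existence $\Rightarrow$ $-\alpha$ quasi-Lyapunov.} Suppose $S$ is a partial cross-section with $[S] = \alpha \neq 0$. Then there is a closed $1$-form (or rather a map to $S^1$ in the relative sense) representing $\alpha$, positive on the flow direction wherever it is defined, i.e.\ the flow only crosses $S$ positively. Given any pseudo-orbit $\gamma$ with sufficiently small jumps — small compared to a ``flow box'' scale near $S$ — each crossing of $S$ contributes $+1$ to $\alpha([\gamma])$ and the jumps can be taken small enough not to allow a ``backward'' crossing; hence $\alpha([\gamma]) \geq 0$, which says exactly that $-\alpha$ is quasi-Lyapunov. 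This direction is a direct consequence of the quantitative setup of part I \cite{martyPS1}, so I would phrase it by citing the relevant construction there (flow boxes adapted to $S$, and the bound on $\alpha$ of pseudo-orbits).

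\medskip

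\textbf{The case $\alpha \neq 0$, $-\alpha$ quasi-Lyapunov $\Rightarrow$ existence.} This is the substantive half and is where the main work of \cite{martyPS1} enters. The hypothesis that $-\alpha$ is quasi-Lyapunov gives, through the machinery of part I, a ``quasi-Lyapunov'' function for $\alpha$: a continuous map $h \colon M \to \RR$, or a closed $1$-form of class $\alpha$, that is nondecreasing along the flow and strictly increasing outside the set where the quasi-Lyapunov condition degenerates — the analogue of Conley's Lyapunov function in the twisted/cohomological setting. From such an $h$ one extracts a level set, pushes it to be transverse to the flow using the strict-increase region and a flow-box argument near the boundary $\partial M$, and checks that the resulting hypersurface is compact with boundary in $\partial M$ and has cohomology class $\alpha$. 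The main obstacle I anticipate is precisely this extraction step: one must ensure the level set can be chosen to avoid the degenerate locus (or to meet it only in a controlled way so that transversality can still be arranged after a small isotopy), and one must handle the boundary $\partial M$ carefully so that $\partial S \subset \partial M$ transversally. I would expect this to be reduced, as much as possible, to a black-box statement imported verbatim from part I, with the present proof doing only the gluing and the $\alpha = 0$ Conley argument in full.
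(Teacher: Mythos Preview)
Your overall strategy matches the paper's exactly: split into $\alpha=0$ and $\alpha\neq0$, use Conley's Lyapunov function in the first case and the equivariant Lyapunov function from part~I in the second, take a level set avoiding the recurrent locus, and for the converse read off the quasi-Lyapunov property from the transverse surface.

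Two points worth tightening. First, in the $\alpha\neq0$ existence direction you write ``a continuous map $h\colon M\to\RR$, or a closed $1$-form of class $\alpha$''; for $\alpha\neq0$ there is no map $M\to\RR$ in class $\alpha$. The paper works on the $\ZZ$-cover $\wh M_\alpha$ with an $\alpha$-equivariant map $f\colon\wh M_\alpha\to\RR$ whose negative is Lyapunov (Theorem~\ref{thm-spectral-decomp}), and this is the object you should name. Second, the step you flag as the ``main obstacle'' --- choosing a level set that avoids the degenerate locus --- is handled by a specific fact you do not mention: the set of recurrence chains in $\wh M_\alpha$ is totally disconnected (Lemma~\ref{lem-tot-disc}), so $f(\wh\Rec_\alpha)$ has empty interior in $\RR$, and since $f$ is surjective one just picks any value outside this set. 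No smoothing or perturbation is needed; the level set is automatically a topological partial cross-section (Lemma~\ref{lem-pre-image-ps}). For the converse directions, the paper packages both cases uniformly via the map $F_S\colon M\to\RR/\ZZ$ (constant near $S$'s complement in a flow box) and shows $-F_S$ is pre-Lyapunov, which immediately gives both that $-\alpha$ is quasi-Lyapunov and that $S\cap\Rec_\alpha=\emptyset$; your direct pseudo-orbit crossing argument is fine but the $F_S$ construction is reused later and is worth setting up once.
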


When the condition in the theorem is satisfied, we have $\alpha(D_\varphi)\geq 0$, which is similar to Fried's criterion. Recall from the first paper that in general, $\alpha(D_\varphi)\geq 0$ does not imply that $-\alpha$ is quasi-Lyapunov. 

Let~$\alpha$ be as in the theorem. The partial cross-sections cohomologous to~$\alpha$ are in correspondence with a set of homology class relative to a recurrent-like set. Denote by~$\Rec_\alpha$ the \emph{$\alpha$-recurrent set} introduced in the first paper. For now, it can be though of as:~$\Rec_\alpha$ is the set of points that is contained in no partial cross-section cohomologous to~$\alpha$. 

A partial cross-section cohomologous to~$\alpha$ is disjoint from~$\Rec_\alpha$, so it induces a cohomologous class in $H^1(M,\Rec_\alpha,\ZZ)$. Note that this cohomology module behaves quite poorly since~$\Rec_\alpha$ can be any compact subset of $M$. 
We consider a slightly different cohomology module, which we denote by $H^1(M,\germ(\Rec_\alpha),\ZZ)$, called \emph{the cohomology relative to the germ} of~$\Rec_\alpha$. It is defined as a direct limit of cohomology modules, and as such, it has a natural topology. 
Note that there is a natural map $H^1(M,\germ(\Rec_\alpha),\ZZ)\xrightarrow{\pi_\germ} H^1(M,\ZZ)$.
We introduce similarly its continuous dual $H_1(M,\germ(\Rec_\alpha),\RR)$. 

\begin{maintheorem}\label{mainthm-classification}
	Under the conditions in Theorem~\ref{mainthm-existence}, there exists a subset $D_{\varphi,\alpha}$ of $H_1(M,\germ(\Rec_\alpha),\ZZ)$ that satisfies:
	\begin{itemize}
		\item any partial cross-section cohomologous to~$\alpha$ admits a cohomology class in $H^1(M,\germ(\Rec_\alpha),\ZZ)$,
		\item a non-zero element~$\beta$ in $H^1(M,\germ(\Rec_\alpha),\ZZ)$ represents a partial cross-section if and only if $\pi_\germ(\beta)=\alpha$ and $\beta(D_{\varphi,\alpha})\geq 0$ hold true,
		\item two partial cross-sections cohomologous in $H^1(M,\germ(\Rec_\alpha),\ZZ)$ are isotopic along the flow.
	\end{itemize}
\end{maintheorem}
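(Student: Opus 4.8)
The plan is to upgrade Fried's classical argument for global cross-sections to the relative setting, working systematically with the germ cohomology $H^1(M,\germ(\Rec_\alpha),\ZZ)$ so that the pathological behaviour of $\Rec_\alpha$ is absorbed by the direct limit. First I would define $D_{\varphi,\alpha}$ as the set of germ homology classes carried by $\varphi$-invariant measures supported outside (arbitrarily small neighbourhoods of) $\Rec_\alpha$; concretely, a finite invariant measure $\mu$ with $\supp\mu\cap\Rec_\alpha=\varnothing$ has an asymptotic cycle, and since $\supp\mu$ sits in the complement of some open neighbourhood $U\supset\Rec_\alpha$, that cycle lifts canonically to $H_1(M,\germ(\Rec_\alpha),\RR)$; we take $D_{\varphi,\alpha}$ to be (the integral points, or the closed cone generated by) these classes. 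The compatibility $\pi_\germ(\beta)=\alpha$ is forced because any partial cross-section $S$ cohomologous to $\alpha$ is disjoint from $\Rec_\alpha$ and hence from a neighbourhood of it, so $[S]$ lives in the appropriate term of the direct system and maps to $\alpha$ under $\pi_\germ$.

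For the first bullet, given a partial cross-section $S$ cohomologous to $\alpha$, disjointness from $\Rec_\alpha$ gives $S\subset M\setminus U$ for some open $U\supset\Rec_\alpha$, and the Poincaré–Lefschetz dual of $[S,\partial S]$ in $H^1(M\setminus U,\partial(M\setminus U);\ZZ)$ maps into the direct limit, producing the germ class $\beta$; one checks $\pi_\germ(\beta)=[S]_{H^1(M,\ZZ)}=\alpha$ by naturality of the long exact sequences. For the necessity direction of the second bullet, if $\beta$ represents a partial cross-section $S$ then $S$ is transverse to $\varphi$ and disjoint from $\Rec_\alpha$, so for any invariant $\mu$ with $\supp\mu$ away from $\Rec_\alpha$ the flow lines of $\mu$-generic points cross $S$ with a definite sign (this is the standard computation: the intersection number of a long orbit arc with $S$ equals $\beta$ evaluated on the arc up to bounded error), whence $\beta(\mu)\geq 0$; taking the relevant closure gives $\beta(D_{\varphi,\alpha})\geq 0$. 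The substantive direction is the converse: assuming $\pi_\germ(\beta)=\alpha$ and $\beta(D_{\varphi,\alpha})\geq 0$, I would first represent $\beta$ by a class in $H^1(M\setminus U,\partial(M\setminus U);\ZZ)$ for a small $U\supset\Rec_\alpha$, then invoke the machinery of part~I --- the $\alpha$-recurrent set $\Rec_\alpha$ is exactly the obstruction locus, and on its complement the relevant pseudo-orbit/cohomology criterion (the analogue of ``$-\alpha$ quasi-Lyapunov'' restricted to $M\setminus\Rec_\alpha$, which holds by Theorem~\ref{mainthm-existence} and the definition of $\Rec_\alpha$) lets one run Fried's construction of a transverse surface realizing $\beta$, after a $C^0$-small perturbation of the flow near $\partial U$ to make things transverse to the boundary.

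For the third bullet (uniqueness up to isotopy along the flow), the argument is again Fried-style: if $S_0,S_1$ are partial cross-sections with the same germ class, choose $U\supset\Rec_\alpha$ with $S_0,S_1\subset M\setminus U$ and the same relative cohomology class there; then a primitive of the difference of their duals is a function on $M\setminus U$ whose level sets interpolate, and flowing along $\varphi$ realizes the isotopy --- crucially the flow in $M\setminus U$ has no chain-recurrence obstructing this (that is the content of $\Rec_\alpha$ being removed), so orbits starting on $S_0$ reach $S_1$ and the ``suspension-like'' isotopy is well defined and stays within partial cross-sections. The main obstacle, I expect, is the converse half of the second bullet: one must transfer the existence mechanism of Theorem~\ref{mainthm-existence} from the ambient closed setting to the manifold-with-corners $M\setminus U$ while controlling the boundary $\partial U$, showing both that the quasi-Lyapunov condition localizes correctly to $M\setminus\Rec_\alpha$ and that the resulting surface can be taken with $\partial S\subset\partial M$ (not spilling onto the artificial boundary $\partial U$); handling the direct-limit bookkeeping so that the constructed surface is independent, up to the germ equivalence, of the auxiliary $U$ is the technical heart.
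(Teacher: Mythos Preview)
Your definition of $D_{\varphi,\alpha}$ is the main gap, and it propagates through the rest of the argument. An invariant measure $\mu$ with $\supp\mu\subset M\setminus U$ has an asymptotic cycle that is a \emph{closed} class: the map $H_1(M\setminus U,\RR)\to H_1(M,U,\RR)$ factors through $H_1(M,\RR)$, so the resulting element of $H_1(M,\germ(\Rec_\alpha),\RR)$ lies in the image of $H_1(M,\RR)$. Consequently $\beta(D_{\varphi,\alpha})$ depends only on $\pi_\germ(\beta)=\alpha$, and your criterion cannot distinguish two classes $\beta_1\neq\beta_2$ with the same $\alpha$. But the whole point of the theorem is that such $\beta$'s \emph{are} distinguished (Theorem~\ref{mainthm-cardinal} shows $\PS_\varphi(\alpha)$ is typically not a singleton), and not every $\beta$ over $\alpha$ represents a partial cross-section: if $R_1\recto R_2$ are distinct $\alpha$-recurrence chains, any admissible $\beta$ must be nonnegative on the relative class of a connecting pseudo-orbit, which is a genuine constraint with nonzero boundary in $\Rec_\alpha$.

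The paper's $D_{\varphi,\alpha}$ is built from exactly these connecting objects: homology classes of $\epsilon$-pseudo-orbits starting and ending on $\Rec_\alpha$, taken in $H_1(M,\Rec_\alpha,\ZZ)$, pushed to the germ, closed up, and intersected over $\epsilon\to 0$. The key technical point (Proposition~\ref{prop-rel-ass-dir}) is that such a pseudo-orbit, once its endpoints lie on fixed $\alpha$-recurrence chains and it stays in a bounded slice of $\wh M_\alpha$, has a germ class that stabilises and lands in $D_{\varphi,\alpha}$. With this in hand, the paper does \emph{not} run Fried's surface construction in $M\setminus U$; instead it passes through the intermediate combinatorial classifications $\PS_\varphi(\alpha)\simeq\pL(\alpha)\simeq I_\alpha(\wh\Rec_\alpha^*,\ZZ)$ (Theorems~\ref{thm-ps-classification-1} and~\ref{thm-preL-to-order}): from $\beta$ one defines $\wh f\colon\wh\Rec_\alpha^*\to\ZZ$ by integrating $\beta$ along paths, and $\beta(D_{\varphi,\alpha})\geq 0$ is precisely what makes $\wh f$ non-decreasing for $\recto$. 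Your isotopy sketch also breaks for the same reason --- orbits from $S_0$ can fall into $\Rec_\alpha$ and never reach $S_1$; the paper instead shows equal germ classes force equal images in $I_\alpha(\wh\Rec_\alpha^*,\ZZ)$ and invokes the earlier bijection.
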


We will restate the theorem as Theorem~\ref{thm-ps-classification-3}.
Denote by $\PS_\varphi(\alpha)$ the set of partial cross-section cohomologous to~$\alpha$, up to isotopy along the flow.
In order to prove Theorem~\ref{mainthm-classification}, we give two intermediate descriptions of $\PS_\varphi(\alpha)$ in Theorems~\ref{thm-ps-classification-1} and~\ref{thm-preL-to-order}. They are more combinatorial in nature, and they allow us to characterize the cardinality of $\PS_\varphi(\alpha)$.
Let us informally state our results.

\begin{maintheorem}[Concatenation of Theorems~\ref{thm-ps-countable},~\ref{thm-finite-pa} and~\ref{thm-unique-ps}]\label{mainthm-cardinal}
	Let~$\alpha$ be in $H_1(M,\ZZ)$ so that $\PS_\varphi(\alpha)$ is not empty.
	The set $\PS_\varphi(\alpha)$ is at most countable. In the case $\alpha=0$, $\PS_\varphi(0)$ is countable (infinite).

	Assume $\alpha\neq 0$. 
	Then $\PS_\varphi(\alpha)$ is finite if and only if some oriented graph, built from the connected components of~$\Rec_\alpha$, is finite and transitive.	
	Additionally, $\PS_\varphi(\alpha)$ is a singleton if and only if~$\Rec_\alpha$ is either empty or connected.
\end{maintheorem}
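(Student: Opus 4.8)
The plan is to prove Theorem~\ref{mainthm-cardinal} (the cardinality statement) by leveraging the two intermediate descriptions of $\PS_\varphi(\alpha)$ promised in the excerpt, namely the combinatorial reformulations in Theorems~\ref{thm-ps-classification-1} and~\ref{thm-preL-to-order}. I would first reduce the whole statement to a purely order-theoretic or graph-theoretic question: by Theorem~\ref{mainthm-classification} (restated as Theorem~\ref{thm-ps-classification-3}), partial cross-sections cohomologous to~$\alpha$ up to isotopy correspond bijectively to the non-zero elements $\beta\in H^1(M,\germ(\Rec_\alpha),\ZZ)$ with $\pi_\germ(\beta)=\alpha$ and $\beta(D_{\varphi,\alpha})\geq 0$. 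So $\PS_\varphi(\alpha)$ is in bijection with a certain subset of an affine sublattice of $H^1(M,\germ(\Rec_\alpha),\ZZ)$ cut out by the "positivity on $D_{\varphi,\alpha}$" condition. The countability then follows because $H^1(M,\germ(\Rec_\alpha),\ZZ)$, being a countable direct limit of finitely generated abelian groups (one for each cofinal neighborhood of the germ of~$\Rec_\alpha$), is itself countable; hence any subset is at most countable. This handles the first sentence uniformly in~$\alpha$.

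For the case $\alpha=0$: here the fiber $\pi_\germ^{-1}(0)$ is precisely the kernel of $\pi_\germ$, which measures the "extra" cohomology created by collapsing neighborhoods of $\Rec_0$; since $\varphi$ is not chain-recurrent, $\Rec_0$ is a proper compact attractor-repeller-type set, and by Conley theory one produces infinitely many distinct positive functions (e.g.\ by rescaling or by using nested isolating neighborhoods) giving infinitely many non-cohomologous partial cross-sections. Concretely, I would exhibit an injection $\ZZ_{>0}\hookrightarrow \PS_\varphi(0)$ by taking integer multiples of a fixed partial cross-section's relative class, or by stacking parallel copies; each multiple lies in the kernel of $\pi_\germ$ and satisfies the positivity condition since $D_{\varphi,0}$ sits in the appropriate half-space, and distinct multiples are non-isotopic because they have distinct relative cohomology classes. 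Combined with the general countability bound, this gives "countable (infinite)".

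For $\alpha\neq 0$: this is where the combinatorial model does the real work. The idea is that $H^1(M,\germ(\Rec_\alpha),\ZZ)$ fibers over $H^1(M,\ZZ)$ via $\pi_\germ$ with fiber a lattice indexed by (roughly) the connected components of $\Rec_\alpha$ — each component can be "pushed across" by the flow, contributing a $\ZZ$-direction to the relative cohomology — and the positivity condition $\beta(D_{\varphi,\alpha})\geq 0$ translates into: the assignment of integer "heights" to the components must be monotone along a certain oriented graph $\CatG_\alpha$ whose vertices are the components of $\Rec_\alpha$ and whose edges record which component can flow to which via a short chain/pseudo-orbit with negative $\alpha$-cost (this is precisely the graph alluded to in Theorem~\ref{mainthm-cardinal}, and it is the object appearing in Theorem~\ref{thm-preL-to-order}). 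So $\PS_\varphi(\alpha)$ is in bijection with the set of monotone $\ZZ$-labelings of $\CatG_\alpha$ modulo global translation (when $\alpha$ pairs nontrivially, there is also a normalization fixing one coordinate). Such a labeling set is a singleton iff there is a \emph{unique} compatible labeling up to translation, which happens iff $\CatG_\alpha$ is either empty or strongly connected — and strong connectedness of this particular graph turns out (by a recurrence argument from part~I, using that $\Rec_\alpha$ is the maximal invariant obstruction) to be equivalent to $\Rec_\alpha$ being connected. It is finite iff the poset of monotone labelings is finite, which happens exactly when $\CatG_\alpha$ has finitely many vertices and is transitive (so that no infinite antichain or infinite "ladder" of labelings exists); one checks that if $\CatG_\alpha$ is infinite or fails transitivity, one builds an infinite family of pairwise non-equivalent labelings, and conversely a finite transitive graph has only finitely many monotone labelings up to translation.

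The main obstacle I expect is establishing the precise dictionary between the positivity condition $\beta(D_{\varphi,\alpha})\geq 0$ and the combinatorial monotone-labeling condition on $\CatG_\alpha$ — i.e.\ identifying the extremal rays of (the relevant projection of) $D_{\varphi,\alpha}$ with the edges of the component graph. This requires understanding how the asymptotic-direction set interacts with the germ topology on $H_1(M,\germ(\Rec_\alpha),\RR)$, in particular that $D_{\varphi,\alpha}$ is (the closed cone on) a set detecting exactly the "short negative-$\alpha$-cost connections" between components, which should follow from the construction of $\Rec_\alpha$ and the quasi-Lyapunov machinery in \cite{martyPS1} but needs care because the germ module is only a direct limit and the cone $D_{\varphi,\alpha}$ must be shown to be finitely generated in each finite-dimensional quotient for the finiteness criterion to even make sense. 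The equivalence "finite and transitive $\Leftrightarrow$ finitely many labelings" is then a routine combinatorial lemma, and "connected $\Leftrightarrow$ singleton" reduces to showing the graph is strongly connected iff it is weakly connected, which holds because the edge relation is, up to the recurrence structure, symmetric-closure-transitive on components of a recurrent set.
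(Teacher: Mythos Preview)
Your overall strategy---reduce to a combinatorial labeling problem---matches the paper, but you route through the cohomological classification (Theorem~\ref{mainthm-classification}) rather than the order-theoretic bijection of Theorem~\ref{thm-preL-to-order}, and this detour introduces both an unnecessary obstacle and a genuine gap. In the paper's logical order, Theorem~\ref{thm-preL-to-order} is proved first and then used to establish the cohomological criterion; so the ``main obstacle'' you flag (the dictionary between $\beta(D_{\varphi,\alpha})\geq 0$ and monotone labelings) is precisely what the paper bypasses. The cardinality results are read off directly from the bijection $\PS_\varphi(\alpha)\simeq I_\alpha(\wh\Rec_\alpha^*,\ZZ)$, the set of $\alpha$-equivariant, non-decreasing, continuous maps $\wh\Rec_\alpha^*\to\ZZ$ modulo additive constants.

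The gap is that your graph lives downstairs: your $\CatG_\alpha$ has vertices the components of $\Rec_\alpha$ in $M$, and you model $\PS_\varphi(\alpha)$ as monotone $\ZZ$-labelings of $\CatG_\alpha$ up to translation. But a class $\beta$ with $\pi_\germ(\beta)=\alpha$ does \emph{not} assign a well-defined height to a downstairs component: integrals of $\beta$ along two curves joining the same pair of components differ by the $\alpha$-value of a closed loop. The correct model records heights at chosen lifts in $\wh M_\alpha$, and the monotonicity constraints involve the specific integers $a_{i,j}$ for which $\wt R_i\recto a_{i,j}\cdot\wt R_j$ holds upstairs. This breaks your singleton argument: with two $\alpha$-recurrence chains and $\wt R_1\recto a\cdot\wt R_2$, $\wt R_2\recto b\cdot\wt R_1$ upstairs (so $G_{\varphi,\alpha}$ is strongly connected downstairs), an equivariant non-decreasing map is only constrained by $g(\wt R_1)-g(\wt R_2)\in[a\,n_\alpha,-b\,n_\alpha]$, which is a finite interval but typically not a single integer. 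The paper's criterion (Theorem~\ref{thm-unique-ps}) is \emph{at most one $\alpha$-recurrence chain}, i.e.\ a single vertex, not merely a strongly connected quotient graph; and for finiteness (Theorem~\ref{thm-finite-pa}) it is exactly these upstairs bounds $[a_{i,j}n_\alpha,-a_{j,i}n_\alpha]$ that make the labeling set finite when the graph is transitive, while Lemma~\ref{lem-tech-preL-I} produces an explicit infinite family when it is not.

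Two of your sub-arguments are fine and in fact simpler than the paper's: countability via $H^1(M,\germ(\Rec_\alpha),\ZZ)$ being a direct limit over a countable cofinal system of finitely generated groups is cleaner than the paper's Cantor-set argument, and your $\alpha=0$ infinitude by integer multiples is exactly what the paper does.
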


The article is organized as follows. 
In Section~\ref{sec-preliminary-top}, we show some elementary properties of partial cross-sections, and recall Fried's classification. 
In Section~\ref{sec-preliminary-dyn}, we recall some notation and results from the first paper in the series. All the dynamical objects are presented there.

In Section~\ref{sec-ps-classification}, we characterize the set of partial cross-sections. 
We first prove Theorem~\ref{mainthm-existence} using the following ideas.
Given a quasi-Lyapunov class~$\alpha$ in $ H_1(M,\ZZ)\setminus\{0\}$, it admits an~$\alpha$-equivariant and Lyapunov map $f\colon\wh M_\alpha\to\RR$. When a level set $f^{-1}(\{t\})$ is disjoint from the recurrent set, it projects onto $M$ into a partial cross-section cohomologous to $-\alpha$. Conversely, when~$S$ is a partial cross-section cohomologous to $-\alpha$, it can be used to build a function $f$ as above. We establish a deeper relation between partial cross-sections and equivariant Lyapunov maps, in Sections~\ref{sec-ps-subL-classification} and~\ref{sec-conley-order}.

 In Section~\ref{sec-card-PS} we characterize the cardinality of $\PS_\varphi(\alpha)$ and prove Theorem~\ref{mainthm-cardinal}. In particular, we characterize when $\PS_\varphi(\alpha)$ has cardinality one. More generally, characterizing when $\PS_\varphi(\alpha)$ has a given finite cardinality is possible, but it goes beyond the scope of this article. However, we give the sufficient tools for this characterization. 

In Section~\ref{sec-germ-hom}, we introduce the homology relative to the germ of a compact subset. This construction is quite general and not specific to the~$\alpha$-recurrent set. It may already be known to some degree by some experts. But we could not find it in the bibliography.
 
In Section~\ref{sec-hom-classification}, we build the set $D_{\varphi,\alpha}$ in $H_1(M,\germ(\Rec_\alpha),\RR)$ and prove Theorem~\ref{mainthm-classification}. We end with a discussion about the necessity of one hypothesis in that theorem.

We end our paper with three appendixes.
In Appendix~\ref{sec-smoothing}, we discuss the difference between continuous and smooth partial cross-sections. We give a quite weak hypothesis on the regularity of the flow under which every partial cross-section can be smoothed.

In Appendix~\ref{sec-Fried-sum}, we briefly discuss the relation with Theorem~\ref{mainthm-classification} and the so called Fried desingularization.  

In many applications, the germ of~$\Rec_\alpha$ is simpler than in the general case. For instance, when the flow has enough hyperbolicity,~$\Rec_\alpha$ should have finitely many connected components. 
We give in Appendix~\ref{app-practical-applications} a practical theorem to compute the homology and cohomology relative to the germ of~$\Rec_\alpha$, when~$\Rec_\alpha$ has finitely many connected components.

\vline

Let us illustrate the two main theorems with two examples. In Figure~\ref{fig-ps-existence}, we present two flows on the torus, $\varphi_1$ on the left and $\varphi_2$ on the right. 
We consider the two classes $\alpha=dx$ and $\alpha=-dx$.

The difference between $\alpha(D_\varphi)\geq0$ and $-\alpha$ being quasi-Lyapunov is illustrated here. 
For the two flows, we have $D_{\varphi_i}=\{(0,1)\}$. So both $dx$ and $-dx$ satisfy $\alpha(D_{\varphi_i})\geq0$. 
Furthermore, $-dx$ is quasi-Lyapunov for the two flows, but $dx$ is quasi-Lyapunov only for the flow $\varphi_1$. 
As a consequence, $\PS_{\varphi_1}(dx)$, $\PS_{\varphi_2}(dx)$ and $\PS_{\varphi_1}(-dx)$ are not empty, but $\PS_{\varphi_2}(-dx)$ is empty.

\begin{figure}[h]
	\begin{center}
		\begin{picture}(80,35)(0,0)
		\put(0,0){\includegraphics[width=80mm]{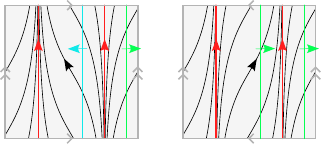}}
		\put(-4,20){$\varphi_1$}
		\put(80.5,20){$\varphi_2$}
		\color{red}
		\put(7.7,-2.5){$\Rec_\alpha$}
		\end{picture}
	\end{center}
	\caption{Existence of partial cross-sections, in green and blue, cohomologous to $dx$ and $-dx$.}
	\label{fig-ps-existence}
\end{figure}

We end the introduction with Figure~\ref{fig-ps-classification} which illustrates the classification of partial cross-sections. Take some~$\alpha$ in $H^1(M,\ZZ)$ so that $-\alpha$ is quasi-Lyapunov. From the first paper, there exists a map $\wh f\colon M\to\bfrac{\RR}{\ZZ}$, cohomologous to~$\alpha$, so that $-f$ satisfies a Lyapunov property. That is, $f$ is constant on each connected component of~$\Rec_\alpha$ (in red), and it is cyclically increasing along the flow outside~$\Rec_\alpha$. 

\begin{figure}[h]
	\begin{center}
		\begin{picture}(140,48)(0,0)
		\put(0,0){\includegraphics[width=140mm]{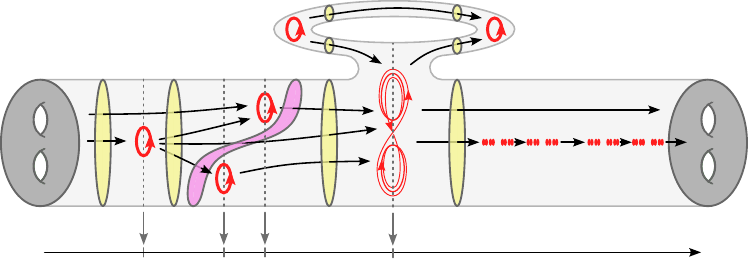}}
		\put(105, 29.5){$\varphi$}
		\put(18,34.6){$S_1$}
		\put(31,34.6){$S_2$}
		\put(53.5,34.6){$S_3$}
		\put(60,34.6){$S_4$}
		\put(21,11){$M$}
		\put(11,2){$\bfrac{\RR}{\ZZ}$}
		\put(97.5,42){attractor}
		\put(37.5,42){repeller}
		\end{picture}
	\end{center}
	\caption{Classification of partial cross-sections.}
	\label{fig-ps-classification}
\end{figure}

Consider a level set of $f$ that is disjoint from~$\Rec_\alpha$. By definition, it is transverse to the flow. So it is a partial cross-section cohomologous to~$\alpha$.
Four partial cross-sections obtained that way are represented in yellow. 
Note that they are separated by the~$\alpha$-recurrent set. So they are not isotopic to each other along the flow, even if they may be isotopic through regular embedded surfaces.
The partial cross-sections cohomologous to~$\alpha$ need not all have the same topology. For instance $S_1$ is connected but $S_4$ is not.

Every partial cross-section cohomologous to~$\alpha$ is obtained as a level set, for possibly a different function than $f$. For instance $S_3$ is not isotopic along the flow to a level set of $f$, so one needs to change $f$ to express $S_3$ as a level set. In particular, here, it is not possible to obtain all partial cross-sections cohomologous to $\alpha$ as the level sets of a single function $f$.

\paragraph{Acknowledgments.} I thank Pierre Dehornoy for the discussions that motivated this project. The research was conducted during my stay at the Université de Bourgogne, in Dijon, funded in honor of Marco Brunella.

{
	\hypersetup{linkcolor=black}
	\tableofcontents \label{ToC}
}

\section{Preliminary: the topology part} \label{sec-preliminary-top}

In the article,~$M$ is a topological manifold, connected, compact, possibly with boundary, and equipped with a compatible metric. Most of our arguments are valid without assuming much regularity on~$M$ nor $\varphi$. 

We fix a continuous flow $\varphi_t\colon M\to M$. All flows are assumed complete, that is defined at all time. In particular, $\varphi$ preserves $\partial M$. We allow~$M$ to have a non-empty boundary, so that the theory can be applied to manifolds obtained after blowing-up finitely many closed curves.


Since this is the second paper in a series, we introduce some terminology with less details, and refer to the first paper \cite{martyPS1} for more details on the preliminary.

\subsection{$\ZZ$-covering}\label{sec-Z-covering}

Fried \cite{Fried1983} showed the strong connection between global cross-sections, cohomology classes (of rank 1) and $\ZZ$-covers.

\paragraph{Descripiton of the rank 1 cohomology.} The cohomology set $H^1(M,\ZZ)$ plays a central role in this article. 

Let us describe that class in general. It is well-known that $H^1(M,\ZZ)$ is isomorphic (as a group) to the set $[M,\bfrac{\RR}{\ZZ}]$ of homotopy classes of continuous maps from $M$ to $\bfrac{\RR}{\ZZ}$. For instance, when $f\colon M\to\bfrac{\RR}{\ZZ}$ is smooth, its differential $df=f^*dt$ is a closed 1-form on $M$, so it induces a cohomology class in $H^1(M,\ZZ)$. It has integer coefficients since $df(\gamma)$ lies in $\ZZ$ for any oriented closed curve $\gamma$. 
We call the \emph{cohomology} class of $f\colon M\to\bfrac{\RR}{\ZZ}$ the element in $H^1(M,\ZZ)$ that corresponds to the homotopy class of $f$. 

Let $S\subset M$ be a compact, co-oriented hypersurface of $M$, with $\partial S\subset\partial M$. Similarly to above,~$S$ induces a cohomology class $[S]$ in $H^1(M,\ZZ)$ defined as follows. By assumption, there exists a continuous map $f\colon M\to\bfrac{\RR}{\ZZ}$ with $f^{-1}(\{0\})=S$, and so that the co-orientation on~$S$ goes from $f^{-1}(]\epsilon,0])$ to $f^{-1}([0,\epsilon[)$. Then the cohomology class $[S]$ in $H^1(M,\ZZ)$ is equal to the cohomology class of $f$. Any other possible choice of function $f$ is homotopic to $f$, so its cohomology class depends only on~$S$. 

\begin{remark}
	When $M$ is orientable and $d=\dim(M)$, the module $H^1(M,\ZZ)$ is naturally isomorphic to $H_{d-1}(M,\partial M,\ZZ)$, from a Poincaré duality. Given a compact and co-oriented hypersurface~$S$ of $M$, with $\partial S\subset\partial M$, it may be natural to consider its homology class in $H_{d-1}(M,\partial M,\ZZ)$ instead of $H^1(M,\ZZ)$. 
	But fundamentally, $H_{d-1}(M,\partial M,\ZZ)$ represents oriented hypersurfaces, and $H^1(M,\ZZ)$ represents co-oriented hypersurfaces, which is the case of~$S$. In particular, when $M$ is not orientable,~$S$ may be not representable by an element in $H_{d-1}(M,\partial M,\ZZ)$.
\end{remark}

\paragraph{$\ZZ$-covering}
Take a continuous map $f\colon M\to\bfrac{\RR}{\ZZ}$, assumed to be not null-cohomologous. We denote temporally by $M_f$ the connected manifold:
$$\wh M_f=\{(x,t)\in M\times\RR, f(x)\equiv t\smod1\},$$
equip with the covering map $\pi\colon\wh M_f\to M$ given by $\pi(x,t)=x$. We also denote by $\wh f\colon\wh M_f\to M$ the map $\wh f(x,t)=t$. 
Take another map $g\colon M\to\bfrac{\RR}{\ZZ}$ homotopic to $f$. Then there exists a map $h\colon M\to\RR$ which satisfies $h\equiv f-g\smod1$. And the map $H\colon\wh M_g\to\wh M_f$ defined by $H(x,t)=(x,t+h(t))$ is an isomorphism of covering over~$M$. So up to isomorphism, $\wh M_f$ depends only on the cohomology class of $f$.

Notice that $f$ is connected if and only if its cohomology class is primitive (not the multiple of another element). When it is the case, then $\wh M_f\to M$ is a $\ZZ$-covering map.

Take a non-zero class~$\alpha$ in $H^1(M,\ZZ)$. We chose an arbitrary continuous map $f\colon M\to\bfrac{\RR}{\ZZ}$ cohomologous to~$\alpha$. Then we denote by $\wh M_\alpha$ one connected component of $\wh M_f$, and we denote by $\pi_\alpha\colon\wh M_\alpha\to M$ the $\ZZ$-covering map discuss above.
A less explicit but more general description of $\wh M_\alpha$ is as the covering space over $M$ given by the kernel of the following composition of morphisms: 
$$\pi_1(M)\to H_1(M,\ZZ)\xrightarrow{\delta\mapsto\alpha(\delta)}\ZZ.$$
We used this definition in the first paper of the article, since we had to use more general Abelian covering.

Note that $\varphi$ lifts to a flow on $\wh M_\alpha$. From now on, we implicitly speak about the lifted flow when considering $\wh M_\alpha$.

\paragraph{Equivariance.} 
We denote by $k\cdot x$, for $x$ in $\wh M_\alpha$ and $k$ in $\ZZ$, the image of $x$ under the $\ZZ$-action of $n$.
A continuous map $f\colon\wh M_\alpha\to\RR$ is said \emph{$\ZZ$-equivariant} if for any $x$ and $k$ as above, we have $f(k\cdot x)=f(x)+k$. It is actually not the equivalence property that we are interested in, since it does not take in account~$\alpha$. 

When~$\alpha$ is not primitive, it can be uniquely written $\alpha=n_\alpha\beta$ for some $n_\alpha\geq 1$ and some primitive class~$\beta$ in $H^1(M,\ZZ)$. 
We say that $f\colon\wh M_\alpha\to\RR$ is \emph{$\alpha$-equivariant} if $\frac{1}{n_\alpha}f$ is $\ZZ$-equivariant, or equivalently, $f(k\cdot x)=f(x)+kn_\alpha$ holds for any $x,k$ as above.
Note that an~$\alpha$-equivariant map $f$ induces a continuous map $g\colon M\to\bfrac{\RR}{\ZZ}$ cohomologous to~$\alpha$, and with $f\equiv g\circ\pi_\alpha\smod1$ for any $x$ in $\wh M_\alpha$. Conversely, any continuous map $g\colon M\to\bfrac{\RR}{\ZZ}$ cohomologous to~$\alpha$ lifts to a continuous and~$\alpha$-equivariant map $f\colon\wh M_\alpha\to\RR$ that satisfies $f\equiv g\circ\pi_\alpha\smod1$.

\paragraph*{Choice of metric.} The soon introduced notion of $\epsilon$-pseudo-orbit depends on a choice of metric. 

Let $N$ be a topological space, and $d,d'$ be two metrics on $N$ compatible with its equipped topology. We say that $d$ and $d'$ are \emph{comparable at small scale} if for all $\epsilon>0$, there exists $\epsilon'>0$ so that every $\epsilon'$-ball for one metric is included in an~$\epsilon$-ball for the other metric. When $N$ is compact, any two compatible metrics are comparable at small scale. 

For the compact manifold~$M$, choose any metric compatible with the topology, denoted here by $d_M$. 
Given a non-zero~$\alpha$ in $H^1(M,\ZZ)$, 
When $N$ is given by a covering space $N\xrightarrow{\pi}M$, we lift the metric from~$M$ to $N$ as follows. 
We call the \emph{radius of injectivity} of~$M$, denote by $\diam(M)$, the supremum of the $\eta>0$ that satisfies that all balls of radius $\eta$ belong in a contractible open subset of~$M$. We choose a metric $d_N$ on $N$ which is invariant by the deck transformation of $\pi$, and so that for any ball $B\subset N$ of radius less than $\rad(M)$, the projection $B\xrightarrow{\pi}M$ is an isometry onto its image. Such a metric can be build for instance by taking chains of points in $N$, at distance less than $\rad(M)$, and summing the distance between successive points. Any two such metrics are comparable at small scale.

\subsection{Partial cross-section}\label{sec-partial-section}

Let~$S$ be a topologically sub-manifold $S\subset M$ of co-dimension one. We say that~$S$ is \emph{topologically transverse} to $\varphi$ if for all $x$ in~$S$, there exist $t>0$ and a neighborhood~$U$ of $x$ in~$S$, so that for all $y$ in $ U$ and $s$ in $[-t,t]$, if $\varphi_s(y)$ belongs to~$U$ then $s=0$ holds. 

A \emph{partial cross-section} of $\varphi$ is a (non-empty) topologically sub-manifold~$S$ inside $M$, of co-dimension one, compact, topologically transverse to $\varphi$, and with $\partial S\subset\partial M$.
A \emph{global cross-section} is a partial cross-section which intersects every flow lines of $\varphi$. We warn the reader that these notions have many names in the literature (Poincaré section, Birkhoff section, global section).

\begin{lemma}\label{lem-loc-flat}
	Any partial cross-section is locally flat.
\end{lemma}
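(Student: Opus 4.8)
The plan is to show that near any point $x$ of a partial cross-section $S$, one can build a flow-box chart in which $S$ appears as a graph over the directions transverse to the flow, hence as a bicollared hypersurface. First I would use the fact that $\varphi$ is a continuous flow on a manifold to produce a local flow-box: by a continuous version of the flow-box theorem (or, in low regularity, by the Whitney--Bebutov / Beck flow-box construction for continuous flows without fixed points), there is a neighborhood $U$ of $x$ and a homeomorphism $\Psi\colon U \to V \times (-\delta,\delta)$, where $V$ is an open subset of $\RR^{\dim M - 1}$, conjugating $\varphi$ on $U$ to the translation flow $(v,s)\mapsto(v,s+t)$. (Since $S$ is transverse to $\varphi$, in particular $\varphi$ has no fixed point on $S$, so such a box exists around each $x\in S$; one must also handle $x\in\partial S\subset\partial M$, where the box is a half-box $V\times(-\delta,\delta)$ with $V$ a relatively open subset of a half-space, and $S$ meets $\partial M$ transversally inside it.)

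Next I would translate topological transversality of $S$ into the statement that, after shrinking $U$, the image $\Psi(S\cap U)$ meets each flow line $\{v\}\times(-\delta,\delta)$ in at most one point. Indeed the definition of topological transversality gives, for $x$, a time $t>0$ and a neighborhood $W\subset S$ of $x$ such that no two points of $W$ on the same orbit are within flow-time $t$ of each other; taking the flow-box short enough (so that crossing it costs flow-time less than $t$) forces each orbit segment in the box to carry at most one point of $S$. Thus $\Psi(S\cap U)$ is the graph of a function $g$ defined on a subset $W_0\subset V$ of the transverse slice, i.e. $\Psi(S\cap U)=\{(v,g(v)) : v\in W_0\}$. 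Because $S$ is a topological $(\dim M-1)$-submanifold and the projection $\pi_V\colon \Psi(S\cap U)\to V$, $(v,s)\mapsto v$, is a continuous injection, invariance of domain shows $W_0$ is open in $V$ and $\pi_V$ is an open map, hence a homeomorphism onto $W_0$; therefore $g=g(v)$ is continuous on the open set $W_0$.

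Finally, continuity of $g$ makes the graph a bicollared (locally flat) hypersurface in the box: the map $(v,s)\mapsto(v, s+g(v))$ (extended by cut-off so that it is a homeomorphism of $V\times(-\delta',\delta')$ onto a neighborhood of the graph) straightens $\Psi(S\cap U)$ to $W_0\times\{0\}$, and composing with $\Psi$ gives a chart around $x$ in which $S$ is a coordinate hyperplane (or, at boundary points, a coordinate half-hyperplane meeting $\partial M$ in a coordinate subspace). Since $x\in S$ was arbitrary, $S$ is locally flat. The main obstacle is the low-regularity input: one cannot invoke the smooth flow-box theorem, so the argument must rest on the existence of continuous flow-box coordinates for fixed-point-free continuous flows and on invariance of domain to upgrade the "at most one intersection per orbit" property to continuity of the graphing function $g$; the boundary case $x\in\partial S$ also requires a half-space version of each of these steps, which is routine but needs to be stated.
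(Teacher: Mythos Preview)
Your argument is correct, but it takes a considerably longer route than the paper's. The paper does not invoke any flow-box theorem at all: instead of first building coordinates $\Psi\colon U\to V\times(-\delta,\delta)$ and then showing that $S$ is a continuous graph in those coordinates, it uses $S$ itself as the transversal and the flow itself as the collar. Concretely, by compactness of $S$ and the definition of topological transversality one gets a uniform $t>0$ such that for every $x\in M$ there is at most one $s\in(-t,t)$ with $\varphi_s(x)\in S$; then the map
\[
f\colon S\times(-t,t)\longrightarrow M,\qquad f(x,s)=\varphi_s(x)
\]
is continuous and injective between manifolds of the same dimension, so invariance of domain makes it an open embedding, and $S=f(S\times\{0\})$ is bicollared, hence locally flat.

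Both proofs ultimately rest on invariance of domain, but you apply it to the projection $\Psi(S\cap U)\to V$ after having spent the flow-box theorem, whereas the paper applies it once to the flow map $f$ and is done. Your approach has the disadvantage that the continuous flow-box theorem (Whitney--Bebutov, Beck) is itself a nontrivial result---and in fact its usual proof amounts to finding a local section and flowing off it, which is precisely what the paper does directly with $S$. So the detour through flow-box coordinates is unnecessary: the partial cross-section already hands you the section you need. Your handling of the boundary case is more explicit than the paper's, which is a minor plus, but otherwise the paper's argument is strictly shorter and uses fewer external inputs.
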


\begin{proof}
	Let~$S$ be a partial cross-section of $\varphi$. By compactness, there exists $t>0$ which satisfies that for all $x$ in $M$, there exists at most one $s$ in $]-t,t[$ for which $\varphi_s(x)$ belongs to~$S$. Denote by $f\colon S\times]-t,t[\to M$ the map defined by $f(x,s)=\varphi_s(x)$. It is continuous, injective, and between two manifolds of the same dimension, so it is open. It implies that $\im(f)$ is a neighborhood of~$S$, and that $f$ is an embedding. Thus,~$S$ is locally flat.
\end{proof}

Given a partial cross-section~$S$ and a homotopy $\theta_t\colon S\to\RR$, for $t$ in $[0,1]$ and with $\theta_0=0$, denote by $S_{\theta_t}$ the set of $\varphi_{\theta_t(x)}(x)$ where $x$ varies in~$S$. We view $S_{\theta_t}$ as a homotopy of~$S$. When $S_{\theta_t}$ is embedded for all $t$, $S_{\theta_t}$ is said to be an \emph{isotopy along the flow}. 

\begin{lemma}
	Any isotopy among partial cross-sections can be parametrized as an isotopy along the flow.
\end{lemma}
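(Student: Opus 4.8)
The plan is to work locally in the time parameter, straightening the isotopy inside the flow box around one fixed member of the family, and then to patch the local straightenings together by a connectedness argument in $t$.

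Write the given isotopy as a continuous family of embeddings $\iota_t\colon S\to M$, $t\in[0,1]$, with $\iota_0$ the inclusion of $S:=S_0$ and $S_t:=\iota_t(S)$ a partial cross-section for every $t$; continuity of $\iota$ makes $t\mapsto S_t$ continuous for the Hausdorff distance. Fix $t^*$. As in the proof of Lemma~\ref{lem-loc-flat}, $f_{t^*}(y,s)=\varphi_s(y)$ embeds $S_{t^*}\times]-\epsilon,\epsilon[$ onto an open neighbourhood of $S_{t^*}$, and since $S_t\to S_{t^*}$ in Hausdorff distance, for $t$ close to $t^*$ we have $S_t\subset\im(f_{t^*})$, so that $\Sigma_t:=f_{t^*}^{-1}(S_t)$ is a compact hypersurface of $S_{t^*}\times]-\epsilon,\epsilon[$ transverse to the segments $\{y\}\times]-\epsilon,\epsilon[$.

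\emph{Step 1: $S_t$ is a flow graph over $S_{t^*}$ for $t$ near $t^*$.} The first projection $\Sigma_t\to S_{t^*}$ is a proper local homeomorphism, hence a covering map. Since $S_t$, and therefore $\Sigma_t$, is embedded, over any connected piece of $S_{t^*}$ the sheets of $\Sigma_t$ are disjoint and totally ordered by the $s$-coordinate, so there is no monodromy: $\Sigma_t$ is a disjoint union of graphs of continuous functions on the components of $S_{t^*}$. Finally, $\Sigma_t$ lies within Hausdorff distance tending to $0$ of $S_{t^*}\times\{0\}$, whereas two distinct graphs over one component stay a definite distance apart and each component must be covered; hence for $t$ near $t^*$ there is exactly one graph over each component. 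This graph is the graph of a continuous $\eta_t\colon S_{t^*}\to\RR$, depending continuously on $(y,t)$, with $\eta_{t^*}\equiv 0$, such that $y\mapsto\varphi_{\eta_t(y)}(y)$ is an embedding of $S_{t^*}$ onto $S_t$. Throughout one uses that $f_{t^*}$ sends $\partial S_{t^*}\times]-\epsilon,\epsilon[$ into $\partial M$ and $S_t\cap\partial M=\partial S_t$, so the boundary behaves well.

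\emph{Step 2: globalisation.} Let $T$ be the set of $t^*\in[0,1]$ for which there is a continuous $\theta\colon S\times[0,t^*]\to\RR$ with $\theta_0\equiv 0$ such that $\Phi_t:=\bigl(s\mapsto\varphi_{\theta_t(s)}(s)\bigr)$ embeds $S$ onto $S_t$ for every $t\leq t^*$. Then $0\in T$ and $T$ is downward closed. If $t^*\in T$ then $\Phi_{t^*}\colon S\to S_{t^*}$ is a homeomorphism, and for $t$ within the $\delta$ of Step~1 one sets $\theta_t(s):=\theta_{t^*}(s)+\eta_t(\Phi_{t^*}(s))$; using $\varphi_a\varphi_b=\varphi_{a+b}$ one gets $\Phi_t(s)=\varphi_{\eta_t(\Phi_{t^*}(s))}(\Phi_{t^*}(s))$, which embeds $S$ onto $S_t$ as the composite of the homeomorphism $\Phi_{t^*}$ with the embedding $y\mapsto\varphi_{\eta_t(y)}(y)$, and since $\eta_{t^*}\equiv 0$ this extends $\theta$ continuously past $t^*$; so $T$ is open. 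For closedness put $s^*:=\sup T$, pick $t_1\in T$ with $s^*-\delta<t_1\leq s^*$ (possible since $[0,s^*)\subset T$), and re-glue: on $[t_1,s^*+\delta)$ both $S_{t_1}$ and $S_t$ are flow graphs over $S_{s^*}$, so composing $\Phi_{t_1}$ with the induced homeomorphism $S_{t_1}\cong S_{s^*}$ and then applying $y\mapsto\varphi_{\eta_t(y)}(y)$ gives, after adjusting the time function so that $t=t_1$ reproduces $\theta_{t_1}$, the required extension; hence $s^*\in T$. Thus $T$ is a nonempty clopen subset of $[0,1]$, so $T=[0,1]$, and the resulting $\theta$ exhibits $(S_t)_{t\in[0,1]}$ as an isotopy along the flow.

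The main obstacle is Step~1: transversality yields a covering map, embeddedness removes the monodromy, and Hausdorff closeness forces the number of sheets down to one; all three inputs are genuinely needed, closeness must be invoked only after $S_t\subset\im(f_{t^*})$, and the manifold-with-boundary bookkeeping has to be kept straight. The remainder is routine gluing, and the $\sup$-of-a-connected-set argument performs the "integration" of the local data without any ODE theory, which matters since $\varphi$ is only assumed continuous.
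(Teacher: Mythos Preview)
Your overall strategy matches the paper's: work in a flow box around $S_{t^*}$, show that the projection of $S_t$ onto $S_{t^*}$ along the flow is a homeomorphism, read off the time function, and then globalise. The execution differs in the details. The paper obtains surjectivity of $\pi\circ h_t$ by a hands-on argument with small discs, and degree one by observing that the induced map on $\pi_1(S)$ is constant in $t$ and equals the identity at $t=t^*$; you instead use properness for surjectivity and the $s$-ordering of the sheets to kill monodromy. The paper handles $\partial S\neq\emptyset$ by doubling $M$ along $\partial M$, reducing to the closed case; you absorb the boundary into the flow-box picture. Your clopen argument makes explicit what the paper compresses into ``the result follows from compactness''.

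There is one genuine gap. In Step~1 you conclude that over each component of $S_{t^*}$ there is exactly one graph because ``two distinct graphs over one component stay a definite distance apart'' while $\Sigma_t$ is Hausdorff-close to $S_{t^*}\times\{0\}$. But the minimal gap between two sheets depends on $t$ and may tend to $0$ as $t\to t^*$; Hausdorff closeness only forces every sheet near $s=0$, not that there is a single one. The fix is immediate once you have trivial monodromy: each component of $\Sigma_t$ is then a full graph over one component of $S_{t^*}$, so the total number of sheets equals the number of components of $\Sigma_t$, which equals the number of components of $S$ (since $\Sigma_t\cong S_t\cong S$ via $\iota_t$); combined with surjectivity (at least one sheet over each component) this forces exactly one. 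Alternatively, use the parametrisation directly as the paper does: $\pi\circ\iota_t\circ\iota_{t^*}^{-1}\colon S_{t^*}\to S_{t^*}$ is a continuous family of self-coverings equal to the identity at $t=t^*$, so its degree is locally constant in $t$ and hence equal to $1$.
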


Note that two partial cross-sections can be isotopic through embedded sub-manifolds, with $\partial S\subset \partial M$, without being isotopic through partial cross-sections. 
For instance a usual isotopy can cross a fixed point of the flow, which an isotopy along the flow cannot.
So we keep using the term ``isotopy along the flow'' to lift any ambiguity.

\begin{proof}
	Let~$S$ be a partial cross-section of $M$, and $h\colon[0,1]\times S\to M$ be an isotopy of~$S$.
	We first assume that $\partial S=\emptyset$ holds. 
	We prove that for some $s$ in $]0,1]$, the isotopy $h_t$ for $t$ in $[0,s]$ is an isotopy along the flow. The result follows from compactness.

	Similarly to the proof of Lemma~\ref{lem-loc-flat}, take $d>0$ and $f\colon S\times]-d,d[\to M$ a parametrization of a neighborhood of~$S$. Denote by $U=\im(f)$ and $\pi\colon U\to S$ the map that satisfies $\pi\circ f(x,t)=x$ for all $x$ in $ M$ and $|t|<d$. When $t$ is small enough, $S_t$ remains inside~$U$, so $\pi\circ h_t$ induces a map from~$S$ to itself. When $t$ is small enough, $\pi\circ h_t$ is additionally close to the identity map.

	We prove that $\pi\circ h_t\colon S\to S$ is surjective.
	Cover~$S$ with finitely many small open balls $B_1\cdots B_n$, and for all $i$, take a disc $D_i$ that contains the closure of $B_i$ in its interior. Given $x$ in~$S$, take an index $i_x$ so that $x$ belongs to $B_{i_x}$. When $t$ is small enough, $\pi\circ h_t(D_{i_x})$ still contains $x$ in its interior, since its boundary loop once around $x$. By compactness of~$S$, for every small enough $t$ and every $x$ in~$S$, $x$ lies in $\pi\circ h_t(D_{i_x})$. So $\pi\circ h_t$ is surjective.

	Since $h_t(S)$ is topologically transverse to the flow, $\pi\circ h_t$ is locally injective. Therefore, $\pi\circ h_t$ is a cover from~$S$ to itself. Note that the induced group morphism on $\pi_1(S)$ is constant in $t$, so it is the identity. Hence, $\pi\circ h_t$ is a cover of degree one, and Hence, a homeomorphism. 
	
	For any small $t$ and $x$ in~$S$, we write $h_t(x)=f(y,\theta(t,x))$ for a unique $\theta(t,x)$ in $ ]-d,d[$ and some $y$ in $ S$. It follows from above that $\theta$ is a homotopy, and that $S_{\theta_t}$ is an isotopy along the flow that parametrize the isotopy $S_t$.
	
	When $\partial S$ is not empty, we glue two copies $M\times\{0\}$ and $M\times\{1\}$ of $M$ along their boundary, using the map $\partial M\times\{0\}\to\partial M\times\{1\}$, that sends $(x,0)$ to $(x,1)$. The flow $\varphi$ lifts to a flow on the doubling of $M$. Then $S\times\{0,1\}$ is a partial cross-section on the new manifold, on which we can apply the previous case. The isotopy along the flow of the doubling of~$S$, in restriction to one copy of $M$, yields an isotopy along the flow of~$S$.
\end{proof}

Given a partial cross-section~$S$, we denote by $[S]$ its cohomology class in $H^1(M,\ZZ)$ (see Section~\ref{sec-Z-covering} for the definition of~$[S]$). 
Given~$\alpha$ in $H^1(M,\ZZ)$, we denote by $\PS_\varphi(\alpha)$ the set of isotopy classes along the flow of partial cross-sections~$S$ with $[S]=\alpha$. 
Fried classified the set of global cross-sections using its cohomology class.

\begin{theorem} [Fried {\cite[Thm. C and D]{Fried1982}}]\label{thm-Fried}
	Recall $M$ compact. The map $S\mapsto[S]$ yields a bijection between the set of global cross-sections~$S$ up to isotopy along the flow, and the set of cohomology class~$\alpha$ in $H^1(M,\ZZ)$ which satisfies $\alpha(D_\varphi)>0$.
\end{theorem}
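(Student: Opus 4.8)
Throughout write $\wh M_\alpha$ and $\pi_\alpha$ as in Section~\ref{sec-Z-covering}. The plan is to set up a dictionary between global cross‑sections cohomologous to a class $\alpha$ and $\alpha$‑equivariant continuous maps $f\colon\wh M_\alpha\to\RR$ that are strictly increasing along the lifted flow \emph{with a uniform gap}: there are $\epsilon,T_0>0$ with $f(\varphi_s x)\geq f(x)+\epsilon$ for all $x$ and all $s\geq T_0$ (so in particular $t\mapsto f(\varphi_t x)$ is a homeomorphism of $\RR$). Call such an $f$ a \emph{graded Lyapunov map for $\alpha$}. Given this dictionary, Fried's statement splits into: (i) an isotopy along the flow is a homotopy of the defining map, so $S\mapsto[S]$ is well defined on isotopy‑along‑the‑flow classes; (ii) every class coming from a cross‑section satisfies $\alpha(D_\varphi)>0$; (iii) every $\alpha$ with $\alpha(D_\varphi)>0$ comes from one; (iv) two cross‑sections with the same class are isotopic along the flow.

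\textbf{From a cross‑section to a graded Lyapunov map, and (ii).} Let $S$ be a global cross‑section with $[S]=\alpha$. By Lemma~\ref{lem-loc-flat} and compactness of $M$, the first‑return time $r\colon S\to(0,\infty)$ and first‑return homeomorphism $P\colon S\to S$ are continuous, $r\leq R<\infty$, and $(y,u)\mapsto\varphi_{u\,r(y)}(y)$ identifies $M$ with the mapping torus of $P$ in such a way that the tautological projection to $\bfrac{\RR}{\ZZ}$ is a representative of $[S]$ whose zero set is $S$. Its $\alpha$‑equivariant lift $\wh f\colon\wh M_\alpha\to\RR$ grows linearly on each return interval, hence is a graded Lyapunov map for $\alpha$ (gap $\geq1$ after time $R$). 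Now take any orbit segment from $x$ to $\varphi_T(x)$ with $\varphi_T(x)$ near $x$ and close it up with a path of bounded length; $\wh f$ changes by at least $T/R-1$ along the orbit part and by a bounded amount along the closing path, so the resulting cycle $c$ satisfies $\alpha([c])/T\geq1/R-o(1)$. Passing to the limit over all such nearly‑closed segments shows that every homology direction, hence every nonzero element of the cone $D_\varphi$, has $\alpha$‑value $\geq1/R>0$; that is $\alpha(D_\varphi)>0$.

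\textbf{From $\alpha(D_\varphi)>0$ to a cross‑section (iii).} By definition of $D_\varphi$ and compactness of $M$, $\alpha(D_\varphi)>0$ yields $\epsilon,T_0>0$ such that $\wh g(\varphi_t x)-\wh g(x)\geq\epsilon$ for all $x$ and all $t\geq T_0$, where $\wh g\colon\wh M_\alpha\to\RR$ is an arbitrary $\alpha$‑equivariant lift of a map cohomologous to $\alpha$; in particular $\wh g(\varphi_t x)\to+\infty$ uniformly. Put
$$f(x)\defi\inf_{t\geq T_0}\wh g(\varphi_t x).$$
Equivariance of $\wh g$ and commutation of the deck action with the flow make $f$ $\alpha$‑equivariant; the uniform escape to $+\infty$ confines the infimum to a compact range of $t$ that is uniform over a fundamental domain, so $f$ is continuous; and for $u\geq T_0$ the displacement estimate gives $f(\varphi_u x)\geq f(x)+\epsilon$. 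Thus $f$ is a graded Lyapunov map for $\alpha$. Finally, a level set $S\defi f^{-1}(c)$ is a global cross‑section cohomologous to $\alpha$: $S$ contains no fixed point of the flow, so by the local structure of flows near non‑fixed points one finds around each $x_0\in S$ a topological flow box on which $s\mapsto f(h(z,s))$ is strictly increasing with gap, hence a unique continuous $\sigma(z)$ with $f(h(z,\sigma(z)))=c$; this exhibits $S$ locally as a flat codimension‑one disc transverse to and cooriented by the flow, $S$ is compact as a closed level set, it is global because every orbit meets every level of $f$ by the gap, and $[S]=\alpha$. \textbf{I expect this to be the main obstacle}: extracting a graded Lyapunov map from the purely asymptotic hypothesis $\alpha(D_\varphi)>0$, and then verifying that an a priori merely continuous level set is an honest transverse submanifold.

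\textbf{Injectivity (iv).} Let $S_0,S_1$ be global cross‑sections with $[S_0]=[S_1]=\alpha$ and let $\wh f_0,\wh f_1\colon\wh M_\alpha\to\RR$ be the graded Lyapunov maps produced above; their difference is deck‑invariant, so the affine family $\wh f_s\defi(1-s)\wh f_1+s\wh f_0$ consists of $\alpha$‑equivariant continuous maps, and a convex combination of functions strictly increasing along the flow with a common gap is again one, so each $\wh f_s$ is a graded Lyapunov map for $\alpha$ and $S_s\defi(\wh f_s)^{-1}(\ZZ)/\!\!\sim$ is a global cross‑section cohomologous to $\alpha$ (the level‑set argument of the previous paragraph). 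For $x\in S_1$ and $s\in[0,1]$, since $t\mapsto\wh f_s(\varphi_t\hat x)$ is an increasing bijection of $\RR$, there is a unique $\theta_s(x)$ with $\wh f_s(\varphi_{\theta_s(x)}\hat x)=\wh f_1(\hat x)\in\ZZ$ (independent of the chosen lift $\hat x$), and $\theta_1\equiv0$; monotonicity makes $\theta$ jointly continuous and $x\mapsto\varphi_{\theta_s(x)}(x)$ a homeomorphism $S_1\to S_s$. Hence $S_{\theta_s}$ is an embedded submanifold for every $s$, so $\{S_{\theta_s}\}_{s\in[0,1]}$ is an isotopy along the flow (Section~\ref{sec-partial-section}) joining $S_1$ to $S_0$, after the harmless reparametrisation $s\mapsto1-s$ arranging $\theta_0=0$. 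Combining (i)–(iv), $S\mapsto[S]$ is the asserted bijection onto $\{\alpha\in H^1(M,\ZZ):\alpha(D_\varphi)>0\}$.
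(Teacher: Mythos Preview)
Your route differs from the paper's, and is in several ways more direct: the paper routes (iii) through the machinery of Part~I, invoking Proposition~\ref{prop-part1-empty-rec} (so $\alpha(D_\varphi)>0$ gives $-\alpha$ quasi-Lyapunov with $\Rec_\alpha=\emptyset$) and then Theorem~\ref{thm-spectral-decomp} to obtain an $\alpha$-equivariant map $f$ on $\wh M_\alpha$ with $-f$ Lyapunov; since $\Rec_\alpha=\emptyset$ this $f$ is strictly increasing along every orbit, and its level sets give global cross-sections via Lemma~\ref{lem-pre-image-ps}. For (iv) the paper simply observes that every orbit in $\wh M_\alpha$ meets each lifted cross-section exactly once and follows the flow between them. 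You instead produce your own graded Lyapunov map by an elementary infimum and prove injectivity via convex interpolation of the mapping-torus functions --- a pleasant, self-contained alternative that avoids the Part~I input.

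There is, however, a real gap in your step~(iii). The function $f(x)=\inf_{t\geq T_0}\wh g(\varphi_t x)$ is $\alpha$-equivariant, continuous, and does satisfy $f(\varphi_u x)\geq f(x)+\epsilon$ for $u\geq T_0$; but it is only \emph{non-decreasing} along the flow, not strictly increasing. If the infimum over $[T_0,\infty)$ is attained at some $s^*>T_0$, then $f(\varphi_u x)=f(x)$ for all $u\in[0,s^*-T_0]$. So in your flow box the map $s\mapsto f(h(z,s))$ may be constant on intervals of length up to $T_0$, the ``unique $\sigma(z)$'' need not exist, and $f^{-1}(c)$ is in general a band of orbit arcs rather than a codimension-one submanifold. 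The fix is easy: convolve along the flow, setting $F(x)=\int_\RR h(s)\,f(\varphi_s x)\,ds$ for any positive kernel $h$ with full support (e.g.\ a Gaussian). Then $F$ remains $\alpha$-equivariant and keeps the gap, and it is strictly increasing along every orbit because $f(\varphi_{s+u}x)-f(\varphi_s x)\geq 0$ for all $s$ with strict inequality for some $s$ (by the gap); your level-set argument then goes through for $F$. This convolution device is exactly what the paper uses elsewhere (see the proof of Theorem~\ref{thm-ps-classification-1}). Your step~(iv) is unaffected: the mapping-torus functions $\wh f_0,\wh f_1$ are genuinely strictly increasing along orbits, and so are their convex combinations.
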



A consequence is that when $\alpha(D_\varphi)>0$ holds, any partial cross-section cohomologous to~$\alpha$ is actually a global cross-section.

Fried stated his theorems with additional assumptions, knowing that his proofs work fine in a more general setting. For completeness, we will sketch a proof at the end of Section~\ref{sec-ps-existence}.

\section{Preliminary: the dynamic part} \label{sec-preliminary-dyn}

This section is a recall of the notation and of some results from the first paper in the series \cite{martyPS1}. 

\subsection{Conley's theory}
 
In this section, $N$ is either $M$ or $\wh M_\alpha$ for some non-zero class~$\alpha$ in $H^1(M,\ZZ)$.
Fix some $T>0$.
An $(\epsilon,T)$-\emph{pseudo-orbit} is a curve $\gamma\colon[0,l[\to N$, for some $l>0$, that satisfies the following:
\begin{itemize}
	\item $\gamma$ is right-continuous,
	\item $\gamma$ is continuous outside a finite set $\Delta$ that is disjoint from $[0,T[$,
	\item the points in $\Delta$ are distant by at least $T$,
	\item for any $t,s$ for which $\gamma$ is continuous on $[t,s]$, we have $\gamma(s)=\varphi_{s-t}\circ\gamma(t)$,
	\item for any $t$ in $\Delta$, the distance between $\gamma(t)$ and $\lim_{s\to t^-}\gamma(s)$ is smaller than $\epsilon$.
\end{itemize}

A \emph{periodic $\epsilon$-pseudo-orbit} is similarly a function $\gamma\colon\bfrac{\RR}{l\ZZ}\to M$, with $l>0$, that satisfies the same four conditions.
Note that any orbit arc, of positive length, is an $\epsilon$-pseudo-orbit. Similarly, any periodic orbit is a periodic $\epsilon$-pseudo-orbit.
From now on, we write $\gamma(u^-)=\lim_{t\to u^-}\gamma(t)$.
At a discontinuity time $u$ in $\Delta$, we say that $\gamma$ jumps from $\gamma(u^-)$ to $\gamma(u)$.
The scalar $l$ is called the \emph{length} of the pseudo-orbit, later denoted by $\len(\gamma)$. Additionally, we say that $\gamma$ goes from the point $\gamma(0)$ to the point $\gamma(\len(\gamma)^-)$.

\begin{definition}
	Let $x,y$ be two points in $N$. We write $x\recto y$ if for all $\epsilon>0$, there exists an $\epsilon$-pseudo-orbit from $x$ to $y$. 
	It does not depend on the value of $T>0$ (once fixed). 
\end{definition}

A point $x$ in $N$ is said \emph{recurrent} when we have $x\recto x$. The set of recurrent point is called the \emph{recurrent set}. We denote by $\Rec$ the recurrent set on the manifold~$M$. Given~$\alpha$ in $ H^1(M,\RR)$, we denote by $\wh\Rec_\alpha$ the recurrent set on~$\wh M_\alpha$. The relation $x\rectot y$, given by $x\recto y$ and $y\recto x$, is an equivalence relation. Its equivalence classes are called \emph{recurrence chain}. Similarly, for two recurrence chains $R,R'$, we denote by $R\recto R'$ if we have $x\recto y$ for some/all $x$ in $ R$ and $y$ in $ R'$. 

We will denote by $\wh\Rec_\alpha$ the recurrent set of the lifted flow on $\wh M_\alpha$. It is well-known that the recurrence chains satisfy:

\begin{lemma}[See \cite{martyPS1} Corollary 1.6 for instance]\label{lem-tot-disc}
	The set of recurrence chains in $\wh M_\alpha$, with the quotient topology, is totally disconnected.
\end{lemma}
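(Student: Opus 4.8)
The plan is to exhibit a family $(h_i)_{i\in I}$ of continuous maps $h_i\colon\wh\Rec_\alpha\to Z_i$, with each $Z_i$ totally disconnected, such that every $h_i$ is constant on each recurrence chain (so factors through the quotient) and the family separates distinct recurrence chains. Then $x\mapsto(h_i(x))_i$ descends to a continuous injection of the quotient into $\prod_iZ_i$, which is totally disconnected, since the connected component of a point in an arbitrary product is the product of the components. A continuous injection into a totally disconnected space has totally disconnected domain — the image of a connected subset is a connected subset of $\prod_iZ_i$, hence a point, hence by injectivity the subset itself is a point — so the lemma follows. We never need that the descended map is a homeomorphism onto its image, which is fortunate since $\wh\Rec_\alpha$ need not be compact.

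The first functions come from Conley's theory on the compact base $M$: there is a countable family of attractor--repeller Lyapunov functions $u_n\colon M\to\{0,1\}$, each constant on the recurrence chains of $\varphi$ on $M$ and together separating those chains on $\Rec$. Because the metric on $\wh M_\alpha$ is a small-scale isometric lift of that on $M$, an $\epsilon$-pseudo-orbit of $\wh M_\alpha$ projects to an $\epsilon$-pseudo-orbit of $M$ (and conversely every pseudo-orbit downstairs lifts), so $x\rectot y$ in $\wh M_\alpha$ forces $\pi_\alpha(x)\rectot\pi_\alpha(y)$ in $M$; hence each $u_n\circ\pi_\alpha$ is continuous, $\{0,1\}$-valued, and constant on recurrence chains of $\wh M_\alpha$. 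This batch separates any two recurrence chains of $\wh M_\alpha$ whose projections fall in distinct recurrence chains of $M$. A further batch is obtained by running the same Conley construction on the compact finite quotients $M_N\defi\wh M_\alpha/N\ZZ$ and pulling the resulting $\{0,1\}$-valued Lyapunov functions back along $\wh M_\alpha\to M_N$, for all $N\ge 1$; these separate any deck translates $R$ and $k\cdot R$ with $k\neq 0$, by choosing $N$ not dividing $k$.

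What remains, and what I expect to be the real difficulty, is to separate two recurrence chains $R\neq R'$ that project into a common recurrence chain $\wb R$ of $M$ with $R'$ not a deck translate of $R$ — a genuine phenomenon as soon as $\wb R$ is disconnected. Equivalently, one needs the fibres of the continuous map from the quotient of $\wh\Rec_\alpha$ onto the (totally disconnected) quotient of $\Rec$ to be totally disconnected, and then the principle ``a continuous map to a totally disconnected target with totally disconnected fibres has totally disconnected domain'' closes the argument. Such a fibre is the space of recurrence chains of the lifted flow restricted to the $\ZZ$-cover $\pi_\alpha^{-1}(\wb R)$ of the compact chain-transitive set $\wb R$, with its deck $\ZZ$-action; I would study it through the $\ZZ$-equivariant (and proper) height function $\wh f$ together with a winding-number accounting for pseudo-orbits, exploiting the $T$-separation of jumps exactly as in the analysis of $D_\varphi$ in \cite{martyPS1}. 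The subtlety is that the chains over $\wb R$ need not form a single $\ZZ$-orbit of translates, so one must manufacture finitely many genuinely continuous chain-constant invariants on $\wh\Rec_\alpha$ with totally disconnected image that tell such ``non-translate'' chains apart; morally this amounts to constructing a complete Lyapunov function for the lifted flow, and is precisely the content imported from the first paper of the series.
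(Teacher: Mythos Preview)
The paper does not prove this lemma; it is imported by citation from \cite{martyPS1}, Corollary~1.6, so there is no in-paper argument to compare against.

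Your overall architecture is sound: a continuous injection of $\wh\Rec_\alpha^*$ into a product of totally disconnected spaces forces total disconnectedness, and the fibrewise principle you quote (continuous map with totally disconnected target and totally disconnected fibres implies totally disconnected domain) is correct. The first batch, pulling back attractor--repeller separations from the compact base $M$, is fine. But the proposal is, by your own admission, incomplete: for the residual case of two chains over a common base chain you write that the needed ingredient ``is precisely the content imported from the first paper of the series''. That is a deferral, not a proof, and it is essentially circular --- the cited result already yields the lemma directly.

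There is also a genuine gap in your second batch. You claim that Conley's $\{0,1\}$-valued functions on the finite quotients $M_N=\wh M_\alpha/N\ZZ$ separate $R$ from $k\cdot R$ once $N\nmid k$. This requires that the images of $R$ and $k\cdot R$ in $M_N$ lie in \emph{distinct} recurrence chains of the flow on $M_N$, and nothing you have said guarantees it: passing to a covering quotient can only enlarge chain recurrence, since pseudo-orbits on $M_N$ may wind in ways that admit no lift to $\wh M_\alpha$, and so the two images may well be merged into a single chain downstairs. This is the same obstruction as in your ``residual'' case; the finite-cover trick does not evade it. In short, the reduction to compact Conley theory handles only the coarsest layer of the problem, and the substantive separation of chains within a single $\pi_\alpha$-fibre remains to be done.
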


A continuous map $f\colon N\to\RR$ is \emph{Lyapunov} if the three following properties are satisfied: $f$ is constant on each recurrence chain, two distinct recurrence chains have distinct values, and $f$ is decreasing along the flow outside the recurrent set. 

\begin{theorem}[\cite{Conley1978}]\label{thm-Conley-L}
	Let $M$ be a compact manifold. Any continuous flow on $M$ admits a Lyapunov function.
\end{theorem}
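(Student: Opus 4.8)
The plan is to carry out Conley's argument through attractor--repeller pairs, phrased in the pseudo-orbit language fixed above. Recall that an \emph{attractor block} is a compact set $N\subset M$ with $\varphi_t(N)\subset\Int(N)$ for every $t>0$; it determines the \emph{attractor} $A=\bigcap_{t\ge0}\varphi_t(N)$ and the \emph{dual repeller} $A^*=\{x\in M:\varphi_t(x)\notin\Int(N)\text{ for all }t\ge0\}$, which is an attractor for $-\varphi$, and for $x\notin A^*$ the forward orbit of $x$ eventually enters $N$ and then converges to $A$. The two inputs I would take from Conley's theory of attractors on a compact metric space (and hence cite) are: since $M$ is second countable there are only countably many attractors $A_1,A_2,\dots$; and these satisfy $\Rec=\bigcap_{n\ge1}(A_n\cup A_n^*)$, where each $A_n$ and each $A_n^*$ is a union of recurrence chains, and any two distinct recurrence chains are separated by some $A_n$ (one chain inside $A_n$, the other inside $A_n^*$).

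For each $n$ I would then build a function $g_n\colon M\to[0,1]$ adapted to the pair $(A_n,A_n^*)$. Pick a continuous $\ell_n\colon M\to[0,1]$ with $\ell_n^{-1}(0)=A_n$ and $\ell_n\equiv1$ off $\Int(N_n)$, set $h_n(x)=\sup_{t\ge0}\ell_n(\varphi_tx)$ — a standard computation using $\varphi_t(x)\to A_n$ for $x\notin A_n^*$ shows the supremum is attained over a locally bounded range of $t$, so $h_n$ is continuous — and then smooth in time by $g_n(x)=\int_0^\infty e^{-s}h_n(\varphi_sx)\,ds$. Since $h_n(\varphi_rx)=\sup_{t\ge r}\ell_n(\varphi_tx)\le h_n(x)$, the function $h_n$ is non-increasing along the flow, and a direct computation gives $\frac{d}{ds}g_n(\varphi_sx)=g_n(\varphi_sx)-h_n(\varphi_sx)\le0$; one also has $g_n^{-1}(0)=A_n$ and $g_n\equiv1$ on $A_n^*$. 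For $x\notin A_n\cup A_n^*$ the map $t\mapsto h_n(\varphi_tx)$ is strictly positive yet tends to $0$, hence never constant on a half-line, so the displayed derivative is strictly negative almost everywhere and $g_n$ is strictly decreasing along the orbit of $x$.

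Finally I would set $f=\sum_{n\ge1}2\cdot3^{-n}g_n$. Uniform convergence gives continuity of $f$; each $g_n$ is non-increasing along the flow, so $f$ is too; and if $x\notin\Rec$ then by $\Rec=\bigcap_n(A_n\cup A_n^*)$ some $g_n$ is strictly decreasing along the orbit of $x$ while the others are non-increasing, so $f$ is strictly decreasing along that orbit. On $\Rec$ every $g_n$ takes only the values $0$ and $1$ and is constant on each recurrence chain (as $A_n$ and $A_n^*$ are unions of chains), so $f$ is constant on each chain, and its value there is the real number whose ternary digits are $(2g_n(x))_n\in\{0,2\}^{\NN}$; since distinct chains are separated by some $A_n$, their digit strings differ and $f$ takes distinct values on them. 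Hence $f$ is a Lyapunov function.

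The main obstacle is the input cited in the first paragraph — producing enough attractor blocks and establishing $\Rec=\bigcap_n(A_n\cup A_n^*)$ together with the fact that attractors are unions of recurrence chains. This is precisely where the pseudo-orbit machinery (of \cite{martyPS1}, or the classical references) does the work; granting it, assembling $f$ as above is routine. The only feature specific to our setting, that $\varphi$ preserves a possibly non-empty $\partial M$, causes no difficulty: attractor blocks are taken to be closed subsets of $M$ and every estimate above goes through verbatim.
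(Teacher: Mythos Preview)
The paper does not prove this statement at all: Theorem~\ref{thm-Conley-L} is simply attributed to Conley \cite{Conley1978} and used as a black box. Your proposal is a faithful outline of Conley's original attractor--repeller construction, and as such there is nothing to compare against in the paper itself.

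Your argument is essentially correct and classical. Two places deserve a little more care if you actually write it up. First, the continuity of $h_n$ is not quite as automatic as you suggest: the supremum of continuous functions is only upper semicontinuous a priori, and lower semicontinuity needs a short argument (for $x\notin A_n^*$ the supremum is attained at a time $t(x)$ which is locally bounded, while near $A_n^*$ one uses that $\ell_n$ is already close to $1$). Second, your claim that $g_n$ is \emph{strictly} decreasing on orbits outside $A_n\cup A_n^*$ deserves one more line: from $\tfrac{d}{ds}g_n(\varphi_s x)=g_n(\varphi_s x)-h_n(\varphi_s x)\le0$, constancy of $g_n$ along an orbit would force $g_n=h_n$ there, hence $h_n$ constant along the orbit; but $h_n(\varphi_t x)\to0$ while $h_n(x)>0$, a contradiction. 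With these two clarifications the sketch is complete.
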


We consider a weaker version of the Lyapunov function, useful to control partial cross-sections.
A continuous map $f\colon N\to\RR$ is called \emph{pre-Lyapunov} if for any $x,y$ in $ N$, $x\recto y$ implies $f(x)\geq f(y)$. It implies that $f$ is constant on each recurrence chain.

\subsection{Asymptotic pseudo-directions}\label{sec-ass-ps-dir}

Assume $\epsilon>0$ smaller than half the injectivity radius of $M$.
For short, we will simply say that $\epsilon$ is \emph{small}.
Take an $(\epsilon, T)$-pseudo-orbit $\gamma$. We denote by $[\gamma]$ the homology class in $H^1(M,\ZZ)$ of the oriented curve obtained from $\gamma$ by connecting any jumps in $\gamma$ by curves that each remains in a ball of radius $\epsilon$. Since $\epsilon$ is small, the cohomology class of $\gamma$ does not depend on the choice of curves used to close $\gamma$.
We define the set
$$D_{\varphi,\epsilon,T}=\clos\left(\left\{\tfrac{1}{\len(\gamma)}[\gamma]\in H_1(M,\RR), \gamma\text{ a periodic $\epsilon$-pseudo-orbit}\right\}\right)$$ 
to be the closure of the set of elements $\tfrac{1}{\len(\gamma)}[\gamma]$. Note that $\tfrac{1}{\len(\gamma)}[\gamma]$ remains in a bounded region of $H_1(M,\RR)$, so $D_{\varphi,\epsilon,T}$ is compact.

\begin{definition}
	We define $D_\varphi=\bigcap_{\epsilon>0}D_{\varphi,\epsilon,T}$, which we call the \emph{set of asymptotic pseudo-directions} of $\varphi$.
\end{definition}

Notice that $D_\varphi$ does not depend on $T$. The definition differ from the set of asymptotic directions given by Fried \cite{Fried1982}. These two sets span the same convex set \cite[Appendix A]{martyPS1}. Our main results depend on the convex hull of $D_\varphi$ only, so we may switch $D_\varphi$ and Fried's set in them.

\subsection{Quasi-Lyapunov class}\label{sec-subL}

Let~$\alpha$ be in $H^1(M,\RR)\setminus\{0\}$ and $f\colon\wh M_\alpha\to\RR$ be an~$\alpha$-equivariant and continuous map. 
Given $\epsilon>0$ small and an $\epsilon$-pseudo-orbit $\gamma$ in $M$, we denote by $\int_\gamma df$ the following quantity. Lift $\gamma$ to an $\epsilon$-pseudo-orbit $\wh\gamma$ in $\wh M_\alpha$, that goes from a point $x$ to a point $y$, and set $\int_\gamma df=f(y)-f(x)$. Note that $df$ is an abuse of notation, we do not assume~$f$ differentiable. 

We say that $f$ is \emph{$C$-quasi-Lyapunov} if there exists $\epsilon>0$ so that for any $\epsilon$-pseudo-orbit $\gamma$ in $M$, we have $\int_\gamma df\leq C$.
We say that $f$ is \emph{quasi-Lyapunov} if it is $C$-quasi-Lyapunov for some~$C$. 
A cohomology class~$\alpha$ in $ H^1(M,\RR)$ is said \emph{quasi-Lyapunov} if any~$\alpha$-equivariant and continuous map $f\colon\wh M\to\RR$ is quasi-Lyapunov. Note that $\alpha=0$ is always quasi-Lyapunov.

\begin{lemma}[\cite{martyPS1} Lemma 3.6]\label{lem-part1-qL}
	A class $\alpha$ in $H^1(M,\ZZ)$ is quasi-Lyapunov, if and only if we have $\alpha(D_{\varphi,\epsilon,T}))\leq0$ for some/any $\epsilon>0$ small enough. In particular, it implies $\alpha(D_\varphi)\leq0$.
\end{lemma}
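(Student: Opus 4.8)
The plan is to prove Lemma~\ref{lem-part1-qL} by unwinding both definitions and relating them through the pairing of $\alpha$ with cycles built from periodic pseudo-orbits. Fix a continuous $\alpha$-equivariant map $f\colon\wh M_\alpha\to\RR$; since any two such maps differ by an $\alpha$-equivariant continuous function descending to a map $M\to\RR$ (hence bounded), the property of being quasi-Lyapunov is independent of the chosen representative $f$, so it suffices to argue for one convenient $f$. The key observation is that for a \emph{periodic} $\epsilon$-pseudo-orbit $\gamma$ in $M$, its lift $\wh\gamma$ to $\wh M_\alpha$ is generally \emph{not} periodic: it goes from a point $x$ to $k\cdot x$ where $k\in\ZZ$ is determined by $\alpha([\gamma])=k\,n_\alpha$ (with $\alpha=n_\alpha\beta$, $\beta$ primitive), and by $\alpha$-equivariance $\int_\gamma df=f(k\cdot x)-f(x)=k\,n_\alpha=\alpha([\gamma])$. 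Thus for periodic pseudo-orbits, $\int_\gamma df$ equals $\alpha([\gamma])$ \emph{exactly}, with no dependence on $f$; this is the bridge between the two sides.

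First I would prove the implication ``quasi-Lyapunov $\Rightarrow$ $\alpha(D_{\varphi,\epsilon,T})\le 0$ for small $\epsilon$''. Suppose $f$ is $C$-quasi-Lyapunov, witnessed by some $\epsilon_0>0$: every $\epsilon_0$-pseudo-orbit $\delta$ in $M$ has $\int_\delta df\le C$. Given a periodic $\epsilon$-pseudo-orbit $\gamma$ with $\epsilon\le\epsilon_0$, concatenate $\gamma$ with itself $N$ times to get a periodic $\epsilon_0$-pseudo-orbit $\gamma^N$ of length $N\len(\gamma)$ with $\alpha([\gamma^N])=N\alpha([\gamma])$; applying the bound gives $N\alpha([\gamma])=\int_{\gamma^N}df\le C$ for all $N\ge 1$, forcing $\alpha([\gamma])\le 0$, hence $\tfrac{1}{\len(\gamma)}\alpha([\gamma])\le 0$. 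Taking the closure, $\alpha(D_{\varphi,\epsilon,T})\le 0$ for every $\epsilon\le\epsilon_0$. Since the sets $D_{\varphi,\epsilon,T}$ shrink as $\epsilon\to 0$, this gives the conclusion for all small $\epsilon$, and in particular $\alpha(D_\varphi)\le 0$ by intersecting.

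Next I would prove the converse: if $\alpha(D_{\varphi,\epsilon_1,T})\le 0$ for some small $\epsilon_1>0$, then some $\alpha$-equivariant $f$ is quasi-Lyapunov. The natural route is to bound $\int_\gamma df$ over \emph{all} (not just periodic, not just closed) $\epsilon$-pseudo-orbits $\gamma$ by the ``periodic'' data. The standard trick is: given a long $\epsilon$-pseudo-orbit $\gamma$ from $x$ to $y$, if $\len(\gamma)$ is large enough relative to the compactness of $M$, one can close it up into a periodic $\epsilon'$-pseudo-orbit by appending a short bridging pseudo-orbit (a controlled number of jumps and short orbit arcs, adding bounded length and bounded change in $\int df$); the periodic assumption $\alpha(D_{\varphi,\epsilon',T})\le 0$ then says the average of $\alpha([\cdot])=\int_{(\cdot)}df$ over the closed-up orbit is $\le 0$, which caps $\int_\gamma df$ by a constant depending only on the bridging cost and on $\sup|f|$ on a fundamental domain. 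Pseudo-orbits that are too short to close up have $\int_\gamma df$ bounded a priori (their lift stays within bounded $\wh f$-distance), again by compactness. Combining, $f$ is $C$-quasi-Lyapunov for a suitable $C$; this matches the mechanism of \cite{martyPS1}.

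The main obstacle is the converse direction, specifically making the ``close up a long pseudo-orbit into a periodic one at controlled cost'' step precise while keeping the jump size under control (one must pass from $\epsilon$ to a slightly larger $\epsilon'$, or equivalently refine the geometry so that the bridging does not exceed the allowed jump size) and ensuring the number of bridging jumps and their total $df$-contribution is uniformly bounded independent of $\gamma$ — this is where the compactness of $M$ and the uniform structure of the $\ZZ$-cover $\wh M_\alpha$ are essential. Since the lemma is quoted from \cite{martyPS1}, in the paper itself this is simply cited; the sketch above is the argument one would reconstruct. The only mild subtlety on the ``easy'' side is checking that the $\epsilon$-independence claim (``for some/any small $\epsilon$'') is legitimate, which follows from monotonicity of $D_{\varphi,\epsilon,T}$ in $\epsilon$ together with the fact that the quasi-Lyapunov property, once it holds for one witnessing $\epsilon_0$, automatically holds for all smaller $\epsilon$.
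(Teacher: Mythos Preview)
The paper does not prove this lemma---it is imported verbatim from \cite{martyPS1}---so there is no in-paper argument to compare against; you correctly note this yourself. Your forward direction (concatenate a periodic $\epsilon$-pseudo-orbit with itself and use $\int_{\gamma^N}df=N\alpha([\gamma])\le C$ to force $\alpha([\gamma])\le 0$) is correct, modulo the harmless adjustment of choosing the basepoint of $\gamma$ so that the unrolled copy has no jump in $[0,T[$.

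Your converse sketch has a genuine gap. The bridging step---``append a short bridging pseudo-orbit'' from the endpoint $y$ back to the start $x$ with a controlled number of jumps---is not available for a general continuous flow. A pseudo-orbit must follow $\varphi$ for time $\ge T$ between jumps, and there is no reason every $y$ can reach every $x$ by a bounded-length $\epsilon'$-pseudo-orbit: if $\varphi$ has an attracting set $A$, any small-jump pseudo-orbit that enters a neighbourhood of $A$ is trapped there forever, so no bridge to a point outside $A$ exists at any cost. The way to repair the argument is to abandon bridging and instead extract periodic pieces \emph{from $\gamma$ itself}: sample $\gamma$ at the times $0,T,2T,\dots$, cover $M$ by a finite $\epsilon$-net, and use pigeonhole to find $i<j$ with $\gamma(iT)$ and $\gamma(jT)$ $\epsilon$-close. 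Then $\gamma|_{[iT,jT]}$ closes (by a single $\epsilon$-jump) to a periodic $\epsilon'$-pseudo-orbit with $\alpha$-value $\le 0$, hence $\int_{\gamma|_{[iT,jT]}}df\le\omega_f(\epsilon)$; excise this sub-segment and iterate on the shortened pseudo-orbit. After all excisions the leftover visits each cell of the net at most once, so its length and its $\int df$ are bounded by constants depending only on the net. Summing yields the uniform upper bound $C$, which is the quasi-Lyapunov estimate. Replacing your bridging paragraph with this recurrence--excision mechanism fixes the sketch.
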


One of the main results from the part I is the existence of equivariant Lyapunov functions.

\begin{theorem}[\cite{martyPS1} Theorem 4.1]\label{thm-spectral-decomp}
	Assume $M$ compact and let $\alpha$ in $H^1(M,\ZZ)$ be non-zero. Then $\alpha$ is quasi-Lyapunov if and only if there exists an~$\alpha$-equivariant and Lyapunov map $f\colon\wh M_\alpha\to\RR$.
\end{theorem}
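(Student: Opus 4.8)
The plan is to prove the two implications separately. The substantial one is the construction of an $\alpha$-equivariant Lyapunov map out of the quasi-Lyapunov hypothesis; the converse is softer but still requires some care near the recurrent set.

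\textbf{A reduction.} I would first record that quasi-Lyapunovness can be tested on a \emph{single} $\alpha$-equivariant map: if $g_1,g_2\colon\wh M_\alpha\to\RR$ are $\alpha$-equivariant and continuous, their difference is $\ZZ$-invariant, hence descends to a continuous, thus bounded, function on the compact $M$, so $\int_\gamma dg_1$ and $\int_\gamma dg_2$ differ by at most $2\|g_1-g_2\|_\infty$ for every pseudo-orbit $\gamma$ in $M$. Hence for the ``if'' direction it suffices to check quasi-Lyapunovness of the given $f$, and for the ``only if'' direction it suffices to exhibit one $\alpha$-equivariant quasi-Lyapunov map. Everything can moreover be rephrased through $\alpha(D_{\varphi,\epsilon,T})$ by Lemma~\ref{lem-part1-qL}.

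\textbf{Lyapunov $\Rightarrow$ quasi-Lyapunov.} Suppose $f$ is $\alpha$-equivariant and Lyapunov. The key point is that a Lyapunov map is \emph{non-increasing along every orbit} (it is constant along orbits inside $\wh\Rec_\alpha$, since recurrence chains are flow-invariant, and strictly decreasing outside). I would argue by contradiction via Lemma~\ref{lem-part1-qL}: if $\alpha$ were not quasi-Lyapunov, then for every small $\epsilon$ there is a periodic $\epsilon$-pseudo-orbit $\gamma_\epsilon$ in $M$ with $\alpha([\gamma_\epsilon])\ge 1$; lifting to $\wh M_\alpha$ this forces $\int_{\gamma_\epsilon}df=\alpha([\gamma_\epsilon])\ge 1$, i.e. $f$ makes a net \emph{increase} of at least $1$ along $\gamma_\epsilon$. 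Since orbit arcs contribute $\le 0$, this increase comes entirely from the jumps, so the number of jumps, and hence $\len(\gamma_\epsilon)$, blows up as $\epsilon\to0$. To reach a contradiction I would use that for any level $v\notin f(\wh\Rec_\alpha)$ the band $f^{-1}([v-\delta,v+\delta])$ is, for $\delta$ small, a compact region (by $\alpha$-equivariance) meeting no recurrence chain, so by a standard Conley compactness argument the forward flow crosses it downward in uniformly bounded time, whereas a pseudo-orbit can only move upward across it by accumulating jumps at rate at most $\omega_f(\epsilon)/T$ per unit time; balancing these two rates over a suitable countable family of separating levels $v$ (which exists because the recurrence chains form a totally disconnected space, Lemma~\ref{lem-tot-disc}, so their $f$-values are separated by ``gaps'') shows $\int_{\gamma_\epsilon}df$ stays bounded for $\epsilon$ small.

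\textbf{Quasi-Lyapunov $\Rightarrow$ existence.} Assume $\alpha$ quasi-Lyapunov and fix a continuous $\alpha$-equivariant $f_0\colon\wh M_\alpha\to\RR$; by hypothesis there are $\epsilon_0>0$, $C_0$ with $\int_\gamma df_0\le C_0$ for every $\epsilon_0$-pseudo-orbit in $M$. I would build a potential
\[
 f_1(x)=\sup\{\,f_0(z)\ :\ x\recto z\ \text{ or }\ z=x\,\},\qquad x\in\wh M_\alpha .
\]
Projecting an $\epsilon_0$-pseudo-orbit witnessing $x\recto z$ down to $M$ and applying the bound gives $f_0\le f_1\le f_0+C_0$, so $f_1$ is finite; applying a deck transformation to the competitor $z$ shows $f_1$ is $\alpha$-equivariant; and transitivity of $\recto$ together with the nesting of the defining sets shows $f_1$ is pre-Lyapunov, hence constant on each recurrence chain. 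It then remains to upgrade $f_1$ to a genuine Lyapunov map, i.e. to make it (i) continuous, (ii) strictly decreasing off $\wh\Rec_\alpha$, and (iii) injective on the set of recurrence chains. For (i)--(ii) I would run Conley's construction equivariantly: smooth $f_1$ by flowing/averaging and add a term such as $\psi(x)=\int_0^\infty e^{-s}\chi(\pi_\alpha\varphi_s x)\,ds$ with $\chi\colon M\to[0,1]$ continuous and vanishing exactly on $\Rec$, noting that $\psi$ is $\ZZ$-invariant so equivariance is kept and that $\chi$ must be chosen compatibly with $f_1$ so that the pre-Lyapunov property survives. For (iii) I would invoke Lemma~\ref{lem-tot-disc}: the recurrence chains of $\wh M_\alpha$, modulo the $\ZZ$-action, form a totally disconnected compact space, hence admit a countable separating family of continuous functions; adding these with rapidly decreasing weights separates distinct chains while preserving the rest, and convergence of this sum (and of $\psi$) is exactly where the quasi-Lyapunov bound is used to control the behaviour at infinity of $\wh M_\alpha$.

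\textbf{Main obstacle.} In both directions the delicate point is the analysis near the recurrent set, where the flow has no uniform downward speed: one must simultaneously maintain $\alpha$-equivariance, finiteness (control at infinity in the non-compact cover $\wh M_\alpha$), continuity, strict monotonicity and injectivity on chains, with the quasi-Lyapunov bound and the total disconnectedness of the recurrence chains (Lemma~\ref{lem-tot-disc}) as the only available leverage.
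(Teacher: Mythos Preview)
This theorem is not proved in the present paper: it is stated in the preliminary Section~\ref{sec-preliminary-dyn} as a quotation of Theorem~4.1 from the companion paper~\cite{martyPS1}, and no argument for it is given here. So there is no ``paper's own proof'' to compare your proposal against.

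That said, a few remarks on your sketch. Your reduction is correct and standard. In the direction ``Lyapunov $\Rightarrow$ quasi-Lyapunov'', your strategy of using gap levels $v\notin f(\wh\Rec_\alpha)$ and a crossing-time argument is reasonable, but the final ``balancing these two rates over a suitable countable family of separating levels'' is too vague to count as a proof; you have to actually exhibit one level (a single gap suffices, since $\alpha$-equivariance lets you translate it) and make the inequality explicit. In the converse direction, the Conley-type potential $f_1(x)=\sup\{f_0(z):x\recto z\}$ is the right object, and the quasi-Lyapunov bound indeed makes it finite and $\alpha$-equivariant; however your upgrading steps (i)--(iii) are each real theorems in this non-compact setting and your outline does not address, for instance, why $f_1$ is upper semi-continuous, how to smooth it without destroying the pre-Lyapunov inequality, or why the countable separating sum converges to something continuous. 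These are precisely the places where the proof in~\cite{martyPS1} does the actual work, so if you want a self-contained argument you should expect to fill in several pages here rather than a paragraph.
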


\subsection{The~$\alpha$-recurrent set}

Fix a non-zero cohomology class~$\alpha$ in $ H^1(M,\RR)$. We say that a point $x$ in $ M$ is \emph{$\alpha$-recurrent} if for every $\epsilon>0$ small and every $T>0$, there exist a periodic $\epsilon$-pseudo-orbit $\gamma$ that passes through $x$ and that satisfy $\alpha([\gamma])=0$. We denote by~$\Rec_\alpha$ the set of~$\alpha$-recurrent point, called \emph{$\alpha$-recurrent set}. Two~$\alpha$-reccurent points $x,y$ in $M$ are said to be~$\alpha$-equivalent if for all $\epsilon,T>0$ there exists a periodic $\epsilon$-pseudo-orbit $\gamma$ that passes through $x$ and $y$ and with $\alpha([\gamma])=0$. The~$\alpha$-equivalence relation is an equivalence relation. We call \emph{$\alpha$-recurrence chains} the~$\alpha$-equivalence classes. Note that the~$\alpha$-equivariance chains are invariant by the flow.

The relation between Fried's work and the $\alpha$-recurrent set is given by:

\begin{proposition}[\cite{martyPS1} Proposition 3.11]\label{prop-part1-empty-rec}
	We have $\alpha(D_\varphi)>0$ if and only $-\alpha$ is quasi-Lyapunov and $\Rec_\alpha$ is empty.
\end{proposition}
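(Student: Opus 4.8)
The plan is to prove Proposition~\ref{prop-part1-empty-rec} by combining Lemma~\ref{lem-part1-qL} with a quantitative analysis of how $\alpha$ can fail to be strictly positive on $D_{\varphi,\epsilon,T}$, using the $\alpha$-recurrent set as the obstruction. Since both $\alpha$ and $-\alpha$ enter, recall that by Lemma~\ref{lem-part1-qL} the class $-\alpha$ is quasi-Lyapunov iff $(-\alpha)(D_{\varphi,\epsilon,T})\le 0$ for small $\epsilon$, i.e. iff $\alpha([\gamma])\ge 0$ for every periodic $\epsilon$-pseudo-orbit $\gamma$ with $\epsilon$ small enough. So the right-hand condition ``$-\alpha$ quasi-Lyapunov and $\Rec_\alpha=\emptyset$'' translates to: for small $\epsilon$, every periodic $\epsilon$-pseudo-orbit has $\alpha([\gamma])\ge 0$, and none has $\alpha([\gamma])=0$ while passing through a fixed point that is ``$\epsilon$-robustly'' on such a loop — more precisely $\Rec_\alpha=\emptyset$ means that for every $x\in M$ there is some $\epsilon_x>0$ and $T_x$ such that no periodic $\epsilon_x$-pseudo-orbit through $x$ has $\alpha([\gamma])=0$.

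First I would prove the easy direction, ``$\alpha(D_\varphi)>0 \Rightarrow -\alpha$ quasi-Lyapunov and $\Rec_\alpha=\emptyset$''. If $\alpha(D_\varphi)>0$ then since $D_\varphi=\bigcap_\epsilon D_{\varphi,\epsilon,T}$ is an intersection of compact sets and $\alpha$ is continuous, compactness gives $\alpha(D_{\varphi,\epsilon,T})>0$ for all small enough $\epsilon$; in particular $(-\alpha)(D_{\varphi,\epsilon,T})<0\le 0$, so by Lemma~\ref{lem-part1-qL} the class $-\alpha$ is quasi-Lyapunov. Moreover $\alpha(D_{\varphi,\epsilon,T})>0$ forces $\frac{1}{\len(\gamma)}\alpha([\gamma])$ to be bounded away from $0$ over all periodic $\epsilon$-pseudo-orbits; in particular no periodic $\epsilon$-pseudo-orbit has $\alpha([\gamma])=0$, so no point is $\alpha$-recurrent and $\Rec_\alpha=\emptyset$.

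The converse direction is the substantive one. Suppose $-\alpha$ is quasi-Lyapunov and $\Rec_\alpha=\emptyset$; I want $\alpha(D_\varphi)>0$, equivalently (again by compactness) $\alpha(D_{\varphi,\epsilon,T})>0$ for some small $\epsilon$. Since $-\alpha$ is quasi-Lyapunov, for small $\epsilon$ we already have $\alpha([\gamma])\ge 0$, hence $\alpha(D_{\varphi,\epsilon,T})\ge 0$; the task is to upgrade $\ge 0$ to $>0$, i.e. to rule out a sequence of periodic $\epsilon$-pseudo-orbits $\gamma_n$ with $\len(\gamma_n)\to\infty$ (or $\ge$ const) and $\frac{1}{\len(\gamma_n)}\alpha([\gamma_n])\to 0$. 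The strategy is a contradiction-and-compactness argument: from such a sequence, using that $\alpha([\gamma_n])\ge 0$ is an integer-like quantity that is small relative to $\len(\gamma_n)$, one extracts long sub-arcs of $\gamma_n$ that contribute zero to $\alpha$ and, by a limiting/shadowing argument combined with the total disconnectedness of recurrence chains (Lemma~\ref{lem-tot-disc}) and a pigeonhole on a finite cover of $M$, produces an actual periodic $\epsilon'$-pseudo-orbit $\delta$ (for slightly larger $\epsilon'$, then pass $\epsilon'\to 0$) through some point $x$ with $\alpha([\delta])=0$. That would make $x$ an $\alpha$-recurrent point, contradicting $\Rec_\alpha=\emptyset$. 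The key lemma to extract here is: if $\alpha([\gamma])\ge 0$ for all small-$\epsilon$ periodic pseudo-orbits and $\frac{1}{\len(\gamma_n)}\alpha([\gamma_n])\to 0$, then after removing a controlled ``defect'' the $\gamma_n$ can be closed up into loops of homology annihilated by $\alpha$; concatenating and passing to a limit of the induced closed $\alpha$-null loops yields the $\alpha$-recurrence witness.

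The main obstacle I anticipate is precisely this extraction step: turning an asymptotic statement (the Cesàro average $\frac{1}{\len}\alpha([\gamma_n])\to 0$) into an exact statement (existence of a genuine periodic $\epsilon'$-pseudo-orbit $\gamma$ with $\alpha([\gamma])=0$ passing through a single point). This requires carefully splitting $\gamma_n$ at return times to a finite cover, controlling the accumulated homology of each piece, and invoking compactness of $M$ together with Lemma~\ref{lem-tot-disc} to glue infinitely many near-returns into a closed pseudo-orbit while keeping the jump sizes below the (slightly enlarged) threshold; the metric bookkeeping on $\wh M_\alpha$ versus $M$ (comparability at small scale of the lifted metrics) must be handled with care so that the $\alpha$-value is genuinely $0$ and not merely small. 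Given the machinery of part~I, I expect this to be a fairly short deduction there — probably the proposition is proved by quoting one or two structural results from \cite{martyPS1} about $\epsilon$-pseudo-orbits of $\alpha$-value $0$ rather than redoing the compactness argument from scratch — but the conceptual heart is the equivalence ``average of $\alpha$ vanishes $\Longleftrightarrow$ an $\alpha$-null periodic pseudo-orbit exists through a point''.
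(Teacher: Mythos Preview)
This paper does not contain a proof of Proposition~\ref{prop-part1-empty-rec}: the statement is quoted verbatim from the companion paper \cite{martyPS1} (as Proposition~3.11 there) and is used here as a black box, so there is no proof in the present paper to compare your proposal against.

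That said, your plan is sound in outline. The forward direction is exactly right and complete as written: compactness of the decreasing family $D_{\varphi,\epsilon,T}$ pushes $\alpha(D_\varphi)>0$ to $\alpha(D_{\varphi,\epsilon,T})>0$ for small $\epsilon$, whence Lemma~\ref{lem-part1-qL} gives $-\alpha$ quasi-Lyapunov and the strict positivity forbids any periodic $\epsilon$-pseudo-orbit with $\alpha([\gamma])=0$, so $\Rec_\alpha=\emptyset$. For the converse you correctly identify the crux --- upgrading $\alpha(D_{\varphi,\epsilon,T})\ge 0$ to strict positivity by producing, from a hypothetical sequence with $\tfrac{1}{\len(\gamma_n)}\alpha([\gamma_n])\to 0$, an actual periodic $\epsilon'$-pseudo-orbit with $\alpha$-value zero. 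Your pigeonhole-and-splitting scheme is the right shape; the one place to be careful is that you need the extracted loop to pass through a point that works for \emph{all} small $\epsilon$ simultaneously (the definition of $\Rec_\alpha$ quantifies over all $\epsilon>0$), so a diagonal/compactness argument on the accumulation point is required rather than a single extraction at fixed $\epsilon'$. As you suspect, in \cite{martyPS1} this step is handled via the structural results developed there (the $\alpha$-equivariant Lyapunov function of Theorem~\ref{thm-spectral-decomp} and the analysis of $\wh\Rec_\alpha$) rather than by a bare-hands pseudo-orbit surgery.
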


\section{Characterization of partial cross-sections}\label{sec-ps-classification}

The classification of global-cross-sections is given by a simple homological criterion.
The classification of partial cross-sections is more complicated than the one of global cross-sections. The existence and uniqueness do not hold true in general. We give here some elements for both directions.

\subsection{Existence of partial cross-section}\label{sec-ps-existence}

In this section, we focus on the existence and prove the three following theorems. The criterion is not the same for $\alpha=0$ and $\alpha\neq 0$.

\begin{theorem}\label{thm-ps-to-subL}
	Assume $M$ compact and connected.
	For any non-zero class~$\alpha$ in $H^1(M,\ZZ)$, we have $\PS_\varphi(\alpha)\neq\emptyset$ if and only if $-\alpha$ is quasi-Lyapunov.
\end{theorem}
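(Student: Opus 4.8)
The plan is to prove the two implications separately, both times leaning on Theorem~\ref{thm-spectral-decomp}: for the ``if'' direction I would run it forward and slice a suitable level set of the resulting Lyapunov map, and for the ``only if'' direction I would instead build an equivariant Lyapunov map out of a partial cross-section and run Theorem~\ref{thm-spectral-decomp} backwards (recall that a $(-\alpha)$-equivariant Lyapunov map is decreasing along the flow, so its level sets cross the flow in the $\alpha$-direction, which is the cohomology class we want). So assume first that $-\alpha$ is quasi-Lyapunov and let $f\colon\wh M_{-\alpha}\to\RR$ be a continuous $(-\alpha)$-equivariant Lyapunov map (Theorem~\ref{thm-spectral-decomp}). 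As $\wh M_{-\alpha}$ is connected and $M$ is compact, $f$ is onto $\RR$ and proper (the preimage of a bounded interval meets only finitely many $\ZZ$-translates of a compact fundamental domain). I would first produce a value $t$ with $f^{-1}(\{t\})\cap\wh\Rec_{-\alpha}=\emptyset$: fixing a closed fundamental interval $I$ of length $n_{-\alpha}$, the set $K:=\wh\Rec_{-\alpha}\cap f^{-1}(I)$ is compact, so its image in the space of recurrence chains of $\wh M_{-\alpha}$ is compact; that space is totally disconnected (Lemma~\ref{lem-tot-disc}), and, being Lyapunov, $f$ descends to a \emph{continuous injection} of it into $I$, hence a homeomorphism onto its image. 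Therefore $f(\wh\Rec_{-\alpha})\cap I=f(K)$ is a compact, totally disconnected, in particular nowhere dense subset of $I$, and by equivariance the same holds in every translate of $I$; so some $t$ avoids $f(\wh\Rec_{-\alpha})$.

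Then $f^{-1}(\{t\})$ is compact and lies at positive distance from the closed set $\wh\Rec_{-\alpha}$; on a neighbourhood $U$ of it disjoint from $\wh\Rec_{-\alpha}$ the map $f$ is strictly decreasing along the flow, and $U$ has no fixed point. Along $f^{-1}(\{t\})$ the flow is thus non-singular, so it has a continuous product flow-box near each point, in which $f$ is strictly monotone along the $\RR$-fibres; hence $f^{-1}(\{t\})$ is locally the graph of a continuous function of the transverse coordinate, i.e. a compact, locally flat, topologically transverse hypersurface with boundary in $\partial\wh M_{-\alpha}$. The covering map $\pi_{-\alpha}$ is injective on it (if $\pi_{-\alpha}(x)=\pi_{-\alpha}(y)$ and $f(x)=f(y)=t$ then $y=k\cdot x$, so $k\,n_{-\alpha}=f(y)-f(x)=0$, hence $x=y$) and proper, hence a homeomorphism onto $S:=\pi_{-\alpha}(f^{-1}(\{t\}))$, which is therefore a partial cross-section. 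Finally $S$ is a level set of the induced map $\bar f\colon M\to\bfrac{\RR}{\ZZ}$, which is cohomologous to $-\alpha$; since $f$ decreases along the flow, the flow co-orientation of $S$ is the opposite of the one defining $[\bar f]$, so $[S]=-[\bar f]=\alpha$ and $\PS_\varphi(\alpha)\neq\emptyset$.

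Conversely, assume $\PS_\varphi(\alpha)\neq\emptyset$ and pick a partial cross-section $S$ with $[S]=\alpha$, co-oriented by the flow. Writing $\wh S:=\pi_{-\alpha}^{-1}(S)$, a defining map $g\colon M\to\bfrac{\RR}{\ZZ}$ with $g^{-1}(\{0\})=S$ realizing $[S]=\alpha$ has the flow crossing $S$ in the direction of increasing $g$, so $-g$ is cohomologous to $-\alpha$ and lifts to a continuous $(-\alpha)$-equivariant map $\wh h\colon\wh M_{-\alpha}\to\RR$ with $\wh h^{-1}(\ZZ)=\wh S$ that is strictly decreasing along the flow in a flow-box neighbourhood of $\wh S$. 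By itself $\wh h$ need not be monotone along the flow away from $\wh S$, and I would upgrade it to a genuine Lyapunov map by adding a large multiple of a Conley Lyapunov function of $\varphi$ on $M$ (Theorem~\ref{thm-Conley-L}) pulled back to $\wh M_{-\alpha}$: off a thin flow-box neighbourhood of $\wh S$ the strict decrease of the pulled-back Conley function dominates the variation of $\wh h$, while on that neighbourhood $\wh h$ decreases on its own, using the compactness of $M$, the product structure of the flow-box, and that $S$ stays away from the recurrent set of the lifted flow. This yields a continuous $(-\alpha)$-equivariant Lyapunov, hence quasi-Lyapunov, map; as any other $(-\alpha)$-equivariant map differs from it by the pullback of a bounded function on $M$ and is therefore also quasi-Lyapunov, Theorem~\ref{thm-spectral-decomp} shows that $-\alpha$ is quasi-Lyapunov.

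The main obstacle is the construction in the last paragraph: fusing $\wh h$ with the Conley function into a single continuous equivariant map that strictly decreases along the flow off the recurrent set, and in particular getting the quasi-Lyapunov estimate — the \emph{total} cost of the (arbitrarily many) jumps of an $\epsilon$-pseudo-orbit must stay bounded, not merely the cost per jump. This is where the transverse structure near $S$ and the recurrence of $\varphi$ genuinely have to be reconciled; the rest is soft topology together with the results quoted from Part~I.
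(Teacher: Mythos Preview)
Your forward direction is essentially the paper's: it packages the level-set step as Lemma~\ref{lem-pre-image-ps} and uses Lemma~\ref{lem-tot-disc} exactly as you do to find a value $t$ missed by $f(\wh\Rec_{-\alpha})$. No issue there.

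The converse, however, has a real gap. Your repair of $\wh h$ relies on a Conley Lyapunov function of $\varphi$ \emph{on $M$}, pulled back to the cover. But that function is strictly decreasing only off the chain-recurrent set of $\varphi$ on $M$, and this set can be all of $M$ even when $\PS_\varphi(\alpha)\neq\emptyset$ for some $\alpha\neq 0$: a suspension Anosov flow is chain recurrent yet has a global cross-section. In that case the Conley function is constant and contributes nothing. More generally, any periodic orbit $\gamma$ disjoint from the flow box of $S$ lifts to an orbit on which both $\wh h$ and the pulled-back Conley function are constant; if $\alpha([\gamma])\neq 0$ this lifted orbit is not chain-recurrent in $\wh M_{-\alpha}$, so a genuine Lyapunov map on the cover would have to strictly decrease there --- your sum does not. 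The ``recurrent set of the lifted flow'' you invoke at the end is the right object, but controlling it would require an equivariant Lyapunov function on $\wh M_{-\alpha}$, which is exactly what you are trying to build.

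The paper avoids this entirely, and the fix is small. Choose $g$ to be the specific map $F_S$ that is constant equal to~$0$ outside a thin flow box of $S$ and increases linearly from $0$ to $1$ across it. Then $\wh h=-\wh F_S$ is already non-increasing along the flow \emph{everywhere}, not just near $\wh S$: there is nothing to correct. From here one does not need a Lyapunov map at all. If $\epsilon$ is smaller than the flow-box half-thickness, no jump of an $\epsilon$-pseudo-orbit can cross $S$, while genuine orbit arcs cross $S$ only positively; hence every periodic $\epsilon$-pseudo-orbit $\gamma$ has $\alpha([\gamma])\ge 0$, i.e.\ $(-\alpha)(D_{\varphi,\epsilon,T})\le 0$, and Lemma~\ref{lem-part1-qL} gives that $-\alpha$ is quasi-Lyapunov directly. (The paper phrases this as Lemma~\ref{lem-F-ps}, deducing that $-\wh F_S$ is pre-Lyapunov from a lemma in Part~I.) The global pseudo-orbit estimate you worried about in your last paragraph simply does not arise.
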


The case $\alpha=0$ is different, and already known by experts. The flow $\varphi$ is said \emph{chain recurrent} if the recurrence set of $\varphi$ is equal to $M$. 

\begin{theorem}\label{thm-ps-to-non-rec}
	Assume $M$ compact and connected.
	We have $\PS_\varphi(0)\neq\emptyset$ if and only if the restriction of $\varphi$ is not chain recurrent.
\end{theorem}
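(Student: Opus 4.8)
I would prove this by a direct application of Conley's theory, matching the remark that the $\alpha=0$ case is classical. Throughout I use the standard fact (Conley, \cite{Conley1978}) that $\varphi$ is chain recurrent if and only if its only attractors are $\emptyset$ and $M$; recall also that for an attractor $A$ one has $\Rec\subseteq A\cup A^{*}$, where $A^{*}$ is the dual repeller, and that $(A,A^{*})$ carries a Lyapunov function of the pair, i.e. a continuous $g\colon M\to[0,1]$ with $g^{-1}(0)=A$, $g^{-1}(1)=A^{*}$, which is strictly decreasing along the flow on the open set $M\setminus(A\cup A^{*})$.

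For the ``if'' direction, assume $\varphi$ is not chain recurrent, so there is an attractor $A$ with $\emptyset\neq A\neq M$; then $A^{*}\neq\emptyset$, since $A\neq M$ and $M$ is connected. Take $g$ as above. As $M$ is connected and $g$ attains $0$ and $1$, every $t\in(0,1)$ is a value of $g$; fix one and set $S\defi g^{-1}(t)$. Then $S$ is non-empty, compact, and disjoint from $\Rec$ because $g(\Rec)\subseteq\{0,1\}$. Near any $p\in S$ the function $g$ is strictly decreasing along the flow, so, exactly as in the proof of Lemma~\ref{lem-loc-flat}, in a flow box around $p$ the set $S$ is the graph, over a transverse slice, of the (small, continuous) time needed to reach the level $t$; hence $S$ is a locally flat codimension-one submanifold topologically transverse to the flow, with $\partial S\subseteq\partial M$ since near $\partial M$ the flow box is a half-space. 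Finally $[S]=0$, because $g-t$ composed with $\RR\to\bfrac{\RR}{\ZZ}$ is a null-homotopic map with zero set $S$ whose associated co-orientation is the direction of decreasing $g$, i.e. the flow co-orientation of $S$. Thus $S\in\PS_\varphi(0)$.

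For the ``only if'' direction, let $S$ be a partial cross-section with $[S]=0$; I will build a proper attractor and conclude by the cited characterization. Since $[S]=0$, the map $f\colon M\to\bfrac{\RR}{\ZZ}$ of Section~\ref{sec-Z-covering} realizing $S$ with its flow co-orientation is null-homotopic, hence lifts to a continuous $\wt g\colon M\to\RR$ with $S=\wt g^{-1}(\ZZ)$. By compactness $Z\defi\wt g(S)=\wt g(M)\cap\ZZ$ is finite and non-empty; set $m\defi\min Z$. The set $\wt g^{-1}(m)$ is open and closed in $S$, hence a union of components of $S$, and because $f$ realizes a genuine co-orientation equal to the flow co-orientation, near $\wt g^{-1}(m)$ both $\{\wt g<m\}$ and $\{\wt g>m\}$ are non-empty and the flow points from the former into the latter; in particular $\min_{M}\wt g<m$, so $W\defi\wt g^{-1}((-\infty,m))$ is non-empty, open, and distinct from $M$ (it misses $\wt g^{-1}(m)\neq\emptyset$). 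Now $N\defi M\setminus W=\wt g^{-1}([m,\infty))$ is a trapping region: it is compact and non-empty, and $\varphi_{t}(N)\subseteq\{\wt g>m\}\subseteq\Int N$ for every $t>0$. Indeed, $N$ is forward invariant — if $\wt g(q)\geq m$ and $\wt g(\varphi_{t_{0}}(q))<m$, then at the first time $t_{1}\in(0,t_{0}]$ with $\wt g(\varphi_{t_{1}}(q))=m$ the orbit would cross $\wt g^{-1}(m)\subseteq S$ toward decreasing $\wt g$, against the co-orientation — and likewise $\{\wt g>m\}$ is forward invariant, while every point of $\wt g^{-1}(m)$ enters $\{\wt g>m\}$ immediately. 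Hence $A\defi\bigcap_{t\geq 0}\varphi_{t}(N)$ is a non-empty attractor with $A\subseteq N\neq M$, so $A$ is proper and $\varphi$ is not chain recurrent.

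The main work, and the step I expect to be delicate, is this ``only if'' direction: one has to use that the co-orientation of a partial cross-section is the flow co-orientation in order to see that $\wt g$ is forced to change sign across $S$ on a genuine two-sided neighborhood and that $N=\wt g^{-1}([m,\infty))$ is forward invariant — a priori $\wt g$ has no monotonicity along the flow away from $S$. If $\partial M\neq\emptyset$, the cleanest fix is to push the whole argument to the doubled manifold, exactly as in the proof that isotopies can be made along the flow. On the ``if'' side nothing is new: it is a matter of invoking the correct package from Conley theory.
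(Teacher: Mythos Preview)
Your proof is correct, but it takes a different (and longer) route than the paper's.

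For the ``if'' direction the two arguments are close: both produce a Lyapunov-type function and take a regular level set. The paper invokes the full Conley Lyapunov function (Theorem~\ref{thm-Conley-L}) and then cites Lemma~\ref{lem-pre-image-ps} to conclude that the level set is a partial cross-section; you instead use only an attractor--repeller Lyapunov function and re-derive by hand that the level set is a submanifold transverse to the flow. This is fine, but note that your flow-box argument is exactly the content of Lemma~\ref{lem-pre-image-ps}, so you are reproving that lemma in a special case rather than using it.

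The ``only if'' direction is where the two proofs genuinely diverge. You lift the co-orienting map to $\wt g\colon M\to\RR$, pick the minimal integer value $m$, and build a trapping region $N=\wt g^{-1}([m,\infty))$ to exhibit a proper attractor. This works, and is a pleasant constructive argument, but it is considerably more involved than the paper's one-line proof: the paper uses Lemma~\ref{lem-F-ps} (the map $-F_S$ is pre-Lyapunov, hence $F_S(\Rec_0)=\{0\}$) to conclude immediately that $S=F_S^{-1}(\tfrac12)$ is disjoint from $\Rec_0$, so $\Rec_0\neq M$. In other words, the paper already packaged the crossing/co-orientation analysis you carry out into the statement that $-F_S$ is pre-Lyapunov, and that single lemma is reused for all $\alpha$, not just $\alpha=0$. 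Your approach buys independence from that machinery; the paper's approach buys brevity and uniformity with the $\alpha\neq 0$ case.

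One small remark on your forward-invariance step: the phrasing ``the orbit would cross $\wt g^{-1}(m)$ toward decreasing $\wt g$'' is slightly off. The flow always crosses $\wt g^{-1}(m)\subset S$ toward \emph{increasing} $\wt g$ (that is the co-orientation); the contradiction is that the flow-box forces $\wt g<m$ on the incoming side of the crossing at your time $t_1$, while your choice of $t_1$ forces $\wt g\geq m$ just before it. The argument is correct once this is said precisely.
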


We will also prove the following. 
Notice that for $\alpha=0$, the~$\alpha$-recurrent set is simply the recurrent set of~$\varphi$.

\begin{theorem}\label{thm-ps-rec-disjoint}
	Assume $M$ compact and connected.
	Let~$\alpha$ be in $H^1(M,\ZZ)$ with $\PS_\varphi(\alpha)\neq\emptyset$. Then the~$\alpha$-recurrent set~$\Rec_\alpha$ is exactly the set of points in $M$ that lie on no partial cross-section cohomologous to~$\alpha$.
\end{theorem}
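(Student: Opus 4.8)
The plan is to prove the two inclusions separately. For the easy inclusion, I would show that if $x\in\Rec_\alpha$, then $x$ lies on no partial cross-section cohomologous to~$\alpha$. Suppose for contradiction that $S$ is such a cross-section with $x\in S$. Using Lemma~\ref{lem-loc-flat}, $S$ has a flow-box neighborhood $U\cong S\times{]-t,t[}$, and crucially, since $[S]=\alpha$, a choice of continuous $g\colon M\to\bfrac{\RR}{\ZZ}$ with $g^{-1}(\{0\})=S$ lifts to an $\alpha$-equivariant $f\colon\wh M_\alpha\to\RR$ that is \emph{strictly increasing} along every flow line passing through a lift of $S$, precisely because the co-orientation is respected and $S$ is topologically transverse. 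Now take any periodic $\epsilon$-pseudo-orbit $\gamma$ through $x$ with $\alpha([\gamma])=0$, as provided by $x\in\Rec_\alpha$: lifting it to $\wh M_\alpha$ it closes up (since $\alpha([\gamma])=0$), so $\int_\gamma df=0$; but $\gamma$ passes through $x\in S$, so along the continuous flow arcs near $x$ it must increase $f$ by a definite amount, and for $\epsilon$ small enough the finitely many jumps (each of size $<\epsilon$, so moving $f$ by a small amount since $f$ is uniformly continuous on a fundamental domain) cannot compensate. This forces $\int_\gamma df>0$, a contradiction. The subtlety here is making "strictly increasing near $S$" quantitative: I would fix a flow-box and a lower bound $\delta>0$ for the $f$-increment of any orbit arc that fully crosses the box, then choose $\epsilon$ small relative to $\delta$ and to the modulus of continuity of $f$, and also small enough that the number of jumps of a periodic pseudo-orbit is controlled — here I expect to invoke the $T$-separation of jumps together with a uniform bound on $\len(\gamma)$ normalized appropriately, or simply argue per-jump.

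For the converse inclusion, I would show that if $x\notin\Rec_\alpha$, then $x$ lies on some partial cross-section cohomologous to~$\alpha$. For $\alpha\neq 0$: by hypothesis $\PS_\varphi(\alpha)\neq\emptyset$, so by Theorem~\ref{thm-ps-to-subL} the class $-\alpha$ is quasi-Lyapunov, and Theorem~\ref{thm-spectral-decomp} gives an $\alpha$-equivariant Lyapunov map $f\colon\wh M_\alpha\to\RR$ for the flow (Lyapunov meaning: constant on each recurrence chain of $\wh M_\alpha$, separating distinct chains, and strictly decreasing off $\wh\Rec_\alpha$ — note the sign conventions, $-f$ is the Lyapunov function in the sense of the introduction's Figure~\ref{fig-ps-classification}). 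The key point is the relationship between $\wh\Rec_\alpha$ and $\Rec_\alpha$: a point $x\in M$ is $\alpha$-recurrent exactly when some (hence any) lift $\wh x$ lies in a recurrence chain of $\wh M_\alpha$ that is invariant under no nontrivial deck transformation — equivalently $\wh x\in\wh\Rec_\alpha$ and the $\ZZ$-orbit of its chain is discrete in the appropriate sense; this is presumably recorded in part~I and I would cite it. Since $x\notin\Rec_\alpha$, a lift $\wh x$ satisfies $\wh x\notin\wh\Rec_\alpha$, or more precisely $f$ takes a value $c:=f(\wh x)$ that is not attained on any recurrence chain meeting the fiber over a recurrent point — using total disconnectedness (Lemma~\ref{lem-tot-disc}) of the space of recurrence chains, one finds $c$ (or a nearby value, after a small flow-time adjustment of $\wh x$) such that the level set $f^{-1}(\{c\})$ is disjoint from $\wh\Rec_\alpha$. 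Then $f^{-1}(\{c\})$ is compact modulo the $\ZZ$-action (it meets each fundamental domain in a compact set, and by $\alpha$-equivariance it descends), it is topologically transverse to the flow because $f$ is strictly decreasing there, and it projects to a partial cross-section $S\subset M$ with $[S]=\alpha$ (or $-\alpha$ — I must track the sign: $f$ decreasing means the co-orientation gives class $-[f]$; so one uses $-\alpha$ quasi-Lyapunov to get a cross-section for $\alpha$, consistent with Theorem~\ref{thm-ps-to-subL}) passing through $x$.

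For the case $\alpha=0$: here $\Rec_0=\Rec$ is the ordinary recurrent set of $\varphi$ on $M$, and by Theorem~\ref{thm-ps-to-non-rec} the hypothesis $\PS_\varphi(0)\neq\emptyset$ means $\varphi$ is not chain recurrent, i.e. $\Rec\neq M$. Given $x\notin\Rec$, apply Conley's theorem~\ref{thm-Conley-L} to get a Lyapunov function $g\colon M\to\RR$; since $g$ separates recurrence chains and is strictly decreasing off $\Rec$, the value $g(x)$ is not a value taken on $\Rec$ provided — again by total disconnectedness of the chain space — we first flow $x$ slightly to a regular value, and then $g^{-1}(\{g(x)\})$ is a compact hypersurface transverse to the flow (locally flat by Lemma~\ref{lem-loc-flat}'s argument, or directly) containing $x$ and null-cohomologous, as desired.

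**Main obstacle.** I expect the principal difficulty to be the quantitative estimate in the easy inclusion — turning "$S$ is topologically transverse and $\alpha$-equivariantly co-oriented" into a uniform positive lower bound on $\int_\gamma df$ for periodic $\epsilon$-pseudo-orbits through $x$ with $\alpha([\gamma])=0$, while controlling the total contribution of the jumps. The cleanest route is probably to observe that such a $\gamma$, viewed in $\wh M_\alpha$, is a periodic pseudo-orbit of the \emph{lifted} flow (closing up because $\alpha([\gamma])=0$), that it meets a lift $\wh S$ of $S$ (since it passes through $x\in S$), and that $f$ restricted to any flow arc crossing the flow-box of $\wh S$ increases by at least a fixed $\delta>0$; then for $\epsilon<\delta$ (and $\epsilon$ small enough that each jump changes $f$ by less than, say, $\delta/2$, using uniform continuity of $f$ on a fundamental domain), the net change of $f$ around $\gamma$ is at least $\delta - (\text{one jump's worth}) > 0$, contradicting periodicity. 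The delicate point is that a periodic pseudo-orbit may cross $\wh S$ and have jumps near it; one wants the arc between two consecutive jumps straddling $\wh S$ to still gain the full $\delta$, which holds if the flow-box is thin enough and $\epsilon$ smaller than its thickness — a choice of constants I would spell out but not belabor here.
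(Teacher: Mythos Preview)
Your plan for the converse inclusion (every point outside $\Rec_\alpha$ lies on some partial cross-section cohomologous to~$\alpha$) is essentially the paper's proof: invoke Theorem~\ref{thm-spectral-decomp} (or Conley for $\alpha=0$) to get an $\alpha$-equivariant map $f$ with $-f$ Lyapunov, use Lemma~\ref{lem-tot-disc} to see $f(\wh\Rec_\alpha)$ has empty interior, flow $\wh p$ to a time $s$ with $f(\wh\varphi^\alpha_s(\wh p))\notin f(\wh\Rec_\alpha)$, take that level set via Lemma~\ref{lem-pre-image-ps}, and then push the resulting cross-section back by $\varphi_{-s}$ so it contains $p$. (Do not forget this last step: after flowing to a good level you must flow the level set back, otherwise it need not contain $p$.) Your sign worries are harmless; the paper's convention is that $-f$ Lyapunov means $f$ increasing along the flow, which is the orientation you want.

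For the forward inclusion (no point of $\Rec_\alpha$ lies on such a cross-section) your approach diverges from the paper's and, as written, has a real gap. The paper's argument is two lines: the explicit map $F_S$ built just before Lemma~\ref{lem-F-ps} (zero outside a thin flow box of $S$, linear inside) satisfies $-F_S$ pre-Lyapunov by that lemma, hence $F_S(\Rec_\alpha)=\{0\}$; since $F_S(S)=\{\tfrac12\}$, done. Your direct pseudo-orbit estimate instead claims the net change of $f$ around $\gamma$ is ``at least $\delta-(\text{one jump's worth})$''. But a periodic $\epsilon$-pseudo-orbit has no a priori bound on its length, hence none on its number of jumps, so the cumulative jump contribution is uncontrolled. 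Worse, you only assume your $f$ is increasing along flow lines \emph{near} $S$; away from $S$ it could decrease along the flow, so the continuous arcs do not help you either. The repair is to use the specific $F_S$, whose lift $\wh F_S$ is \emph{globally} non-decreasing along the flow and integer-valued off the flow boxes: then one shows inductively that at the end of every arc of $\gamma$ one has $\wh F_S\geq 1$, so the final jump back to $\wh x$ (where $\wh F_S=\tfrac12$) would have to move $\wh F_S$ by at least $\tfrac12$, impossible for small $\epsilon$. That argument works, but it is exactly what Lemma~\ref{lem-F-ps} (via \cite[Lemma~1.7]{martyPS1}) packages---you are better off citing it.
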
 

We first build partial a cross-section cohomologous to~$\alpha$. Then we clarify the relation with the~$\alpha$-recurrent set. In the case $\alpha=0$, we take the convention $\wh M_\alpha=M$ and any function $f\colon\wh M_\alpha\to\RR$ is~$\alpha$-equivariant.

\begin{lemma}\label{lem-pre-image-ps}
	Let~$\alpha$ be as in Theorem~\ref{thm-ps-rec-disjoint}, $f\colon\wh M_\alpha\to\RR$ be~$\alpha$-equivariant, $t$ in $f(\wh M_\alpha)$ and $S=f^{-1}(\{t\})$. If $-f$ is Lyapunov and $t$ is not in $f(\wh\Rec_\alpha)$, then the image of~$S$ in $M$ is a partial cross-section cohomologous to~$\alpha$. 
	
	If $f$ is only assumed to be decreasing along the flow on a neighborhood of~$S$, the same conclusion holds true.
\end{lemma}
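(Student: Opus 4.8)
The plan is to show that $S = f^{-1}(\{t\})$ is a topological submanifold of codimension one which is topologically transverse to the lifted flow on $\wh M_\alpha$, and then that the covering projection $\pi_\alpha$ restricts to an embedding on $S$ whose image has cohomology class $\alpha$. The starting point is the hypothesis that $-f$ is decreasing along the flow on a neighborhood $V$ of $S$ (the Lyapunov case being a special case, since $t \notin f(\wh\Rec_\alpha)$ forces $S$ to avoid the recurrent set, and a Lyapunov function is strictly decreasing along the flow off the recurrent set). Concretely, for each $x \in S$ there is $\tau_x > 0$ so that $s \mapsto f(\varphi_s(x))$ is strictly decreasing on $[-\tau_x, \tau_x]$, and by continuity of $f$ and of the flow this can be upgraded to a uniform statement: there exist a neighborhood $W$ of $S$ in $\wh M_\alpha$ and $\tau > 0$ such that $s \mapsto f(\varphi_s(y))$ is strictly monotone on $[-\tau,\tau]$ for every $y \in W$. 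Then the map $(x,s) \mapsto \varphi_s(x)$ from $S \times (-\tau,\tau) \to \wh M_\alpha$ is injective, and a standard invariance-of-domain argument (as in the proof of Lemma~\ref{lem-loc-flat}) shows $S$ is locally flat and $\varphi$-transverse: if $\varphi_s(y) \in S$ for $y \in S$ and $|s| \le \tau$ then $f(\varphi_s(y)) = t = f(y)$, so by strict monotonicity $s = 0$.

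Next I would check that $S$ is nonempty and compact. Nonemptiness is immediate since $t \in f(\wh M_\alpha)$. For compactness one uses that $\wh M_\alpha \to M$ is a $\ZZ$-covering and $f$ is $\alpha$-equivariant: although $\wh M_\alpha$ is noncompact, a single level set $f^{-1}(\{t\})$ is carried by the deck action to $f^{-1}(\{t + n_\alpha\})$, so $f^{-1}(\{t\})$ injects into $M$ under $\pi_\alpha$ (two points of $S$ in the same $\ZZ$-orbit would have $f$-values differing by a nonzero multiple of $n_\alpha$). Combined with the fact that $S$ is closed in $\wh M_\alpha$ and $\pi_\alpha(S)$ is closed in the compact $M$ (because $S$ meets only finitely many fundamental domains — this needs the bound on $f$ near a given level, which again comes from equivariance plus the covering structure), we get that $\pi_\alpha|_S$ is a homeomorphism onto a compact subset $\bar S \subset M$, and $\bar S$ inherits local flatness and $\varphi$-transversality. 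The boundary condition $\partial \bar S \subset \partial M$ holds because $\varphi$ preserves $\partial M$, hence $\partial \wh M_\alpha$, and transversality at boundary points is automatic.

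It remains to identify the cohomology class of $\bar S$. Here I would invoke the description of $H^1(M,\ZZ)$ from Section~\ref{sec-Z-covering}: the $\alpha$-equivariant map $f$ descends to $g \colon M \to \bfrac{\RR}{\ZZ}$ cohomologous to $\alpha$ (in the primitive case $n_\alpha = 1$; in general $\tfrac{1}{n_\alpha}f$ descends, and one argues with $\beta$ and scales back), and $\bar S = \pi_\alpha(f^{-1}(\{t\}))$ is exactly $g^{-1}(\{t \bmod 1\})$ — or a sub-level-set-to-sub-level-set co-orientation computation shows the class of $\bar S$ equals that of $g$, namely $\alpha$. One small point: the co-orientation on $S$ induced by $f$ (from higher values to lower, say) is globally consistent because $f$ is a genuine function, so $\bar S$ is co-oriented and the class is well-defined. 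The $\alpha = 0$ convention ($\wh M_\alpha = M$, every $f$ is $\alpha$-equivariant) makes the argument go through verbatim with $g = f \bmod 1$ null-homotopic.

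\textbf{Main obstacle.} The genuinely delicate step is the passage from the pointwise "$f$ decreasing along the flow near $S$" to a \emph{uniform} transversality bound, and relatedly the compactness of $S$ in the noncompact cover $\wh M_\alpha$. The uniformity is where one must be careful that $f$ is only continuous (not differentiable), so "decreasing along the flow" has to be exploited via compactness of $\bar S \subset M$ and the local form of the flow rather than via a derivative; this is essentially the same technical core as Lemma~\ref{lem-loc-flat}, and I expect the cleanest route is to transport everything down to $M$ early — work with $g \colon M \to \bfrac{\RR}{\ZZ}$ and $\bar S = g^{-1}(\{t_0\})$ — so that compactness is available throughout, and only lift back to $\wh M_\alpha$ at the end to read off that the class is $\alpha$ rather than merely a class pulled back to the cover.
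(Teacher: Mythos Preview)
Your proof is correct and follows essentially the same route as the paper's: reduce the Lyapunov case to the monotone-on-a-neighborhood case (using that $S$ is disjoint from the closed set $\wh\Rec_\alpha$), use monotonicity of $f$ along the flow to see that $S$ is a codimension-one transverse submanifold, use $\alpha$-equivariance to show $\pi_\alpha|_S$ is injective, and read off the cohomology class from the induced map to $\bfrac{\RR}{\ZZ}$. The obstacle you flag is not a real obstacle: compactness of $S$ in the noncompact cover is handled by the paper in one clause (``compact by co-compactness of $\wh M_\alpha$''), which is exactly your fundamental-domain observation, and once $S$ is compact any needed uniformity follows --- though in fact topological transversality as defined in Section~\ref{sec-partial-section} is a purely local condition, so the uniform $\tau$ you chase is not strictly required.
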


\begin{proof}
	First note that when $-f$ is Lyapunov and $t$ is not in $f(\wh\Rec_\alpha)$, then~$S$ lies outside the recurrent set of the flow on $\wh M_\alpha$. Since~$S$ and $\wh\Rec_\alpha$ are compact, $\wh M_\alpha\setminus\wh\Rec_\alpha$ is a neighborhood of~$S$, and $f$ is decreasing along the flow on that set. Thus, it is enough to prove the lemma under the second assumption. 
	
	Since $f$ is increasing along the flow on a neighborhood of~$S$,~$S$ is a topological sub-manifold of codimension 1, and it is topologically transverse to the flow. It is non-empty by construction, compact by co-compactness of $\wh M_\alpha$, and satisfies $\partial S\subset\partial\wh M_\alpha$. 
	
	Since $f$ is~$\alpha$-equivariant, it induces an embedding from~$S$ to $M$. Indeed, if $\pi_\alpha(x)=\pi_\alpha(y)$ holds for some points $x,y$ in~$S$, then there exists $n$ in $\ZZ$ with $y=n\cdot x$. The~$\alpha$-equivariance and $f(x)=t=f(y)$ imply $n=0$ and $y=y$.
	
	Thus, the image of~$S$ in $M$ is a partial cross-section. It is cohomologous to~$\alpha$ by construction.
\end{proof}

Let~$S$ be a partial cross-section and $\alpha=[S]$. We build a function from $M$ to $\bfrac{\RR}{\ZZ}$ cohomologous to~$\alpha$. Since~$S$ is compact and transverse to the flow $\varphi$, there exists $\epsilon>0$ so that the flow box $\varphi_{[-\epsilon,\epsilon]}(S)$ is embedded. Define the map
$$F_{S}\colon M\to\bfrac{\RR}{\ZZ}$$
by $F_S(\varphi(t,x))= \frac{t+\epsilon}{2\epsilon}$ for any $t$ in $[-\epsilon,\epsilon]$ and $x$ in $ S$, and $F_S\equiv0$ outside $\varphi_{[-\epsilon,\epsilon]}(S)$. It is continuous, non-decreasing along the flow, and constant on the recurrent set. By construction, we have $S=F_S^{-1}(\tfrac{1}{2})$, so~$S$ and $F_S$ are cohomologous.

It will be convenient to consider pre-Lapunov map from $M$ to the circle. A map $f\colon M\to\bfrac{\RR}{\ZZ}$ is called \emph{pre-Lyapunov} if given $\alpha=[f]$ in $ H^1(M,\ZZ)$ its cohomology class, any continuous lift $\wh f\colon\wh M_\alpha\to\RR$ of $f$ is pre-Lyapunov.

It follows from $\wh F_S(\wh\Rec_\alpha)\subset\ZZ$ and \cite[Lemma 1.7]{martyPS1} that:

\begin{lemma}\label{lem-F-ps}
	Let~$S$ be a partial cross-section. The map $-F_{S}$ is pre-Lyapunov. Thus, using $\alpha=[S]$, $-\alpha$ is quasi-Lyapunov and $F_{S}(\Rec_\alpha)=\{0\}$ hold true.
\end{lemma}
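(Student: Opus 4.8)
The plan is to pass to the $\alpha$-equivariant lift $\wh F_S\colon\wh M_\alpha\to\RR$ of $F_S$ (so that $F_S\circ\pi_\alpha\equiv\wh F_S\smod1$) and read off from the flow-box construction its structural features: $\wh F_S$ is continuous and non-decreasing along the lifted flow, it takes integer values exactly on $\pi_\alpha^{-1}\bigl(M\setminus\varphi_{(-\epsilon,\epsilon)}(S)\bigr)$, and on each connected component of the complement of this set --- a lift of the flow box of $S$ --- it is \emph{strictly} increasing along the flow, sweeping out an interval $(k,k+1)$ with $k\in\ZZ$. With these features in hand the argument has three steps.

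First I would establish the auxiliary fact $\wh F_S(\wh\Rec_\alpha)\subset\ZZ$ invoked in the statement. If some $\wh x\in\wh\Rec_\alpha$ had $\wh F_S(\wh x)\notin\ZZ$, then $\wh x$ would lie inside a lifted flow box, where $\wh F_S$ is strictly increasing along the flow; since $\wh F_S$ is non-decreasing along the flow everywhere, the forward orbit of $\wh x$ would stay permanently strictly above the level $\wh F_S(\wh x)$, so an $\epsilon$-pseudo-orbit returning near $\wh x$ would have to drop back across that level by jumping. Ruling out that a long pseudo-orbit accumulates enough small jumps to do this --- equivalently, that the recurrence chains of the lifted flow are carried into the locus where $\wh F_S$ is locally constant along the flow, i.e.\ into $\wh F_S^{-1}(\ZZ)$ --- is exactly the pseudo-orbit bookkeeping performed in \cite[Lemma~1.7]{martyPS1}, which I would cite here (Lemma~\ref{lem-tot-disc} is the companion fact that the chain quotient is totally disconnected).

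Next, since $-\wh F_S$ is continuous, non-increasing along the lifted flow, and (by the first step) integer-valued on $\wh\Rec_\alpha$, \cite[Lemma~1.7]{martyPS1} yields that $-\wh F_S$ is pre-Lyapunov, i.e.\ $\wh x\recto\wh y$ forces $\wh F_S(\wh x)\le\wh F_S(\wh y)$; as any continuous lift of $-F_S$ to $\wh M_{-\alpha}=\wh M_\alpha$ differs from $-\wh F_S$ by an integer constant, this is precisely the assertion that $-F_S$ is pre-Lyapunov. For the two consequences: $F_S(\Rec_\alpha)=\{0\}$ follows because an $\alpha$-recurrent point $x$ carries, for every small $\epsilon$ and every $T$, a periodic $\epsilon$-pseudo-orbit $\gamma$ with $\alpha([\gamma])=0$, which lifts to a \emph{periodic} $\epsilon$-pseudo-orbit of the lifted flow through a lift $\wh x$ of $x$ (it closes up precisely because $\alpha([\gamma])=0$); hence $\wh x\in\wh\Rec_\alpha$ and $\wh F_S(\wh x)\in\ZZ$, so $F_S(x)=0$ (in particular $\Rec_\alpha\cap S=\emptyset$, which feeds into Theorem~\ref{thm-ps-rec-disjoint}). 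That $-\alpha$ is quasi-Lyapunov I would extract from the pre-Lyapunov $(-\alpha)$-equivariant map $-\wh F_S$: controlling jumps once more (again via \cite[Lemma~1.7]{martyPS1}) one obtains $\alpha([\gamma])\ge 0$ for every periodic $\epsilon$-pseudo-orbit with $\epsilon$ small, i.e.\ $(-\alpha)\bigl(D_{\varphi,\epsilon,T}\bigr)\le 0$, and then Lemma~\ref{lem-part1-qL} concludes.

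The one non-formal point is the first step. A naive estimate on a single $\epsilon$-pseudo-orbit --- bounding each jump by a modulus of continuity of $F_S$ and the number of jumps by $\len(\gamma)/T$ --- only gives $\alpha([\gamma])\ge -C(\epsilon)\len(\gamma)$ with $C(\epsilon)\to0$, which does not prevent a very long pseudo-orbit from recrossing a non-integer level of $\wh F_S$; the refined argument that handles this accumulation is exactly the content of \cite[Lemma~1.7]{martyPS1}. Once it is available, every remaining step above is formal manipulation with the flow-box description of $F_S$ and the definitions of pre-Lyapunov and quasi-Lyapunov.
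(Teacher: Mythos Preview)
Your proposal is correct and follows essentially the same route as the paper: the paper's entire argument is the one-line remark preceding the lemma, namely that the conclusion ``follows from $\wh F_S(\wh\Rec_\alpha)\subset\ZZ$ and \cite[Lemma 1.7]{martyPS1}'', and your write-up is a faithful unpacking of precisely those two ingredients, including the correct identification of the non-trivial pseudo-orbit bookkeeping as the content of \cite[Lemma 1.7]{martyPS1}. The only cosmetic difference is that for the consequence ``$-\alpha$ is quasi-Lyapunov'' the paper (in the proof of Theorem~\ref{thm-ps-to-subL}) appeals to Theorem~\ref{thm-spectral-decomp} rather than to Lemma~\ref{lem-part1-qL}, but both routes are valid and rest on the same underlying estimate.
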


We now prove the three theorems stated earlier. First the existence of partial cross-sections in the case $\alpha\neq 0$.

\begin{proof}[Proof of Theorem~\ref{thm-ps-to-subL}]
	Take a non-zero class~$\alpha$ in $H^1(M,\ZZ)$.

	Assume that $-\alpha$ is quasi-Lyapunov. Then there exists an $-\alpha$-equivariant Lyapunov map $g\colon\wh M_\alpha\to\RR$ (see Theorem~\ref{thm-spectral-decomp}). It follows from Lemma~\ref{lem-tot-disc} that $g(\wh\Rec_\alpha)$ has an empty interior. Note that $g$ is surjective, since it is continuous and~$\alpha$-equivariant on a connected set. So there exists $s$ in its image which does not lie in $g(\Rec_\alpha)$. Hence, by Lemma~\ref{lem-pre-image-ps}, the image in $M$ of $g^{-1}(\{s\})$ is a partial cross-section cohomologous to~$\alpha$.

	Assume now that there exists a partial cross-section~$S$ cohomologous to~$\alpha$. Denote by $\wh F_S\colon\wh M_\alpha\to\RR$ a lift of the map $F_S$ constructed above. It follows from above that $-\wh F_S$ is pre-Lyapunov. The map $-\wh F_S$ is $-\alpha$-equivariant, so $-\alpha$ is quasi-Lyapunov (see Theorem~\ref{thm-spectral-decomp}).
\end{proof}

Secondly we prove the existence of partial cross-sections in the case $\alpha=0$.

\begin{proof}[Proof of Theorem~\ref{thm-ps-to-non-rec}]
	Assume that $\varphi$ is not chain recurrent. Then it has at least two recurrence chains.
	It follows from Theorem~\ref{thm-Conley-L} that there exists a Lyapunov map $f\colon M\to\RR$ for $\varphi$. By assumption, $f$ admits distinct values on distinct recurrence chains, so $f$ is not constant. Since it is continuous on a connected set, $f(M)$ is a closed and non-trivial interval. It follows from Lemma~\ref{lem-tot-disc} that $f(\Rec_0)$ has empty interior inside $\RR$, so there exists a value $t$ in $f(M)$ outside $f(\Rec_0)$. It follows from Lemma~\ref{lem-pre-image-ps} that $f^{-1}(\{t\})$ is a null-cohomologous partial cross-section.

	Conversely, assume that there exists a null-cohomologous partial cross-section~$S$. It follows from Lemma~\ref{lem-F-ps} that~$S$ is disjoint from $\Rec_0$. Thus, $\varphi$ is not chain recurrent.
\end{proof}

End now prove that the~$\alpha$-recurrent set is the complementary of the partial cross-section cohomologous to~$\alpha$.

\begin{proof}[Proof of Theorem~\ref{thm-ps-rec-disjoint}]
	Let~$\alpha$ be in $H^1(M,\ZZ)$ with $\PS_\varphi(\alpha)\neq\emptyset$.
	According to Theorem~\ref{thm-spectral-decomp} in the case $\alpha\neq 0$, and to Theorem~\ref{thm-Conley-L} in the case $\alpha=0$, there exists an~$\alpha$-equivariant map $f\colon\wh M_\alpha\to\RR$ so that $-f$ is Lyapunov. It follows from Lemma~\ref{lem-tot-disc} that $f(\wh\Rec_\alpha)$ has an empty interior. 
	
	Take a point $p$ in $M$ that is not~$\alpha$-recurrent, and $\wh p$ a lift of $p$ in $\wh M_\alpha$. 
	Denote by $\wh\varphi^\alpha$ the lifted flow on $\wh M_\alpha$.
	Since $f$ is increasing along the flow on the orbit of $\wh p$, there exists $s$ in $\RR$ for which $t=f\circ\wh\varphi^\alpha_s(\wh p)$ does not lie in~$f(\wh\Rec_\alpha)$. The image of $f^{-1}(t)$ in $M$ is a partial cross-section cohomologous to~$\alpha$, according to Lemma~\ref{lem-pre-image-ps}. Denote it by~$S$. Then $\varphi_{-s}(S)$ is also a partial cross-section cohomologous to~$\alpha$, and it contains $p$. So any point outside~$\Rec_\alpha$ is contained in a partial cross-section cohomologous to~$\alpha$.

	We now prove the converse. Let~$S$ be a partial cross-section cohomologous to~$\alpha$, and $F_{S}\colon M\to\bfrac{\RR}{\ZZ}$ the map defined above. It follows from Lemma~\ref{lem-F-ps} that $F_{S}(\wh\Rec_\alpha)=\{0\}$. So $F_{S}(S)=\tfrac{1}{2}$ implies that~$S$ is disjoint from~$\Rec_\alpha$.
\end{proof}

Let us now sketch a proof of Fried's classification, in the general setting.

\begin{proof}[Sketch of proof of Theorem~\ref{thm-Fried} in the general settings]
	Take $\alpha\neq0$ and assume that $\alpha(D_\varphi)>0$ holds. From Proposition~\ref{prop-part1-empty-rec}, $-\alpha$ is quasi-Lyapunov and~$\Rec_\alpha$ is empty. By Theorem~\ref{thm-spectral-decomp}, there exists an~$\alpha$-equivariant map $f\colon\wh M_\alpha\to\RR$ so that $-f$ is Lyapunov. In particular $f$ is everywhere increasing along the flow.  Lemma~\ref{lem-pre-image-ps} implies that any level set~$S$ of $f$ is a partial cross-section cohomologous to~$\alpha$. And since $f$ is increasing along the flow, any orbit of the flow in $\wh M_\alpha$ intersects~$S$ in exactly one point. So~$S$ is a global cross-section. 
	
	Each step above has a converse. So if there exists a global cross-section cohomologous to~$\alpha$, then $\alpha(D_\varphi)>0$.

	Let~$\alpha$ and $f$ be as above, and take $S_1,S_2$ be two global cross-sections cohomologous to~$\alpha$, and $\wh S_i$ a lift of $S_i$ in $\wh M_\alpha$. As said above, any orbit in $\wh M_\alpha$ intersects exactly once $\wh S_i$. Following the flow from $\wh S_1$ to $\wh S_2$ induces an isotopy along the flow from $\wh S_1$ to $\wh S_2$. And its image in $M$ is an isotopy along the flow from $S_1$ to $S_2$.
\end{proof}

\subsection{Equivalence with pre-Lyapunov maps}\label{sec-ps-subL-classification}

We give a bijection between $\PS_\varphi(\alpha)$ and a set of homotopy classes of pre-Lyapunov maps.

Denote by 
$[(M,\Rec_\alpha)\col(\bfrac{\RR}{\ZZ},0)]_\alpha$
the set of maps $f\colon M\to\bfrac{\RR}{\ZZ}$, cohomologous to~$\alpha$, which satisfy $f(\Rec_\alpha)=0$ and so that $-f$ is pre-Lyapunov. We consider maps in $[(M,\Rec_\alpha)\col(\bfrac{\RR}{\ZZ},0)]_\alpha$ up to homotopy through that set.

In the case $\alpha=0$, we denote by $0$ the homotopy class of functions in $[(M,\Rec_\alpha)\col(\bfrac{\RR}{\ZZ},0)]_\alpha$ that contains the constant map~0. The set of non-zero homotopy classes inside $[(M,\Rec_\alpha)\col(\bfrac{\RR}{\ZZ},0)]_\alpha$ is denoted by \emph{$\pL(\alpha)$}. As discussed below, every element in $\pL(\alpha)$ corresponds to a partial cross-section cohomologous to $\alpha$. In the case $\alpha=0$, we remove the zero map since it corresponds to the empty set, which by convention is not a partial cross-section. 

Note that given a partial cross-section~$S$ cohomologous to~$\alpha$, the function $F_{S}$ (defined previously) lies in $[(M,\Rec_\alpha)\col(\bfrac{\RR}{\ZZ},0)]_\alpha$, which is not homotopic to a constant. The image of $F_S$ in $\pL(\alpha)$ does not depend on the choices made to build $F_S$, and it is invariant by isotopy along the flow on~$S$. So the map $S\mapsto F_{S}$ induces a map from $\PS_\varphi(\alpha)\to\pL(\alpha)$. We denote that map by $[F]$.

\begin{theorem}\label{thm-ps-classification-1}
	Assume $M$ compact and connected. For any~$\alpha$ in $H^1(M,\ZZ)$, the map 
	$\PS_\varphi(\alpha)\xrightarrow{[F]}\pL(\alpha)$
	is a bijection.
\end{theorem}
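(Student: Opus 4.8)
The plan is to construct an explicit inverse to $[F]$ and check that both compositions are the identity. Given a homotopy class in $\pL(\alpha)$, represented by a pre-Lyapunov map $f\colon M\to\bfrac{\RR}{\ZZ}$ with $f(\Rec_\alpha)=0$, I would first lift $f$ to an $\alpha$-equivariant map $\wh f\colon\wh M_\alpha\to\RR$. Since $-\wh f$ is pre-Lyapunov but not necessarily Lyapunov or even decreasing along the flow, the naive level set $\wh f^{-1}(\{t\})$ need not be transverse to the flow, so the first real step is a \emph{perturbation}: I would modify $f$ within its homotopy class to a map $f'$ that is strictly increasing along the flow on a neighborhood of some level set $f'^{-1}(\{t_0\})$, while keeping $f'(\Rec_\alpha)=0$ and $f'$ still pre-Lyapunov. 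The point is that on the compact set $M\setminus\Rec_\alpha$ the map is ``not stuck'', and one can use a Conley/Lyapunov-type smoothing (or compose with an equivariant Lyapunov function from Theorem~\ref{thm-spectral-decomp}, or Theorem~\ref{thm-Conley-L} when $\alpha=0$, supported away from $\Rec_\alpha$) to push the level set to one that is genuinely transverse. Then Lemma~\ref{lem-pre-image-ps} (second, weaker hypothesis) produces a partial cross-section $S$ cohomologous to $\alpha$, and I would define the candidate inverse by $[f]\mapsto [S]\in\PS_\varphi(\alpha)$.

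The second step is well-definedness of this assignment: different representatives $f$ of the same class in $\pL(\alpha)$, and different choices of $t_0$ and of the perturbation, must yield isotopic-along-the-flow partial cross-sections. A homotopy through $[(M,\Rec_\alpha)\col(\bfrac{\RR}{\ZZ},0)]_\alpha$ between two such maps lifts to an $\alpha$-equivariant homotopy on $\wh M_\alpha$, and after a parametrized version of the perturbation above one gets a homotopy of partial cross-sections, hence (by the earlier lemma that any isotopy among partial cross-sections is an isotopy along the flow) an isotopy along the flow. The choice of $t_0$ within a ``regular'' interval is handled by following the flow, exactly as in the proof of Theorem~\ref{thm-ps-rec-disjoint}: two level sets in the same transverse slab are connected by flowing. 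I would also need to check the $\alpha=0$ bookkeeping — that a non-constant class in $\pL(0)$ cannot produce the empty surface, which follows since such an $f$ takes a value not in $f(\Rec_0)$.

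The third step is to verify the two compositions. For $[F]\circ(\text{inverse})=\id$: starting from $f$, building $S$, then forming $F_S$, the maps $f$ and $F_S$ are both pre-Lyapunov, cohomologous to $\alpha$, vanish on $\Rec_\alpha$, and have $S$ as the relevant level set (up to flowing), so a straight-line-type homotopy in $\bfrac{\RR}{\ZZ}$ — carried out carefully so as to stay in the pre-Lyapunov class, using that $S$ separates the slab — connects them; here the transversality of $S$ is what keeps the intermediate maps pre-Lyapunov. For $(\text{inverse})\circ[F]=\id$: starting from a partial cross-section $S$, the map $F_S$ is already strictly increasing along the flow on the slab $\varphi_{[-\epsilon,\epsilon]}(S)$, so no perturbation is needed and $F_S^{-1}(\tfrac12)=S$ on the nose, giving back $S$ up to isotopy along the flow.

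The main obstacle is the perturbation step: turning a merely pre-Lyapunov map into one whose relevant level set is transverse to the flow, \emph{while staying inside the prescribed class} (cohomologous to $\alpha$, vanishing on $\Rec_\alpha$, pre-Lyapunov) and doing so compatibly in one-parameter families for the well-definedness argument. The key input is Theorem~\ref{thm-spectral-decomp}: $-\alpha$ quasi-Lyapunov gives an $\alpha$-equivariant \emph{Lyapunov} map $g$, which is strictly increasing off $\Rec_\alpha$; the device is to replace $\wh f$ by $\wh f + \delta\,(g - \text{(piecewise constant matching of }g\text{ to }\wh f\text{ on recurrence chains)})$ for small $\delta$, or more robustly to interpolate between $\wh f$ and $g$ near the chosen level while controlling values on $\wh\Rec_\alpha$ via total disconnectedness (Lemma~\ref{lem-tot-disc}). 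Making this interpolation respect equivariance and the pre-Lyapunov inequality simultaneously, and then making it parametric, is the technical heart of the proof; everything else is the routine homotopy bookkeeping sketched above.
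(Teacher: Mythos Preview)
Your overall architecture matches the paper's: prove surjectivity by taking a representative $g$, perturbing it to be strictly increasing along the flow away from $\Rec_\alpha$, reading off $S=g^{-1}(\tfrac12)$ via Lemma~\ref{lem-pre-image-ps}, and then linearly interpolating between $F_S$ and $g$ (both vanish on $\Rec_\alpha$ and agree on $S$); prove injectivity by performing the same perturbation on a whole homotopy $g_t$ and reading off an isotopy $S_t=g_t^{-1}(\tfrac12)$. The difference is entirely in the perturbation device.

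The paper does not add a Lyapunov function. It \emph{convolves along the flow}: replace $g$ by $x\mapsto\int_\RR g(\varphi_s(x))\,h_a(s)\,ds$ for a Gaussian $h_a$. Since $-g$ is pre-Lyapunov, the average remains pre-Lyapunov; since $\Rec_\alpha$ is flow-invariant, the condition $g(\Rec_\alpha)=0$ is preserved; the cohomology class is unchanged; and the averaged map is strictly increasing along the flow at every point where $g$ is not locally constant along the orbit, in particular on $g^{-1}((\bfrac{\RR}{\ZZ})\setminus\{0\})$. This single move dissolves your ``main obstacle'': there is no matching on recurrence chains, equivariance is automatic, and the construction is trivially parametric --- for injectivity the paper convolves the entire family $g_t$ with a fixed $h_1$ and connects to the endpoints $F_{S_i}$ by letting $a\to+\infty$.

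Your device --- perturbing by $\delta\bigl(g-\text{(piecewise constant matching)}\bigr)$ for a genuine Lyapunov $g$ from Theorem~\ref{thm-spectral-decomp} --- can likely be pushed through, but the matching step is exactly where the difficulty hides: extending a function that equals $g$ on each recurrence chain to all of $\wh M_\alpha$ while keeping the difference strictly increasing is delicate when chains accumulate, and the parametric version compounds this. The convolution trick sidesteps all of it and is the cleaner tool here.
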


\begin{proof}
	Let us first prove that $[F]$ is surjective.
	Take $g$ in $[(M,\Rec_\alpha)\col(\bfrac{\RR}{\ZZ},0)]_\alpha$, a scalar $t$ in $\bfrac{\RR}{\ZZ}$ and some $\epsilon>0$. Up to replacing $g$ with a convolution along the flow between $g$ and a Gaussian map, we may assume that $g$ is increasing along the flow on $g^{-1}((\bfrac{\RR}{\ZZ})\setminus\{0\})$.
	We claim that $g$ is surjective. In the case $\alpha\neq 0$, it follows from the fact that $g$ is not null-cohomologous. In the case $\alpha=0$, it follows from the fact that $g$ is not constant.

	It follows from Lemma~\ref{lem-pre-image-ps} that $S=g^{-1}(\tfrac{1}{2})$ is a partial cross-section cohomologous to~$\alpha$. The maps $F_{S}$ and $g$ are cohomologous, and they coincide on $\Rec_\alpha\cup S$. So a linear interpolation between $F_{S}$ and $g$ (fixing $\Rec_\alpha\cup S$), yields a homotopy inside $[(M,\Rec_\alpha)\col(\bfrac{\RR}{\ZZ},0)]_\alpha$. Therefore, the image of $g$ and $F_{S}$ in $\pL(\alpha)$ are equal, and $[F]$ is surjective.

	We now prove that $[F]$ is injective. Let $S_0,S_1$ be two partial cross-sections cohomologous to~$\alpha$, and which satisfy that $F_{S_0}$ and $F_{S_2}$ are homotopic inside the set $[(M,\Rec_\alpha)\col(\bfrac{\RR}{\ZZ},0)]_\alpha$. We write $g_i=F_{S_i}$, and we take a homotopy $g_t$ for $t$ in $[0,1]$ from $F_{S_1}$ to $F_{S_2}$. Take a Gaussian map $h_a(t)=\sqrt{\tfrac{a}{\pi}}\e^{-at^2}$. Consider the homotopy $g_0*_\varphi h_a$ with $a$ that goes from $+\infty$ to $1$. That homotopy goes from $g_0$ to $g_0*_\varphi h_1$. We concatenate to it the homotopies $g_t*_\varphi h_1$ for $t$ in $[0,1]$ and $g_1*_\varphi h_a$ for $a$ in $[1,+\infty[$. It yields a homotopy inside $[(M,\Rec_\alpha)\col(\bfrac{\RR}{\ZZ},0)]_\alpha$ from $g_0$ to $g_1$. 
	
	We denote by $f_t$, with $t$ in $[0,1]$, a reparametrization of the homotopy described above. Note that for any $0<t<1$, $f_t$ is increasing along the flow inside $f_t^{-1}(\bfrac{\RR}{\ZZ}\setminus\{0\})$. It follows from Lemma~\ref{lem-pre-image-ps} that $S_t=f_t^{-1}(\tfrac{1}{2})$ is a partial cross-section cohomologous to~$\alpha$. Hence, $S_t$ is an isotopy of partial cross-sections from $S_0$ and $S_1$, and is Thus, an isotopy along the flow.
\end{proof}

\subsection{Equivalence with Conley's order}\label{sec-conley-order}

We give another bijection from $\PS_\varphi(\alpha)$, to a more combinatorial set.


Denote by $\wh\varphi^\alpha$ flow on $\wh M_\alpha$ obtained by lifting $\varphi$, and $\wh\Rec_\alpha^*$ the set of recurrence chains of $\wh\varphi^\alpha$, equipped with the quotient topology given by the projection $\wh\Rec_\alpha\to\wh\Rec_\alpha^*$. It is equipped with a (partial) order $\recto$ defined by: $R_1\recto R_2$ holds if we have $x_1\recto x_2$ for some/any $x_1$ in $ R_1$ and $x_2$ in $ R_2$. Note that the order is invariant under the action of $\ZZ$ over $\wh M_\alpha$. The order $(\wh\Rec_\alpha^*,\recto)$ is sometimes called Conley's order. 

\begin{definition}
	We define $\wh I_\alpha(\wh\Rec_\alpha^*,\ZZ)$ to be the set of~$\alpha$-equivariant, non-constant, non-decreasing and continuous (equivalently locally constant) maps from $(\wh\Rec_\alpha^*,\recto)$ to $(\ZZ,\leq)$. 

	We also define $I_\alpha(\wh\Rec_\alpha^*,\ZZ)$ to be the set of equivalence classes of functions in $\wh I_\alpha(\wh\Rec_\alpha^*,\ZZ)$ up to an additive constant. That is $f\simeq g$ if $f-g$ is a constant. 
\end{definition}

Note that when $\wh\Rec_\alpha^*$ is empty, by convention, $\wh I_\alpha(\wh\Rec_\alpha^*,\ZZ)$ is equal to the singleton with the unique function from $\emptyset$ to $\ZZ$, and $I_\alpha(\wh\Rec_\alpha^*,\ZZ)$ is a singleton too.

Take a map $f$ in $[(M,\Rec_\alpha)\col(\bfrac{\RR}{\ZZ},0)]_\alpha$. It lifts to an~$\alpha$-equivariant map from $\wh M_\alpha$ to $\RR$, whose opposite is pre-Lyapunov. It then restricts to a continuous map $\wh f^*\colon\wh\Rec_\alpha^*\to\ZZ$. The map $\wh f^*$ is non-decreasing since $-f$ is pre-Lyapunov. Thus, $\wh f^*$ belongs to $\wh I_\alpha(\wh\Rec_\alpha^*,\ZZ)$. 
We denote by $[\wh f^*]$ the equivalence class of $\wh f^*$ inside $I_\alpha(\wh\Rec_\alpha^*,\ZZ)$.

Let $f_s$, for $s$ in $[0,1]$, be a homotopy of maps in $[(M,\Rec_\alpha)\col(\bfrac{\RR}{\ZZ},0)]_\alpha$. We choose lifts $\wh f_s\colon\wh M_\alpha\to\RR$ of the maps $f_s$, continuously in $s$. Then by continuity, the restrictions $\wh f^*_s\colon\wh\Rec_\alpha^*\to\ZZ$ are all equal. 
Thus, $[\wh f^*]$ depends only on the homotopy class of $f$ in $\pL(\alpha)$. Denote by $\pL(\alpha)\xrightarrow{[\wh *]}I_\alpha(\wh\Rec_\alpha^*,\ZZ)$ the corresponding map.

\begin{theorem}\label{thm-preL-to-order}
	Assume $M$ compact and connected. 
	Let~$\alpha$ be in $H^1(M,\ZZ)$ so that $\PS_\varphi(\alpha)$ is not empty. 
	Then the map $\pL(\alpha)\xrightarrow{[\wh *]}I_\alpha(\wh\Rec_\alpha^*,\ZZ)$ is a bijection.
\end{theorem}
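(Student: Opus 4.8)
The plan is to prove directly that $[\wh *]$ is a bijection, without passing through Theorem~\ref{thm-ps-classification-1}. Two observations will be used throughout. First, the defining implication ``$x\recto y\Rightarrow\wh h(x)\le\wh h(y)$'' that makes $-\wh h$ pre-Lyapunov is stable under convex combinations of $\wh h$. Second, since $\PS_\varphi(\alpha)\neq\emptyset$, the argument in the proof of Theorem~\ref{thm-ps-rec-disjoint} provides an $\alpha$-equivariant continuous $\wh g\colon\wh M_\alpha\to\RR$ with $-\wh g$ Lyapunov; recall also that $\wh g(\wh\Rec_\alpha)$ is closed, invariant under $s\mapsto s+n_\alpha$, and has empty interior (Lemma~\ref{lem-tot-disc}).

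For \textbf{injectivity}, let $f_0,f_1$ represent classes in $\pL(\alpha)$ with $[\wh f_0^*]=[\wh f_1^*]$, and fix $\alpha$-equivariant lifts $\wh f_i\colon\wh M_\alpha\to\RR$, so each $-\wh f_i$ is pre-Lyapunov and $\wh f_i(\wh\Rec_\alpha)\subset\ZZ$. The hypothesis says that on $\wh\Rec_\alpha$ the two restrictions differ by a constant, necessarily an integer; adding that integer to the lift $\wh f_1$ (still a lift of $f_1$), we may assume $\wh f_0=\wh f_1$ on $\wh\Rec_\alpha$. The straight-line homotopy $\wh f_s\defi(1-s)\wh f_0+s\wh f_1$, $s\in[0,1]$, is then $\alpha$-equivariant, is $\ZZ$-valued on $\wh\Rec_\alpha$ (where it is constantly $\wh f_0$), and $-\wh f_s$ is pre-Lyapunov by convexity. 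It descends to a homotopy $f_s\colon M\to\bfrac{\RR}{\ZZ}$ staying inside $[(M,\Rec_\alpha)\col(\bfrac{\RR}{\ZZ},0)]_\alpha$ (each $f_s$ is homotopic to $f_0$, hence cohomologous to $\alpha$, and $f_s(\Rec_\alpha)=0$). Since $\pi_0$ of that space is discrete, $f_0$ and $f_1$ define the same element of $\pL(\alpha)$.

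For \textbf{surjectivity}, take $\psi\in\wh I_\alpha(\wh\Rec_\alpha^*,\ZZ)$. It suffices to build a continuous $\alpha$-equivariant $\wh f\colon\wh M_\alpha\to\RR$ with $-\wh f$ pre-Lyapunov and $\wh f|_{\wh\Rec_\alpha}=\psi$: then $f\defi\wh f\bmod1$ lies in $[(M,\Rec_\alpha)\col(\bfrac{\RR}{\ZZ},0)]_\alpha$, is non-zero in $\pL(\alpha)$ (automatic if $\alpha\neq0$, and if $\alpha=0$ because $\psi$, hence $\wh f$, is non-constant), and $[\wh f^*]=[\psi]$ by construction. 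One cannot merely reparametrize $\wh g$, since $\psi$ need not be monotone for the total order $\wh g$ induces on recurrence chains. Instead we use the level sets of $\psi$ to refine the Conley filtration: for each $m\in\ZZ$ the set $\psi^{-1}((-\infty,m])\subset\wh\Rec_\alpha^*$ is clopen and closed under $\recto$-predecessors, hence is the recurrence-chain trace of a repeller, and the strata $\psi^{-1}(m)$ are isolated invariant sets through which the flow is monotone in $m$ (if $R\recto R'$ then $\psi(R)\le\psi(R')$). Running the equivariant Conley construction behind Theorem~\ref{thm-spectral-decomp} with a filtration adapted to this decomposition yields an $\alpha$-equivariant $\wh g'$ with $-\wh g'$ Lyapunov such that the sets $\wh g'(\psi^{-1}(m))$, $m\in\ZZ$, lie in pairwise disjoint intervals of $\RR$ ordered increasingly by $m$. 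Then $\Psi\defi\psi\circ(\wh g'|_{\wh\Rec_\alpha^*})^{-1}$ is a well-defined non-decreasing $\ZZ$-valued function on the closed set $Z'\defi\wh g'(\wh\Rec_\alpha)$ with $\Psi(z+n_\alpha)=\Psi(z)+n_\alpha$; extend it to a continuous non-decreasing $\Phi\colon\RR\to\RR$ with $\Phi(s+n_\alpha)=\Phi(s)+n_\alpha$ for all $s$. Finally $\wh f\defi\Phi\circ\wh g'$ works: it is $\alpha$-equivariant (as $\wh g'$ is and $\Phi$ respects the $n_\alpha$-drift), $-\wh f$ is pre-Lyapunov ($\Phi$ non-decreasing, $-\wh g'$ Lyapunov), and $\wh f|_{\wh\Rec_\alpha}=\psi$ (as $\Phi=\Psi$ on $Z'$).

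The \textbf{main obstacle} is the adapted equivariant Conley construction: producing a Lyapunov function on the $\ZZ$-cocompact manifold $\wh M_\alpha$ whose filtration refines the one cut out by the level sets of $\psi$, while keeping it $\alpha$-equivariant and continuous (the latter forcing geometric weights in the underlying series of attractor-block functions). This is a relative form of Theorem~\ref{thm-spectral-decomp}; if the proof in \cite{martyPS1} already proceeds through a filtration by attractor blocks, the refinement is immediate, and otherwise one inserts the repellers $\psi^{-1}((-\infty,m])$ into that filtration and reruns the argument. Everything else — injectivity, the reduction, and the monotone reparametrization $\Phi$ — is elementary.
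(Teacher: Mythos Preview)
Your injectivity argument is exactly the paper's: choose lifts that agree on $\wh\Rec_\alpha$ and take the straight-line homotopy. No issues there.

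For surjectivity the paper does something much shorter: it simply cites \cite[Theorem~4.11]{martyPS1}, which already produces, for any $\psi\in\wh I_\alpha(\wh\Rec_\alpha^*,\ZZ)$, a representative in $[(M,\Rec_\alpha)\col(\bfrac{\RR}{\ZZ},0)]_\alpha$. Your approach unpacks what such a theorem must contain: you reduce to building an $\alpha$-equivariant Lyapunov $\wh g'$ whose induced total order on recurrence chains refines the partial order imposed by the level sets of $\psi$, then post-compose with a monotone staircase $\Phi$. The reparametrization step is clean, and the reduction is correct; the only substantive content is the existence of $\wh g'$. You flag this honestly as the ``main obstacle'' and defer it to a refinement of the proof of Theorem~\ref{thm-spectral-decomp} in \cite{martyPS1}. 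That is precisely the work done by the cited Theorem~4.11, so your proof and the paper's are pulling on the same external input --- you have just made explicit what the black box must deliver.

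One point worth tightening: for $\Psi=\psi\circ(\wh g'|_{\wh\Rec_\alpha^*})^{-1}$ to be non-decreasing you need the implication $\wh g'(R)<\wh g'(R')\Rightarrow\psi(R)\le\psi(R')$, which is \emph{not} a consequence of $-\wh g'$ being Lyapunov (incomparable chains can be ordered either way). This is exactly the ``adapted'' requirement you impose on $\wh g'$, so the argument is internally consistent, but it is worth stating the implication explicitly rather than leaving it inside the phrase ``pairwise disjoint intervals ordered increasingly by $m$''. Also note the paper treats the case $\wh\Rec_\alpha^*=\emptyset$ separately via Fried's theorem; your argument covers it implicitly (the conditions on $\wh\Rec_\alpha$ become vacuous and the linear homotopy shows $\pL(\alpha)$ is a singleton), but a sentence acknowledging this would not hurt.
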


In this theorem, it is necessary to assume $\PS_\varphi(\alpha)\neq\emptyset$. The set $I_\alpha(\wh\Rec_\alpha^*,\ZZ)$ does not contain enough information to determine whether $\PS_\varphi(\alpha)$ is empty or not.

\begin{proof}
	Assume first that $\wh\Rec_\alpha^*$ is empty. Then~$\alpha$ is not zero, since the recurrent set on $M$ is not empty. Thus, $-\alpha$ is quasi-Lyapunov from Theorem~\ref{thm-ps-to-subL}. By Proposition~\ref{prop-part1-empty-rec}, we have $\alpha(D_\varphi)>0$. It follows from Theorem~\ref{thm-Fried}, that $\PS_\varphi(\alpha)$ is a singleton, and so is $I_\alpha(\wh\Rec_\alpha^*,\ZZ)$. Thus, $[\wh *]$ is bijective.

	We now assume $\wh\Rec_\alpha^*\neq\emptyset$.
	We first show the injectivity of the map $[\wh*]$.
	Take two functions $f,g$ in $[(M,\Rec_\alpha)\col(\bfrac{\RR}{\ZZ},0)]_\alpha$ and assume that $[\wh f^*]=[\wh g^*]$ holds. From the equality, we may choose two lifts $\wh f,\wh g\colon\wh M_\alpha\to\RR$ of $f$ and $g$, so that they coincide on the subset $\wh\Rec_\alpha$. 
	
	Define $\wh h_t=t\wh f+(1-t)\wh g$, for $t$ in $[0,1]$. It provides a homotopy from $\wh f$ to $\wh g$. 
	The map $\wh h_t$ projects to a map $h_t\colon M\to\bfrac{\RR}{\ZZ}$, which lies inside $[(M,\Rec_\alpha)\col(\bfrac{\RR}{\ZZ},0)]_\alpha$. We have $h_0=g$ and $h_1=f$, so the image of $f$ and $g$ inside $\pL(\alpha)$ are equal.

	The surjectivity is a direct consequence of \cite[Theorem 4.11]{martyPS1}.
\end{proof}

\section{Cardinality of $\PS_\varphi(\alpha)$}\label{sec-card-PS}

In this section, we characterize when $\PS_\varphi(\alpha)$ is finite and when it is a singleton.

\begin{theorem}\label{thm-ps-countable}
	For any~$\alpha$ in $ H^1(M,\ZZ)\setminus\{0\}$, the set $\PS_\varphi(\alpha)$ is at most countable.
\end{theorem}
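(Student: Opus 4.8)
By the chain of bijections already established (Theorems~\ref{thm-ps-classification-1} and~\ref{thm-preL-to-order}), proving $\PS_\varphi(\alpha)$ is at most countable reduces to showing that $I_\alpha(\wh\Rec_\alpha^*,\ZZ)$ is at most countable, since $\PS_\varphi(\alpha)$ is in bijection with $\pL(\alpha)$, and $\pL(\alpha)$ embeds (via $[\wh *]$, which is bijective when $\PS_\varphi(\alpha)\neq\emptyset$ and otherwise the statement is vacuous). So the plan is: first dispose of the empty case, then work entirely inside $I_\alpha(\wh\Rec_\alpha^*,\ZZ)$.

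The core of the argument is that an element of $\wh I_\alpha(\wh\Rec_\alpha^*,\ZZ)$ is a continuous (equivalently, locally constant) $\alpha$-equivariant non-decreasing map $\wh g\colon\wh\Rec_\alpha^*\to\ZZ$, and such a map is determined by very little data. First I would note that $\wh\Rec_\alpha^*$ is the quotient of the recurrent set $\wh\Rec_\alpha$ of the lifted flow; by Lemma~\ref{lem-tot-disc} it is totally disconnected, and it carries a free $\ZZ$-action whose quotient is $\Rec_\alpha^*$, the (compact, totally disconnected) space of $\alpha$-recurrence chains in $M$. A locally constant map to the discrete set $\ZZ$ corresponds to a decomposition of $\wh\Rec_\alpha^*$ into clopen pieces indexed by the value. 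Key observation: since $\wh g$ is non-decreasing for the Conley order and $\alpha$-equivariant (so $\wh g(k\cdot R)=\wh g(R)+kn_\alpha$), the preimage of any value is clopen, and $\wh g$ is completely determined by the clopen subset $\wh g^{-1}(\,(-\infty,0]\,)\subset\wh\Rec_\alpha^*$ (say), which is a down-set for the order, invariant under translation by the appropriate index shift. Passing to the quotient $\Rec_\alpha^*$, this becomes: $\wh g$ (up to additive constant) is determined by a single clopen down-set of $\Rec_\alpha^*$ together with the finitely many ``levels'' at which the equivariant picture cycles — and there are only countably many clopen subsets of $\Rec_\alpha^*$. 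Indeed a compact totally disconnected (i.e.\ profinite-like, second countable because $M$ is a compact metric space and $\Rec_\alpha$ is a compact subset) space has at most countably many clopen subsets: the clopen algebra of a second-countable compact Hausdorff zero-dimensional space is countable, since clopen sets are finite unions of basic clopen sets drawn from a countable base closed under the relevant Boolean operations.

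Concretely the steps are: (1) if $\PS_\varphi(\alpha)=\emptyset$ there is nothing to prove; otherwise invoke Theorems~\ref{thm-ps-classification-1} and~\ref{thm-preL-to-order} to get $\PS_\varphi(\alpha)\cong I_\alpha(\wh\Rec_\alpha^*,\ZZ)$. (2) If $\wh\Rec_\alpha^*=\emptyset$, this set is a singleton, done. (3) Otherwise, show $\Rec_\alpha^*$ (the quotient of $\Rec_\alpha$ by $\alpha$-equivalence) is compact, metrizable, and totally disconnected — compactness and metrizability because $\Rec_\alpha\subset M$ is a compact subset of a compact metric space and $\alpha$-equivalence classes are closed, total disconnectedness from Lemma~\ref{lem-tot-disc} applied on $\wh M_\alpha$ passed down. (4) Deduce that $\Rec_\alpha^*$ has a countable base of clopen sets, hence only countably many clopen subsets. (5) Show each element of $\wh I_\alpha(\wh\Rec_\alpha^*,\ZZ)$, up to additive constant, is determined by the data of the clopen sets $\{R : \wh g^*(R)\le n\}$ for $n$ ranging over a period $\{0,\dots,n_\alpha-1\}$ — equivalently by a finite tuple of nested clopen down-sets in $\Rec_\alpha^*$ — so $I_\alpha(\wh\Rec_\alpha^*,\ZZ)$ injects into a finite product of copies of the countable set of clopen subsets of $\Rec_\alpha^*$, and is therefore countable.

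The main obstacle I anticipate is Step~(5): carefully unwinding the $\ZZ$-equivariance to see that an equivariant locally constant non-decreasing map on $\wh\Rec_\alpha^*$ really is pinned down by finitely many clopen down-sets downstairs. One has to check that "non-decreasing for the Conley order" forces the level sets to be nested down-sets and that equivariance reduces the infinitely many level sets $\{\wh g^*\le n\}_{n\in\ZZ}$ to the $n_\alpha$ "fundamental" ones by the relation $\{\wh g^*\le n+n_\alpha\}=(1\cdot)\{\wh g^*\le n\}$ — and that the resulting finite datum genuinely lives in $\Rec_\alpha^*$, not just in $\wh\Rec_\alpha^*$. The topological input (a second-countable zero-dimensional compact space has countably many clopens) is standard, and the bijection reductions are already in hand, so the equivariant bookkeeping is where the real care is needed.
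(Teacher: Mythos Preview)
Your overall strategy coincides with the paper's: reduce via Theorems~\ref{thm-ps-classification-1} and~\ref{thm-preL-to-order} to counting $I_\alpha(\wh\Rec_\alpha^*,\ZZ)$, then exploit that locally constant $\ZZ$-valued maps on compact totally disconnected metrizable spaces are countable. The difference lies entirely in how you achieve compactness. The paper picks a compact $K\subset\wh M_\alpha$ with $\pi_\alpha(K)=M$, notes that every $\ZZ$-orbit of recurrence chains meets $K$, so any $g\in\wh I_\alpha(\wh\Rec_\alpha^*,\ZZ)$ is determined by its values on chains meeting $K\cap\wh\Rec_\alpha$; then it factors through a surjection from the Cantor set $C$ onto $K\cap\wh\Rec_\alpha$ and counts continuous maps $C\to\ZZ$ directly. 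This sidesteps all equivariance bookkeeping.

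Your Step~(5), as written, has a genuine gap. The sublevel sets $\{\wh g^*\le n\}\subset\wh\Rec_\alpha^*$ each project \emph{onto} all of $\Rec_\alpha^*$, so they do not descend to interesting subsets there. If instead you try the level sets $\{\wh g^*=n\}$ for $n=0,\dots,n_\alpha-1$, these project to a clopen partition of $\Rec_\alpha^*$, but that partition does \emph{not} determine $\wh g^*$ up to constant: whenever the graph $G_{\varphi,\alpha}$ is not transitive (as in Example~\ref{ex-non-trans-bis}), one can shift $\wh g^*$ on one component by any multiple of $n_\alpha$ and obtain a distinct element of $I_\alpha$ with the same residue partition---indeed this is exactly how the paper builds infinitely many elements in the proof of Theorem~\ref{thm-finite-pa}. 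A valid repair is to first produce a continuous section $\Rec_\alpha^*\to\wh\Rec_\alpha^*$ (which exists because \v{C}ech $H^1$ vanishes over zero-dimensional compacta) and pull $\wh g^*$ back to a continuous map $\Rec_\alpha^*\to\ZZ$; but the paper's compact-fundamental-domain argument achieves the same end with less machinery.
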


\begin{proof}
	Denote by $C\subset[0,1]$ the standard Cantor (compact) set.
	Let $K\subset\wh M_\alpha$ be a compact subset for which the projection $K\to M$ is surjective. Then $K\cap\wh\Rec_\alpha$ is compact, so there exists a continuous surjection $\wh f\colon C\to K\cap\wh\Rec_\alpha$. Given any map $g$ in $\wh I_\alpha(\wh\Rec_\alpha^*,\ZZ)$, its value is completely determined by the images $g(R)$ for the set of the recurrent chains $R$ that intersects~$K$. Therefore, the continuous map $g\circ f\colon C\to\ZZ$ determines completely $g$. Thus, it is enough to prove that there are only countably many continuous maps from $C$ to $\ZZ$.

	Let $h\colon C\to\ZZ$ be a continuous map. Note that it is locally constant, so we can find a partition of $C$ by finitely many open sets $U_1\cdots U_n$, so that $h$ is constant of each $U_k$. Up to splitting each $U_k$ in finitely many disjoint open subsets, we may assume that $U_k$ can be written as $U_k=C\cap I_k$, where $I_k$ is an interval in $\RR$ whose ends are rational and outside $C$. 
	
	Note that there are only countably many intervals $I$ with $\partial I\subset\QQ$.
	Thus, $h$ is determined by the countable choices of a finite partition of $C$ by sets of the form $C\cap I_k$, and by the countable possible images of $h$ on $C\cap U_k$. Hence, there are no more than countably many continuous functions from $C$ to $\ZZ$.
\end{proof}

Denote by $\wh G_{\varphi,\alpha}$ the oriented graph whose set of vertices is $\wh\Rec_\alpha^*$ and with an edge from $R_1$ to $R_2$ if $R_1\recto R_2$ holds. We also denote by $\recto$ the edges inside $\wh G_{\varphi,\alpha}$. 
The graph $\wh G_{\varphi,\alpha}$ admits a natural $\ZZ$-action. 

\begin{definition}
	We denote by $G_{\varphi,\alpha}$ the oriented graph obtained as the quotient of $\wh G_{\varphi,\alpha}$ by $\ZZ$. Its vertices are the~$\alpha$-recurrence chains of $\varphi$. 
\end{definition}

The graph $G_{\varphi,\alpha}$ is \emph{transitive} if there exist an oriented path between any two~$\alpha$-recurrence chains. Notice that, any path $R_1\recto\cdots\recto R_n$ can be concatenated in a single arrow $R_1\recto R_n$, so $G_{\varphi,\alpha}$ is transitive if and only if it is a complete oriented graph. By convenient, we take the convention that the empty graph is transitive.

\begin{theorem}\label{thm-finite-pa}
	Let~$\alpha$ be a non-zero class in $H^1(M,\ZZ)$ so that $-\alpha$ is quasi-Lyapunov. Then $\PS_\varphi(\alpha)$ is finite if and only if the oriented graph $G_{\varphi,\alpha}$ is finite and transitive.
	
	The set $\PS_\varphi(0)$ is either empty or (infinitely) countable.
\end{theorem}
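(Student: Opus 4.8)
The plan is to leverage the two bijections already established, namely $\PS_\varphi(\alpha)\xrightarrow{[F]}\pL(\alpha)$ (Theorem~\ref{thm-ps-classification-1}) and $\pL(\alpha)\xrightarrow{[\wh *]}I_\alpha(\wh\Rec_\alpha^*,\ZZ)$ (Theorem~\ref{thm-preL-to-order}), so that the whole question becomes: when is $I_\alpha(\wh\Rec_\alpha^*,\ZZ)$ finite? An element of $I_\alpha(\wh\Rec_\alpha^*,\ZZ)$ is an equivalence class (mod additive constants) of an $\alpha$-equivariant, non-constant, non-decreasing, locally constant map $\wh\Rec_\alpha^*\to\ZZ$; equivalently, by $\ZZ$-equivariance, it is determined by its restriction to one fundamental domain, which amounts to a non-decreasing locally constant function on the finite-or-infinite quotient graph $G_{\varphi,\alpha}$ together with the monodromy data encoding $\alpha$. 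So the core reduction is: $\PS_\varphi(\alpha)$ is finite iff there are only finitely many such functions on $G_{\varphi,\alpha}$.

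For the ``if'' direction I would argue: if $G_{\varphi,\alpha}$ is finite and transitive, then the $\ZZ$-action on $\wh G_{\varphi,\alpha}$ has finitely many orbits and transitivity forces any non-decreasing $\ZZ$-equivariant map $\wh\Rec_\alpha^*\to\ZZ$ to be essentially rigid — on each $\ZZ$-orbit it must be an arithmetic-progression-like function, and transitivity links all orbits together, leaving only finitely many choices modulo constants. (One must be a bit careful here: transitivity of $G_{\varphi,\alpha}$ together with non-zero $\alpha$ means the lift $\wh G_{\varphi,\alpha}$ is a total preorder on which the height function is forced to increase by exactly $n_\alpha$ per $\ZZ$-period, pinning it down up to finitely many options.) Conversely, if $G_{\varphi,\alpha}$ is infinite, I would produce infinitely many distinct classes: an infinite transitive $G_{\varphi,\alpha}$ still has a totally ordered (by $\recto$) sequence of distinct chains, and one can independently choose the ``jump locations'' of a monotone $\ZZ$-function at infinitely many spots; if $G_{\varphi,\alpha}$ is finite but \emph{not} transitive, then there are two incomparable chains $R,R'$, and one can build two non-equivalent equivariant monotone functions that disagree on the relative heights of $R$ and $R'$ while agreeing elsewhere — and iterating/combining such freedoms gives infinitely many classes. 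The totally-disconnectedness of $\wh\Rec_\alpha^*$ (Lemma~\ref{lem-tot-disc}) is what guarantees these combinatorially-defined functions are genuinely continuous/locally constant, hence lie in $\wh I_\alpha(\wh\Rec_\alpha^*,\ZZ)$.

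For the last sentence, the $\alpha=0$ case: here $\wh M_0=M$, the $\ZZ$-action is trivial, and $I_0(\Rec^*,\ZZ)$ is the set of non-constant locally constant non-decreasing maps $\Rec^*\to\ZZ$ modulo constants. If $\PS_\varphi(0)\neq\emptyset$ then by Theorem~\ref{thm-ps-to-non-rec} $\varphi$ is not chain recurrent, so $\Rec^*$ has at least two distinct recurrence chains $R_1\not\recto R_2$ or a Lyapunov function separating them; taking a Lyapunov value $t_0$ and pushing it, one gets a two-valued monotone function, and then by adjusting thresholds (using that $\Rec^*$ is totally disconnected and $\Rec$ is compact, so a Lyapunov $f$ on $M$ takes values in a set whose complement is dense) one manufactures countably many non-equivalent such functions — e.g. $\min(k,\lfloor$ suitably rescaled $f\rfloor)$ for $k\in\NN$ — hence $\PS_\varphi(0)$ is countably infinite; it is at most countable by the same Cantor-set argument as in Theorem~\ref{thm-ps-countable} (which did not really use $\alpha\neq 0$).

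The main obstacle I anticipate is the careful bookkeeping in the ``finite $\Rightarrow$ transitive'' necessity and in the ``not transitive $\Rightarrow$ infinite'' construction: one has to handle the $\ZZ$-equivariance constraint correctly (a single incomparable pair in the quotient $G_{\varphi,\alpha}$ lifts to infinitely many incomparable pairs upstairs, and one must check the resulting functions are both $\alpha$-equivariant and still non-decreasing with respect to the lifted order $\recto$), and to verify that distinct combinatorial choices really give distinct classes in $I_\alpha(\wh\Rec_\alpha^*,\ZZ)$ rather than coinciding after the mod-constant identification. Establishing that the constructed locally-constant functions on the (possibly complicated, totally disconnected) space $\wh\Rec_\alpha^*$ extend/restrict appropriately — i.e. that membership in $\wh I_\alpha(\wh\Rec_\alpha^*,\ZZ)$ is genuinely controlled by the combinatorics of $G_{\varphi,\alpha}$ — is the technical heart, and Lemma~\ref{lem-tot-disc} is the tool that makes it go through.
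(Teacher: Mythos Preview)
Your plan is correct and matches the paper's strategy: reduce to $I_\alpha(\wh\Rec_\alpha^*,\ZZ)$ via Theorems~\ref{thm-ps-classification-1} and~\ref{thm-preL-to-order}, then argue combinatorially. Your treatment of ``finite and transitive $\Rightarrow$ finite'' (transitivity bounds the values on each chain) and of ``not transitive $\Rightarrow$ infinite'' (split the vertex set into $A\sqcup B$ with no arrow $B\to A$, add $k$ on $\wh B$) are exactly what the paper does.

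Two places where the paper's execution is simpler than yours. First, for ``$G_{\varphi,\alpha}$ infinite $\Rightarrow \PS_\varphi(\alpha)$ infinite'', your sketch of ``independently choosing jump locations'' is delicate because $\wh\Rec_\alpha^*$ may have accumulation points and local constancy is a genuine constraint; the paper instead fixes an $\alpha$-equivariant Lyapunov $\wh f$, observes $f(\Rec_\alpha)\subset\bfrac{\RR}{\ZZ}$ is infinite and totally disconnected hence has infinitely many complementary intervals $I$, and uses Lemma~\ref{lem-tech-preL-I} to show the collapsed maps $-\pi_I\circ f$ are pairwise non-homotopic in $\pL(\alpha)$. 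Second, for $\alpha=0$, rather than building threshold functions from Lyapunov values, the paper simply takes any $\wh g\in\wh I_0(\Rec^*,\ZZ)$ normalized with $\min\wh g=0$ and notes that $k\wh g$ for $k\geq 1$ are pairwise inequivalent modulo constants.
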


We warn the reader of the following. The two properties of ``$G_{\varphi,\alpha}$ being transitive'' and ``$\varphi$ being chain recurrent'' seem to be very similar. However, none of them implies the other one. Let us describe two counter-examples on the torus. 

\begin{example}\label{ex-non-trans}
	Take $M=\bfrac{\RR^2}{\ZZ^2}$ and let $\varphi$ be the following flow on $M$. It admits two periodic orbits $\delta_1=\{0\}\times\bfrac{\RR}{\ZZ}$ and $\delta_2=\{\tfrac{1}{2}\}\times\bfrac{\RR}{\ZZ}$. Every other orbit converges toward $\delta_0$ in the past and $\delta_1$ in the future. Also all its orbits are transverse to the curve $\gamma=\bfrac{\RR}{\ZZ}\times\{0\}$. Then, $\gamma$ is a global cross-section, cohomologous to $\alpha=[dy]$. Thus, $\varphi$ admits a unique partial cross-section cohomologous to $[dy]$, and $G_{\varphi,\alpha}$ is finite and transitive. But $\varphi$ is not recurrent, as its recurrent set is $\delta_1\cup\delta_2\neq M$.
\end{example}

\begin{example}\label{ex-non-trans-bis}
	Take $M=\bfrac{\RR^2}{\ZZ^2}$ and let $\varphi$ be the flow on $M$ obtained by taking the linear flow directed by $\partial y$, and slowing it down so that it admits exactly two fixed points $p=(0,0)$ and $q=(\tfrac{1}{2},0)$. We let the reader check that $\varphi$ is chain recurrent. We set $\alpha=[dy]$. The graph $G_{\varphi,\alpha}$ has two vertices, that correspond to $p$ and $q$, and its only edges are loops from a vertex to itself. Thus, it is not transitive, but $\varphi$ is chain recurrent.
\end{example}

\begin{proof}[Proof of Theorem~\ref{thm-finite-pa}, in the case $\alpha=0$]
	When $\varphi$ is chain recurrent, it follows from Theorem~\ref{thm-ps-to-non-rec} that $\PS_\varphi(0)$ is empty.
	
	We now assume that $\varphi$ is not chain recurrent. Then $\PS_\varphi(0)$ is in bijection with $I_\alpha(\wh\Rec_\alpha^*,\ZZ)$. For any $g$ in $I_\alpha(\wh\Rec_\alpha^*,\ZZ)$, it admits a unique representative $\wh g$ in $\wh I_\alpha(\wh\Rec_\alpha^*,\ZZ)$, which is not constant, and satisfies $\min(g)=0$. Then for any integer $k\geq 1$, $k\wh g$ also belongs to $\wh I_\alpha(\wh\Rec_\alpha^*,\ZZ)$. And for any distinct $i,j\geq 1$, $i\wh g$ and $j\wh g$ are not equivalent up to a constant. Thus, $I_\alpha(\wh\Rec_\alpha^*,\ZZ)$ is infinite.
\end{proof}


Before proving the theorem in the case $\alpha\neq0$, we build some explicit element in $\PS_\varphi(\alpha)$. Choose $\alpha\neq 0$ so that $-\alpha$ is quasi-Lyapunov. The case $\alpha(D_\varphi)>0$ is a consequence of Fried's work Theorem~\ref{thm-Fried}. So we may assume $\alpha(D_\varphi)\ngtr0$ here, and so $\Rec_\alpha\neq\emptyset$ holds according to Proposition~\ref{prop-part1-empty-rec}.

	By Theorem~\ref{thm-spectral-decomp}, there exists an~$\alpha$-equivariant function $\wh f\colon \wh M_\alpha\to\RR$ so that $-\wh f$ is Lyapunov. Denote by $f\colon M\to\bfrac{\RR}{\ZZ}$ the quotient map. Recall that $f(\Rec_\alpha)$ is non-empty, compact its interior is empty (see Lemma~\ref{lem-tot-disc}). So it admits a complementary component, which is an open interval in $\bfrac{\RR}{\ZZ}$. 
	Take $I\subset\bfrac{\RR}{\ZZ}$ a complementary component of~$f(\Rec_\alpha)$. We define $\pi_I\colon\bfrac{\RR}{\ZZ}\to\bfrac{\RR}{\ZZ}$ the map that consists in retracting the $\bfrac{\RR}{\ZZ}\setminus I$ in one point. That is $\pi_I$ is affine on $I$, constant outside $I$, and is of degree~1. We additionally assume that the complementary of $I$ is sent onto the point~$0$.
	Then $-\pi_I\circ f$ is pre-Lyapunov, cohomologous to $f$ and sends~$\Rec_\alpha$ into~$\{0\}$. That is $-\pi_I\circ f$ belongs to $[(M,\Rec_\alpha)\col(\bfrac{\RR}{\ZZ},\{0\})]_\alpha$. 
	
\begin{lemma}\label{lem-tech-preL-I}
	Let $I,J$ be two distinct complementary components of $f(\Rec_\alpha)$. Then $-\pi_I\circ f$ and $-\pi_J\circ f$ are not homotopic inside $[(M,\Rec_\alpha)\col(\bfrac{\RR}{\ZZ},\{0\})]_\alpha$.
\end{lemma}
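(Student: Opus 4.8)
The idea is to distinguish $-\pi_I\circ f$ and $-\pi_J\circ f$ by looking at the induced maps on Conley's order, using the bijection $\pL(\alpha)\xrightarrow{[\wh\ast]}I_\alpha(\wh\Rec_\alpha^\ast,\ZZ)$ from Theorem~\ref{thm-preL-to-order}. If the two maps were homotopic inside $[(M,\Rec_\alpha)\col(\bfrac{\RR}{\ZZ},0)]_\alpha$, then their restrictions to $\wh\Rec_\alpha^\ast$ would define the same element of $I_\alpha(\wh\Rec_\alpha^\ast,\ZZ)$, i.e.\ the associated $\ZZ$-valued non-decreasing functions on Conley's order would differ by a constant. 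So it suffices to exhibit two $\alpha$-recurrence chains $R_1,R_2$ on which the $\wh f$-values straddle $I$ but not $J$ (or conversely), which forces the two order functions to have different ``jump patterns'' and hence to differ by more than a constant.

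First I would set up notation: lift $f$ to the $\alpha$-equivariant Lyapunov map $\wh f\colon\wh M_\alpha\to\RR$, and recall that $\wh f$ induces a bijection between recurrence chains in $\wh M_\alpha$ and their $\wh f$-values (since $-\wh f$ is Lyapunov), and that $\wh f(\wh\Rec_\alpha)$ is the full preimage in $\RR$ of $f(\Rec_\alpha)\subset\bfrac{\RR}{\ZZ}$. Pick a lift $\wt I\subset\RR$ of the complementary interval $I$; its two endpoints $a<b$ are limits of $\wh f$-values of recurrence chains, and there exist recurrence chains $R_-$ with $\wh f(R_-)$ just below $a$ and $R_+$ with $\wh f(R_+)$ just above $b$ — more precisely, since the endpoints of $I$ lie in $f(\Rec_\alpha)$, there are recurrence chains $R_a, R_b$ with $\wh f(R_a)=a$, $\wh f(R_b)=b$, and these are distinct because $b-a>0$ (equivalently $b-a<1$, as $I$ is a proper subinterval). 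Now compute $\wh{(\pi_I\circ f)}^\ast$ on Conley's order: it is the $\ZZ$-valued function that is constant (say $\equiv 0$ after normalization) on all recurrence chains whose $\wh f$-value lies in a given ``retracted'' block $\RR\setminus\bigsqcup_k(\wt I+k)$, and increases by $1$ each time one crosses a translate of $\wt I$. Concretely, for a recurrence chain $R$ the integer $\wh{(\pi_I\circ f)}^\ast(R)$ counts the number of translates $\wt I+k$ lying entirely below $\wh f(R)$. The analogous description holds with $J$ in place of $I$.

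The key step is then to observe that these two order-functions cannot differ by a constant. Since $I\neq J$ are distinct complementary components of $f(\Rec_\alpha)$, choose a recurrence chain $R$ with $\wh f(R)$ between a lift of $I$ and the corresponding lift of $J$ — such $R$ exists because between any two complementary intervals there sits a point of $f(\Rec_\alpha)$, hence a recurrence chain. Comparing $R$ with a recurrence chain $R_0$ realizing, say, the left endpoint of $\wt I$: the quantity $\wh{(\pi_I\circ f)}^\ast(R)-\wh{(\pi_I\circ f)}^\ast(R_0)$ and $\wh{(\pi_J\circ f)}^\ast(R)-\wh{(\pi_J\circ f)}^\ast(R_0)$ differ, because $\wt I$ is crossed in one case but not the other as one passes from $\wh f(R_0)$ to $\wh f(R)$ (and the $\ZZ$-action periodicity of period $n_\alpha$ for both functions means a uniform additive constant is the only ambiguity). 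Hence $\wh{(\pi_I\circ f)}^\ast$ and $\wh{(\pi_J\circ f)}^\ast$ are not equivalent up to a constant, so by Theorem~\ref{thm-preL-to-order} (injectivity of $[\wh\ast]$) their images in $\pL(\alpha)$ are distinct, i.e.\ $-\pi_I\circ f$ and $-\pi_J\circ f$ are not homotopic inside $[(M,\Rec_\alpha)\col(\bfrac{\RR}{\ZZ},0)]_\alpha$.

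**Main obstacle.** The delicate point is the bookkeeping with the $\ZZ$-action and the equivariance: I must make sure the comparison of the two order-functions is genuinely ``not by a constant'' rather than merely ``not equal'', and that the choice of lifts $\wt I,\wt J$ does not secretly reintroduce a constant discrepancy. Handling this cleanly likely requires committing to a normalization (e.g.\ demanding both $\pi_I\circ f$ and $\pi_J\circ f$ send $\Rec_\alpha$ to $0$ and fixing a base recurrence chain), and then verifying that the difference of the two $\ZZ$-valued functions on $\wh\Rec_\alpha^\ast$ is $\alpha$-invariant but non-constant — which boils down to the elementary fact that $I\neq J$ forces a genuine discrepancy in how many translates of each interval lie below a given level.
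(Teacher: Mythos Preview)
Your approach is correct and not circular (Theorem~\ref{thm-preL-to-order} is proved earlier and independently of this lemma). The underlying computation is the same as the paper's: you compare the two maps on the pair of recurrence chains $R_a,R_b$ sitting at the endpoints of a lift $\wt I$ of $I$, and observe that passing from $\wh f(R_a)$ to $\wh f(R_b)$ crosses $\wt I$ but no lift of $J$, so the two induced $\ZZ$-valued functions differ there by exactly~$1$ and hence are not equivalent up to an additive constant.

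The packaging, however, is different. The paper does not invoke Theorem~\ref{thm-preL-to-order} at all: it takes a curve $\gamma$ from $R_1$ to $R_2$ (the chains realizing the endpoints of $I$) and observes directly that for any $g$ in $[(M,\Rec_\alpha)\col(\bfrac{\RR}{\ZZ},0)]_\alpha$ the curve $g\circ\gamma$ is closed, so its degree is a homotopy invariant of $g$; computing this degree for $g=-\pi_I\circ f$ versus $g=-\pi_J\circ f$ gives values differing by $\pm1$. This sidesteps precisely the ``main obstacle'' you flag: since the degree of $g\circ\gamma$ is already an absolute invariant (not defined only up to a constant), there is no normalization or $\ZZ$-equivariance bookkeeping to do. Your route through $I_\alpha(\wh\Rec_\alpha^*,\ZZ)$ works fine but uses a heavier theorem to reach the same elementary conclusion; the paper's degree argument is self-contained and slightly cleaner for exactly the reason you anticipated.
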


\begin{proof}
	By definition, $I$ is bounded by two points in $f(\Rec_\alpha)$.
	Denote by $R_1,R_2$ two~$\alpha$-recurrence chains so that $f(R_1)$ and $f(R_2)$ bound $I$. Let $\gamma\subset M$ be a curve that starts on $R_1$ and end on $R_2$. We consider the image curve $f\circ\gamma$ in $\bfrac{\RR}{\ZZ}$. Note that $f\circ\gamma$ is homologous, relative to $\partial I$, to $I$ plus finitely many copies of $\bfrac{\RR}{\ZZ}$.	
	
	Take a third complementary interval~$K$ of $f(\Rec_\alpha)$. The curve $-\pi_K\circ f\circ\gamma$ in $\bfrac{\RR}{\ZZ}$ is a closed curve, since the two ends of $\gamma$ are sent onto $0$. Notice that the degree of $-\pi_K\circ f\circ\gamma$ is equal to the algebraic intersection between $-f\circ\gamma$ with any point inside~$K$. It follows from above that when taking $K=I$ or $K=J$, these algebraic intersections differ by $\pm1$.
	Thus, the degrees of $-\pi_I\circ f\circ\gamma$ and $-\pi_J\circ f\circ\gamma$ are different.
	
	For maps $g$ in $[(M,\Rec_\alpha)\col(\bfrac{\RR}{\ZZ},\{0\})]_\alpha$, the degree of $g\circ\gamma$ is an invariant of homotopy inside that set. 
	Thus, $-\pi_I\circ f$ and $-\pi_J\circ f$ are not homotopic in that set. 
\end{proof}

Denote by $\Rec_\alpha^*$ the set of~$\alpha$-recurrence chains, equipped with the quotient topology using the projection $\Rec_\alpha\to\Rec_\alpha^*$.

\begin{proof}[Proof of Theorem~\ref{thm-finite-pa}, in the case $\alpha\neq0$]
	Take a non-zero class~$\alpha$ so that $-\alpha$ is quasi-Lyapunov.
	When~$\Rec_\alpha$ is empty, $G_{\varphi,\alpha}$ is finite and transitive by convention. And $\PS_\varphi(\alpha)$ is a singleton from Proposition~\ref{prop-part1-empty-rec} and Theorem~\ref{thm-Fried}. So the conclusion holds in that case.

	We now assume that~$\Rec_\alpha$ is not empty.
	The set $\PS_\varphi(\alpha)$ is in bijection with $I_\alpha(\wh\Rec_\alpha^*,\ZZ)$, so it is enough to determine the cardinality of the later set.
	Let $n_\alpha\geq 1$ be so that $\alpha=n_\alpha\beta$ for some primitive class~$\beta$ in $H^1(M,\ZZ)$.

	We suppose that $G_{\varphi,\alpha}$ is finite and transitive, and prove that $\PS_\varphi(\alpha)$ is finite. Denote by $R_1\cdots R_n$ the~$\alpha$-recurrence chains on $M$, and a lift $\wt R_i$ of~$R_i$ in~$\wh M_\alpha$.	
	By transitivity of $G_{\varphi,\alpha}$, for every $i\neq j$, there exists $a_{i,j}$ in $\ZZ$ that satisfies $\wt R_i\recto a_{i,j}\cdot\wt R_j$. 

	Take an element in $I_\alpha(\wh\Rec_\alpha^*,\ZZ)$, and its unique lift $g$ in $\wh I_\alpha(\wh\Rec_\alpha^*,\ZZ)$ which satisfies $g(R_1)=0$. 
	From above, we have $g(R_i)\geq g(R_j) + n_\alpha a_{i,j}$ for any $i\neq j$.
	So $g(R_i)$ lies in $\intint{-n_\alpha a_{1,i},n_\alpha a_{i,1}}$. Thus, $g$ is determined by a finite data. It follows that $I_\alpha(\wh\Rec_\alpha^*,\ZZ)$ is finite.

	We now prove the converse implication by contraposition. 
	Fix an~$\alpha$-equivariant map $\wh f\colon \wh M_\alpha\to\RR$ so that $-\wh f$ is Lyapunov (see Theorem~\ref{thm-spectral-decomp}), which project down to a map $f\colon M\to\bfrac{\RR}{\ZZ}$. 
	Assume first that $G_{\varphi,\alpha}$ is infinite. Then $\Rec_\alpha^*\simeq f(\Rec_\alpha)$ is infinite and totally disconnected. So there are infinitely many complementary intervals of $f(\Rec_\alpha)$ in $\bfrac{\RR}{\ZZ}$. 
	It follows from Lemma~\ref{lem-tech-preL-I} that $I_\alpha(\wh\Rec_\alpha^*,\ZZ)$ is infinite.

	Assume now that the quotient graph $G_{\varphi,\alpha}$ is not transitive. We may also choose it finite for convenience. Since the graph is not transitive, there exists a partition $\{A,B\}$ of the set of vertices of $G_{\varphi,\alpha}$ so that there is no arrow from $B$ to $A$. Denote by $\wh A$ and $\wh B$ the preimages of $A$ and $B$ inside $\wh G_{\varphi,\alpha}$. 
	
	Since $\PS_\varphi(\alpha)$ is not empty (see Theorem~\ref{thm-ps-to-non-rec}), there exists a map $h$ in $\wh I_\alpha(\wh\Rec_\alpha^*,\ZZ)$. For all $k$ in $\NN$, we define the map $h_k\colon \wh\Rec_\alpha^*\to \ZZ$ that coincide with $h$ on $\wh A$ and with $h+k$ on~$\wh B$. It is clear that $h_k$ is non-creasing and continuous (by finiteness of $\Rec_\alpha^*$), so it belongs to $\wh I_\alpha(\wh\Rec_\alpha^*,\ZZ)$. Observes that for any distinct indexes $i,j$, the functions $h_i$ and $h_j$ do not differ by a constant, so their images in $I_\alpha(\wh\Rec_\alpha^*,\ZZ)$ are distinct. It follows that $I_\alpha(\wh\Rec_\alpha^*,\ZZ)$ is infinite.
\end{proof}

Using the same technique, we can determine when there is a unique partial cross-section up to isotopy.

\begin{theorem}\label{thm-unique-ps}
	Let~$\alpha$ be in $H^1(M,\ZZ)$. The set $\PS_\varphi(\alpha)$ is a singleton if and only if~$\alpha$ is non-zero, $-\alpha$ is quasi-Lyapunov and if there exists at most one~$\alpha$-recurrence chain.
\end{theorem}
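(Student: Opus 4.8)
The plan is to push everything through the bijections $\PS_\varphi(\alpha)\simeq\pL(\alpha)\simeq I_\alpha(\wh\Rec_\alpha^*,\ZZ)$ of Theorems~\ref{thm-ps-classification-1} and~\ref{thm-preL-to-order}, to reuse the complementary‑interval construction behind Theorem~\ref{thm-finite-pa}, and to reduce the non‑primitive case to the primitive one. Recall that the vertices of $G_{\varphi,\alpha}$, i.e. the $\alpha$‑recurrence chains, are exactly the elements of $\wh\Rec_\alpha^*/\ZZ$.

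For the ``if'' implication, assume $\alpha\neq 0$, $-\alpha$ quasi‑Lyapunov, and at most one $\alpha$‑recurrence chain. By Theorem~\ref{thm-ps-to-subL}, $\PS_\varphi(\alpha)\neq\emptyset$, so by Theorems~\ref{thm-ps-classification-1} and~\ref{thm-preL-to-order} it is in bijection with $I_\alpha(\wh\Rec_\alpha^*,\ZZ)$, which is thus non‑empty. If $\Rec_\alpha=\emptyset$ then $\wh\Rec_\alpha^*=\emptyset$ and $I_\alpha(\wh\Rec_\alpha^*,\ZZ)$ is a singleton by convention. If there is exactly one $\alpha$‑recurrence chain, i.e. $\wh\Rec_\alpha^*/\ZZ$ is a single point, then for any $\wh g_1,\wh g_2$ in $\wh I_\alpha(\wh\Rec_\alpha^*,\ZZ)$ the difference $\wh g_1-\wh g_2$ is $\ZZ$‑invariant, since the $\alpha$‑equivariance terms $kn_\alpha$ cancel; hence it factors through the single point $\wh\Rec_\alpha^*/\ZZ$ and is a constant integer, so $[\wh g_1]=[\wh g_2]$ in $I_\alpha(\wh\Rec_\alpha^*,\ZZ)$. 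In both cases $I_\alpha(\wh\Rec_\alpha^*,\ZZ)$, and therefore $\PS_\varphi(\alpha)$, is a singleton.

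For the ``only if'' implication, assume $\PS_\varphi(\alpha)$ is a singleton. Then $\alpha\neq 0$, since for $\alpha=0$ the corresponding case of Theorem~\ref{thm-finite-pa} shows $\PS_\varphi(0)$ is empty or countably infinite; and $-\alpha$ is quasi‑Lyapunov by Theorem~\ref{thm-ps-to-subL} (using $\PS_\varphi(\alpha)\neq\emptyset$ and $\alpha\neq 0$). Suppose for contradiction there are at least two $\alpha$‑recurrence chains. Write $\alpha=n_\alpha\beta$ with $\beta$ primitive; since $\wh M_\alpha$ is the cover attached to $\ker\!\big(\pi_1(M)\to\ZZ,\ \delta\mapsto\alpha(\delta)\big)=\ker(\delta\mapsto\beta(\delta))$, we get $\wh M_\alpha=\wh M_\beta$, $\Rec_\alpha=\Rec_\beta$, $\wh\Rec_\alpha^*=\wh\Rec_\beta^*$ with the same Conley order and $\ZZ$‑action, and $-\beta$ is quasi‑Lyapunov by Lemma~\ref{lem-part1-qL}; in particular there are at least two $\beta$‑recurrence chains. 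I would then take a $\beta$‑equivariant (equivalently $\ZZ$‑equivariant) map $\wh f\colon\wh M_\beta\to\RR$ with $-\wh f$ Lyapunov (Theorem~\ref{thm-spectral-decomp}), projecting to $f\colon M\to\bfrac{\RR}{\ZZ}$ cohomologous to $\beta$. Since $\wh f$ has distinct values on distinct recurrence chains and is $\ZZ$‑equivariant, $f$ is injective on the set of $\beta$‑recurrence chains: if $f(R)=f(R')$, translate a lift of $R'$ so that it has the same $\wh f$‑value as a lift of $R$, forcing the two chains to coincide. Hence $f(\Rec_\beta)$ has at least two points; being compact, totally disconnected (Lemma~\ref{lem-tot-disc}) and a proper subset of $\bfrac{\RR}{\ZZ}$, its complement has at least two components $I\neq J$. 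By Lemma~\ref{lem-tech-preL-I}, $-\pi_I\circ f$ and $-\pi_J\circ f$ are non‑homotopic in $[(M,\Rec_\beta)\col(\bfrac{\RR}{\ZZ},0)]_\beta$ and, being non‑constant (cohomologous to $\beta\neq 0$), give two distinct elements of $\pL(\beta)\simeq\PS_\varphi(\beta)$. Finally, $g\mapsto n_\alpha g$ is an injection $\wh I_\beta(\wh\Rec_\beta^*,\ZZ)\hookrightarrow\wh I_\alpha(\wh\Rec_\alpha^*,\ZZ)$ descending to an injection $I_\beta\hookrightarrow I_\alpha$; since $\beta\neq 0$ and $-\beta$ quasi‑Lyapunov give $\PS_\varphi(\beta)\neq\emptyset$, Theorems~\ref{thm-ps-classification-1} and~\ref{thm-preL-to-order} yield $|\PS_\varphi(\alpha)|=|I_\alpha|\geq|I_\beta|=|\PS_\varphi(\beta)|\geq 2$, a contradiction.

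The step I expect to be the main obstacle is the non‑primitive case just above: when $n_\alpha>1$, the projection of an $\alpha$‑equivariant Lyapunov map need not be injective on the $\alpha$‑recurrence chains, so $f(\Rec_\alpha)$ may have only one complementary interval even when $\Rec_\alpha$ is disconnected, and the degree argument of Lemma~\ref{lem-tech-preL-I} does not apply directly; the reduction to the primitive class $\beta$ through $\wh M_\alpha=\wh M_\beta$ and the scaling $g\mapsto n_\alpha g$ is what repairs this. The one technical point that still needs care is that $\Rec_\beta^*\simeq f(\Rec_\beta)$ is genuinely totally disconnected with empty interior, so that it has at least two complementary intervals; this follows from Lemma~\ref{lem-tot-disc} together with the injectivity of $f$ on $\beta$‑recurrence chains, exactly as in the proof of Theorem~\ref{thm-finite-pa}.
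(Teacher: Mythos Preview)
Your proof is correct, and in the ``only if'' direction it is actually more careful than the paper's own argument. The paper proceeds directly: it asserts $f(\Rec_\alpha)\simeq\Rec_\alpha^*$ for an $\alpha$-equivariant Lyapunov lift $\wh f$, concludes that $f(\Rec_\alpha)$ has at least two complementary intervals whenever there are at least two $\alpha$-recurrence chains, and then invokes Lemma~\ref{lem-tech-preL-I}. Your worry about this step when $\alpha$ is non-primitive is well founded: when $n_\alpha>1$, $\alpha$-equivariance only gives $\wh f(k\cdot x)=\wh f(x)+kn_\alpha$, so two distinct recurrence chains in $\wh M_\alpha$ could in principle have $\wh f$-values differing by an integer not in $n_\alpha\ZZ$ and hence project to the same point of $\bfrac{\RR}{\ZZ}$; then $f(\Rec_\alpha)$ might be a single point with only one complementary interval. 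The paper's assertion can be rescued by perturbing $\wh f$ so that no two chains have values congruent modulo~$1$, but this is not said. Your reduction to the primitive class $\beta$ --- where $\ZZ$-equivariance genuinely forces $f$ to be injective on $\Rec_\beta^*$ --- together with the injection $I_\beta(\wh\Rec_\beta^*,\ZZ)\hookrightarrow I_\alpha(\wh\Rec_\alpha^*,\ZZ)$ via $g\mapsto n_\alpha g$ is a clean and correct way around this. The identifications you use ($\wh M_\alpha=\wh M_\beta$, $\Rec_\alpha=\Rec_\beta$, and $-\beta$ quasi-Lyapunov via Lemma~\ref{lem-part1-qL}) are all valid. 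The ``if'' direction is essentially identical to the paper's.
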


\begin{proof}
	Assume $\alpha\neq 0$ and quasi-Lyapunov, otherwise it follows from above that $\PS_\varphi(\alpha)$ is not a singleton.
	Let $f\colon M\to\bfrac{\RR}{\ZZ}$ be a continuous map, cohomology to~$\alpha$, and $\wh f\colon\wh M_\alpha\to\RR$ a lift of $f$, so that $-\wh f$ is Lyapunov.

	Assume that there exists at least two~$\alpha$-recurrence chains. Then the space $f(\Rec_\alpha)\simeq\Rec_\alpha^*$ has at least two complementary components. It follows from Lemma~\ref{lem-tech-preL-I} that $\PS_\varphi(\alpha)$ has at least two elements. Thus, if $\PS_\varphi(\alpha)$ is a singleton, $\varphi$ has at most one~$\alpha$-recurrence chain. 

	Conversely, if there is at most one~$\alpha$-recurrence chain, then $\wh I_\alpha(\wh\Rec_\alpha^*,\ZZ)$ is either empty (when~$\Rec_\alpha$ is empty) or equal to a single $\ZZ$-orbit. Thus, $I_\alpha(\wh\Rec_\alpha^*,\ZZ)$ is a singleton, and so is $\PS_\varphi(\alpha)$. 
\end{proof}

\section{Germs in homology}\label{sec-germ-hom}

We will discuss a homology construction. It is certainly well-known by some expert, but we could not find it in the bibliography. And it is not a bad idea to reintroduce it here anyway. In this section, we develop a general theory independent on the flow. We come back to the subject of partial cross-sections in Section~\ref{sec-hom-classification}.

Given a compact subset~$K$ in $M$, we build a homology module relative to the germ of neighborhoods of~$K$, as a projective limit of some singular homology modules. It can be though of the singular homology relative to a compact subset~$K$, modulo arbitrarily small neighborhoods of~$K$.

\subsection{Homology relative to a germ}


Let us fix some notations. We take $\A$ to be one of the ring $\ZZ$ or $\RR$. 
We consider the singular homology and cohomology with coefficients in $\A$. 
Given two sets $Y\subset X$, we sometimes write $H_1(X)$ and $H_1(X,Y)$ the modules $H_1(X,\A)$ and $H_1(X,Y,\A)$, when the chosen ring is clear from the context.
Take four sets $A,B,X,Y$ that satisfy $Y\subset X$, $A\subset B$, $A\subset X$ and $B\subset Y$. The inclusion $(A,B)\hookrightarrow(X,Y)$ induces a morphism $H_1(A,B)\to H_1(X,Y)$. Given an element $\delta$ in $H_1(A,B)$, we will denote by \emph{$\delta\hmod{X,Y}$} its image in $H_1(X,Y)$. 
Or simply write $\delta\rmod{X}$ when $B=Y$ holds, and $\delta\lmod{Y}$ when $A=X$ holds.
We use a similar notation for the map $H^1(X,Y)\to H^1(A,B)$.

We will discuss a projective limit of some modules. For more on the matter, we refer to \cite[\href{https://stacks.math.columbia.edu/tag/0031}{Tag 0031}]{Stacks} for directed inverse systems, and to \cite[\href{https://stacks.math.columbia.edu/tag/002U}{Tag 002U}]{Stacks} for projective limits. 

Fix a compact subset~$K$ of $M$, potentially empty.
The set of neighborhoods of $K$ is a \emph{directed inverse system} for the partial order~$\supset$. That is, for any two neighborhood $U,V$, there exists a neighborhood $W$, for instance $W=U\cap V$, that satisfies $U\supset W$ and $V\supset W$. It is a necessary condition to define a projective limit.

Consider the following diagram of $\A$-modules. The objects are the modules $H_1(M,U,\A)$ where~$U$ varies over the neighborhoods of $K$. The diagram contains the maps $H_1(M,V,\A)\xrightarrow{\srmod} H_1(M,U,\A)$, where $U\supset V$ are any two neighborhoods of $K$. We use the notation $\germ(K)$ for the germ of neighborhoods of~$K$.

\begin{definition}
	We denote by:
	$$H_1(M,\germ(K),\A)=\varprojlim_{U\supset K}H_1(M,U,\A)$$
	the projective limit, equipped with the maps 
	$$H_1(M,\germ(K),\A)\xrightarrow{\srmod}H_1(M,U,\A)$$ 
	for any neighborhood~$U$ of~$K$.
\end{definition}

By definition, the projective limit is equal to:
$$\varprojlim_{U\supset K}H_1(M,U,\A)=\left\{\delta\in\prod_{U\supset K}H_1(M,U,\A),\,\forall U\subset V,\,\delta_U\rmod{V}=\delta_V\right\}$$

Given a subset $X$ of $M$, we denote by $V_\epsilon(X)$ the set of points at distance less than $\epsilon$ of $X$.
Since~$K$ is compact, any neighborhood of~$K$ some $V_\epsilon(K)$ with a small $\epsilon>0$. So for convenience, one may replace~$U$ by $V_\epsilon(K)$ in the projective limit.

The projective limit satisfies a universal property. So the inclusion maps $H_1(M,K)\to H_1(M,U)$ induce a natural map $H_1(M,K)\to H_1(M,\germ(K))$. We will discuss more about this map further down.

An open set $U\supset K$ is said to be \emph{$K$-connected} if any path-connected component of~$U$ intersects~$K$, or alternatively $H_0(U,K)=\{0\}$ holds. 
Note that $V_\epsilon(K)$ is $K$-connected.

\begin{lemma}\label{lem-germ-surf}
	The map $H_1(M,\germ(K))\xrightarrow{\srmod} H_1(M,U)$ is surjective for any $K$-connected neighborhood~$U$ of~$K$.
\end{lemma}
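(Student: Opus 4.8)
The plan is to present $H_1(M,\germ(K))$ as the inverse limit of a \emph{tower of surjections} whose bottom term is $H_1(M,U)$, and then to invoke the elementary fact that the projection of such an inverse limit onto any term is onto.

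Concretely, I would first fix $N$ large enough that $V_{1/N}(K)\subseteq U$ (possible since $K$ is compact), and set $W_0=U$ and $W_k=V_{1/(N+k)}(K)$ for $k\geq 1$, so that $W_0\supseteq W_1\supseteq W_2\supseteq\cdots$ is a nested sequence of neighborhoods of $K$. Every neighborhood of $K$ contains some $V_\epsilon(K)$, hence some $W_k$, so $\{W_k\}_{k\geq 0}$ is a cofinal — and, being totally ordered, directed — subsystem of the system of neighborhoods of $K$ defining $H_1(M,\germ(K))$. By the standard invariance of projective limits under passage to a cofinal subsystem, the canonical maps identify $H_1(M,\germ(K))$ with $\varprojlim_k H_1(M,W_k)$, and under this identification the structure map $H_1(M,\germ(K))\xrightarrow{\srmod}H_1(M,U)$ is the projection onto the term $k=0$.

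The key point is that every transition map $H_1(M,W_{k+1})\xrightarrow{\srmod}H_1(M,W_k)$ is surjective. For this I would use the long exact sequence of the triple $(M,W_k,W_{k+1})$, a portion of which reads
$$\cdots\longrightarrow H_1(M,W_{k+1})\xrightarrow{\ \srmod\ }H_1(M,W_k)\longrightarrow H_0(W_k,W_{k+1})\longrightarrow\cdots,$$
so the cokernel of the map in question embeds into $H_0(W_k,W_{k+1})$. Now $W_k$ is $K$-connected: this holds for $W_0=U$ by hypothesis, and for $W_k=V_{1/(N+k)}(K)$ with $k\geq 1$ because the sets $V_\epsilon(K)$ are $K$-connected (enlarge $N$ if needed). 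Since moreover $K\subseteq W_{k+1}$, every path-component of $W_k$ meets $K\subseteq W_{k+1}$, i.e. $H_0(W_k,W_{k+1})=0$; hence each transition map is onto. (The same triple-sequence computation, applied to $(M,W_0,W_1)$, is what makes $H_1(M,W_1)\to H_1(M,U)$ surjective as well, using the $K$-connectedness of $U$.)

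Finally I would apply the tower-of-surjections fact: given $x\in H_1(M,U)=H_1(M,W_0)$, choose inductively preimages $x_1\in H_1(M,W_1)$, $x_2\in H_1(M,W_2)$, \dots\ along the surjective transition maps; the resulting compatible family $(x_k)_{k\geq 0}$ is an element of $\varprojlim_k H_1(M,W_k)=H_1(M,\germ(K))$ with $x_k\srmod{}$ $=x$ for $k=0$, i.e. whose image under $\srmod$ is $x$. I do not expect a genuine obstacle here: the argument is just the triple exact sequence packaged with standard facts about inverse limits, and the only steps demanding any care are the vanishing of $H_0(W_k,W_{k+1})$ (which is exactly where $K$-connectedness enters) and the bookkeeping that the cofinal subtower really does recover the structure map $\srmod$ of the limit.
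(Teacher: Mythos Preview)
Your argument is correct, but the paper takes a shorter route. Instead of working inside the inverse system and lifting step by step along a tower of surjections, the paper factors the structure map through the single object $H_1(M,K)$: from the long exact sequence of the triple $(M,U,K)$ one has $H_1(M,K)\to H_1(M,U)\to H_0(U,K)=0$, so $H_1(M,K)\to H_1(M,U)$ is onto; since this map factors as $H_1(M,K)\to H_1(M,\germ(K))\xrightarrow{\srmod} H_1(M,U)$, the second arrow is onto as well. The difference is that the paper uses one exact sequence and the universal map from $H_1(M,K)$ into the limit, whereas you reconstruct that surjectivity by hand via a cofinal $\NN$-indexed tower and an inductive choice of preimages. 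Your approach has the minor virtue of staying entirely within the inverse system (and essentially proves the Mittag-Leffler-type statement you need), but the factorization trick is considerably shorter and avoids any bookkeeping about cofinality.
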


\begin{proof}
	The long exact sequence in homology contains the maps
	$$H_1(M,K)\to H_1(M,U)\to H_0(U,K)=0.$$
	So the left map is surjective. Since it can be factorized as
	$$H_1(M,K)\to H_1(M,\germ(K))\to H_1(M,U),$$
	the second map is surjective.
\end{proof}

\begin{remark}
	The map $H_1(M,K)\to H_1(M,\germ(K))$ is neither injective nor surjective in general, as illustrated in Examples~\ref{ex-non-surj} and~\ref{ex-non-inj}. We will characterize its image in Proposition~\ref{prop-img-hom-finit-l}.
\end{remark}

Let us denote temporally by $\pi_\germ\colon H_1(M,K)\to H_1(M,\germ(K))$ this map.

\begin{example}\label{ex-non-surj}
	Consider $M=[0,1]$ and $K=\{0\}\cup\{\frac{1}{n+1},n\in\NN\}$. The formal infinite 1-chain:
	$$\delta_1=\sum_{n\in\NN}\left[0,\lsfrac{1}{n+1}\right]$$
	induces an element in $H_1(M,\germ(K))$, since only finitely many terms appear modulo any given neighborhood of~$K$. Clearly, $\delta_1$ is not in the image of $\pi_\germ$. So the map $\pi_\germ$ is not surjective.
\end{example}

\begin{example}\label{ex-non-inj}
	Take $M=(\bfrac{\RR}{\ZZ})\times\RR$ and the compact subset
	$$K=\left\{\!\middle(t,\,\sin\!\middle(\lsfrac{1}{t}\middle)\!\middle)\!,\,t\in]0,1]\middle\}\cup\middle(\{0\}\times[-1,1]\right)$$
	of $M$. Consider the curve $\delta_2$ that goes from $[0,\frac{1}{2\pi}]\times\{0\}$. It is not null homologous in $H_1(M,K)$. But $\delta_2$ is null-homologous in $H_1(M,\germ(K))$, since for any neighborhood~$U$ of~$K$, $\delta_2$ is homotopic (relatively to $K$) to a curve that remains inside~$U$. So the map $\pi_\germ$ is not injective.
\end{example}

Replacing~$K$ with~$\Rec_\alpha$, the second example corresponds to a curve that has `zero contribution to the dynamic'. It is actually to remove these curves that we consider the module $H_1(M,\germ(K))$ in the first place.

\begin{remark}
	One can define similarly the homology modules $H_n(\germ(K),\A)$ and $H_n(\germ(K),K,\A)$. One difficulty is that the long exact sequence in homology are more complicated when using the germs. In particular the expected two sequences 
	$$\cdots\to H_n(K)\to H_n(\germ(K))\to H_n(M)\to H_{n-1}(K)\to\cdots$$
	and
	$$\cdots\to H_n(\germ(K),K)\to H_n(M,K)\to H_n(M,\germ(K))\to H_{n-1}(\germ(K),K)\to\cdots$$
	are not exact in general. The reason is that the directed inverse systems given by the modules $H_n(U)$ and $H_n(U,K)$ are not Mittag-Leffler. This goes beyond the scope of this paper, so we refer to \cite[\href{https://stacks.math.columbia.edu/tag/0595}{Tag 0595}]{Stacks} for the Mittag-Leffler criterion, and \cite[\href{https://stacks.math.columbia.edu/tag/0H31}{Tag 0H31}]{Stacks} for an exact sequences that one can hope to obtain.
\end{remark}

\subsection{Torsion and extension of coefficients}

We consider $\A=\RR$ and $\ZZ$ for different reasons. It is easier to work with $\RR$ since it is a field with a good topology. But the natural ring to express the classification of partial cross-sections is $\ZZ$. Let us explain how they relate to each other.

Given a $\ZZ$-module $H$, denote by $\tor(H)$ the torsion part of $H$, and $\bfrac{H}{\tor}$ the quotient of $H$ by $\tor(H)$.

\begin{lemma}\label{lem-tor-bounded}
	Let~$U$ be a neighborhood of~$K$. The two maps 
	$$\tor(H_1(M,U,\ZZ))\xrightarrow{\srmod}\tor(H_1(M,\germ(K),\ZZ))\xrightarrow{\srmod}\tor(H_1(M,U,\ZZ))$$
	are surjective. When~$U$ is a small enough neighborhood of $K$, the right map is an isomorphism.
\end{lemma}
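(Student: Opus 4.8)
The key point is that the torsion of $H_1(M,U,\ZZ)$ stabilizes as $U$ shrinks, because $M$ is a compact manifold and hence has finitely generated homology, which forces the directed inverse system $\{\tor(H_1(M,U,\ZZ))\}_{U\supset K}$ to be eventually constant along a cofinal chain $V_{\epsilon}(K)$. First I would recall that for a compact manifold $M$, the pair $(M, V_\epsilon(K))$ is homotopy equivalent to a finite CW pair (or one can use \v{C}ech–Alexander–Spanier duality / the fact that $K$ is a compact subset of a manifold so it has arbitrarily small neighborhoods that are finite CW complexes, e.g. regular neighborhoods after a slight perturbation, or simply invoke that $H_1(M,V_\epsilon(K),\ZZ)$ is finitely generated). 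Then each $\tor(H_1(M,V_\epsilon(K),\ZZ))$ is a finite abelian group, and the maps $\srmod\colon \tor(H_1(M,V_{\epsilon'}(K)))\to\tor(H_1(M,V_\epsilon(K)))$ for $\epsilon'<\epsilon$ form an inverse system of finite groups.

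The main step is a stabilization argument. I would show that the connecting maps $\tor(H_1(M,V_{\epsilon'}(K),\ZZ))\xrightarrow{\srmod}\tor(H_1(M,V_{\epsilon}(K),\ZZ))$ are \emph{surjective} for all $\epsilon'<\epsilon$ small enough. This uses the long exact sequence of the triple or of the pair: a torsion class $\delta$ in $H_1(M,V_\epsilon(K),\ZZ)$ comes, via Lemma~\ref{lem-germ-surf} and the surjectivity of $H_1(M,K)\to H_1(M,V_\epsilon(K))$, from a class in $H_1(M,K,\ZZ)$, which in turn maps to $H_1(M,V_{\epsilon'}(K),\ZZ)$; one checks the image is torsion (it is, since it dies after mapping back to the finite-order class $\delta$, modulo the kernel, and one controls this using that $H_1(M,K)\to H_1(M,V_\epsilon(K))$ factors through every intermediate $V_{\epsilon'}(K)$). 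An inverse system of finite abelian groups with surjective connecting maps is Mittag–Leffler in the strongest sense, and moreover, since the groups $\tor(H_1(M,V_\epsilon(K),\ZZ))$ are finite and the surjections $\tor(H_1(M,V_{\epsilon'}))\twoheadrightarrow\tor(H_1(M,V_\epsilon))$ have the property that the orders are non-increasing and bounded below by $1$, there is an $\epsilon_0>0$ below which all connecting maps are \emph{isomorphisms}. For such small $U=V_\epsilon(K)$ with $\epsilon<\epsilon_0$, the projective limit $\tor(H_1(M,\germ(K),\ZZ)) = \varprojlim \tor(H_1(M,V_\epsilon(K),\ZZ))$ is just the common value, so the composite $\tor(H_1(M,V_\epsilon(K),\ZZ))\xrightarrow{\srmod}\tor(H_1(M,\germ(K),\ZZ))\xrightarrow{\srmod}\tor(H_1(M,V_\epsilon(K),\ZZ))$ equals the identity, and in particular the right-hand map is an isomorphism and the left-hand map is surjective (indeed bijective for $\epsilon<\epsilon_0$).

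For the remaining assertion — that the first map $\tor(H_1(M,U,\ZZ))\xrightarrow{\srmod}\tor(H_1(M,\germ(K),\ZZ))$ is surjective for \emph{every} neighborhood $U$, not just small ones — I would note that any $U$ contains some $V_\epsilon(K)$ with $\epsilon<\epsilon_0$, and the factorization $\tor(H_1(M,V_\epsilon(K)))\xrightarrow{\srmod}\tor(H_1(M,U))\xrightarrow{\srmod}\tor(H_1(M,\germ(K)))$ together with the surjectivity (indeed bijectivity) of the composite from the previous paragraph forces the second map $\tor(H_1(M,U))\to\tor(H_1(M,\germ(K)))$ to be surjective. Similarly, the composite $\tor(H_1(M,\germ(K)))\to\tor(H_1(M,U))\to\tor(H_1(M,V_\epsilon(K)))$ — with the first map the structure map and the second a connecting map — equals the structure map $\tor(H_1(M,\germ(K)))\to\tor(H_1(M,V_\epsilon(K)))$, which is an isomorphism by the stabilization; hence the map $\tor(H_1(M,\germ(K)))\to\tor(H_1(M,U))$ is injective, and combined with the opposite composite argument it is an isomorphism onto a subgroup, while surjectivity onto $\tor(H_1(M,U))$ follows from the first sentence of Lemma~\ref{lem-germ-surf} applied to the torsion.

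\textbf{Main obstacle.} The delicate point is establishing that the connecting maps on torsion are eventually \emph{surjective} (hence, being maps of finite groups, eventually isomorphisms): this requires knowing that the inverse system $\{H_1(M,V_\epsilon(K),\ZZ)\}$ — or at least its torsion — is Mittag–Leffler, which is precisely the kind of subtlety flagged in the earlier remark about Mittag–Leffler failure for the full homology germ sequences. The resolution is that torsion subgroups of the finitely generated groups $H_1(M,V_\epsilon(K),\ZZ)$ are finite, so the images $\im(\tor(H_1(M,V_{\epsilon'}))\to\tor(H_1(M,V_\epsilon)))$ form a descending chain of finite groups and must stabilize; one then only needs to identify the stable image with all of $\tor(H_1(M,\germ(K),\ZZ))$, which is where Lemma~\ref{lem-germ-surf} and the compatibility of the structure maps enter.
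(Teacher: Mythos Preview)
Your proposal has a genuine gap at exactly the point you flag as the ``main obstacle'': you never establish that the connecting maps $\tor(H_1(M,V_{\epsilon'}(K),\ZZ))\to\tor(H_1(M,V_\epsilon(K),\ZZ))$ are surjective. Your attempted argument --- lift a torsion class $\delta$ to $H_1(M,K,\ZZ)$ via Lemma~\ref{lem-germ-surf}, then push it into $H_1(M,V_{\epsilon'}(K),\ZZ)$ --- does not produce a torsion class in general: the lift of a torsion element need not be torsion, and neither need its image under further reduction. Your fallback (``images form a descending chain of finite groups and must stabilize'') is true for any inverse system of finite groups, but it only shows that the projective limit surjects onto the \emph{stable image} inside each $\tor(H_1(M,V_\epsilon(K)))$, which could a priori be a proper subgroup. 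So neither the surjectivity of the right-hand map nor the eventual isomorphism follows from what you have written.

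The paper's proof bypasses this with a single clean observation you are missing: in the exact sequence $H_1(M,\ZZ)\to H_1(M,U,\ZZ)\xrightarrow{\partial} H_0(U,\ZZ)$, the group $H_0(U,\ZZ)$ is free abelian, so every torsion element of $H_1(M,U,\ZZ)$ lies in the image of $H_1(M,\ZZ)$. Hence $\tor(H_1(M,\ZZ))\twoheadrightarrow\tor(H_1(M,U,\ZZ))$ for \emph{every} neighborhood $U$. This immediately gives surjectivity of all connecting maps on torsion (since the surjection from the fixed group $\tor(H_1(M,\ZZ))$ factors through them), and a uniform bound $|\tor(H_1(M,U,\ZZ))|\leq|\tor(H_1(M,\ZZ))|<\infty$. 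The cardinalities, being non-decreasing as $U$ shrinks and bounded above, stabilize; from there your projective-limit argument goes through. The moral: rather than trying to compare $\tor(H_1(M,V_{\epsilon'}))$ and $\tor(H_1(M,V_\epsilon))$ directly, route everything through the fixed finite group $\tor(H_1(M,\ZZ))$.
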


\begin{proof}
	Here, all homology modules are with coefficients in $\ZZ$. The long exact sequence in homology for $(U,M)$ contains the maps:
	$$H_1(M)\xrightarrow{\srmod} H_1(M,U)\xrightarrow{\partial} H_0(U).$$
	Since $H_0(U)$ is a free Abelian group, the torsion part $\tor(H_1(M,U))$ lies inside $\ker(\partial)=\im(\srmod)$.
	So the map $\tor(H_1(M))\to\tor(H_1(M,U))$ is surjective. Note that the cardinality of the image is non-increasing in~$U$, and bounded by the cardinality of $\tor(H_1(M))$, which is finite. So the cardinality of the image is constant when~$U$ is small enough. 
	So given two neighborhoods $V\subset U$ of $K$, assumed small enough, the map $\tor(H_1(M,V))\to\tor(H_1(M,U))$ is an isomorphism.
	It follows that $\tor(H_1(M,\germ(K),\ZZ))\to\tor(H_1(M,U,\ZZ))$ is also an isomorphism.
\end{proof}

\begin{corollary}
	The quotient $\bfrac{H_1(M,\germ(K),\ZZ)}{\tor}$ is naturally isomorphic to the projective limit:
	$$\bfrac{H_1(M,\germ(K),\ZZ)}{\tor}\simeq\varprojlim_{U\supset K}\left(\bfrac{H_1(M,U,\ZZ)}{\tor}\right).$$
\end{corollary}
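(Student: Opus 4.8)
The plan is to deduce the corollary from Lemma~\ref{lem-tor-bounded} by a short spectral-sequence-free argument with the $\varprojlim^1$ exact sequence. For each neighborhood $U$ of $K$ there is a short exact sequence of $\ZZ$-modules
$$0\to\tor(H_1(M,U,\ZZ))\to H_1(M,U,\ZZ)\to\bfrac{H_1(M,U,\ZZ)}{\tor}\to0,$$
and these are compatible with the maps $\srmod$, so they assemble into a short exact sequence of inverse systems over the neighborhoods of $K$ ordered by $\supset$ — equivalently, since it suffices to work over a cofinal subsystem, over the countable tower $U=V_{1/n}(K)$, $n\in\NN$. Applying $\varprojlim$ produces the six-term exact sequence whose relevant portion is
$$0\to\varprojlim_U\tor(H_1(M,U))\to H_1(M,\germ(K),\ZZ)\to\varprojlim_U\bfrac{H_1(M,U)}{\tor}\xrightarrow{\ \partial\ }\varprojlim^1_U\tor(H_1(M,U)).$$

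The first thing to do is to kill the derived term. By Lemma~\ref{lem-tor-bounded}, for $U$ small enough the transition maps $\tor(H_1(M,V))\to\tor(H_1(M,U))$, $V\subset U$, are isomorphisms; hence the inverse system $\{\tor(H_1(M,U))\}_U$ is Mittag-Leffler, so $\varprojlim^1_U\tor(H_1(M,U))=0$ (see \cite{Stacks}). Concretely, $\partial=0$ is the standard inductive lifting along the tower: one lifts a compatible family of classes in the $\bfrac{H_1(M,U)}{\tor}$ one step at a time, correcting each partial lift by an element of $\tor(H_1(M,U))$, which is possible precisely because those transition maps are eventually bijective. Thus the sequence degenerates to a short exact sequence
$$0\to\varprojlim_U\tor(H_1(M,U))\to H_1(M,\germ(K),\ZZ)\to\varprojlim_U\bfrac{H_1(M,U)}{\tor}\to0.$$

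The second step is to identify the left term with $\tor(H_1(M,\germ(K),\ZZ))$. Since $M$ is a compact manifold, $\tor(H_1(M,\ZZ))$ is finite and (as in the proof of Lemma~\ref{lem-tor-bounded}) surjects onto every $\tor(H_1(M,U))$, so a single integer $n$ annihilates all of them simultaneously; therefore every element of the submodule $\varprojlim_U\tor(H_1(M,U))\subset H_1(M,\germ(K),\ZZ)=\varprojlim_U H_1(M,U)$ is $n$-torsion. Conversely, a torsion element of the inverse limit has all of its coordinates torsion, so it lies in $\varprojlim_U\tor(H_1(M,U))$. Hence this submodule is exactly $\tor(H_1(M,\germ(K),\ZZ))$ (consistently, by Lemma~\ref{lem-tor-bounded} it is also isomorphic to $\tor(H_1(M,U_0))$ for $U_0$ small). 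Quotienting $H_1(M,\germ(K),\ZZ)$ by its torsion subgroup in the short exact sequence then gives the claimed isomorphism onto $\varprojlim_U\bfrac{H_1(M,U,\ZZ)}{\tor}$; it is the natural one, since the map $H_1(M,\germ(K))\xrightarrow{\srmod}H_1(M,U)\to\bfrac{H_1(M,U)}{\tor}$ is compatible over $U$ and factors through $\bfrac{H_1(M,\germ(K))}{\tor}$ because an inverse limit of torsion-free modules is torsion-free.

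The only real content here is the vanishing of $\varprojlim^1$ together with the correct identification of the torsion subgroup of the limit; both are supplied by the stabilization statement in Lemma~\ref{lem-tor-bounded} and by the finiteness of $\tor(H_1(M,\ZZ))$, so beyond invoking those the argument is pure bookkeeping with the derived-limit exact sequence.
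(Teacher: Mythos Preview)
Your proof is correct and follows essentially the same route as the paper's: both take the inverse system of short exact sequences $0\to\tor(H_1(M,U))\to H_1(M,U)\to H_1(M,U)/\tor\to 0$, use the eventual stabilization of the torsion terms (Lemma~\ref{lem-tor-bounded}) to conclude Mittag--Leffler and hence exactness after passing to the inverse limit, and then identify the left term with $\tor(H_1(M,\germ(K),\ZZ))$. The paper compresses the last two steps into citations of \cite[\href{https://stacks.math.columbia.edu/tag/0H31}{Tag 0H31}]{Stacks} and \cite[\href{https://stacks.math.columbia.edu/tag/07KW}{Tag 07KW}]{Stacks}, whereas you spell out the $\varprojlim^1$ vanishing and the torsion identification explicitly; but these are the same argument.
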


\begin{proof}
	From above, we consider the directed inverse system of exact sequences:
	$$1\mapsto\tor(H_1(M,U))\to H_1(M,U)\to\bfrac{H_1(M,U)}{\tor}\to 1,$$
	where~$U$ varies over the neighborhoods of $K$. By Lemma~\ref{lem-tor-bounded}, the first term is constant when~$U$ is small enough. Consider the projective limit of these sequences. Note that the projective limit functor is left exact, but not exact in general. But since the left term is eventually constant, it follows from 
	\cite[\href{https://stacks.math.columbia.edu/tag/0H31}{Tag 0H31}]{Stacks}
	and 
	\cite[\href{https://stacks.math.columbia.edu/tag/07KW}{Tag 07KW, item 3.}]{Stacks}
	that the short sequence 
	$$1\mapsto\tor(H_1(M,\germ(K)))\to H_1(M,\germ(K))\to\varprojlim_{U\supset K}\bfrac{H_1(M,U)}{\tor}\to 1$$
	is exact. The conclusion follows.
\end{proof}

\begin{remark}
	We later introduce the cohomology module $H^1(M,\germ(K),\ZZ)$, which is the continuous dual of $H_1(M,\germ(K),\ZZ)$. As such, it does not see the torsion part of $H_1(M,\germ(K),\ZZ)$. Thus, one could replace $H_1(M,\germ(K),\ZZ)$ by $\bfrac{H_1(M,\germ(K),\ZZ)}{\tor}$ in many of our results. In order to remain general, we will not make this choice. 
\end{remark}

\begin{lemma}\label{lem-coeff-extension}
	The coefficient extension maps $H_1(M,U,\ZZ)\to H_1(M,U,\RR)$ induces a map:
	$$H_1(M,\germ(K),\ZZ)\to H_1(M,\germ(K),\RR)$$
	which is $\ZZ$-linear, continuous, and its kernel is $\tor(H_1(M,\germ(K),\ZZ))$.
\end{lemma}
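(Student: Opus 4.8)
The plan is to produce the map from the universal property of the projective limit, check $\ZZ$-linearity and continuity formally, and then identify the kernel using Lemma~\ref{lem-tor-bounded}. For the construction, I would note that for each neighborhood $U$ of $K$ the coefficient-extension morphism $H_1(M,U,\ZZ)\to H_1(M,U,\RR)$, induced by $\ZZ\hookrightarrow\RR$, is natural in $U$, hence commutes with the transition maps $\srmod$ of both directed inverse systems. The universal property of $\varprojlim$ then yields a canonical map $\Phi\colon H_1(M,\germ(K),\ZZ)\to H_1(M,\germ(K),\RR)$ with $\srmod\circ\Phi$ equal to coefficient extension composed with $\srmod$ for every $U$. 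Each component map is $\ZZ$-linear, so $\Phi$ is $\ZZ$-linear; and since $H_1(M,\germ(K),\RR)$ carries the coarsest topology making all projections $\srmod$ continuous while each $\srmod\circ\Phi$ is continuous (a coefficient-extension map followed by a projection), $\Phi$ is continuous.

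For the kernel, the inclusion $\tor(H_1(M,\germ(K),\ZZ))\subseteq\ker\Phi$ would be immediate: if $n\delta=0$ for some $n\geq 1$, then every component $\delta\rmod{U}$ is $n$-torsion, hence its image in the torsion-free module $H_1(M,U,\RR)$ vanishes. Conversely, since $\varprojlim$ is left exact one has $\ker\Phi=\varprojlim_U\ker\bigl(H_1(M,U,\ZZ)\to H_1(M,U,\RR)\bigr)$, and the kernel of $A\to A\otimes_\ZZ\RR$ is the torsion subgroup $\tor(A)$ for any abelian group $A$ (because $\ZZ\hookrightarrow\RR$ is flat and $\ker(A\to A\otimes\QQ)=\tor(A)$). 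Hence $\ker\Phi=\varprojlim_U\tor(H_1(M,U,\ZZ))$.

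The remaining step — and the only one that is not pure formalism — would be to recognize this last limit as $\tor(H_1(M,\germ(K),\ZZ))$, i.e.\ to promote ``torsion in every component'' to ``annihilated by a single $n$'', since a priori the orders of the $\delta\rmod{U}$ could be unbounded. This is exactly what Lemma~\ref{lem-tor-bounded} rules out: for $U$ small the transition maps between the groups $\tor(H_1(M,U,\ZZ))$ are isomorphisms, so this inverse system is eventually constant, equal to the \emph{finite} group $\tor(H_1(M,U_0,\ZZ))$ for some small $U_0$. Consequently $\varprojlim_U\tor(H_1(M,U,\ZZ))\cong\tor(H_1(M,U_0,\ZZ))$ is a finite group, a subgroup of $H_1(M,\germ(K),\ZZ)$, so each of its elements — in particular any $\delta\in\ker\Phi$ — has finite order in $H_1(M,\germ(K),\ZZ)$; thus $\ker\Phi\subseteq\tor(H_1(M,\germ(K),\ZZ))$, which together with the first inclusion gives the claimed equality of kernels. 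Apart from bookkeeping with the universal property of $\varprojlim$ and with flatness of $\RR$ over $\ZZ$, the whole argument reduces to the eventual stabilization of the torsion supplied by Lemma~\ref{lem-tor-bounded}, and that is where the real content sits.
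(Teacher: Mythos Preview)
Your proof is correct and follows essentially the same route as the paper: construct the map via the universal property of the projective limit, read off $\ZZ$-linearity and continuity, and identify the kernel using Lemma~\ref{lem-tor-bounded}. Your version is slightly more explicit---you phrase the kernel computation via left exactness of $\varprojlim$ and the identity $\ker(A\to A\otimes\RR)=\tor(A)$, whereas the paper argues pointwise by taking $x\in\ker\Phi$ and observing $x\rmod{U}\in\tor(H_1(M,U,\ZZ))$ directly---but the substance is identical, and in both cases the only non-formal input is the eventual stabilization of torsion from Lemma~\ref{lem-tor-bounded}.
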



\begin{proof}
	Notice the composition of maps 
	$$H_1(M,\germ(K),\ZZ)\to H_1(M,U,\ZZ)\to H_1(M,U,\RR).$$
	Since $H_1(M,U,\RR)$ satisfies a universal property in the set of $\ZZ$-module, this maps induce a limiting map
	$$H_1(M,\germ(K),\ZZ)\to H_1(M,\germ(K),\RR).$$
	It follows from the definitions that it is $\ZZ$-linear and continuous. Take $x$ in its kernel. For any neighborhood~$U$ of~$K$, the image of $x$ in $H_1(M,U,\RR)$ is equal to zero, so the image of $x$ modulo~$U$ belongs to $\tor(H_1(M,U,\ZZ))$. Taking~$U$ small enough, it follows from Lemma~\ref{lem-tor-bounded} that $x$ belongs to the torsion part $\tor(H_1(M,\germ(K),\ZZ))$.
\end{proof}

\subsection{Simplicial norm}

In this section, we determine the image of $H_1(M,K,\A)$ in $H_1(M,\germ(K),\A)$, using the simplicial norm.
Given a subset $X\subset M$, we denote by $\sell_X$ the simplicial norm on $H_1(M,X)$, or simply $\sell$ when $X$ is clear from the context. That is, given some $\delta$ in $H_1(M,X)$, $\sell_X(\delta)$ the minimum $n$ for which we can represent $\delta$ as a sum:
$$\delta\equiv\sum_{i=1}^na_i\delta_i$$
of $n$ curves $\delta_i$, with $a_i$ in $\A$. 
Given two subsets $X\subset Y$ of~$M$, notice that the simplicial norm satisfy $\sell_Y(\delta\rmod{Y})\leq\sell_X(\delta)$ for any $\delta$ in~$H_1(M,X)$. We define the function $\sell=\sell_{\germ(K)}\colon H_1(M,\germ(K))\to[0,+\infty]$, called the \emph{simplicial norm}, by:
$$\sell_{\germ(K)}(\delta)=\sup_{U\supset K}\sell_U(\delta\rmod{U})$$
where~$U$ varies over the neighborhoods of~$K$. 

Note that the curve $\delta_1$ in Example~\ref{ex-non-surj} satisfies $\sell_{\germ(K)}(\delta_1)=+\infty$. So it is necessary to include the value $+\infty$.
For any $\delta$ in $H_1(M,K)$, similarly to above, we have $\sell_{\germ(K)}(\delta\rmod{\germ(K)})\leq\sell_K(\delta)$. Note that the equality is strict for the element $\delta_2$ in Example~\ref{ex-non-inj}, Since $\delta_2\rmod{\germ(K)}=0$ holds.

\begin{proposition}\label{prop-img-hom-finit-l}
	Recall $\A=\RR$ or $\ZZ$.
	The image of $H_1(M,K,\A)$ modulo $\germ(K)$ is the set
	$$H_1(M,K,\A)\rmod{\germ(K)}=(\sell_{\germ(K)})^{-1}([0,+\infty[).$$
	of elements that have finite simplicial norms.
	Additionally, for any class $\delta$ in $H_1(M,\germ(K),\A)$ with $\sell_{\germ(K)}(\delta)<+\infty$, there exists $\gamma$ in $H_1(M,K,\A)$ that satisfies $\sell_K(\gamma)=\sell_{\germ(K)}(\delta)$ and $\gamma\rmod{\germ(K)}=\delta$.
\end{proposition}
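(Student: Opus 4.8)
The plan is to prove the two inclusions separately, and to extract the simplicial-norm-preserving representative as a byproduct of the nontrivial inclusion.

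\medskip

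\textbf{Step 1: The image is contained in the finite-norm locus.} Take $\gamma$ in $H_1(M,K,\A)$. For every neighborhood $U\supset K$ we have $\sell_U(\gamma\rmod{U})\le\sell_K(\gamma)$, as already noted in the text (the simplicial norm is non-increasing under enlarging the subspace, since any cycle representing $\gamma$ relative to $K$ also represents $\gamma\rmod{U}$ relative to $U$ with the same number of simplices). Taking the supremum over $U$ gives $\sell_{\germ(K)}(\gamma\rmod{\germ(K)})\le\sell_K(\gamma)<+\infty$. Hence $H_1(M,K,\A)\rmod{\germ(K)}\subset(\sell_{\germ(K)})^{-1}([0,+\infty[)$.

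\medskip

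\textbf{Step 2: Every finite-norm class lifts to $H_1(M,K,\A)$, with norm preserved.} This is the main point. Fix $\delta$ in $H_1(M,\germ(K),\A)$ with $n\defi\sell_{\germ(K)}(\delta)<+\infty$; recall $\delta=(\delta_U)_{U\supset K}$ is a compatible family, and by the above $\sell_U(\delta_U)\le n$ for every $U$, and in fact $\sell_U(\delta_U)=n$ for all sufficiently small $U$ (the norms $\sell_U(\delta_U)$ are monotone non-decreasing as $U$ shrinks, integer-valued when $\A=\ZZ$, and bounded by $n$, so they stabilize at $n$; for $\A=\RR$ one argues that the sup is attained and equals some fixed value eventually — this needs a small compactness argument, see Step 3). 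Replacing $K$ by the cofinal system of the $V_\epsilon(K)$, pick a decreasing sequence $U_k=V_{\epsilon_k}(K)$ with $\epsilon_k\to 0$ on which $\sell_{U_k}(\delta_{U_k})=n$. For each $k$ choose a relative $1$-cycle $z_k=\sum_{i=1}^{n}a_i^{(k)}\sigma_i^{(k)}$ in $M$ with boundary in $U_k$ representing $\delta_{U_k}$, using exactly $n$ singular $1$-simplices. The idea is to pass to a limit: since $M$ is compact, the space of singular $1$-simplices $\sigma\colon\Delta^1\to M$ with the sup metric is... not compact, but the space of their images up to reparametrization, together with the constraint that endpoints approach $K$, can be handled by a diagonal/Arzelà–Ascoli argument after first straightening the $\sigma_i^{(k)}$. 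Concretely, use simplicial approximation: fix a triangulation of $M$ fine enough and homotope each $z_k$ (rel the condition "boundary in $U_k$", allowed since $U_k$ is open) to a simplicial cycle on a fixed finite complex; there are then only finitely many possibilities for the combinatorial type of a simplicial $1$-cycle with $\le n$ simplices, so some combinatorial type $w$ occurs for infinitely many $k$. For that type, the endpoints of $w$ lie in $U_k$ for all those $k$, hence in $\bigcap_k U_k\supset K$ — wait, that needs the simplices to be literally the same, which fine triangulation gives once we refine enough. Thus $w$ is a relative cycle with boundary in $K$, and it represents $\delta_{U_k}$ for those $k$, hence (by compatibility and the fact that the $U_k$ are cofinal) $[w]\rmod{\germ(K)}=\delta$. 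Set $\gamma\defi[w]\in H_1(M,K,\A)$. Then $\sell_K(\gamma)\le n$ since $w$ uses $\le n$ simplices, and $\sell_K(\gamma)\ge\sell_{\germ(K)}(\gamma\rmod{\germ(K)})=\sell_{\germ(K)}(\delta)=n$ by Step 1. So $\sell_K(\gamma)=n$ and $\gamma\rmod{\germ(K)}=\delta$.

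\medskip

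\textbf{Step 3: The $\A=\RR$ subtlety and conclusion.} When $\A=\RR$, the number of simplices is still an integer, so the combinatorial type $w$ above is again chosen from a finite list and the same argument applies — only the coefficients $a_i^{(k)}\in\RR$ vary. Fixing the combinatorial type $w$, the condition "$\sum a_i\sigma_i$ is a relative cycle representing $\delta_{U_k}$" is an affine condition on the $a_i$; passing to a subsequence so that $(a_i^{(k)})_i$ converges (the coefficients stay bounded, e.g. because the reduced combinatorial type with the minimal number of simplices forces them to be determined up to a bounded set, or simply restrict attention to the finitely many $w$ and extract) we get a limiting relative $1$-cycle with boundary in $K$ and the right class and norm. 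Combining Steps 1–3 gives both asserted statements: the image of $H_1(M,K,\A)$ modulo $\germ(K)$ is exactly the finite-norm locus, and every finite-norm $\delta$ admits a lift $\gamma$ with $\sell_K(\gamma)=\sell_{\germ(K)}(\delta)$.

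\medskip

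\textbf{Main obstacle.} The delicate point is Step 2: upgrading the compatible family of relative cycles $z_k$ (each with boundary merely in $U_k$) to a single relative cycle with boundary in $K$. The honest way is the finiteness of combinatorial types of bounded-size simplicial cycles on a fixed complex, which forces one type to recur and pins its endpoints down to $\bigcap_k U_k=K$. One must be careful that simplicial approximation is performed compatibly (on one fixed sufficiently fine triangulation of $M$, independent of $k$), and that the homotopy realizing the simplicial approximation does not push the boundary out of $U_k$ — which holds because $U_k$ is open and the approximation can be taken $C^0$-small relative to the Lebesgue number of the cover. For the $\RR$ case one additionally extracts a convergent subsequence of coefficient vectors, which is routine once the combinatorial type is fixed.
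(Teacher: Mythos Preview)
Your Step~1 is correct and matches the easy direction in the paper. The gap is in Step~2.

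You want a \emph{single} triangulation of $M$ on which to simplicially approximate every $z_k$, so that only finitely many combinatorial types can occur and one must recur. But after simplicial approximation, the boundary of the approximated chain sits at vertices of that fixed triangulation. For the result to remain a relative cycle modulo $U_k$, those boundary vertices must lie in $U_k$. Since the $U_k$ shrink to $K$ and a general compact $K$ is not a subcomplex, for large $k$ the set $U_k$ contains \emph{no} vertex of the triangulation, and then the only simplicial relative cycles mod $U_k$ are absolute cycles. Your own caveat in the final paragraph pinpoints the failure: the approximation must be $C^0$-small compared to the Lebesgue number of the cover $\{U_k, M\setminus K\}$, but that number tends to $0$ with $k$, so no fixed triangulation can do this. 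Refining the triangulation with $k$ of course destroys the finiteness of combinatorial types, so the argument collapses either way. The coefficient-extraction in Step~3 is then moot.

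The paper avoids this by never trying to pin down a single chain combinatorially. It first stabilizes only the \emph{boundary}: after extraction the endpoints of the $n$ curves in $z_k$ accumulate on finitely many connected components of $K$, and one extends each curve inside $U_k$ to terminate at a fixed basepoint in the relevant component (this is the content of Lemma~\ref{lem-pre-image-mod-U}). This gives classes $\gamma_n\in H_1(M,K,\A)$ with a common $\partial\gamma_n=p\in H_0(K,\A)$ and $\gamma_n\rmod{U_n}=\delta\rmod{U_n}$, but a priori different $\gamma_n$. The real work is then algebraic: for $\A=\RR$, a relative Mayer--Vietoris argument together with the notion of a $K$-\emph{thin} neighborhood (the image of $H_1(U,\RR)$ in $H_1(M,\RR)$ stabilizes for small $U$) shows $(\gamma_n-\gamma_m)\rmod{U_m}=0$ for all $m\ge n$, so already $\gamma_0\rmod{\germ(K)}=\delta$. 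The case $\A=\ZZ$ is deduced from the real case modulo torsion, with a final torsion correction coming from $H_1(M,\ZZ)$.
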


The conclusion is trivial when~$K$ is empty, as $H_1(M,K)\to H_1(M,\germ(K))$ is an isomorphism in that case. So let us assume~$K$ non-empty from now on. 

The proof is technical, so let use summarize it first. We take a decreasing sequence $U_n$ of open neighborhoods of~$K$. Take $\delta$ in $H_1(M,\germ(K))$ that has a finite simplicial norm, and search a preimage in $H_1(M,K)$. One may want to consider $\delta\rmod{U_n}$, which is a brave homology class representable by a 1-chain $\gamma_n$ with $\partial\gamma_n\in\partial U_n$. Then one may want to define the element $\gamma_{n+1}$ by extending $\gamma_n$ inside $U_n\setminus U_{n+1}$. The finite simplicial norm ensure that it can be done with a bounded number of curves. 

One difficulty is that the limit object can be a combination of curves that have infinite lengths. 
A second difficulty is that we need to control the coefficients in $\gamma_n$. So instead, we first prove that, up to an extraction, the boundary of $\gamma_n$ converges. Then we kind of apply the idea above.

\begin{lemma}\label{lem-pre-image-mod-U}
	Let $\delta$ in $H_1(M,\germ(K),\A)$ satisfy $\sell(\delta)<+\infty$. Then there exists $p$ in $H_0(K,\A)$ which satisfies the following. For any neighborhood~$U$ of~$K$, there exists $\gamma$ in $H_1(M,K,\ZZ)$ that satisfies $\sell(\gamma)\leq\sell(\delta)$, $\partial\gamma=p$, and with $\gamma\rmod{U}=\delta\rmod{U}$.
\end{lemma}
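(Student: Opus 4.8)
The plan is to fix once and for all a decreasing sequence $U_1\supseteq U_2\supseteq\cdots$ of $K$-connected open neighborhoods of $K$ with $\bigcap_n U_n=K$ — for instance $U_n=V_{1/n}(K)$, which is cofinal among neighborhoods of $K$ since $K$ is compact. Write $N=\sell(\delta)$. For every $n$ the class $\delta\rmod{U_n}$ admits a representative of the form $\sum_{i=1}^N a_i^n c_i^n$ with $c_i^n\colon[0,1]\to M$ singular simplices whose endpoints lie in $U_n$; hence the $0$-classes $p_n\defi\partial(\delta\rmod{U_n})\in H_0(U_n,\A)$ are carried by at most $2N$ connected components of $U_n$, and they are compatible in that $p_n\rmod{U_m}=p_m$ whenever $m\le n$. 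The real content of the lemma is to produce a \emph{single} $p\in H_0(K,\A)$ with $p\rmod{U_n}=p_n$ for all $n$; once that is done, realizing $\gamma$ for a given neighborhood will just be a matter of pushing curves into $K$.

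To build $p$, let $\Sigma_n$ be the set of components of $U_n$ on which $p_n$ is supported, so $\#\Sigma_n\le 2N$. Each component of $U_{n+1}$ lies inside a unique component of $U_n$ (its ``parent''), and since $p_n=p_{n+1}\rmod{U_n}$ each element of $\Sigma_n$ is the parent of some element of $\Sigma_{n+1}$; thus $\#\Sigma_n\le\#\Sigma_{n+1}$. Being non-decreasing and bounded, $\#\Sigma_n$ is eventually equal to some $P$, and past that index the parent map $\Sigma_{n+1}\to\Sigma_n$ is a coefficient-preserving bijection; labelling accordingly, $\Sigma_n=\{C_1^n,\dots,C_P^n\}$ with $C_j^{n+1}\subseteq C_j^n$ and $p_n=\sum_{j=1}^P b_j[C_j^n]$ for a fixed tuple $(b_1,\dots,b_P)$. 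Now $C_j^n\cap K\neq\emptyset$ ($U_n$ is $K$-connected), so the compacta $\wb{C_j^n}\cap K$ form a decreasing sequence of nonempty sets and we may pick $y_j\in\bigcap_n(\wb{C_j^n}\cap K)$. Since $C_j^n$ is clopen in $U_n$ and $y_j\in K\subseteq U_n$, we get $y_j\in\wb{C_j^n}\cap U_n=C_j^n$, hence $[y_j]\rmod{U_n}=[C_j^n]$ for all large $n$. Setting $p\defi\sum_{j=1}^P b_j[y_j]\in H_0(K,\A)$ then gives $p\rmod{U_n}=p_n$ for large $n$, and for every $n$ after a further restriction; consequently $p\rmod{U}=\partial(\delta\rmod{U})$ for every neighborhood $U$ of $K$.

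Finally, fix a neighborhood $U$ of $K$ and choose $n$ with $U_n\subseteq U$. Starting from the representative $\sum_{i=1}^N a_i^n c_i^n$ of $\delta\rmod{U_n}$, modify each curve $c_i^n$ by prepending (resp. appending) a path, lying inside the component of $U_n$ that contains its initial (resp. terminal) endpoint, leading to the point $y_j$ if that component is $C_j^n$, and to an arbitrarily chosen point of $K$ in that component otherwise. Let $\gamma\in H_1(M,K,\A)$ be the class of the resulting chain. Then $\sell_K(\gamma)\le N=\sell(\delta)$; the inserted paths lie in $U_n\subseteq U$, so $\gamma\rmod{U}=\delta\rmod{U}$; and grouping the boundary by target point gives $\partial\gamma=\sum_{j=1}^P b_j[y_j]=p$, where the contributions coming from endpoints over components of $U_n$ outside $\Sigma_n$ cancel, since there the endpoint coefficients sum to the corresponding (zero) coefficient of $p_n$. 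This is exactly the assertion of the lemma.

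The main obstacle is precisely the uniformity of $p$ in $U$: one must simultaneously keep the simplicial norm under control — which forces carrying around an explicit $N$-curve representative — and pin down the boundary, which lives in the badly behaved inverse system of the $H_0(U_n)$. The device that reconciles the two is the monotone, bounded count $\#\Sigma_n$: its stabilization freezes the ``support tree'' of the boundary data and thereby supplies canonical target points $y_j\in K$ to which every push-in can be routed. A secondary technical point, easy but essential, is the verification that these $y_j$ genuinely land in $C_j^n$ and not merely in $\wb{C_j^n}$, which uses that components of the open set $U_n$ are clopen in $U_n$ together with $K\subseteq U_n$.
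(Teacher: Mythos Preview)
Your argument is correct and follows the same overall strategy as the paper: first pin down a single boundary class $p\in H_0(K,\A)$ compatible with all the $\partial(\delta\rmod{U_n})$, then for a given $U$ push the endpoints of an $N$-curve representative along paths in $U_n\subseteq U$ to hit the chosen points of $K$.

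The implementations of the first step differ, and the comparison is worth noting. The paper works pointwise: it extracts a subsequence so that the actual endpoints of the curves $c_{n,i}$ converge in $M$, lets $R_1,\dots,R_p$ be the components of $K$ receiving the limits, separates them by passing to small enough $U_n$, and then replaces $\gamma_n$ by a deeper $\gamma_{k_n}$ so that all endpoints sit in the components containing the $R_j$. Your argument is more intrinsic: you track only the homological data $p_n=\partial(\delta\rmod{U_n})\in H_0(U_n)$, observe that the support $\Sigma_n$ has bounded cardinality and is forced to grow monotonically under the parent map, hence stabilizes, and then produce the target points $y_j$ via the nested intersection $\bigcap_n(\wb{C_j^n}\cap K)$. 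This avoids both the subsequence extraction and the ``replace $\gamma_n$ by $\gamma_{k_n}$'' device, at the price of the small clopen check $y_j\in\wb{C_j^n}\cap U_n=C_j^n$ (which you handle correctly, using that $M$ is locally connected). The cancellation of the contributions from components outside $\Sigma_n$ in your boundary computation is exactly the statement that those components carry zero coefficient in $p_n$; it would be worth making explicit that you fix one target point per such component. Either route yields the lemma with the same bound $\sell_K(\gamma)\le\sell(\delta)$.
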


\begin{proof}
	We choose a decreasing sequence $U_n$ of neighborhoods of~$K$ that satisfy the following. First we assume $\sell_{U_n}(\delta\rmod{U_n})=\sell(\delta)$, which holds true whenever $U_n$ is small enough. Secondly, we assume that $U_n$ is contained in $V_{\frac{1}{n}}(K)$, so that the sequence of values of $\delta\rmod{U_n}$ determines the value of $\delta$. 
	Denote by $\gamma_n$ a 1-chain that represent $\delta\rmod{U_n}$, and that can be written as 
	$$\gamma_n=\sum_{i=1}^{\sell(\delta)}a_{n,i}c_{n,i}$$
	with $a_{n,i}$ in $\A$, and where $c_{n,i}$ is a curve with boundary in $U_n$. Up to an extraction, we may assume that the points in $\partial c_{n,i}$ accumulates on finitely many connected components $R_1\cdots R_p$ of~$K$. 
	
	Since $K$ is compact, any connected component of $K$ is also compact.
	Consider the set of connected components of $K$, with the quotient topology. It is metrizable (using the Hausdorff distance) and totally disconnected. So each pairs of distinct connected components of $K$ are separable by open sets. Thus, when $\epsilon>0$ is small enough, each two distinct $R_i$ lie in distinct connected components of $V_\epsilon(K)$.

	By removing finitely many $U_n$, we may assume that the $R_j$ lie in distinct connected components of $U_n$. Up to replacing $\gamma_n$ by a $\gamma_{k_n}$ for some large $k_n\geq n$, we may assume that the boundary points of $\gamma_n$ lie in the connected components of $U_n$ that contain the $R_j$.

	Pick a point $x_j$ in each~$R_j$.
	Then, for any endpoints $q$ of $\gamma_n$, there exists a unique $x_j$ in the same connected component of $U_n$ than $q$. Define an element $p_n$ in $H_0(K)$ by replacing any point $q$ in $\partial\gamma_n$ with the corresponding $x_j$. Then $p_n$ is supported by the point $x_j$, and it satisfies:
	$$p_n\lmod{U_n}=\partial\gamma_n\lmod{U_n}=\partial(\delta\rmod{U_n}).$$

	For any $m\geq n$, we have $p_n-p_m\lmod{U_n}=0$. The difference $p_n-p_m$ is supported on the $x_j$, and each two $x_j$ lie in two distinct connected components of $U_n$. So we actually have $p_n=p_m$. Denote by $p$ the common value of the $p_n$. By construction, we may extend the curves in $\gamma_n$ inside $U_n$, so that that its boundary lies on the $x_j$, and so that $\partial\gamma_n=p$ holds. In particular the boundary of $\gamma_n$ lies in $K$, so $\gamma_n$ induces a homology class in $H_1(M,K,\ZZ)$.

	For any neighborhood $V$ of $K$, there exists a neighborhood $U_n$ included in~$V$. Then the homology class of $\gamma_n$ in $H_1(M,K,\ZZ)$ satisfies the conclusion of the lemma.
\end{proof}

A neighborhood~$U$ of~$K$ is said \emph{$K$-thin} if for any other neighborhood $V\subset U$ of~$K$, we have:
$$H_1(U,\RR)\lmod{M}=H_1(V,\RR)\lmod{M}$$
as subsets of $H_1(M,\RR)$.
Since $H_1(U,\RR)\lmod{M}$ is non-decreasing in~$U$, and it lies in a finite dimensional vector space, it stabilizes when~$U$ is small enough. Note that $K$-thin is a property of the homology with real coefficients only.

\begin{proof}[Proof of Proposition~\ref{prop-img-hom-finit-l}]
	We may assume $\delta\neq 0$ since this the case $\delta=0$ is trivial.
	Choose a sequence $U_n$ of $K$-thin neighborhoods of $K$, included in $V_{\frac{1}{n}}(K)$.	
	From Lemma~\ref{lem-pre-image-mod-U}, there exists $p$ in $H_0(K)$ and for any $n$, an element $\gamma_n$ in $H_1(M,K,\A)$ that satisfies $\partial\gamma_n=p$, $\sell(\gamma)\leq\sell(\delta)$ and $\gamma_n\rmod{U_n}=\delta\rmod{U_n}$.

	We first consider the case $\A=\RR$.
	We claim that for any $m\geq n$, we have $(\gamma_n-\gamma_m)\rmod{U_m}=0$.
	The claim implies $\gamma_0\rmod{U_n}=\delta\rmod{U_n}$ for all $n$, and then
	$\gamma_0\rmod{\germ(K)}=\delta$ by construction of the projective limit. We have $\sell(\gamma_0)\leq\sell(\delta)$ by definition, and the equality holds since the map $\prmod{\germ(K)}$ does not increase the simplicial norm.
	It implies the conclusion of the lemma in the case~$\A=\RR$.
	
	Let us prove the claim. 
	We consider the following commutative diagram, where the two horizontal lines are exacts:
	\definecolor{lightergray}{gray}{0.6} 
	\[
	\begin{tikzcd}[column sep=0.5cm]
		 & H_1(M,\RR) \arrow[r] & H_1(M,K,\RR) \arrow[r,"\partial"] & H_0(K,\RR) \\[-1cm]
		H_1(U_m,\RR) \arrow[ur] & |[lightergray]| y \arrow[r, lightergray, mapsto] & |[lightergray]| \gamma_n-\gamma_m \arrow[r, lightergray, mapsto] \arrow[dddd] \arrow[ddddr] & |[lightergray]| 0 \\[-0.8cm]
	|[lightergray]| w \arrow[ur, lightergray, mapsto]\\[-0.7cm]
		& H_1(U_n,\RR) \arrow[uu] \\[-0.9cm]
		& |[lightergray]| z \arrow[d] \\[-0.2cm]
		& H_1(U_n,U_m,\RR) \arrow[r] & H_1(M,U_m,\RR) \arrow[r] & H_1(M,U_n,\RR) \\[-1cm]
		& |[lightergray]| x \arrow[r, lightergray, mapsto] & |[lightergray]| (\gamma_m-\gamma_n)\rmod{U_m} \arrow[r, lightergray, mapsto] & |[lightergray]| 0 \\
	\end{tikzcd}
	\]

	Fix some $m>n$. By definition of $\gamma_n$, we have $(\gamma_m-\gamma_n)\rmod{U_n}=0$.
	The lower exact sequence in the diagram implies that there exists some $x$ in $H_1(U_n,U_m,\RR)$ that satisfies $(\gamma_m-\gamma_n)\rmod{U_m}=x\lmod{M}$.
	We have $\partial\gamma_n=p=\partial\gamma_m$, so $\partial(\gamma_n-\gamma_m)=0$ holds. Thus, the upper sequence implies that there exists $y$ in $H_1(M,\RR)$, that satisfies $\gamma_n-\gamma_m=y\rmod{K}$.
	The relative Mayer-Vietoris sequence applied to the sequence:
	$$(U_n,\emptyset)\hookrightarrow(U_n,U_m)\oplus(M,\emptyset)\hookrightarrow(M,U_m)$$
	yields that the sequence:
	$$\cdots\to H_1(U_n,\RR)\xrightarrow{p\mapsto(p,-p)}H_1(U_n,U_m,\RR)\oplus H_1(M,\RR)\to H_1(M,U_m,\RR)\to\cdots$$ 
	is exact. Note that $(x,-y)$ lies in the kernel of the second map. So there exists $z$ in $H_1(U_n,\RR)$ which $z\lmod{M}=y$ and $z\rmod{U_m}=x$. 
	Since $U_n$ is~$K$-thin, and since we consider real coefficients, there exists $w$ in $H_1(U_m,\RR)$ with $w\lmod{M}=z\lmod{M}=y$. The diagram commutes, so we have:
	$$(\gamma_m-\gamma_n)\rmod{U_m}=y\rmod{U_m}=w\hmod{M,U_m}=0$$
	since $w$ is supported by $U_m$. The claim follows.

	We now consider the case $\A=\ZZ$. We first work modulo the torsion. Denote by $\delta_t$ the image of $\delta$ in $\bfrac{H_1(M,\germ(K),\ZZ)}{\tor}$. Then the images of $\gamma_n$ and $\delta_t$ in $\bfrac{H_1(M,U_n,\ZZ)}{\tor}$ are equal. 
	Using the first case, we know that for any $n\geq 0$, the image of $\gamma_0-\gamma_n$ in $H_1(M,U_n,\RR)$ is zero. So by Lemma~\ref{lem-coeff-extension}, the image of $\gamma_0-\gamma_n$ in $\bfrac{H_1(M,U_n,\ZZ)}{\tor}$ is also zero. It follows that the image of $\gamma_0$ in $\bfrac{H_1(M,\germ(K),\ZZ)}{\tor}$ is equal to $\delta_t$. So $\gamma_0\rmod{\germ(K)}-\delta$ lies in $\tor(H_1(M,\germ(K),\ZZ))$. From Lemma~\ref{lem-tor-bounded}, there exists $t$ in $\tor(H_1(M,\ZZ))$ with $(\gamma_0+t)\rmod{\germ(K)}=\delta$. It follows that $\delta$ is the image of $\gamma_0+t\rmod{K}$.

	We show that $\sell(\gamma_0+t\rmod{K})=\sell(\gamma_0)\leq\sell(\delta)$ holds, which implies $\sell(\gamma_0)=\sell(\delta)$ as above.
	Note that $\gamma_0$ is not zero since $\delta$ is taken non-zero. 
	So take a representation $c_0$ of $\gamma_0$ as a combination of $\sell(\gamma_0)>0$. 
	Since $M$ is connected and since $t$ is closed, $t$ can be represented using only one curve~$c_1$.
	Take a point $p_0$ in $c_0$, a point $p_1$ in $c_1$, a curve $d$ from $p_0$ to $p_1$, and $\wb d$ the curve $d$ with the opposite orientation. We cut $c_0$ at $p_0$, and concatenate to it the curve $\wb d\circ c_1\circ d$. It yields a 1-chain with $\sell(\gamma_0)$ curves, and homologous to $\gamma_0+t$. It follows $\sell(\gamma_0+t\rmod{K})=\sell(\gamma_0)$, which ends the case $\A=\ZZ$.
\end{proof}

\subsection{The projective limit topology}

We equip $H_1(M,\germ(K))$ with a topology, which is non-trivial even for~$\A=\ZZ$.

\begin{lemma}\label{lem-K-con-finit-gen}
	Recall $\A=\ZZ$ or $\RR$. For any $K$-connected neighborhood~$U$ of~$K$, the modules $H_0(U,\A)$ and $H_1(M,U,\A)$ are finitely generated.
\end{lemma}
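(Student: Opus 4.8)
The plan is to treat the two modules separately. For $H_0(U,\A)$ it suffices to show that $U$ has only finitely many connected components, since $H_0(U,\A)$ is the free $\A$-module on the set of components. For $H_1(M,U,\A)$ I would then feed this into the long exact sequence of the pair $(M,U)$, using that $H_1(M,\A)$ is finitely generated because $M$ is a compact manifold (this finiteness is already invoked in the proof of Lemma~\ref{lem-tor-bounded}).

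For the component count, first note that $U$, being open in the manifold $M$, is locally path-connected, so its path components coincide with its connected components and are open. Suppose towards a contradiction that $U$ has infinitely many components, and extract a sequence of pairwise distinct ones $U_1,U_2,\dots$. Since $U$ is $K$-connected, each $U_i$ meets $K$, so I would pick $x_i\in U_i\cap K$; by compactness of $K$ a subsequence converges to some $x\in K\subset U$. By local path-connectedness of $M$ there is a path-connected open $W$ with $x\in W\subset U$, hence $x_i\in W$ for all large $i$, placing all these $x_i$ in a single component of $U$ — contradicting that they lie in pairwise distinct components. Thus $U$ has finitely many components and $H_0(U,\A)$ is finitely generated.

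For $H_1(M,U,\A)$, I would use the exact sequence
\[
H_1(M,\A)\xrightarrow{\ \srmod\ }H_1(M,U,\A)\xrightarrow{\ \partial\ }H_0(U,\A).
\]
The submodule $\ker(\partial)$ is the image of $H_1(M,\A)$, hence finitely generated; the submodule $\im(\partial)\subset H_0(U,\A)$ is finitely generated because $H_0(U,\A)$ is finitely generated over the Noetherian ring $\A$ (either $\ZZ$ or $\RR$). So $H_1(M,U,\A)$ is an extension of a finitely generated module by a finitely generated module, hence finitely generated. The only genuinely non-formal step is the component count, which is exactly where the hypotheses that $U$ is a neighborhood of the \emph{compact} set $K$ and is \emph{$K$-connected} are used; the rest is bookkeeping with the long exact sequence and the standing compactness assumptions.
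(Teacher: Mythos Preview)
Your proof is correct and follows essentially the same route as the paper: the paper also shows $U$ has finitely many path-components (stated tersely as ``since $K$ is compact, the set of path-connected components of $U$ is finite''), and then uses the identical piece of the long exact sequence $H_1(M)\to H_1(M,U)\to H_0(U)$ together with the Noetherian property of $\A$. Your version simply spells out the compactness argument for the component count in more detail.
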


\begin{proof}
	Since~$K$ is compact, the set of path-connected components of~$U$ is finite. So $H_0(U)$ is finitely generated.
	The long exact sequence in homology, for $U\hookrightarrow M$, contains the maps:
	$$H_1(M)\to H_1(M,U)\to H_0(U).$$
	The left and right terms are finitely generated, and $\A$ is Noetherian. It implies that middle term is also finitely generated. 
\end{proof}

As a consequence, when~$U$ is $K$-connected, $H_1(M,U)$ is equipped with a canonical topology, defined by any norm. When $\A=\ZZ$ holds, the topology on $H_1(M,U)$ is trivial, but it is still useful to define a topology on $H_1(M,\germ(K),\ZZ)$.

\begin{definition}
	We equip $H_1(M,\germ(K),\A)$ with the coarsest topology that makes all the maps $H_1(M,\germ(K),\A)\xrightarrow{\srmod}H_1(M,U,\A)$ continuous, for the $K$-connected neighborhoods~$U$ of~$K$. It is called the \emph{projective limit topology}.
\end{definition}

Given a $K$-connected neighborhood~$U$ of $K$ and $\mathcal{V}\subset H_1(M,U,\A)$ open, the pre-image of $\mathcal{V}$ in $H_1(M,\germ(K),\A)$ is open. 
For $\A=\ZZ$, we can even take $\mathcal{V}$ to be a singleton.
The collection of these sets form a basis of the projective limit topology. 
It follows immediately:

\begin{lemma}
	The set $H_1(M,K,\A)\rmod{\germ(K)}$ is dense in $H_1(M,\germ(K),\A)$.
\end{lemma}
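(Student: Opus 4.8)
The plan is to check density directly against a basis of the projective limit topology. By the discussion above, the sets
$$W_{U,\mathcal V}=\bigl\{\delta\in H_1(M,\germ(K),\A):\delta\rmod{U}\in\mathcal V\bigr\},$$
with $U$ ranging over the $K$-connected neighborhoods of $K$ and $\mathcal V$ over the open subsets of $H_1(M,U,\A)$, form a basis of $H_1(M,\germ(K),\A)$ (and for $\A=\ZZ$ one may even take $\mathcal V$ a single point). So it is enough to show that $H_1(M,K,\A)\rmod{\germ(K)}$ meets every nonempty $W_{U,\mathcal V}$.

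Fix such a nonempty $W_{U,\mathcal V}$, with $U$ a $K$-connected neighborhood of $K$, choose $\delta$ in it, and set $v=\delta\rmod{U}\in\mathcal V$. The key input is that the map $H_1(M,K,\A)\to H_1(M,U,\A)$ is already surjective — this is exactly the argument used for Lemma~\ref{lem-germ-surf}: since $U$ is $K$-connected we have $H_0(U,K,\A)=0$, and the long exact sequence of the triple $(M,U,K)$ ends with $H_1(M,K,\A)\to H_1(M,U,\A)\to H_0(U,K,\A)=0$. Pick $\gamma$ in $H_1(M,K,\A)$ with $\gamma\rmod{U}=v$. By naturality of the projections out of the projective limit, the composite $H_1(M,K,\A)\to H_1(M,\germ(K),\A)\xrightarrow{\srmod}H_1(M,U,\A)$ is the inclusion-induced map, so $(\gamma\rmod{\germ(K)})\rmod{U}=\gamma\rmod{U}=v\in\mathcal V$, i.e. $\gamma\rmod{\germ(K)}\in W_{U,\mathcal V}$. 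This proves density.

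There is no genuine obstacle here: all the content is packaged in Lemma~\ref{lem-germ-surf}, which applies precisely because the projective limit topology was set up using only $K$-connected neighborhoods. The only minor points to keep in mind are that finite intersections of subbasic sets do reduce to a single $W_{U,\mathcal V}$ (the $K$-connected neighborhoods, for instance the balls $V_\epsilon(K)$, form a directed system under inclusion, and the transition maps are continuous), and that the argument is uniform in $\A$ — the statement being non-obvious for $\A=\ZZ$ only because the projective limit topology itself is non-trivial there.
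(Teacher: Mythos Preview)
Your proof is correct and follows essentially the same approach as the paper: both reduce to the basis of the projective limit topology given by preimages of opens under $\srmod$ for $K$-connected neighborhoods $U$, and then invoke the surjectivity of $H_1(M,K,\A)\to H_1(M,U,\A)$ (the content of Lemma~\ref{lem-germ-surf}) to produce the desired approximant. Your write-up is in fact slightly more careful, explicitly addressing why the $W_{U,\mathcal V}$ form a basis rather than merely a subbasis.
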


\begin{proof}
	Take $\delta$ in $H_1(M,\germ(K))$, and an open neighborhood $\mathcal{V}$ of $\delta$. Then there exists an open neighborhood~$U$ of $K$ so and a neighborhood $\mathcal{V'}$ of $\delta\rmod{U}$ in $H_1(M,U)$, so that the preimage of $\mathcal{V'}$ in $H_1(M,\germ(K))$ contains $\delta$. 
	
	Up to taking~$U$ smaller, we may assume it $K$-connected. Then the map $H_1(M,K)\to h_1(M,U)$ is surjective, so there exists $\gamma$ in $H_1(M,\germ(K))$ with $\gamma\rmod{U}=\delta\rmod{U}$. It follows that $\gamma\rmod{\germ(K)}$ lies in $\mathcal{V}$.
\end{proof}

We give below some elementary properties of the projective limit topology.
For any $n\geq 1$, fix a distance $d_n$ on $H_1(M,V_{\frac{1}{n}}(K),\A)$, and denote by $d_n^*$ the push back of $d_n$ on $H_1(M,\germ(K),\A)$. We denote by $d^*_\infty$ the distance on $H_1(M,\germ(K),\A)$ defined by 
$$d^*_\infty(\delta,\gamma)=\sum_{n\geq 1}\frac{1}{2^n}\frac{d^*_n(\delta,\gamma)}{1+d^*_n(\delta,\gamma)}.$$

We let the reader verify that it is indeed a distance, and that the following holds:

\begin{lemma}
	$d^*_\infty$ is a distance compatible with the projective limit topology. 
\end{lemma}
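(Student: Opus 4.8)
The statement to prove is that $d^*_\infty$ is a distance compatible with the projective limit topology on $H_1(M,\germ(K),\A)$. The plan is to verify this in two stages: first that $d^*_\infty$ really is a metric, then that the metric topology coincides with the projective limit topology. For the first stage, I would note that each $d_n^*$ is the pullback of a genuine metric $d_n$ along the map $\prmod{V_{1/n}(K)}$, hence is a pseudometric on $H_1(M,\germ(K),\A)$; the truncation $t\mapsto t/(1+t)$ is bounded, subadditive and monotone, so each $\frac{1}{2^n}\frac{d_n^*}{1+d_n^*}$ is a pseudometric bounded by $2^{-n}$, and the series converges uniformly to a pseudometric $d^*_\infty$. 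To get \emph{positivity}, suppose $d^*_\infty(\delta,\gamma)=0$; then $d_n^*(\delta,\gamma)=0$ for every $n$, i.e. $\delta$ and $\gamma$ have the same image in every $H_1(M,V_{1/n}(K),\A)$. Since~$K$ is compact, every neighborhood of~$K$ contains some $V_{1/n}(K)$, so $\delta$ and $\gamma$ agree modulo every neighborhood of~$K$; by the description of the projective limit as a subset of $\prod_{U\supset K}H_1(M,U,\A)$ subject to the compatibility relations, this forces $\delta=\gamma$.

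For the second stage, I would show the two topologies have the same convergent sequences (both being metrizable-or-coarser, it suffices to compare the generating families of open sets). On one hand, each ball for $d_n^*$ is, by construction, the pullback of a $d_n$-ball under $\prmod{V_{1/n}(K)}$, hence open in the projective limit topology; since the $d^*_\infty$-balls are generated by finite intersections of the $d_n^*$-balls (using that the tail $\sum_{n>N}2^{-n}$ is small), every $d^*_\infty$-open set is projective-limit-open. Conversely, a subbasic open set of the projective limit topology is the preimage of an open $\mathcal V\subset H_1(M,U,\A)$ for some $K$-connected~$U$; choosing $n$ with $V_{1/n}(K)\subset U$, the map $\prmod{U}$ factors through $\prmod{V_{1/n}(K)}$ (composed with the restriction $H_1(M,V_{1/n}(K),\A)\to H_1(M,U,\A)$, which is continuous for the canonical topologies since both modules are finitely generated by Lemma~\ref{lem-K-con-finit-gen}), so the preimage of $\mathcal V$ is a union of $d_n^*$-open sets, hence $d^*_\infty$-open. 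This gives equality of the two topologies.

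One technical point to be careful about: in the case $\A=\ZZ$ the canonical topology on each $H_1(M,U,\ZZ)$ is discrete, and one must make sure that the $d_n$ chosen on $H_1(M,V_{1/n}(K),\ZZ)$ induce this discrete topology — any distance on a finitely generated $\ZZ$-module does this, so there is no issue, but it is worth stating. The main (and essentially only) obstacle is bookkeeping: checking that the cofinal family $\{V_{1/n}(K)\}_{n\geq1}$ genuinely suffices, i.e. that restricting attention to these particular neighborhoods loses no information about the projective limit or its topology. This follows from compactness of~$K$ (every neighborhood contains some $V_{1/n}(K)$) together with the fact that among the $V_{1/n}(K)$ cofinally many are $K$-connected, so the subfamily is cofinal in the directed system used to define the projective limit topology; a cofinal subsystem defines the same limit and the same limit topology. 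I expect no genuine difficulty beyond assembling these standard facts, which is presumably why the authors leave the verification to the reader.
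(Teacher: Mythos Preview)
Your proposal is correct and complete; the paper itself gives no proof, explicitly leaving the verification to the reader, so your argument is exactly the kind of routine check the authors intended. The key ingredients you identify --- cofinality of the family $\{V_{1/n}(K)\}_{n\geq1}$ by compactness of~$K$, the fact (stated in the paper) that each $V_\epsilon(K)$ is $K$-connected so that Lemma~\ref{lem-K-con-finit-gen} applies, and the standard bounded-pseudometric-sum construction --- are precisely what is needed.
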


\begin{remark}
	Going back to Example~\ref{ex-non-inj}, the set $H_1(M,\germ(K),\A)$ has no compatible norm. Indeed, if it had a norm, there would be a sequence $a_n$ in $\NN$, that goes to $+\infty$ fast enough, so that the series $\sum_na_n[0,\frac{1}{n+1}]$ induces an element in $H_1(M,\germ(K),\A)$ with an infinite norm.
\end{remark}

A subset $X$ of $H_1(M,\germ(K),\A)$ is said \emph{bounded} if for any $K$-connected open neighborhood~$U$ of~$K$, the image of $X$ in $H_1(M,U,\A)$ is bounded, or alternatively, $X\rmod{U}$ is pre-compact.

\begin{lemma}\label{lem-germ-hom-compact}
	Recall $\A=\ZZ$ or $\RR$. Any bounded subset of $H_1(M,\germ(K),\A)$ is pre-compact. 
\end{lemma}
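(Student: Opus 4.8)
The plan is to reduce the statement to the classical fact that a projective limit of compact spaces is compact, by working one $K$-connected neighborhood at a time and then invoking Tychonoff-type closure. First I would fix a bounded subset $X$ of $H_1(M,\germ(K),\A)$ and let $\wb X$ denote its closure in the projective limit topology; it suffices to show $\wb X$ is compact, so I may as well assume $X$ is closed. Since $\germ(K)$ is cofinal in the neighborhoods $V_{\frac1n}(K)$, and each such neighborhood contains a $K$-connected one (e.g. replace $V_{\frac1n}(K)$ by the union of the path-components meeting $K$, or just note $V_{\frac1n}(K)$ itself is $K$-connected), I can choose a decreasing cofinal sequence $U_1\supset U_2\supset\cdots$ of $K$-connected neighborhoods of $K$ with $U_n\subset V_{\frac1n}(K)$. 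Then $H_1(M,\germ(K),\A)$ is the projective limit of the countable tower $\cdots\to H_1(M,U_{n+1},\A)\xrightarrow{\srmod} H_1(M,U_n,\A)\to\cdots$, and the projective limit topology is the subspace topology it inherits from $\prod_n H_1(M,U_n,\A)$ with the product topology.

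Next I would use Lemma~\ref{lem-K-con-finit-gen}: each $H_1(M,U_n,\A)$ is finitely generated over $\A$, hence carries its canonical (norm or discrete) topology, in which closed bounded sets are compact — for $\A=\RR$ this is Heine–Borel in a finite-dimensional normed space, and for $\A=\ZZ$ a bounded subset of a finitely generated abelian group is finite, hence compact and discrete. By the definition of "bounded", the image $X\rmod{U_n}$ is pre-compact in $H_1(M,U_n,\A)$; let $C_n$ be its closure, a compact set. Then $X$ is contained in the closed subset $\bigl(\prod_n C_n\bigr)\cap H_1(M,\germ(K),\A)$ of $\prod_n C_n$. The product $\prod_n C_n$ is compact by Tychonoff, and the compatibility condition $\{\delta : \delta_{U_n}\rmod{U_{m}}=\delta_{U_m}\text{ for all }n\ge m\}$ defining the projective limit is closed in the product (it is an intersection of equalizers of pairs of continuous maps into the Hausdorff spaces $H_1(M,U_m,\A)$). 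Hence $\wb X$, being a closed subset of this compact set, is compact; since the projective-limit topology on $\wb X$ agrees with the subspace topology from $\prod C_n$, we are done.

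The main obstacle — really the only point requiring care — is making sure the tower is countable and cofinal so that one genuinely lands inside an honest projective limit with the product topology, and that the "bounded" hypothesis is exactly what feeds the $C_n$ into Tychonoff; once those are pinned down the argument is a routine "closed subset of a compact product". A secondary point is to check that each $C_n$ is really compact in the relevant topology in the $\A=\ZZ$ case, where one should observe that a bounded subset of a finitely generated abelian group lies in a finite set, so its closure in the discrete topology is itself and is compact. No delicate Mittag-Leffler or exactness issue arises here because we only need the set-theoretic/topological structure of the limit, not any exact sequence.
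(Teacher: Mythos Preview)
Your proof is correct and follows essentially the same strategy as the paper's: both restrict to the cofinal countable tower of neighborhoods $V_{1/n}(K)$, use that the image of $X$ at each level is pre-compact (finite for $\A=\ZZ$, bounded in a finite-dimensional space for $\A=\RR$), and assemble these into compactness in the limit. The only cosmetic difference is that the paper invokes metrizability and runs a diagonal subsequence extraction, whereas you appeal to Tychonoff on $\prod_n C_n$ and observe that the projective limit sits inside as a closed subset; the two arguments are interchangeable in this setting.
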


It has the immediate consequence:

\begin{corollary}
	$H_1(M,\germ(K),\A)$ is complete.
\end{corollary}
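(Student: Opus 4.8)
The plan is to realize $H_1(M,\germ(K),\A)$ as a closed subspace of a Tychonoff product of the (finitely generated) level modules, and then argue exactly as for an inverse limit of compact spaces. Since $K$ is compact, the neighborhoods $V_\epsilon(K)$ are cofinal among all neighborhoods of $K$ and are $K$-connected; hence the $K$-connected neighborhoods form a cofinal subsystem over which $H_1(M,\germ(K),\A)$ may be computed, both as a module and, by the very definition of the projective limit topology (the initial topology for the maps $\srmod$ to the $K$-connected levels), as a topological space. First I would set $P=\prod_{U}H_1(M,U,\A)$, the product over the $K$-connected neighborhoods $U$ of $K$, each factor carrying its canonical topology (it is finitely generated, hence in particular Hausdorff, by Lemma~\ref{lem-K-con-finit-gen}). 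The map $\delta\mapsto(\delta\rmod{U})_U$ identifies $H_1(M,\germ(K),\A)$ homeomorphically with the set of compatible families in $P$, which is the intersection over all pairs $V\subset U$ of preimages of the diagonals in $H_1(M,V,\A)\times H_1(M,V,\A)$ under continuous maps, hence is closed in $P$.

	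Now let $X\subset H_1(M,\germ(K),\A)$ be bounded. For each $K$-connected neighborhood $U$ of $K$ the image $X\rmod{U}$ is pre-compact in $H_1(M,U,\A)$ by the definition of boundedness, so $C_U\defi\overline{X\rmod{U}}$ is compact; by Tychonoff's theorem $\prod_U C_U$ is a compact subset of $P$. We have $X\subseteq\prod_U C_U$, and $\prod_U C_U$ is closed in $P$; moreover, since (the image of) $H_1(M,\germ(K),\A)$ is closed in $P$, the closure of $X$ in $H_1(M,\germ(K),\A)$ coincides with its closure in $P$. Hence $\overline X$ is a closed subset of the compact set $\prod_U C_U$, so it is compact, i.e.\ $X$ is pre-compact.

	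I do not expect a genuine obstacle; the only points worth a line of care are that passing to the cofinal family of $K$-connected neighborhoods changes neither the limit nor its topology, and that the projective limit topology of the definition is exactly the subspace topology induced from $P$ — both being the initial topology for the family $(\srmod)_U$. The finite generation of the levels (Lemma~\ref{lem-K-con-finit-gen}) is what makes level-wise boundedness equivalent to level-wise relative compactness, which is precisely what feeds Tychonoff's theorem.
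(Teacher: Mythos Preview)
Your proof is correct and follows the same overall strategy as the paper: establish that bounded subsets are pre-compact (this is Lemma~\ref{lem-germ-hom-compact}), from which completeness follows immediately since a Cauchy sequence is bounded levelwise, hence pre-compact, hence convergent. Neither you nor the paper spells out this last implication, treating it as routine.

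The only genuine difference is in how you prove the lemma. The paper exploits metrizability and the countable cofinal family $(V_{1/n}(K))_n$: given a sequence in a bounded set, extract diagonally so that each projection converges, and the compatible limits assemble into a point of the inverse limit. You instead embed the limit as a closed subspace of the Tychonoff product $\prod_U H_1(M,U,\A)$ and invoke Tychonoff's theorem on $\prod_U \overline{X\rmod{U}}$. Your argument is a bit more general (it does not rely on having a countable cofinal family or on the explicit metric $d^*_\infty$), at the cost of invoking Tychonoff over a large index set where the paper's diagonal extraction is entirely elementary. Both are standard ways to see that an inverse limit of compact sets is compact; your identification of the projective-limit topology with the subspace topology from $P$, and the closedness of the compatibility conditions, are handled correctly.
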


In particular, $H_1(M,\germ(K),\A)$ can be seen as the completion of the quotient 
$\bfrac{H_1(M,K,\A)}{\ker(\rmod{\germ(K)})}$ for the projective limit topology.

\begin{proof}
	Let $X$ be a bounded subset of $H_1(M,\germ(K),\A)$.
	Since $H_1(M,\germ(K),\A)$ is metrizable, it is enough to prove that any bounded subset is sequentially pre-compact. 
	So let $x_n$ be a sequence in $X$. Up to an extraction, we may assume that for all $k\geq 1$, the image $x_n\rmod{V_{\frac{1}{k}}(K)}$ converges in $n$ toward some $\delta_k$ in $H_1(M,V_{\frac{1}{k}}(K))$. 
	
	For any integers $k\leq l$, we have $\delta_l\rmod{V_{\frac{1}{k}}(K)}=\delta_k$. Thus, the projective limit of the sets $\{\delta_n\}$ is a singleton $\{\delta\}$, which lies in $H_1(M,\germ(K),\A)$. Additionally, $\delta\rmod{V_{\frac{1}{k}}(K)}=\delta_k$ holds for all $k$. It follows that $x_n$ converges toward $\delta$. Therefore, $X$ is pre-compact.
\end{proof}

\begin{lemma}\label{lem-len-semi-cont}
	The simplicial norm $\sell_{\germ(K)}$ is lower semi-continuous.
\end{lemma}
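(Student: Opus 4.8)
The plan is to prove directly that each sublevel set $\{\delta\in H_1(M,\germ(K),\A):\sell_{\germ(K)}(\delta)\le m\}$ is closed; since $\sell_{\germ(K)}$ takes values in $\NN\cup\{+\infty\}$, doing this for every integer $m\ge 0$ yields lower semi-continuity. The first step is to record the monotonicity of $\sell$ under shrinking neighborhoods: if $U'\subset U$ are neighborhoods of $K$ and $\sigma'\in H_1(M,U',\A)$ has image $\sigma$ in $H_1(M,U,\A)$, then $\sell_U(\sigma)\le\sell_{U'}(\sigma')$, since a representation of $\sigma'$ as a sum of $n$ curves with boundary in $U'$ becomes, via the inclusion $(M,U')\hookrightarrow(M,U)$, a representation of $\sigma$ as a sum of $n$ curves. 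Because the sets $V_\epsilon(K)$ are $K$-connected and cofinal among all neighborhoods of $K$, this monotonicity lets me replace the supremum defining $\sell_{\germ(K)}$ by one taken over $K$-connected neighborhoods only, so that
$$\{\delta:\sell_{\germ(K)}(\delta)\le m\}=\bigcap_{U}\{\delta:\sell_U(\delta\rmod{U})\le m\},$$
the intersection running over the $K$-connected neighborhoods $U$ of $K$.

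Each set on the right is the preimage, under the projection $\srmod\colon H_1(M,\germ(K),\A)\to H_1(M,U,\A)$, of the set $\{\sigma\in H_1(M,U,\A):\sell_U(\sigma)\le m\}$, and $\srmod$ is continuous by the definition of the projective limit topology. Hence the lemma reduces to the assertion that, for every $K$-connected neighborhood $U$ of $K$, the set $\{\sigma\in H_1(M,U,\A):\sell_U(\sigma)\le m\}$ is closed in $H_1(M,U,\A)$. For $\A=\ZZ$ this is immediate, since by Lemma~\ref{lem-K-con-finit-gen} the group $H_1(M,U,\ZZ)$ is finitely generated and so carries the discrete topology, for which every subset is closed; taking the intersection above then closes $\{\sell_{\germ(K)}\le m\}$ and finishes the proof in this case.

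The remaining point — which I expect to be the main obstacle — is the case $\A=\RR$, where one must show that $\{\sigma\in H_1(M,U,\RR):\sell_U(\sigma)\le m\}$ is closed inside the finite-dimensional space $H_1(M,U,\RR)$. The structural input I would use is that, $U$ being $K$-connected, it has finitely many path-components $U_1,\dots,U_k$, and that, $M$ being connected, every class of $H_1(M,\ZZ)$ is carried by a single loop; thus the classes of single curves relative to $U$ form a finite union $\bigcup_{i,j}(\tau_{ij}+L_0)$ of cosets of the lattice $L_0=\im(H_1(M,\ZZ)\to H_1(M,U,\RR))$, with $\tau_{ij}$ the class of an arc from $U_i$ to $U_j$, and $\{\sell_U\le m\}$ is then the union of the subspaces spanned by $m$ such curve classes. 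Showing this union is closed is the delicate step: it amounts to a finiteness statement about how $m$ vectors span inside a fixed lattice, and it is the place where the restriction to $K$-connected $U$ — hence to finitely generated modules — is genuinely used. If that finiteness argument does not go through for $\A=\RR$, the honest conclusion would be that the statement is to be read for $\A=\ZZ$, which is the coefficient ring used in the classification of partial cross-sections.
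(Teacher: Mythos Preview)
Your argument for $\A=\ZZ$ is correct and is essentially the paper's proof, only more explicit: the paper simply asserts that each $\sell_U$ is lower semi-continuous and then writes $(\sell_{\germ(K)})^{-1}([0,t])$ as the intersection of the preimages of the closed sets $(\sell_U)^{-1}([0,t])$. You supply the reason why this holds in the integer case (discreteness of $H_1(M,U,\ZZ)$ for $K$-connected $U$), which the paper leaves implicit; your reduction to $K$-connected neighborhoods via monotonicity of $\sell$ is also correct and exactly what is needed to invoke continuity of the projections.

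Your caution about $\A=\RR$ is well placed: the statement is in fact \emph{false} over $\RR$, so the ``delicate step'' you identify cannot be completed. Take $M=\TT^2$ and $K$ a single point. For any small disk $U\supset K$ one has $H_1(M,U,\RR)\simeq\RR^2$, and every single curve relative to $U$ has integral class; hence
\[
\{\sell_U\le 1\}=\{a\cdot v:\ a\in\RR,\ v\in\ZZ^2\},
\]
the union of all lines through the origin of rational slope. This set is dense in $\RR^2$ but misses, for instance, $(1,\sqrt{2})$, so it is not closed. Since here $H_1(M,\germ(K),\RR)\to H_1(M,U,\RR)$ is an isomorphism, $\sell_{\germ(K)}$ itself fails to be lower semi-continuous. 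Thus the paper's one-line assertion that $\sell_U$ is lower semi-continuous does not survive with real coefficients; the lemma should be read for $\A=\ZZ$, and indeed its only application in the paper (showing that $D_{\varphi,\alpha}$ consists of classes of simplicial norm at most $1$) takes place in $H_1(M,\germ(\Rec_\alpha),\ZZ)$. Your instinct to restrict to $\ZZ$ was the right call; you may drop the attempted real-coefficient argument entirely.
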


\begin{proof}
	Take some $t$ in $[0,+\infty]$ and a neighborhood~$U$ of~$K$. The simplicial norm $\sell_U$ is lower semi-continuous, so $(\sell_U)^{-1}([0,t])$ is a closed set. By definition, $(\sell_{\germ(K)})^{-1}([0,t])$ is equal to the intersection of the preimages of the sets $(\sell_U)^{-1}([0,t])$, for~$U$ as above. Thus, $(\sell_{\germ(K)})^{-1}([0,t])$ is close.
\end{proof}

\subsection{Cohomology relative to a germ}

We discuss a cohomological construction similar to the homological construction given above.
Given two neighborhoods $U\subset V$ of~$K$, the inclusion $U\hookrightarrow V$ induces a map:
$$H^1(M,V)\xrightarrow{\srmod}H^1(M,U).$$

\begin{definition}
	We define the set:
	$$H^1(M,\germ(K))=\varinjlim_{U\supset K}H^1(M,U)$$
	as a directed limit, where~$U$ varies over the neighborhoods of~$K$, and the maps:
	$$H^1(M,V)\xrightarrow{\srmod}H^1(M,\germ(K))$$
	for any neighborhoods $V$ of~$K$.
\end{definition}

The universal property implies:
$$\bigcup_{\epsilon>0}(H^1(M,V_\epsilon(K))\rmod{\germ(K)})=H^1(M,\germ(K)).$$
So an element in $H^1(M,\germ(K),\RR)$ can be represented has a closed 1-form $\omega$ on~$M$, which is null on some neighborhood of~$K$. Similarly, an element in $H^1(M,\germ(K),\ZZ)$ can be represented has a continuous map $f\colon M\to\bfrac{\RR}{\ZZ}$ which is equal to zero on some neighborhood of $K$.

\begin{proposition}\label{prop-germ-form}
	Recall $\A=\RR$ or $\ZZ$. There is a natural isomorphism of $\A$-modules from $H^1(M,\germ(K),\A)$ to the continuous dual of $H_1(M,\germ(K),\A)$, that is the set of $\A$-linear forms on $H_1(M,\germ(K),\A)$ which are continuous for the projective limit topology.
\end{proposition}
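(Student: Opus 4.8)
The plan is to construct mutually inverse maps between $H^1(M,\germ(K),\A)$ and the continuous dual $H_1(M,\germ(K),\A)^{\vee}$, exploiting the fact that for each $K$-connected neighborhood $U$ the ordinary pairing $H^1(M,U,\A)\times H_1(M,U,\A)\to\A$ is perfect up to the usual subtleties (over $\RR$ it identifies $H^1(M,U,\RR)$ with the full dual of the finite-dimensional space $H_1(M,U,\RR)$; over $\ZZ$, with the dual of $H_1(M,U,\ZZ)/\tor$ by universal coefficients). First I would fix, for each neighborhood $V$ of $K$, the map that sends a class $\omega\in H^1(M,V,\A)$ to the linear form $\delta\mapsto\langle\omega,\delta\rmod{V}\rangle$ on $H_1(M,\germ(K),\A)$; this form is continuous precisely because it factors through the continuous projection $\srmod\colon H_1(M,\germ(K),\A)\to H_1(M,V,\A)$ (recall $H_1(M,V,\A)$ is finitely generated by Lemma~\ref{lem-K-con-finit-gen}, hence carries its canonical topology, and the pairing with a fixed cohomology class is continuous). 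One checks this is compatible with the transition maps $\srmod\colon H^1(M,V,\A)\to H^1(M,U,\A)$ for $U\subset V$, so by the universal property of the directed limit it descends to a well-defined $\A$-linear map $\Phi\colon H^1(M,\germ(K),\A)\to H_1(M,\germ(K),\A)^{\vee}$.

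Next I would show $\Phi$ is surjective. Let $\lambda$ be a continuous linear form on $H_1(M,\germ(K),\A)$. By definition of the projective limit topology, a basic neighborhood of $0$ is pulled back from some $H_1(M,U,\A)$ with $U$ $K$-connected; continuity of $\lambda$ at $0$ therefore forces $\lambda$ to vanish on $\ker(\srmod\colon H_1(M,\germ(K),\A)\to H_1(M,U,\A))$ for one such $U$ (over $\ZZ$ use that singletons are open in $H_1(M,U,\ZZ)$, so $\lambda^{-1}(0)$ open means $\lambda$ kills the kernel of some $\srmod$). Hence $\lambda$ factors through the image of $\srmod$ inside $H_1(M,U,\A)$; since $H_1(M,\germ(K),\A)\to H_1(M,U,\A)$ is surjective for $K$-connected $U$ by Lemma~\ref{lem-germ-surf}, $\lambda$ is induced by an honest linear form on $H_1(M,U,\A)$, which (by universal coefficients / finite-dimensionality) comes from a class in $H^1(M,U,\A)$, and that class maps to $\lambda$ under $\Phi$.

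Finally I would show $\Phi$ is injective. Suppose $\omega\in H^1(M,\germ(K),\A)$ induces the zero form; represent $\omega$ by a class in some $H^1(M,U,\A)$ with $U$ $K$-connected. Then $\langle\omega,\delta\rmod{U}\rangle=0$ for every $\delta\in H_1(M,\germ(K),\A)$; by surjectivity of $\srmod\colon H_1(M,\germ(K),\A)\to H_1(M,U,\A)$ this says $\omega$ pairs to zero with all of $H_1(M,U,\A)$. Over $\RR$ this immediately gives $\omega=0$ in $H^1(M,U,\RR)$, hence in the limit. Over $\ZZ$ one only concludes $\omega$ lies in the subgroup of $H^1(M,U,\ZZ)$ annihilating $H_1(M,U,\ZZ)$, i.e.\ the part detecting torsion of $H_1(M,U,\ZZ)$; here I would invoke Lemma~\ref{lem-tor-bounded}, which says the torsion of $H_1(M,V,\ZZ)$ stabilizes for small $V$, to shrink $U$ and conclude that $\omega$ already vanishes after restricting to a smaller neighborhood, hence is zero in the directed limit. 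The main obstacle I anticipate is precisely this $\A=\ZZ$ injectivity bookkeeping — reconciling the fact that $H^1$ over $\ZZ$ "does not see torsion" (as the surrounding remarks emphasize) with the requirement that $\Phi$ be an isomorphism onto the dual and not merely onto the dual of $H_1/\tor$; the resolution is that a torsion class in $H_1(M,U,\ZZ)$ need not survive to $H_1(M,\germ(K),\ZZ)$ in a way detectable by cohomology, and Lemma~\ref{lem-tor-bounded} is exactly what makes the germ-level torsion match up so that no information is lost. The remaining verifications (compatibility with transition maps, $\A$-linearity, that the constructed inverse is two-sided) are routine diagram chases.
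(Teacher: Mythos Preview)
Your proposal is correct and follows essentially the same route as the paper: construct the pairing map from each $H^1(M,U,\A)$, pass to the direct limit, and then prove injectivity and surjectivity using the surjectivity of $H_1(M,\germ(K),\A)\to H_1(M,U,\A)$ for $K$-connected $U$ (Lemma~\ref{lem-germ-surf}) together with the factorization of a continuous form through some $H_1(M,U,\A)$.

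One remark: your anticipated obstacle in the $\A=\ZZ$ injectivity step is a phantom. By the universal coefficient theorem, the evaluation map $H^1(M,U,\ZZ)\to\mathrm{Hom}(H_1(M,U,\ZZ),\ZZ)$ has kernel $\mathrm{Ext}^1(H_0(M,U,\ZZ),\ZZ)$, and $H_0(M,U,\ZZ)$ is free, so this kernel vanishes. Hence a class in $H^1(M,U,\ZZ)$ that pairs to zero with all of $H_1(M,U,\ZZ)$ is already zero, with no need to shrink $U$ or invoke Lemma~\ref{lem-tor-bounded}. The paper's proof accordingly treats both coefficient rings uniformly at this step. (The torsion discussion in the surrounding remarks concerns torsion in $H_1$, which $H^1$ indeed does not detect; but that is harmless for the isomorphism, since the continuous dual of $H_1$ also does not detect torsion.)
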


To prove the proposition, we use the following lemma.

\begin{lemma}\label{lem-inj-com-rel}
	Let~$U$ be a $K$-connected neighborhood of~$K$. Then the map $H^1(M,U)\xrightarrow{\srmod} H^1(M,\germ(K))$ is injective.
\end{lemma}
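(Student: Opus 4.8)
The plan is to reduce to the injectivity of ordinary restriction maps and then extract that from a long exact sequence. Recall first the elementary description of a directed colimit of $\A$-modules: an element $\xi$ of $H^1(M,U)$ has zero image in $\varinjlim_{V\supset K}H^1(M,V)$ if and only if there is a neighborhood $V$ of $K$ with $V\subset U$ such that $\xi\rmod{V}=0$ in $H^1(M,V)$. Since the neighborhoods $V$ of $K$ contained in $U$ are cofinal in the directed system of all neighborhoods of $K$, it is therefore enough to prove that the restriction map $H^1(M,U)\xrightarrow{\srmod}H^1(M,V)$ is injective for every neighborhood $V$ of $K$ with $V\subset U$.

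So fix such a $V$. I would invoke the long exact sequence in cohomology of the triple $V\subset U\subset M$, whose relevant segment is
$$H^0(U,V,\A)\xrightarrow{\ \delta\ }H^1(M,U,\A)\xrightarrow{\ \srmod\ }H^1(M,V,\A),$$
so that $\srmod$ on $H^1$ is injective as soon as $H^0(U,V,\A)=0$. Now $H^0(U,V,\A)$ is the kernel of the restriction $H^0(U,\A)\to H^0(V,\A)$, i.e. the module of functions on the path-components of $U$ that vanish on every path-component of $V$. Since $U$ is $K$-connected, each path-component of $U$ meets $K$, hence meets $V$ (because $K\subset V$); consequently such a function vanishes on every path-component of $U$, so $H^0(U,V,\A)=0$. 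Therefore $\delta=0$ and $\srmod\colon H^1(M,U)\to H^1(M,V)$ is injective, which completes the argument.

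I do not expect any real obstacle here: the proof is a two-line homological-algebra computation once the colimit is unwound. The only point deserving a moment's care is the cofinality observation in the first paragraph, which is exactly what lets us use the hypothesis that $U$ — rather than $V$ — is $K$-connected. (When $K=\emptyset$ the statement holds trivially, since $\emptyset$ is then the only $K$-connected neighborhood of $K$ and the map in question is the identity of $H^1(M,\emptyset)$.)
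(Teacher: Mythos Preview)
Your proof is correct and follows essentially the same route as the paper's. The paper (whose proof is indicated as dual to that of Lemma~\ref{lem-germ-surf}) uses the long exact sequence of the triple $K\subset U\subset M$ to obtain injectivity of $H^1(M,U)\to H^1(M,K)$ from $H^0(U,K)=0$, and then factors this map through $H^1(M,\germ(K))$; you instead unwind the colimit and work with a variable neighborhood $V\subset U$, but the key vanishing $H^0(U,V)=0$ comes from exactly the same $K$-connectedness observation.
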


The proof is similar to the one of Lemma~\ref{lem-germ-surf}.

\begin{proof}[Proof of Proposition~\ref{prop-germ-form}]
	We first argue that any~$\beta$ in $H^1(M,\germ(K))$ induces a continuous form on $H_1(M,\germ(K))$. From above follows that we can write $\beta=\alpha\rmod{\germ(K)}$ for some neighborhood $U\supset K$ and some~$\alpha$ in $H^1(M,U)$. Up to choosing~$U$ smaller, we may assume it $K$-connected. Then~$\alpha$ induces a continuous form $H_1(M,U)\xrightarrow{\alpha}\A$. 
	We consider the map $F_\beta\colon H_1(M,\germ(K))\to\A$ obtained as $F_\beta(\delta)=\alpha(\delta\rmod{U})$ for any $\delta$ in $H_1(M,\germ(K))$. It is continuous by definition.
	
	We claim that $F_\beta$ does not depend on the choices of~$U$ and~$\alpha$.
	Indeed, take a second $K$-neighborhood $U'$ of~$K$, and some $\alpha'$ in $H^1(M,U')$ which satisfy $\beta=\alpha'\rmod{\germ(K)}$.
	Choose a $K$-connected neighborhood $V\subset U\cap U'$. By construction, we have:
	$$\alpha\rmod{\germ(K)}=\beta=\alpha'\rmod{\germ(K)}.$$
	It follows from Lemma~\ref{lem-inj-com-rel} that that map $H_1(M,V)\to H_1(M,\germ(K))$ is injective, so
	$\alpha\rmod{V}=\alpha'\rmod{V}$
	holds. Then for any $\delta$ in $H_1(M,\germ(K))$, we have:
	\begin{align*}
		\alpha(\delta\rmod{U})
			&=(\alpha\rmod{V})(\delta\rmod{V}) \\
			&=(\alpha'\rmod{V})(\delta\rmod{V}) \\
			&=\alpha'(\delta\rmod{U'})
	\end{align*}
	It follows that $F_\beta$ is well-defined. With a similar argument, one easily sees that $\beta\mapsto F_\beta$ is $\A$-linear. 

	Let us prove the injectivity. Take~$\beta$ with $F_\beta=0$. As above, we take a $K$-connected neighborhood~$U$ of~$K$, so that $\beta=\alpha\rmod{\germ(K)}$ holds for some~$\alpha$ in $H^1(M,U)$. Then $F_\beta=0$ implies that for any $\delta$ in $H_1(M,\germ(K))$, we have $\alpha(\delta\rmod{U})=0$. It follows from Lemma~\ref{lem-germ-surf} that $H_1(M,\germ(K))\xrightarrow{\srmod} H_1(M,U)$ is surjective, so~$\alpha$ is null. Thus, $\beta=0$ holds.

	We now prove the surjectivity. Let $f\colon H_1(M,\germ(K))\to\A$ be a continuous linear form. By definition of the topology, there exists a $K$-connected neighborhood~$U$ of~$K$, some $C\geq 0$ and a norm $\|\cdot\|$ on $H_1(M,U)$ so that $|f(\delta)|\leq C\|\delta\rmod{U}\|$ holds for any $\delta$ in $H_1(M,\germ(F))$. So $f$ can be factorized as $f(\delta)=f_U(\delta\rmod{U})$ for some linear form $f_U$ on $H_1(M,U)$. Interpreting $f_U$ as an element in $H^1(M,U)$ implies $f=F_{f_U}$. The surjectivity follows.	
\end{proof}

\section{Homological criterion}\label{sec-hom-classification}

In this section, we prove the classification of partial cross-sections using the relative homology. We first introduce the required notions. 

We express a criterion using the module $H_1(M,\germ(\Rec_\alpha),\ZZ)$. When~$\Rec_\alpha$ is empty $H_1(M,\germ(\Rec_\alpha),\ZZ)$ is isomorphic to $H_1(M,\ZZ)$. So some parts of this section are relevant only when~$\Rec_\alpha$ is not empty.

\subsection{Relative asymptotic pseudo-directions}

We define a set of relative asymptotic pseudo-directions which captures some information of the flow that are not captured by the set of asymptotic pseudo-directions. Here we do not require $-\alpha$ to be quasi-Lyapunov.

Take a small $\epsilon>0$. Given an $\epsilon$-pseudo-orbit $\gamma$ that starts and ends on~$\Rec_\alpha$, it has a homology class $[\gamma]$ in $H_1(M,\Rec_\alpha,\ZZ)$.
Denote by $D_{\varphi,\alpha,\epsilon}$ the subset of $H_1(M,\germ(\Rec_\alpha),\ZZ)$ made of the classes $[\gamma]\rmod{\germ(\Rec_\alpha)}$ for these $\epsilon$-pseudo-orbits~$\gamma$. Also denote by $\wb{D_{\varphi,\alpha,\epsilon}}$ its closure.
\begin{definition}
	We call the set of \emph{$\alpha$-relative asymptotic pseudo-directions} the set $D_{\varphi,\alpha}$ defined as 
	$$D_{\varphi,\alpha}=\bigcap_{\epsilon>0}\wb{D_{\varphi,\alpha,\epsilon}}\subset H_1(M,\germ(\Rec_\alpha),\ZZ),$$
	which does not depend on $T$.
\end{definition}

Contrary to $D_\varphi$, the set $D_{\varphi,\alpha}$ is not necessarily bounded and Thus, not necessarily compact.


\begin{lemma}\label{lem-D-len-1}
	$D_{\varphi,\alpha}$ lies in $(\sell_{\germ(\Rec_\alpha)})^{-1}(\{0,1\})$. 
\end{lemma}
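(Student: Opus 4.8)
The goal is to show that every element of $D_{\varphi,\alpha}$ has simplicial norm $0$ or $1$, i.e. that a class of the form $[\gamma]\rmod{\germ(\Rec_\alpha)}$, obtained as a limit (over $\epsilon\to 0$) of homology classes of $\epsilon$-pseudo-orbits from $\Rec_\alpha$ to $\Rec_\alpha$, can always be represented modulo arbitrarily small neighborhoods of $\Rec_\alpha$ by a single curve (or by the zero chain). The plan is to fix a small $\epsilon>0$ and analyze the homology class $[\gamma]\rmod{U}$ of a single $\epsilon$-pseudo-orbit $\gamma$ from $\Rec_\alpha$ to $\Rec_\alpha$ inside $H_1(M,U,\ZZ)$ for a fixed neighborhood $U$ of $\Rec_\alpha$, and show $\sell_U([\gamma]\rmod{U})\leq 1$; taking the supremum over $U$ and using that $D_{\varphi,\alpha,\epsilon}$ is made of such classes gives $\sell_{\germ(\Rec_\alpha)}\leq 1$ on $D_{\varphi,\alpha,\epsilon}$. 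Then I would pass to the closure and the intersection over $\epsilon$: since by Lemma~\ref{lem-len-semi-cont} the simplicial norm $\sell_{\germ(\Rec_\alpha)}$ is lower semi-continuous, the set $(\sell_{\germ(\Rec_\alpha)})^{-1}([0,1])$ is closed, so it contains $\wb{D_{\varphi,\alpha,\epsilon}}$ and hence $D_{\varphi,\alpha}$. Finally $\sell_{\germ(\Rec_\alpha)}$ takes values in $\NN\cup\{+\infty\}$ when $\A=\ZZ$, so $\sell\le 1$ forces $\sell\in\{0,1\}$.

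\textbf{Key step: a single pseudo-orbit is one curve modulo $U$.} The heart of the matter is that an $\epsilon$-pseudo-orbit $\gamma\colon[0,\ell[\to M$ with both endpoints on $\Rec_\alpha$ has a homology class representable, modulo any neighborhood $U$ of $\Rec_\alpha$, by a \emph{single} curve. First, $\gamma$ is a concatenation of finitely many orbit arcs $\gamma|_{[t_i,t_{i+1}]}$ joined at the finitely many jump times $t_i\in\Delta$, where each jump displaces by less than $\epsilon$. By definition of $[\gamma]$ (Section~\ref{sec-ass-ps-dir}), we close up each jump by a short arc inside a ball of radius $\epsilon$, obtaining an honest curve $c$ in $M$ from $\gamma(0)\in\Rec_\alpha$ to $\gamma(\ell^-)\in\Rec_\alpha$; this $c$ is a single (connected) curve, so its class in $H_1(M,\Rec_\alpha,\ZZ)$, and a fortiori in $H_1(M,U,\ZZ)$ and in $H_1(M,\germ(\Rec_\alpha),\ZZ)$, has simplicial norm at most $1$. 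If the two endpoints of $c$ lie in the same path-component of $U$ (in particular if $\gamma(0)=\gamma(\ell^-)$, or if it is a periodic pseudo-orbit) one can even push $c$ to a relative cycle; but a single curve already gives $\sell\le 1$, and if the class happens to vanish modulo every $U$ we get $\sell=0$. So in every case $\sell_{\germ(\Rec_\alpha)}([\gamma]\rmod{\germ(\Rec_\alpha)})\leq 1$, and this is exactly the bound on the generating set of $D_{\varphi,\alpha,\epsilon}$.

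\textbf{Expected main obstacle.} The only delicate point is bookkeeping about what "the homology class of $\gamma$" means and making sure the closing-up arcs do not create extra components or change the class in an uncontrolled way. Since $\epsilon$ is small (less than half the injectivity radius), each closing arc stays in a contractible ball, so the class of $c$ in $H_1(M,U,\ZZ)$ is independent of the choices, exactly as recalled in Section~\ref{sec-ass-ps-dir}; this removes the ambiguity. One should also be slightly careful that $D_{\varphi,\alpha}$ is defined as $\bigcap_\epsilon\wb{D_{\varphi,\alpha,\epsilon}}$, so I am not bounding $\sell$ on $D_{\varphi,\alpha,\epsilon}$ directly but on its closure; this is handled precisely by the lower semicontinuity of $\sell_{\germ(\Rec_\alpha)}$ (Lemma~\ref{lem-len-semi-cont}), which guarantees $\{\sell\le 1\}$ is closed and hence survives taking closures and intersections. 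No genuinely hard analysis is needed; the proof is essentially a definition-chase plus one application of lower semicontinuity.
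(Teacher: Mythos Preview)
Your proposal is correct and follows essentially the same approach as the paper: observe that each element of $D_{\varphi,\alpha,\epsilon}$ is by definition the class of a single $\epsilon$-pseudo-orbit (hence, after closing jumps, a single curve) so has simplicial norm at most $1$, then invoke the lower semi-continuity of $\sell_{\germ(\Rec_\alpha)}$ (Lemma~\ref{lem-len-semi-cont}) to pass to the closure and the intersection. The paper's proof is just a two-line version of what you wrote.
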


Together wise Proposition~\ref{prop-img-hom-finit-l}, it has the consequence:

\begin{corollary}
	$D_{\varphi,\alpha}$ lies in $H_1(M,\Rec_\alpha,\ZZ)\rmod{\germ(\Rec_\alpha)}$. 
\end{corollary}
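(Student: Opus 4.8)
The plan is to show the containment holds already at the level of each $D_{\varphi,\alpha,\epsilon}$, and then to propagate it through the closure and the intersection over $\epsilon$ using that $(\sell_{\germ(\Rec_\alpha)})^{-1}(\{0,1\})$ is closed.

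First I would unwind the definition of the class of a pseudo-orbit. If $\gamma$ is a small $\epsilon$-pseudo-orbit that starts and ends on $\Rec_\alpha$, then, closing its finitely many jumps by the short connecting arcs used in the definition of $[\gamma]$, one obtains a single oriented (possibly self-intersecting) curve whose two endpoints lie on $\Rec_\alpha$. Hence $[\gamma]\in H_1(M,\Rec_\alpha,\ZZ)$ is represented by one curve, so $\sell_{\Rec_\alpha}([\gamma])\le 1$. By the inequality $\sell_{\germ(\Rec_\alpha)}(\delta\rmod{\germ(\Rec_\alpha)})\le\sell_{\Rec_\alpha}(\delta)$ recalled in the Simplicial norm subsection, every element $[\gamma]\rmod{\germ(\Rec_\alpha)}$ of $D_{\varphi,\alpha,\epsilon}$ satisfies $\sell_{\germ(\Rec_\alpha)}\le 1$. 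Since $\A=\ZZ$, the simplicial norm takes values in $\NN\cup\{+\infty\}$, so this reads exactly $D_{\varphi,\alpha,\epsilon}\subset(\sell_{\germ(\Rec_\alpha)})^{-1}(\{0,1\})$.

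Next I would invoke Lemma~\ref{lem-len-semi-cont}: $\sell_{\germ(\Rec_\alpha)}$ is lower semi-continuous, hence $(\sell_{\germ(\Rec_\alpha)})^{-1}([0,1])$ is closed; combined with the integrality just noted, $(\sell_{\germ(\Rec_\alpha)})^{-1}([0,1])=(\sell_{\germ(\Rec_\alpha)})^{-1}(\{0,1\})$ is a closed set. Therefore $\overline{D_{\varphi,\alpha,\epsilon}}\subset(\sell_{\germ(\Rec_\alpha)})^{-1}(\{0,1\})$ for every small $\epsilon>0$, and taking the intersection over $\epsilon$ gives $D_{\varphi,\alpha}=\bigcap_{\epsilon>0}\overline{D_{\varphi,\alpha,\epsilon}}\subset(\sell_{\germ(\Rec_\alpha)})^{-1}(\{0,1\})$, which is the claim.

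There is essentially no hard step: the only points to be a little careful about are that a pseudo-orbit with jumps still counts as a single curve once its jumps are closed (so that $\sell_{\Rec_\alpha}\le 1$ genuinely holds before passing to the germ), and that the passage to the closure is licit, which is precisely what the lower semi-continuity of the simplicial norm from Lemma~\ref{lem-len-semi-cont} guarantees.
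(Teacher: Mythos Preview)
Your argument proves the wrong statement. What you establish at the end is
\[
D_{\varphi,\alpha}\subset(\sell_{\germ(\Rec_\alpha)})^{-1}(\{0,1\}),
\]
which is exactly Lemma~\ref{lem-D-len-1}, not the Corollary. The Corollary asserts instead that $D_{\varphi,\alpha}$ lies in the \emph{image} $H_1(M,\Rec_\alpha,\ZZ)\rmod{\germ(\Rec_\alpha)}$ of the natural map, and that image is not known a priori to coincide with the set of elements of simplicial norm $\le 1$. In fact the paper shows that $H_1(M,K,\ZZ)\rmod{\germ(K)}$ is merely dense in $H_1(M,\germ(K),\ZZ)$, so it is generally not closed; you therefore cannot propagate the trivial containment $D_{\varphi,\alpha,\epsilon}\subset H_1(M,\Rec_\alpha,\ZZ)\rmod{\germ(\Rec_\alpha)}$ through the closure by the route you describe.

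The missing ingredient is Proposition~\ref{prop-img-hom-finit-l}, which is precisely the statement that
\[
H_1(M,K,\ZZ)\rmod{\germ(K)}=(\sell_{\germ(K)})^{-1}([0,+\infty[).
\]
This is a genuinely non-trivial result (its proof in the paper involves an extraction argument to stabilise the boundary $0$-chains and a Mayer--Vietoris chase). Once you have it, the Corollary is immediate: Lemma~\ref{lem-D-len-1} gives simplicial norm $\le 1<+\infty$, and Proposition~\ref{prop-img-hom-finit-l} then places every element of $D_{\varphi,\alpha}$ in the image of $H_1(M,\Rec_\alpha,\ZZ)$. Your write-up is essentially a (correct) reproof of Lemma~\ref{lem-D-len-1}; to complete the Corollary you must either invoke Proposition~\ref{prop-img-hom-finit-l} or supply an independent argument that finite simplicial norm implies representability by a chain relative to $\Rec_\alpha$.
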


So any~$\alpha$-relative asymptotic directions is representable by a brave curve with boundary in~$\Rec_\alpha$. 

\begin{proof}[Proof of Lemma~\ref{lem-D-len-1}]
	From definition, $D_{\varphi,\alpha,\epsilon}$ lies in $(\sell_{\germ(\Rec_\alpha)})^{-1}(\{0,1\})$. It follows from the lower semi-continuity of $\sell_{\germ(\Rec_\alpha)}$ (see Lemma~\ref{lem-len-semi-cont}) that $\wb{D_{\varphi,\alpha,\epsilon}}$ lies in $(\sell_{\germ(\Rec_\alpha)})^{-1}(\{0,1\}))$, and so does $D_{\varphi,\alpha}$.
\end{proof}

Contrary to asymptotic pseudo-directions, we can not use a compactness argument to generate asymptotic pseudo-directions. Instead, we rely on the next result.

Let $f\colon\wh M_\alpha\to\RR$ be an~$\alpha$-equivariant continuous map. 
Call a \emph{$C$-thick slice} (for $f$) the pre-image $f^{-1}(I)$ of any interval $I\subset\RR$ of length $C$.
Take $\epsilon>0$ small. An $\epsilon$-pseudo-orbit $\gamma$ in $M$ is said to remain in a $C$-thick slice if, given any $\epsilon$-pseudo-orbit $\wh\gamma$ that lifts $\gamma$ in $\wh M_\alpha$, $\wh\gamma$ remains in a $C$-thick slice.

\begin{proposition}\label{prop-rel-ass-dir}
	Fix~$\alpha$ in $H^1(M,\ZZ)$ and an~$\alpha$-equivariant map $f\colon\wh M_\alpha\to\RR$. For any $C\geq 0$ and any~$\alpha$-recurrence chains $R_0$, and $R_1$, there exists $\epsilon>0$ which satisfies the following. For any $\epsilon$-pseudo-orbit $\gamma$ from $R_0$ to $R_1$, which remains in a $C$-thick slice, the homology class $[\gamma]\rmod{\germ(\Rec_\alpha)}$ lies in $D_{\varphi,\alpha}$.
\end{proposition}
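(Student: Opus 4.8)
The plan is to lift the whole situation to the $\ZZ$-cover $\wh M_\alpha$, where a $C$-thick slice is compact, run a Conley-type closing argument there, and keep track of homology only modulo the germ of $\Rec_\alpha$. First I would reduce to the cover: lift $\gamma$ to an $\epsilon$-pseudo-orbit $\wh\gamma$ in $\wh M_\alpha$ from a point $\wh x_0$ over $\gamma(0)\in R_0$ to a point $\wh x_1$ over $\gamma(\len(\gamma)^-)\in R_1$. Since $R_0$ and $R_1$ are $\alpha$-recurrence chains, their points carry, for every fineness, periodic pseudo-orbits of vanishing $\alpha$-class, and lifting such loops shows $\wh x_0,\wh x_1\in\wh\Rec_\alpha$ — any lift works, as $\wh\Rec_\alpha$ is $\ZZ$-invariant. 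I would also record that a $C$-thick slice $f^{-1}([a,a+C])$ is compact: over a compact $K\subset M$, $\alpha$-equivariance forces $f$ to lie in an interval of bounded length shifted by $kn_\alpha$ on the $k$-th sheet of $\pi_\alpha^{-1}(K)$, so only finitely many sheets meet the slice; hence the slice is proper over the compact manifold $M$. Thus $\wh\gamma$ stays in a compact set $S\subset\wh M_\alpha$ of $f$-diameter at most $C$.

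Next I would unwind what must be proved. By the description of the projective-limit topology, $[\gamma]\rmod{\germ(\Rec_\alpha)}$ lies in $D_{\varphi,\alpha}=\bigcap_{\delta>0}\wb{D_{\varphi,\alpha,\delta}}$ exactly when, for every $\delta>0$ and every $\Rec_\alpha$-connected neighbourhood $U$ of $\Rec_\alpha$, there is a $\delta$-pseudo-orbit $\sigma$ in $M$ from $\Rec_\alpha$ to $\Rec_\alpha$ with $[\sigma]\rmod{U}=[\gamma]\rmod{U}$ in $H_1(M,U,\ZZ)$ — and the quantifier on $\epsilon$ must be fixed depending only on $C$ (through $S$) and on $R_0,R_1$, not on $\delta$ or $U$. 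Since $\gamma$ itself already witnesses $[\gamma]\rmod{\germ(\Rec_\alpha)}\in D_{\varphi,\alpha,\epsilon}$, only the range $\delta<\epsilon$ requires work.

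The heart of the argument, and the step I expect to be hard, is the refinement-with-homology-control. Having chosen $\epsilon$ below the injectivity radius and small relative to $S$ (invoking compactness of $S$, and of $M$ for uniformity in $C$), I would cut $\wh\gamma$ into its genuine orbit arcs and its jumps $p_j\to q_j$ with $d(p_j,q_j)<\epsilon$; each jump sits in an $\epsilon$-ball, contractible in $\wh M_\alpha$. By a Conley-type closing argument, available because $\wh\gamma$ is confined to the compact set $S$ — for $\epsilon$ small no barrier (a Lyapunov increment bounded away from $0$) can be crossed by an $\epsilon$-pseudo-orbit trapped in $S$ — each jump can be bridged by a $\delta$-pseudo-orbit remaining in an $\epsilon$-ball around $p_j$, or else rerouted through $\wh\Rec_\alpha$. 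Concatenating these local replacements with the orbit arcs of $\wh\gamma$, projecting down, and prepending and appending short arcs of periodic $\delta$-pseudo-orbits of $\alpha$-class $0$ through $\gamma(0)$ and $\gamma(\len(\gamma)^-)$ yields a $\delta$-pseudo-orbit $\sigma$ from $\Rec_\alpha$ to $\Rec_\alpha$. It then remains to compare $[\sigma]$ with $[\gamma]$: the loop $\sigma*\wb\gamma$ splits into loops contained in $\epsilon$-balls, which are nullhomotopic, and finitely many detours through recurrence chains together with the two added loops; given $U$, routing those detours inside $U$ — possible once $\epsilon$ and the detour sizes are controlled relative to $U$ — forces $[\sigma*\wb\gamma]$ into the image of $H_1(U,\ZZ)\to H_1(M,\ZZ)$, that is $[\sigma]\rmod{U}=[\gamma]\rmod{U}$.

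I expect the genuine obstacle to be precisely this last step: obtaining a single $\epsilon$ that works simultaneously for all admissible $\gamma$, while arranging the homology of the refinement to agree with $[\gamma]$ modulo \emph{every} neighbourhood of $\Rec_\alpha$. The three levers are that thick slices are compact, so Conley's closing lemma (and the relevant Part I statements) apply inside $S$; that the $\alpha$-recurrence chains through the endpoints of $\gamma$ are stable under refinement; and — decisively — that only equality modulo the germ is required, so for each $U$ one is free to route the unavoidable detours inside $U$.
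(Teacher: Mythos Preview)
Your refinement approach has a genuine gap at the bridging step. The claim that each jump $p_j\to q_j$ can be bridged by a $\delta$-pseudo-orbit remaining in an $\epsilon$-ball around $p_j$ is simply false in general: two $\epsilon$-close points need not satisfy $p_j\recto q_j$, and even when they do, the connecting fine pseudo-orbit has no reason to stay local. Your escape clause, ``or else rerouted through $\wh\Rec_\alpha$'', is where all the difficulty actually lives --- such a detour can carry arbitrary homology, and you give no mechanism to control it modulo \emph{every} neighbourhood $U$ with a single $\epsilon$ independent of $U$. You correctly flag this as the hard step, but the sketch does not close it.

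The paper avoids refinement entirely and argues by compactness and stabilization. First, a pigeonhole lemma: any $\epsilon$-pseudo-orbit confined to a $C$-thick slice spends at most a fixed amount of time (depending only on $C$ and a chosen neighbourhood $U$ of $\Rec_\alpha$) outside $U$; otherwise two well-separated visits to $M\setminus U$ would close up into a periodic $\epsilon$-pseudo-orbit of $\alpha$-class zero through a point far from $\Rec_\alpha$, contradicting the definition of $\Rec_\alpha$. Hence the classes $[\gamma]\rmod{U}$ range over a \emph{finite} subset of $H_1(M,U,\ZZ)$, and these finite sets are non-increasing as $\epsilon\to0$, so they stabilize; the stable set is then contained in $D_{\varphi,\alpha}\rmod{U}$. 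This yields, for each $U$, an $\epsilon_U>0$ with $[\gamma]\rmod{U}\in D_{\varphi,\alpha}\rmod{U}$ for all admissible $\gamma$. The final twist, and the only place the fixed endpoint chains $R_0,R_1$ are used, upgrades this to a single $\epsilon$: along any sequence $\gamma_n$ with $\epsilon_n\to0$, the differences $[\gamma_n]-[\gamma_m]$ have zero boundary in $H_0(\Rec_\alpha)$ and hence come from $H_1(M,\ZZ)$; the image of $H_1(M,\ZZ)$ in $H_1(M,\germ(\Rec_\alpha),\ZZ)$ is finitely generated and therefore discrete, so convergence of $[\gamma_n]\rmod{\germ(\Rec_\alpha)}$ forces it to be eventually constant, equal to its limit in $D_{\varphi,\alpha}$.
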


Note that we do not require $-\alpha$ to be quasi-Lyapunov. But when $-\alpha$ is quasi-Lyapunov, the condition that $\gamma$ remains in a $C$-thick slice can be replaced by $\int_\gamma df\leq C$, maybe for a different value of $C$.

To prove the proposition, we first control the behavior of $\gamma$ outside a small neighborhood of~$\Rec_\alpha$, then prove the proposition modulo a small open neighborhood of~$\Rec_\alpha$.

\begin{lemma}\label{lem-neg-pso-acc-rec}
	Take $C\geq 0$ and a neighborhood~$U$ of~$\Rec_\alpha$. There exists $n$ and $\epsilon>0$ which satisfies the following. For any $\epsilon$-pseudo-orbit $\gamma$ in $M$ that remains in a $C$-thick slice of $M$, there exists a union of $n$ arcs of $\gamma$, each of length $T$, so that $\gamma$ remains inside~$U$ outside these arcs.
\end{lemma}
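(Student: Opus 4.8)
Take $C \geq 0$ and a neighborhood $U$ of $\Rec_\alpha$. There exist $n$ and $\epsilon > 0$ such that for any $\epsilon$-pseudo-orbit $\gamma$ in $M$ remaining in a $C$-thick slice, there is a union of $n$ arcs of $\gamma$, each of length $T$, outside of which $\gamma$ stays inside $U$.
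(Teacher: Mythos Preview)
Your proposal is not a proof: it is simply a restatement of the lemma's conclusion, with no argument whatsoever. You have not chosen $n$ or $\epsilon$, nor explained why any $\epsilon$-pseudo-orbit in a $C$-thick slice must spend all but $n$ arcs of length $T$ inside $U$.

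The paper's proof proceeds by contradiction and a pigeonhole/compactness argument. One first shrinks $U$ to a smaller neighborhood $V$ with $\varphi_{[-2T,2T]}(V)\subset U$, so that any continuous orbit arc of length $T$ meeting $M\setminus U$ lies entirely outside $V$. If for arbitrarily small $\epsilon$ some $\epsilon$-pseudo-orbit in a $C$-thick slice had more than $n$ disjoint arcs of length $T$ exiting $U$, one obtains $n$ sample points $\gamma(t_k)$ all lying in the compact set $M\setminus V$ and all lifting to a fixed $C$-thick slice in $\wh M_\alpha$. For $n$ large enough relative to that compact slice, two of these points must be $\epsilon$-close and at the same $\alpha$-height, producing a periodic $\epsilon$-pseudo-orbit $\gamma([t_k,t_{k'}])$ with $\alpha$-class zero. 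Letting $\epsilon\to 0$, such points accumulate on $\Rec_\alpha\subset V$, contradicting that they lie outside $V$. This is the missing idea: you need the interplay between the $C$-thick slice constraint, compactness, and the definition of $\Rec_\alpha$ to force the excursions outside $U$ to be bounded in number.
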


\begin{corollary}
	In Lemma~\ref{lem-neg-pso-acc-rec}, when $\epsilon$ goes to zero and $\len(\gamma)$ goes to $+\infty$, some point in $\gamma$ accumulates on~$\Rec_\alpha$. 
\end{corollary}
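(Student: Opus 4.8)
The plan is to deduce this directly from Lemma~\ref{lem-neg-pso-acc-rec} by a counting argument on the length of the pseudo-orbit. First I would make the statement precise: I claim that for every $\eta>0$ there exist $\epsilon>0$ and $L>0$ such that any $\epsilon$-pseudo-orbit $\gamma$ in $M$ which remains in a $C$-thick slice and satisfies $\len(\gamma)\geq L$ passes within distance $\eta$ of~$\Rec_\alpha$. Combined with the compactness of~$\Rec_\alpha$, this is precisely what the phrase ``some point in~$\gamma$ accumulates on~$\Rec_\alpha$'' should mean when $\epsilon\to 0$ and $\len(\gamma)\to+\infty$: for any sequence of such pseudo-orbits with $\epsilon\to 0$ and $\len\to+\infty$ one gets points $x_k\in\gamma_k$ with $d(x_k,\Rec_\alpha)\to 0$, and any accumulation point of $(x_k)$ in the compact set~$M$ then lies in the closed set~$\Rec_\alpha$.

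To prove the claim, fix $\eta>0$ and apply Lemma~\ref{lem-neg-pso-acc-rec} to the open neighborhood $U=V_\eta(\Rec_\alpha)$ of~$\Rec_\alpha$, keeping the same~$C$. This yields an integer~$n$ and a scalar $\epsilon>0$ such that every $\epsilon$-pseudo-orbit $\gamma$ remaining in a $C$-thick slice lies inside~$U$ outside a union of~$n$ arcs of~$\gamma$ of length~$T$ each. I would then set $L=nT+1$. If $\gamma$ is such a pseudo-orbit with $\len(\gamma)\geq L$, the $n$ exceptional arcs occupy a subset of the parameter interval $[0,\len(\gamma)[$ of total length at most $nT<\len(\gamma)$, so there is a parameter $t$ at which $\gamma(t)$ lies in~$U$, that is $d(\gamma(t),\Rec_\alpha)<\eta$. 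Letting $\eta$ range over a sequence tending to~$0$, and recalling that an $\epsilon'$-pseudo-orbit is in particular an $\epsilon$-pseudo-orbit whenever $\epsilon'\leq\epsilon$ (the constant~$T$ being fixed), so that ``$\epsilon\to 0$'' is meaningful, one obtains points of~$\gamma$ at distance tending to~$0$ from~$\Rec_\alpha$ along any such sequence, hence accumulating on~$\Rec_\alpha$ by the previous paragraph.

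I do not expect any real obstacle: all the substance is contained in Lemma~\ref{lem-neg-pso-acc-rec}, and the corollary is merely the observation that a pseudo-orbit avoiding $V_\eta(\Rec_\alpha)$ for longer than $nT$ units of time would contradict the conclusion of that lemma. The only points deserving a word of care are the precise interpretation of ``accumulates on~$\Rec_\alpha$'', handled by the closedness and compactness of~$\Rec_\alpha$ inside the compact manifold~$M$, and the harmless monotonicity of the notion of $\epsilon$-pseudo-orbit in~$\epsilon$.
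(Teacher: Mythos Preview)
Your argument is correct and is exactly the intended deduction: the paper states this corollary without proof, treating it as an immediate consequence of Lemma~\ref{lem-neg-pso-acc-rec}, and your counting argument (apply the lemma with $U=V_\eta(\Rec_\alpha)$ and take $\len(\gamma)>nT$) is precisely the one-line justification the paper leaves implicit.
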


\begin{proof}
	Choose a smaller neighborhood $V\subset U$ of~$\Rec_\alpha$ so that $\varphi_{[-2T,2T]}(V)\subset U$ holds. Take $\epsilon>0$ very small and an $\epsilon$-pseudo-orbit $\gamma$ that remains in a $C$-thick slice. 

	We reason by contradiction.
	Take some $n$ large and assume that $\gamma$ has at least $n$ times $t_1\cdots t_n$, with $t_{k+1}>t_k+2T$, so that $\gamma$ is continuous on $[t_k,t_k+T[$, and so that $\gamma([t_k,t_k+T[)$ intersects $M\setminus U$.
	Note that the arc $\gamma([t_kt,_k+T[)$ remains outside $V$. So if $n$ is large enough, there exist two indexes $1\leq k<k'\leq n$ that satisfy the following. Firstly, $\gamma(t_k)$ and $\gamma(t_{k'})$ are $\epsilon$-close. Secondly, the restriction $\gamma([t_k,t_{k'}])$ induces a periodic $\epsilon$-pseudo-orbit which satisfies $\alpha([\gamma([t_k,t_{k'}])])=0$. This is made possible by lifting $\gamma$ to~$\wh M_\alpha$, and considering the accumulation points at the times $t_k$ inside a given $C$-thick slice.

	So take $k,k'$ as above. When $\epsilon$ goes to zero, $\gamma(t_k)$ accumulates on the set~$\Rec_\alpha$. Thus, when $\epsilon$ is small enough, $\gamma(t_k)$ lies in $V$, which contradicts the assumption above.
\end{proof}

\begin{lemma}\label{lem-rel-ass-dir-tool}
	Assume $\alpha(D_\varphi)\geq0$. For any $C\geq 0$ and any open neighborhood~$U$ of~$\Rec_\alpha$, there exists $\epsilon>0$ which satisfies the following. For any $\epsilon$-pseudo-orbit $\gamma$ in~$M$ that remains in a $C$-thick slice, and with ends on~$\Rec_\alpha$, the homology class of $[\gamma]\rmod{U}$ in $H_1(M,U,\ZZ)$ lies in $D_{\varphi,\alpha}\rmod{U}$.
\end{lemma}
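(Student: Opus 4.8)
Since every neighborhood of $\Rec_\alpha$ contains some $V_\delta(\Rec_\alpha)$ and $\rmod{U}$ factors through $\rmod{V_\delta(\Rec_\alpha)}$ (with $D_{\varphi,\alpha}\rmod{V_\delta(\Rec_\alpha)}$ mapping into $D_{\varphi,\alpha}\rmod{U}$), it suffices to treat $U=V_\delta(\Rec_\alpha)$ for an arbitrarily small $\delta>0$. Such a $U$ is $\Rec_\alpha$-connected, so by Lemma~\ref{lem-K-con-finit-gen} the module $H_1(M,U,\ZZ)$ is finitely generated and carries the discrete topology. The plan is: (a) use Lemma~\ref{lem-neg-pso-acc-rec} to cut $\gamma$ into a bounded number of flow arcs of length $T$ separated by pieces that stay close to $\Rec_\alpha$; (b) note the close-to-$\Rec_\alpha$ pieces are null modulo $U$, so $[\gamma]\rmod{U}$ is a sum of boundedly many ``excursion classes'' ranging over a finite set; (c) for every $\epsilon'>0$ reassemble the excursions into a single $\epsilon'$-pseudo-orbit from $\Rec_\alpha$ to $\Rec_\alpha$ with the same class modulo $U$; (d) let $\epsilon'\downarrow0$ and pass to a limit inside $D_{\varphi,\alpha}$.

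\textbf{Decomposition.} Fix an $\Rec_\alpha$-connected neighborhood $U'$ with $\overline{U'}\subset U$ and apply Lemma~\ref{lem-neg-pso-acc-rec} with $C$ and $U'$; this produces the $\epsilon>0$ of the statement (depending only on $C$ and $U$, through $U'$) and, for every admissible $\gamma$, at most $n$ flow arcs $a_1,\dots,a_m$ of length $T$ outside which $\gamma$ remains inside $U'$. Writing $\gamma=g_0\star a_1\star g_1\star\cdots\star a_m\star g_m$, each $g_j\subset U'\subset U$ and, using $\partial\gamma\subset\Rec_\alpha\subset U$, every endpoint of every $a_i$ lies in $U$. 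Each piece is then a relative $1$-cycle modulo $U$; the $g_j$ are supported in $U$, hence null in $H_1(M,U,\ZZ)$, so $[\gamma]\rmod{U}=\sum_{i=1}^m[a_i]\rmod{U}$. Because the $a_i$ are flow arcs of length $T$ with endpoints in the compact set $\overline{U'}$, $m\le n$, and $H_1(M,U,\ZZ)$ is discrete, this sum takes only finitely many values, so it is enough to treat one fixed configuration $a_1,\dots,a_m$.

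\textbf{Reassembly.} Given $\epsilon'>0$, I build an $\epsilon'$-pseudo-orbit $\delta$ from $\Rec_\alpha$ to $\Rec_\alpha$ that traverses the $a_i$ in order, inserting between the end of $a_i$ and the start of $a_{i+1}$ a ``bridge'' contained in $U$. Both of these points lie within $\delta$ of $\Rec_\alpha$, and the good piece $g_i$ connects them while staying in the small set $U'$; for $\epsilon$ small this forces $g_i$ to shadow the recurrent dynamics, so up to a correction that is null modulo $U$ the two endpoints are joined by an $\alpha$-trivial motion in a small neighborhood of $\Rec_\alpha$ — flowing along $\Rec_\alpha$ and inserting arcs of $\alpha$-trivial periodic pseudo-orbits through nearby points of $\Rec_\alpha$, which for small jump size stay inside $U$ (a compactness argument on $\Rec_\alpha$, cf. Lemma~\ref{lem-tot-disc}). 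Here the hypothesis $\alpha(D_\varphi)\ge0$ is what guarantees the near-$\Rec_\alpha$ connection can be realized by $\alpha$-trivial pieces (so it is compatible with the $\alpha$-equivariant bookkeeping and contributes nothing modulo $U$) and keeps the excursion classes bounded. A similar initial bridge supplies the compulsory $T$ units of flow before the first jump, and a final jump from the end of $a_m$ lands on $\Rec_\alpha$. Every bridge lies in $U$, hence is null modulo $U$, so $[\delta]\rmod{U}=\sum_i[a_i]\rmod{U}=[\gamma]\rmod{U}$, and thus $[\gamma]\rmod{U}\in D_{\varphi,\alpha,\epsilon'}\rmod{U}$.

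\textbf{Passing to the limit, and the main obstacle.} Taking $\epsilon'=\epsilon'_k\downarrow0$ with the excursions $a_i$ kept fixed, only the near-$\Rec_\alpha$ bridges change with $k$; being made of $\alpha$-trivial periodic $\epsilon'_k$-pseudo-orbits they eventually lie in $V_\delta(\Rec_\alpha)=U$, so the classes $\xi_k=[\delta_k]\rmod{\germ(\Rec_\alpha)}$ (which have simplicial norm $\le1$ by Lemma~\ref{lem-D-len-1}) form a bounded subset of $H_1(M,\germ(\Rec_\alpha),\ZZ)$, and by Lemma~\ref{lem-germ-hom-compact} a subsequence converges to some $\xi$. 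Each $\xi_k$ lies in $D_{\varphi,\alpha,\epsilon'_k}\subset\wb{D_{\varphi,\alpha,\epsilon'_j}}$ for $k\ge j$, and $\wb{D_{\varphi,\alpha,\epsilon'_j}}$ is closed, so $\xi\in\bigcap_j\wb{D_{\varphi,\alpha,\epsilon'_j}}=D_{\varphi,\alpha}$; by continuity of $\rmod{U}$, $\xi\rmod{U}=\lim\xi_k\rmod{U}=[\gamma]\rmod{U}$, which is the claim. The hard part is the reassembly step, and precisely the claim that the good pieces of $\gamma$ — a priori only shadowing the dynamics at scale $\epsilon$ inside $U'$ — can be replaced by $\alpha$-trivial pseudo-orbits near $\Rec_\alpha$ with arbitrarily small jumps and the same class modulo $U$; this is where one must exploit together the accumulation-on-$\Rec_\alpha$ phenomenon underlying Lemma~\ref{lem-neg-pso-acc-rec}, the internal structure of the $\alpha$-recurrence chains, and the hypothesis $\alpha(D_\varphi)\ge0$. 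The only other delicate point, the limiting step, is handled by the simplicial-norm bound and the compactness criterion above, which compensate for the lack of local compactness of $H_1(M,\germ(\Rec_\alpha),\ZZ)$.
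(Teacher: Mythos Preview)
Your reassembly step is not only the acknowledged hard part --- it is unnecessary, and the paper bypasses it entirely. You are missing the key observation: any $\epsilon$-pseudo-orbit $\gamma$ with ends on $\Rec_\alpha$ already has $[\gamma]\rmod{\germ(\Rec_\alpha)}\in D_{\varphi,\alpha,\epsilon}$ by definition. There is no need to manufacture an $\epsilon'$-pseudo-orbit with the same class modulo~$U$ for smaller~$\epsilon'$. Instead, let $X_\epsilon\subset H_1(M,\germ(\Rec_\alpha),\ZZ)$ be the closure of all such classes (for $\gamma$ staying in a $C$-thick slice). By Lemma~\ref{lem-neg-pso-acc-rec} the time $\gamma$ spends outside any fixed $\Rec_\alpha$-connected neighborhood $V$ is bounded independently of $\gamma$, so $X_\epsilon\rmod{V}$ is a finite subset of $H_1(M,V,\ZZ)$; hence $X_\epsilon$ is bounded and, by Lemma~\ref{lem-germ-hom-compact}, compact. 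The $X_\epsilon$ decrease as $\epsilon\downarrow 0$, their intersection $X$ lies in $D_{\varphi,\alpha}$, and by compactness $X\rmod{U}=\bigcap_\epsilon(X_\epsilon\rmod{U})$. Since each $X_\epsilon\rmod{U}$ is finite and the family is decreasing, it stabilises: for $\epsilon$ small enough $X_\epsilon\rmod{U}=X\rmod{U}\subset D_{\varphi,\alpha}\rmod{U}$, which is the conclusion.

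There is also a genuine error in your limiting step: the bound $\sell_{\germ(\Rec_\alpha)}(\xi_k)\le 1$ does \emph{not} imply that $\{\xi_k\}$ is bounded in the projective-limit sense required by Lemma~\ref{lem-germ-hom-compact}. Simplicial norm $\le 1$ says the class is representable by a single curve, but that curve can have arbitrarily large homology modulo a fixed~$U$. Your bridges lie in $U$ but need not lie in smaller neighborhoods of $\Rec_\alpha$, so the $\xi_k$ may diverge modulo those and no subsequence need converge. The paper's argument avoids this because boundedness of $X_\epsilon$ comes directly from the uniform bound on time spent outside each neighborhood, not from the simplicial norm.
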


Stated differently, the cohomology classes $[\gamma]\rmod{\germ(\Rec_\alpha)}$ accumulates on $D_{\varphi,\alpha}$ when $\epsilon$ goes to zero, under the constraints $\partial\gamma\subset\Rec_\alpha$ and that $\gamma$ remains in a $C$-thick slice.

\begin{proof}
	Let $f$, $C\geq 0$ and~$U$ be as in the lemma. Up to making it smaller, we may assume that~$U$ is~$\Rec_\alpha$-connected.

	Take $\epsilon>0$ and let $\gamma$ be an $\epsilon$-pseudo-orbit in $M$, that remains in a $C$-thick slice.
	From Lemma~\ref{lem-neg-pso-acc-rec}, $\gamma$ spends a bounded above time outside~$U$. So	the homology classes $[\gamma]\rmod{U}$ in $H_1(M,U,\ZZ)$ belongs to a bounded set, that depends on $\epsilon$ but not $\gamma$.

	Denote by $X_\epsilon$ the closure in $H_1(M,\germ(\Rec_\alpha),\ZZ)$ of the set of $[\gamma]\rmod{\germ(\Rec_\alpha)}$ where $\gamma$ varies among the $\epsilon$-pseudo-orbits considered above.
	It follows from above that for any neighborhood $V$ of~$\Rec_\alpha$, the image $X_\epsilon\rmod{V}$ is finite. So~$X_\epsilon$ is bounded and closed.
	By Lemma~\ref{lem-germ-hom-compact}, the set $X_\epsilon$ is compact. Denote by $X$ the monotonous intersection of the compact $X_\epsilon$, necessarily compact and non-empty. Note that $X_\epsilon$ lies in $D_{\varphi,\alpha,\epsilon}$, so $X$ lies inside $D_{\varphi,\alpha}$. 

	For any~$\Rec_\alpha$-connected neighborhood~$U$ of~$\Rec_\alpha$, the image $X_\epsilon\rmod{U}$ in $H_1(M,U,\ZZ)$ is finite and non-increasing when $\epsilon$ decreases. So it stabilizes, and it is equal to $X\rmod{U}$ when $\epsilon$ is small enough. Then, taking $\epsilon$ small enough, when~$\gamma$ is an $\epsilon$-pseudo-orbit as above, its cohomology class $[\gamma]\rmod{U}$ lies in $X_\epsilon\rmod{U}=X\rmod{U}\subset D_{\varphi,\alpha}\rmod{U}$.
\end{proof}

\begin{proof}[Proof of Proposition~\ref{prop-rel-ass-dir}]
	Let $f\colon\wh M_\alpha\to\RR$ be continuous and~$\alpha$-equivariant, $R_0$ and $R_1$ be two~$\alpha$-recurrence chains, and take some $C\geq 0$. Let $\epsilon_n>0$ be a sequence converging toward zero, and $\gamma_n$ be a $(\epsilon_n,T)$-pseudo-orbit from $R_0$ to~$R_1$, and that remains in a $C$-thick slice. We claim that $[\gamma_n]\rmod{\germ(\Rec_\alpha)}$ is constant for large enough $n$, and that it lies in $D_{\varphi,\alpha}$. The conclusion follows. 

	From Lemma~\ref{lem-rel-ass-dir-tool}, the classes $[\gamma_n]\rmod{\germ(\Rec_\alpha)}$ converges toward some $\delta$ in $H_1(M,\germ(\Rec_\alpha),\ZZ)$. By assumption, the boundary of $[\gamma_n]-[\gamma_m]$ is null. So there exists some $x_{n,m}$ in $H_1(M,\ZZ)$ with $x\rmod{\Rec_\alpha}=[\gamma_n]-[\gamma_m]$. We have:
	\begin{equation}\label{eq-conv-tool}
		x_{n,m}\rmod{\germ(\Rec_\alpha)}=([\gamma_n]-[\gamma_m])\rmod{\germ(\Rec_\alpha)}
	\end{equation}
	so when $n$ and $m$ converge toward $+\infty$, $x_{n,m}\rmod{\germ(\Rec_\alpha)}$ converges toward zero. The submodule $H_1(M,\ZZ)\rmod{\germ(\Rec_\alpha)}$ of $H_1(M,\germ(\Rec_\alpha),\ZZ)$ is finitely generated, so it is discrete. Hence, when $n$ and $m$ are large enough, the image $x_{n,m}\rmod{\germ(\Rec_\alpha)}$ is equal to zero. The claim follows from Equation~\ref{eq-conv-tool}.
\end{proof}

\subsection{Classification in relative homology}\label{sec-proof-homo-criterion}

Let~$\alpha$ be in $H^1(M,\ZZ)$, so that $-\alpha$ is quasi-Lyapunov, and let~$S$ be a partial cross-section cohomologous to~$\alpha$. We justify that~$S$ induces a relative cohomology class $[S]$ inside $H^1(M,\germ(\Rec_\alpha),\ZZ)$. Consider the map $F_S\colon M\to\bfrac{\RR}{\ZZ}$ defined in Section~\ref{sec-partial-section}. Recall $-F_S$ that it is pre-Lyapunov, and it is equal to $0$ on a neighborhood~$\Rec_\alpha$. Hence, there is an open neighborhood~$U$ of~$\Rec_\alpha$ which satisfies $F_S(U)=0$. So $F_S$ has a cohomology class in $H^1(M,U,\ZZ)$, which has an image in $H^1(M,\germ(\Rec_\alpha),\ZZ)$.

\begin{definition}
	We denote by $[S]_\germ=[F_S]_\germ$ the relative cohomology class of $F_S$ in $H^1(M,\germ(\Rec_\alpha),\ZZ)$.
\end{definition}


Consider the natural map $H^1(M,\germ(\Rec_\alpha),\ZZ)\xrightarrow{\rmod{\emptyset}} H^1(M,\ZZ)$.

\begin{theorem}[Restatement of Theorem~\ref{mainthm-classification}]\label{thm-ps-classification-3}
	Assume $M$ compact and connected. Let~$\alpha$ be in $H^1(M,\ZZ)$. If $-\alpha$ is quasi-Lyapunov, then the map 
	$$\begin{tikzcd}
		\PS_\varphi(\alpha) \arrow[r] & H^1(M,\germ(\Rec_\alpha),\ZZ)\setminus\{0\} \\ [-0.9cm]
		S \arrow[r, mapsto] & {[S]_\germ}
	\end{tikzcd}$$
	induces a bijection onto the set of non-zero classes~$\beta$ in $H^1(M,\germ(\Rec_\alpha),\ZZ)$ which satisfies $\beta\rmod{\emptyset} = \alpha$ and $\beta(D_{\varphi,\alpha})\geq 0$.
\end{theorem}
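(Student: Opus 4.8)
The strategy is to factor the map $S \mapsto [S]_\germ$ through the combinatorial descriptions already established, and then to identify the target set precisely. Recall from Theorems~\ref{thm-ps-classification-1} and~\ref{thm-preL-to-order} that $\PS_\varphi(\alpha)$ is in bijection with $\pL(\alpha)$ and with $I_\alpha(\wh\Rec_\alpha^*,\ZZ)$. So the plan is: (1) show that $[S]_\germ$ depends only on the class of $F_S$ in $\pL(\alpha)$, hence $S \mapsto [S]_\germ$ descends to a well-defined map on $\PS_\varphi(\alpha)$; (2) show injectivity by recovering the order-preserving map $\wh f^*\colon \wh\Rec_\alpha^* \to \ZZ$ from $[S]_\germ$, reducing to the injectivity in Theorem~\ref{thm-preL-to-order}; (3) show that the image lands in $\{\beta : \beta\rmod{\emptyset}=\alpha,\ \beta(D_{\varphi,\alpha})\geq 0\}$; (4) show surjectivity onto that set.

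\textbf{Well-definedness and the constraint $\beta\rmod{\emptyset}=\alpha$.} For (1), if $S_0, S_1$ are isotopic along the flow, the proof of Theorem~\ref{thm-ps-classification-1} gives a homotopy $f_t$ inside $[(M,\Rec_\alpha)\col(\bfrac{\RR}{\ZZ},0)]_\alpha$ between $F_{S_0}$ and $F_{S_1}$; by compactness there is a single open neighborhood $U$ of $\Rec_\alpha$ on which all $f_t$ vanish, so they are homotopic as maps $(M,U)\to(\bfrac{\RR}{\ZZ},0)$, giving equal classes in $H^1(M,U,\ZZ)$ and hence in $H^1(M,\germ(\Rec_\alpha),\ZZ)$. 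The identity $[S]_\germ\rmod{\emptyset}=[F_S]=\alpha$ is immediate from the definition of $F_S$ and the fact that restriction to $\emptyset$ is the natural map $H^1(M,\germ(\Rec_\alpha),\ZZ)\to H^1(M,\ZZ)$. Also $[S]_\germ\neq 0$: by Lemma~\ref{lem-F-ps} the curve pairing $[S]_\germ(\delta)$ equals the intersection number of a flow-transverse $S$ with $\delta$, which is nonzero for a short positive orbit arc through a point of $S$ (such an arc has both endpoints off $\Rec_\alpha$ since $S\cap\Rec_\alpha=\emptyset$).

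\textbf{The pairing sign $\beta(D_{\varphi,\alpha})\geq 0$ and surjectivity.} For (3): given an $\epsilon$-pseudo-orbit $\gamma$ from $\Rec_\alpha$ to $\Rec_\alpha$, the pairing $[S]_\germ(\,[\gamma]\rmod{\germ(\Rec_\alpha)}\,) = -\int_\gamma d F_S$ computed via a lift $\wh F_S$ on $\wh M_\alpha$; since $-F_S$ is pre-Lyapunov and $\wh F_S$ is constant (integer-valued) on $\wh\Rec_\alpha$, monotonicity of $\wh F_S$ along the flow outside $\Rec_\alpha$ forces $\int_\gamma d F_S \le 0$, i.e. the pairing is $\ge 0$; taking closures and intersecting over $\epsilon$ gives $[S]_\germ(D_{\varphi,\alpha})\ge 0$. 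For surjectivity (4), start from $\beta$ with $\beta\rmod\emptyset=\alpha$ and $\beta(D_{\varphi,\alpha})\geq 0$: represent $\beta$ by a continuous $g\colon M\to\bfrac{\RR}{\ZZ}$ vanishing on a neighborhood $U$ of $\Rec_\alpha$ and cohomologous to $\alpha$. Lift to $\wh g$ on $\wh M_\alpha$; this $\wh g$ restricts to an $\alpha$-equivariant map $\wh\Rec_\alpha^*\to\ZZ$, and the condition $\beta(D_{\varphi,\alpha})\geq 0$ together with Proposition~\ref{prop-rel-ass-dir} forces this restriction to be \emph{non-decreasing} for Conley's order (if $R_0\recto R_1$ one feeds a thin-slice $\epsilon$-pseudo-orbit realizing the relevant relative class into $D_{\varphi,\alpha}$). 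Hence it defines an element of $\wh I_\alpha(\wh\Rec_\alpha^*,\ZZ)$, which by the surjectivity half of Theorem~\ref{thm-preL-to-order} (i.e. \cite[Theorem 4.11]{martyPS1}) comes from some $\wh f$ with $-\wh f$ pre-Lyapunov; one then adjusts $g$ to agree with a convolved-along-flow pre-Lyapunov representative whose level set realizes $S$, checking that the modified map is still homotopic to $g$ rel a neighborhood of $\Rec_\alpha$, so $[S]_\germ = \beta$. Injectivity (2) then follows because $[S]_\germ$ determines the restriction $\wh f^*$ up to additive constant, i.e. the element of $I_\alpha(\wh\Rec_\alpha^*,\ZZ)$, and Theorem~\ref{thm-preL-to-order} is injective.

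\textbf{Main obstacle.} The delicate point is step (4): translating the homological inequality $\beta(D_{\varphi,\alpha})\geq 0$ into the order-monotonicity of the induced map on $\wh\Rec_\alpha^*$, and conversely ensuring that every non-decreasing map on Conley's order is realized by a \emph{global} $g$ vanishing near $\Rec_\alpha$ with the prescribed germ-cohomology class. The forward direction needs Proposition~\ref{prop-rel-ass-dir} to produce enough relative asymptotic directions from pseudo-orbits confined to thick slices; the reverse needs care that the pairing of $\beta$ against $D_{\varphi,\alpha}$ captures \emph{all} order relations $R_0 \recto R_1$ and not merely finitely many, which is exactly why $D_{\varphi,\alpha}$ is allowed to be unbounded. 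Matching the germ class of the realized section with $\beta$ itself (not just up to the image of $H^1(M,\ZZ)$) requires tracking the full element of $I_\alpha(\wh\Rec_\alpha^*,\ZZ)$, using that $H^1(M,\germ(\Rec_\alpha),\ZZ)$ is the continuous dual of $H_1(M,\germ(\Rec_\alpha),\ZZ)$ (Proposition~\ref{prop-germ-form}) to see that $[S]_\germ$ is pinned down by its pairings against relative curve classes.
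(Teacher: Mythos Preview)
Your overall strategy matches the paper's proof: factor through $I_\alpha(\wh\Rec_\alpha^*,\ZZ)$ via Theorems~\ref{thm-ps-classification-1} and~\ref{thm-preL-to-order}, and use Proposition~\ref{prop-rel-ass-dir} to convert the inequality $\beta(D_{\varphi,\alpha})\geq 0$ into monotonicity for Conley's order. That is exactly what the paper does, and your identification of the ``main obstacle'' is on point.

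There are, however, a few genuine local errors. First, your argument for $[S]_\germ\neq 0$ does not work: a short orbit arc through $S$ has its endpoints \emph{off} $\Rec_\alpha$, so it does not represent a class in $H_1(M,\germ(\Rec_\alpha),\ZZ)$ and cannot be paired with $[S]_\germ$. The correct argument (implicit in the paper) is that $[S]_\germ$ pairs with a path between recurrence chains to give the difference of $\wh F_S$ values, and since $\wh F_S^*\in\wh I_\alpha(\wh\Rec_\alpha^*,\ZZ)$ is by definition non-constant (or, when $\alpha\neq 0$, simply $[S]_\germ\rmod{\emptyset}=\alpha\neq 0$), some such pairing is nonzero. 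Second, in your well-definedness step, the homotopy $f_t$ inside $[(M,\Rec_\alpha)\col(\bfrac{\RR}{\ZZ},0)]_\alpha$ is only required to vanish on $\Rec_\alpha$, not on a neighborhood, so your compactness claim that all $f_t$ vanish on a common open $U$ is unjustified; the paper sidesteps this by observing directly that $[S]_\germ$ is invariant under isotopy along the flow. Third, there is a sign slip: one has $[S]_\germ([\gamma])=\int_\gamma dF_S$ (no minus), and $-F_S$ pre-Lyapunov means $F_S$ is non-decreasing along $\recto$, giving $\int_\gamma dF_S\geq 0$; your two sign errors happen to cancel. Finally, before invoking Theorem~\ref{thm-preL-to-order} in the surjectivity step you must verify $\PS_\varphi(\alpha)\neq\emptyset$; for $\alpha\neq 0$ this is Theorem~\ref{thm-ps-to-subL}, while for $\alpha=0$ the existence of a nonzero $\beta$ forces $\Rec_0\subsetneq M$, hence non-chain-recurrence and Theorem~\ref{thm-ps-to-non-rec} applies. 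The paper checks this explicitly.
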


When~$\alpha$ is not zero and $\beta\rmod{\emptyset}=\alpha$ holds, we automatically have $\beta\neq 0$.
The condition $\beta\neq 0$ is present for the case $\alpha=0$, where $\beta=0$ does not represent any null-cohomologous partial cross-section. 

Note that $-\alpha$ is automatically quasi-Lyapunov in the case $\alpha=0$. When~$\alpha$ is not zero, one may want to replace the quasi-Lyapunov assumption with a weaker hypothesis, for instance $\alpha(D_\varphi)\geq 0$. We put the emphasis on the fact that it is not possible in general. The sets $D_\varphi$ and $D_{\varphi,\alpha}$ do not contain enough information to guarantee the quasi-Lyapunov property.
In Section~\ref{sec-rem-ass}, we give two counter-examples.

\begin{proof}
	We first prove that $[S]_\germ$ satisfies the same properties as~$\beta$ in the conclusion of the theorem. The class $[S]_\germ$ is invariant by isotopy along the flow of~$S$, so it depends only on the image of~$S$ inside $\PS_\varphi(\alpha)$. 
	
	Denote by $\pi_\alpha\colon\wh M_\alpha\to M$ the covering map, and by $\wh F_S\colon\wh M_\alpha\to\RR$ an $\alpha$-equivariant lift of $F_S$.
	For any recurrence chains $R_1$ and $R_2$ in~$\wh M_\alpha$, observe the equality $\wh F_{S}(R_2)-\wh F_S(R_1)=[S]_\germ([\pi_\alpha(\delta)]\rmod{\germ(\Rec_\alpha)})$, where $\delta$ is any curve that starts in $R_1$ and ends on $R_2$, 

	Since $-F_S$ is pre-Lyapunov, it follows $[S]_\germ(D_{\varphi,\alpha})\geq0$ from the remark above. By construction, $[S]_\germ$ is the image of the class of $F_S$ in $H^1(M,U,\ZZ)$ for some open neighborhood~$U$ of~$\Rec_\alpha$. The image of that class in $H^1(M,\ZZ)$ is~$\alpha$ by construction of $F_S$, so $[S]_s\rmod{\emptyset}$ is equal to~$\alpha$.

	We prove here the injectivity. Let $S_1,S_2$ be two partial cross-sections cohomologous to~$\alpha$, with $[S_1]_\germ=[S_2]_\germ$. Denote by $\wh F_{S_1},\wh F_{S_2}\colon \wh M_\alpha\to\RR$ the maps they induce.
	For any recurrence chains $R_1,R_2$ in~$\wh M_\alpha$, and any curve $\delta$ from $R_1$ to $R_2$, the quantity $\wh F_{S_i}(R_2)-\wh F_{S_i}(R_1)$ is equal to $[S_i]_\germ([\pi_\alpha(\delta)]\rmod{\germ(\Rec_\alpha)})$. So $\wh F_{S_i}(R_2)-\wh F_{S_i}(R_1)$ is independent on $i$. 
	It follows that the quantity $\wh F_{S_1}(R)-\wh F_{S_2}(R)$ is constant when $R$ varies over the recurrence chains in~$\wh M_\alpha$. Thus, the images of $\wh F_{S_1}$ and $\wh F_{S_2}$ in $I_\alpha(\wh\Rec_\alpha^*,\ZZ)$ are equal. By Theorem~\ref{thm-preL-to-order}, $S_1$ and $S_2$ are isotopic along the flow.
	
	Let us prove that $\PS_\varphi(\alpha)$ is not empty.	
	Assume $\alpha=0$. If there exists~$\beta$ in $H^1(M,\germ(\Rec_\alpha),\ZZ)$ that is not zero, then $H^1(M,\germ(\Rec_\alpha),\ZZ)$ is not trivial and~$\Rec_\alpha$ is strictly smaller than $M$. It follows that $\varphi$ is not chain recurrent, and from Theorem~\ref{thm-ps-to-non-rec}, $\PS_\alpha(\varphi)$ is not empty.
	In the case $\alpha\neq0$, Theorem~\ref{thm-ps-to-subL} also implies that $\PS_\alpha(\varphi)$ is not empty. 
	
	We now prove the surjectivity. 
	Take~$\beta$ in $ H^1(M,\germ(\Rec_\alpha),\ZZ)$ which satisfies $\beta(D_{\varphi,\alpha})\geq 0$ and $\beta\rmod{\emptyset}=\alpha$. We define a map $\wh f$ in $\wh I_\alpha(\wh\Rec_\alpha^*,\ZZ)$ from~$\beta$. Fix a based point $x_0$ in $\wh\Rec_\alpha\subset\wh M_\alpha$ and define $\wh f(x_0)=0$. Given any recurrent point $y$ in $\wh\Rec_\alpha$, take a path $c$ from $x_0$ to $y$, and set $\wh f(y)=\beta([\pi_\alpha(c)]\rmod{\germ(\Rec_\alpha)})$, where $[\pi_\alpha(c)]$ is the homology class of $\pi_\alpha(c)$ in $H_1(M,\Rec_\alpha,\ZZ)$. 
	
	The hypothesis $\beta\rmod{\emptyset}=\alpha$ implies that $\beta([\pi_\alpha(c)]\rmod{\germ(\Rec_\alpha)})$ does not depend on the choice of path $c$. Indeed, if $c'$ is another path from $x_0$ to~$y$, one has
	$$\beta([\pi_\alpha(c'-c)]\rmod{\germ(\Rec_\alpha)})=\alpha([\pi_\alpha(c'-c)]\rmod{\germ(\Rec_\alpha)})=0,$$
	since $c'-c$ is a closed curve inside $\wh M_\alpha$. So $\wh f$ is well-defined. A similar argument shows that $\wh f$ is~$\alpha$-equivariant. The assumption $\beta\neq 0$ implies that $\wh f$ is not constant.
	
	We now show that $-\wh f$ is pre-Lyapunov.
	Take two recurrent points $x,y$ with $x\recto y$, some $\epsilon>0$ small and an $\epsilon$-pseudo-orbit $\gamma$ from $x$ to $y$. 
	Let use choose an~$\alpha$-equivariant and quasi-Lyapunov map $g\colon\wh M_\alpha\to\RR$. Since the endpoints of $\gamma$ are fixed, $\gamma$ remains in a $C$-thick slice of $g$, for some $C$ that depend on $x$, $y$ and $g$ only.

	By Proposition~\ref{prop-rel-ass-dir}, when $\epsilon$ is small enough, the relative homology class of $\pi_\alpha(\gamma)$ lies inside $D_{\varphi,\alpha}$. 
	Thus, $\wh f(y)-\wh f(x)=\beta([\pi_\alpha(\gamma)]\rmod{\germ(\Rec_\alpha)})\geq 0$ holds. It follows $\wh f(y)\geq \wh f(x)$. 
	When $x$ and $y$ are additionally on the same recurrence chain, then $\wh f(y)=\wh f(x)$ holds. So $-\wh f$ is pre-Lyapunov.

	The class~$\beta$ has integer coefficients, so the image of $\wh f$ lies inside $\ZZ$. 
	Therefore, $\wh f$ induces a map $\wh f^*\colon\wh\Rec_\alpha^*\to\ZZ$.
	It is continuous by continuity of~$\beta$. More precisely, there exists a neighborhood~$U$ of~$\Rec_\alpha$ so that~$\beta$ lies in the image of $H^1(M,U,\ZZ)$. Then any two recurrence chains that are close enough lies in the same path-connected component of~$U$, and so have the same value by $\wh f^*$.
	
	It follows from above that $\wh f^*$ is~$\alpha$-equivariant and non-decreasing.
	Using Theorem~\ref{thm-preL-to-order}, $\wh f^*$ represents a partial cross-section~$S$. By construction, we have $[S]_\germ=\beta$, which concludes the proof. 
\end{proof}

Note in the last part of the proof that it is required to know that $\PS_\varphi(\alpha)$ is not empty, otherwise we can not apply Theorem~\ref{thm-preL-to-order}.

\subsection{Necessity of one hypothesis}\label{sec-rem-ass}

In this section, we discuss the necessity of assuming $-\alpha$ quasi-Lyapunov in Theorem~\ref{thm-ps-classification-3}. One could hope replacing it with simply $\alpha(D_\varphi)\geq 0$, but this is not a sufficient condition. More precisely, there exist examples of where $\alpha(D_\varphi)\geq 0$ holds, where there exists~$\beta$ in $H_1(M,\germ(\Rec_\alpha),\ZZ)$ with $\beta\rmod{\emptyset}=\alpha$ and $\beta(D_{\varphi,\alpha})\geq0$, but where $-\alpha$ is not quasi-Lyapunov. 
We give a first example where $D_\varphi$ and $D_{\varphi,\alpha}$ are equal to $\{0\}$, and a second example where it is not the case.

Fix a vector $(a,b)$ in $\RR^2$.
Take $\TT^2=\bfrac{\RR^2}{\ZZ^2}$ and let $\psi_1$ be the flow on $\TT^2$ generated by the vector field $(a,b)\lambda$, for some continuous function $\lambda\colon \TT^2\to[0,1]$ that is equal to zero on exactly one point $p_0$. 

\begin{lemma}\label{lem-compute-D1}
	If $(a,b)$ has an irrational slope and if $\lambda$ converges fast enough to zero on $p_0$, then $D_{\psi_1}=\{0\}$ holds.	
\end{lemma}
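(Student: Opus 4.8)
The plan is to show that $D_{\psi_1}$ is squeezed onto the ray $\RR_{\geq 0}(a,b)$ by the geometry of the vector field, and then that the one-dimensional ``amount'' of homology per unit time is forced to vanish because the only $\psi_1$-invariant probability measure is the Dirac mass at the fixed point $p_0$ (this is where the fast decay of $\lambda$ enters). Concretely, the meaning I would attach to ``$\lambda$ converges fast enough to zero on $p_0$'' is $\lambda^{-1}\notin L^1(\TT^2,\Leb)$; this is achieved e.g. as soon as $\lambda$ vanishes at least quadratically at $p_0$ (then $\int_{\TT^2}\lambda^{-1}\,d\Leb\geq\int_0^1 r^{-2}\cdot 2\pi r\,dr=+\infty$), and it is the sharp threshold.

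\textbf{Step 1: $D_{\psi_1}\subseteq\RR_{\geq 0}(a,b)$.} First I would record a displacement decomposition for a periodic $\epsilon$-pseudo-orbit $\gamma$ of $\psi_1$. Lifting $\gamma$ to $\RR^2$ and identifying $H_1(\TT^2,\ZZ)$ with the deck group $\ZZ^2$, the class $[\gamma]$ is the total displacement of the lift once it is closed up along the $\epsilon$-balls at the jumps. Since the generating vector field is $(a,b)\lambda$, every continuous sub-arc of $\gamma$ has displacement $(a,b)\int_{\mathrm{arc}}\lambda$, so
$[\gamma]=S_\gamma\,(a,b)+E_\gamma$, where $S_\gamma=\int_0^{\len\gamma}\lambda(\gamma(t))\,dt\in[0,\len\gamma]$ (jumps are instantaneous) and $\|E_\gamma\|\leq\epsilon(1+\len\gamma/T)$, because the jump times are pairwise $T$-separated and each closing curve moves a distance $\leq\epsilon$. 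Dividing by $\len\gamma$ gives $\tfrac{1}{\len\gamma}[\gamma]=\tfrac{S_\gamma}{\len\gamma}(a,b)+O(\epsilon)$ with $\tfrac{S_\gamma}{\len\gamma}\in[0,1]$, and letting $\epsilon\to 0$ yields $D_{\psi_1}\subseteq\RR_{\geq 0}(a,b)$. Also the constant pseudo-orbit at $p_0$ shows $0\in D_{\psi_1}$, so it remains to exclude $c(a,b)$ for $c>0$.

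\textbf{Step 2: invariant measures, and the conclusion.} Suppose for contradiction that $c(a,b)\in D_{\psi_1}$ with $c>0$. Choose periodic $\tfrac1n$-pseudo-orbits $\gamma_n$ with $\tfrac{1}{\len\gamma_n}[\gamma_n]\to c(a,b)$; by Step 1, $\tfrac{S_{\gamma_n}}{\len\gamma_n}\to c$, and $\len\gamma_n$ is bounded away from $0$. Form the empirical measures $\mu_n=\tfrac{1}{\len\gamma_n}\int_0^{\len\gamma_n}\delta_{\gamma_n(t)}\,dt$, so that $\int\lambda\,d\mu_n=\tfrac{S_{\gamma_n}}{\len\gamma_n}\to c$. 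A Krylov--Bogolyubov argument shows every weak-$\ast$ limit $\mu$ of $(\mu_n)$ is $\psi_1$-invariant: for a continuous $g$ and fixed $s$, the telescoping $\int(g-g\circ\psi_1^s)\,d\mu_n$ reduces, by translation-invariance of Lebesgue measure on the circle $\RR/\len\gamma_n\ZZ$, to an integral over the $O(\len\gamma_n/T)$ time-intervals of length $s$ sitting just before a jump, on which the integrand is $O(\omega_g(\omega_{\psi_1}(1/n)))$; after dividing by $\len\gamma_n$ this $\to 0$. Hence $\mu$ is $\psi_1$-invariant with $\int\lambda\,d\mu=c>0$. On the other hand, the only $\psi_1$-invariant probability measure is $\delta_{p_0}$: on $\TT^2\setminus\{p_0\}$ the flow $\psi_1$ is a time change (by the factor $\lambda$) of the linear flow directed by $(a,b)$, which is uniquely ergodic because $(a,b)$ has irrational slope; so any $\psi_1$-invariant measure with mass off $p_0$ would, after restricting and applying the standard time-change correspondence between invariant measures, be proportional to $\lambda^{-1}\Leb$, whose total mass is $+\infty$ since $\lambda^{-1}\notin L^1$. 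Thus $\mu=\delta_{p_0}$, giving $\int\lambda\,d\mu=\lambda(p_0)=0$, a contradiction. Therefore $D_{\psi_1}\subseteq\{0\}$, and with $0\in D_{\psi_1}$ we get $D_{\psi_1}=\{0\}$.

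\textbf{Main obstacle.} The delicate point is the Krylov--Bogolyubov step: the jumps, though individually of size $\leq 1/n$, may be numerous (up to $\len\gamma_n/T$ of them), so one must check that their total contribution to the invariance defect vanishes --- this uses precisely the uniform $T$-spacing of jumps together with the uniform continuity of $g$ and of the time-$s$ flow maps. A secondary point requiring care is the time-change correspondence of invariant measures on $\TT^2\setminus\{p_0\}$ and the identification of ``$\lambda$ fast enough'' with $\lambda^{-1}\notin L^1$; one should also note the companion fact that if $\lambda^{-1}\in L^1$ then $\lambda^{-1}\Leb$ (normalized) is an extra invariant measure with non-zero asymptotic cycle, so that $D_{\psi_1}\neq\{0\}$ and the hypothesis cannot be dropped. (Alternatively, Step 2 can be replaced by invoking the Fried--Schwartzman description together with \cite[Appendix A]{martyPS1}, by which the convex hull of $D_{\psi_1}$ is the set of homology classes of invariant probability measures, here $\{0\}$.)
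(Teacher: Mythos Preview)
Your proof is correct and takes a genuinely different route from the paper's. The paper argues by an elementary counting estimate: assuming $\gamma$ has the minimal number of jumps in its homology class, the irrational slope forces the long orbit segments of $\gamma$ to fill the torus almost uniformly, so the fraction of its physical length $L$ spent inside the ball $B_r(p_0)$ is close to $\Leb(B_r)$; since the speed there is at most $\lambda_r:=\sup_{B_r}\lambda$, this yields $\len(\gamma)\gtrsim\Leb(B_r)\,L/\lambda_r$ with $L\approx\|[\gamma]\|/\|(a,b)\|$, and hence $\|\tfrac{1}{\len\gamma}[\gamma]\|\lesssim\lambda_r/\Leb(B_r)$, which vanishes as $r\to 0$ under the condition $\lambda_r=o(r^2)$. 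Your approach instead passes through invariant-measure theory: after the same displacement decomposition in Step~1, you run a Krylov--Bogolyubov argument directly on the empirical measures of pseudo-orbits (carefully controlling the jump contribution via their $T$-separation), and then identify $\delta_{p_0}$ as the sole $\psi_1$-invariant probability by the time-change correspondence with the uniquely ergodic irrational linear flow, which forces any other invariant measure to be a multiple of $\lambda^{-1}\Leb$.

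What each buys: your argument is more conceptual and yields the sharp threshold $\lambda^{-1}\notin L^1(\TT^2,\Leb)$, which is strictly weaker than the paper's $\lambda_r=o(r^2)$ (for $\lambda(x)=d(x,p_0)^2$ your condition holds but the paper's fails) and, as you observe, is also necessary. The paper's argument is more hands-on, avoids ergodic-theoretic machinery, and is arguably closer in spirit to the rest of the article, though the equidistribution claim for ``minimal-jump'' pseudo-orbits is treated somewhat informally there. Your final parenthetical shortcut via \cite[Appendix A]{martyPS1} is legitimate and would let you bypass the explicit Krylov--Bogolyubov step for pseudo-orbits, since the convex hull of $D_{\psi_1}$ agrees with that of the Schwartzman asymptotic cycles of invariant probability measures.
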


Given a pseudo-orbit, we call the \emph{physical length} of $\gamma$, integral of $\|\gamma'\|$. That is the length of the corresponding curve and not the length of the parametrization as a pseudo-orbit. 

\begin{proof}
	Assume that $(a,b)$ has an irrational slope.
	Denote by $B_r$ the ball of radius $r$ around $p_0$. Take $\epsilon>0$ and a periodic $\epsilon$-pseudo-orbit $\gamma$ with $[\gamma]\neq0$.

	Denote by $L$ the physical length of $\gamma$, and by $L_r$ the physical length of $\gamma\cap B_r$. 
	Assume that $\gamma$ has a minimal amount of jumps among the $\epsilon$-$\epsilon$-pseudo-orbit with that homology class. So that, since $\psi_1$ has an irrational slope, the image of $\gamma$ fills the torus in an almost uniform manner. 

	When $\epsilon$ is small enough, the homology class $[\gamma]$ is comparable to $(a,b)L$. More precisely $\frac{1}{L}[\gamma]$ converges toward $(a,b)$ when $\epsilon$ goes to zero.
	Similarly, $L_r$ is comparable to $L$ times the Lebesgue measure $\Leb(B_r)$ of $B_r$, that is $\frac{L_r}{L}$ converges toward $\Leb(B_r)$.
	
	Denote by $\lambda_r=\sup\lambda(B_r)$, it follows from above that when $\epsilon$ is small enough, the length $\len(\gamma)$ of $\gamma$ is bounded below by 
	$$\len(\gamma)\geq\frac{L_r}{2\lambda_r}\approx\frac{Leb(B_r)L}{2\lambda_r}\approx\frac{Leb(B_r)}{2\lambda_r}\frac{\|[\gamma]\|}{\|(a,b)\|}$$
	Thus, the quotient $\frac{1}{\len(\gamma)}[\gamma]$ has a norm that is bounded above by a constant times $\frac{\lambda_r}{Leb(B_r)}$. It follows that if $\frac{\lambda_r}{\Leb(B_r)}$ goes to zero when $r$ goes to zero, then $\frac{1}{\len(\gamma)}[\gamma]$ goes to zero when $\epsilon$ goes to zero. Then, $D_{\psi_1}$ is equal to $\{0\}$.
\end{proof}

We now assume $D_{\psi_1}=\{0\}$. Take $a<0$ and consider the class $\alpha=dx$, which satisfies $\alpha(D_{\psi_1})\geq0$. 
Clearly, $-\alpha$ is not Lyapunov, and $\PS_\varphi(\alpha)$ is empty.
Note that the~$\alpha$-recurrent set is $\Rec_\alpha=\{p_0\}$, so we may identify $H_1(\TT^2,\germ(\Rec_\alpha),\ZZ)$ with $H_1(\TT^2,p_0,\ZZ)$. 

We identify $\wh \TT^2_\alpha$ with $\RR\times(\bfrac{\RR}{\ZZ})$. Fix an $\alpha$-equivariant map $f\colon\wh \TT^2_\alpha\to\RR$, defined by $f(x,y)=x$. 
Note that the map that sends $\delta$ in $H_1(\TT^2,\germ(\Rec_\alpha),\ZZ)$ to $\int_\delta df$ is well-defined and induces an element~$\beta$ in $H^1(\TT^2,\germ(\Rec_\alpha),\ZZ)$, which satisfies $\beta\rmod{\emptyset}=\alpha$.	

\begin{lemma}\label{lem-compute-D2}
	If $(a,b)$ has an irrational slope, then $D_{\psi_1,\alpha}=\{0\}$ holds.
\end{lemma}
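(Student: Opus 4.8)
The plan is to run the argument of Lemma~\ref{lem-compute-D1}, but keeping track of the homology class of a pseudo-orbit instead of dividing it by the length. Since $\Rec_\alpha=\{p_0\}$ is a point, its germ is the point itself, so $H_1(\TT^2,\germ(\Rec_\alpha),\ZZ)=H_1(\TT^2,p_0,\ZZ)\cong H_1(\TT^2,\ZZ)=\ZZ^2$; this module being discrete, $\wb{D_{\psi_1,\alpha,\epsilon}}=D_{\psi_1,\alpha,\epsilon}$ and $D_{\psi_1,\alpha}$ is exactly the set of $\delta\in\ZZ^2$ realized as the homology class of some $(\epsilon,T)$-pseudo-orbit from $p_0$ to $p_0$ for \emph{every} small $\epsilon$. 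The constant pseudo-orbit at the fixed point $p_0$ realizes $\delta=0$, so it suffices to rule out each $\delta\neq0$.

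So fix $\delta\in\ZZ^2\setminus\{0\}$ and suppose for contradiction that for arbitrarily small $\epsilon$ there is an $(\epsilon,T)$-pseudo-orbit from $p_0$ to $p_0$ of class $\delta$; pick one, $\gamma_\epsilon$, with the least number $N_\epsilon$ of jumps. Lift to the universal cover $\RR^2\to\TT^2$, where $\psi_1$ is the flow of $(a,b)\tilde\lambda$ with $\tilde\lambda\ge0$ vanishing exactly on $\ZZ^2$, and where $\gamma_\epsilon$ lifts to a path $\tilde\gamma_\epsilon$ from $0$ to $\delta$ made of flow arcs (all of direction $(a,b)$, of physical lengths $L_i\ge0$) and of jumps of size $<\epsilon$. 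Two linear functionals govern the picture: $f(z)=x$, which is non-increasing along flow arcs (strictly so along nontrivial ones, since $a<0$) and moves by $<\epsilon$ at each jump, with total increment $\delta_x$; and $g(z)=\langle z,(b,-a)\rangle$, which is \emph{constant} along flow arcs and moves by $O(\epsilon)$ at each jump, with total increment $g(\delta)=\delta_x b-\delta_y a$, which is nonzero precisely because $b/a\notin\QQ$. Hence $N_\epsilon$ is at least of order $1/\epsilon$, and monotonicity of $f$ gives $\sum_i L_i=O(|\delta_x|+N_\epsilon\epsilon)$.

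Granting the confinement estimate $N_\epsilon\epsilon=O(1)$ (see below), $\sum_i L_i$, and hence the diameter of $\tilde\gamma_\epsilon$ in $\RR^2$, are bounded by a constant depending only on $\delta$. One then finishes as in Lemma~\ref{lem-compute-D1}: by the slowdown of $\psi_1$ at $\ZZ^2$, a flow arc lasting time $\ge T$ and staying at distance $\ge r$ from $\ZZ^2$ has physical length $\ge\|(a,b)\|Tm_r>0$, where $m_r=\min\{\tilde\lambda(z):\operatorname{dist}(z,\ZZ^2)\ge r\}$; so only boundedly many arcs are ``long'', and the remaining ones are short chords of irrational-slope lines near $\ZZ^2$. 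Since $g$ is flow-invariant, two consecutive long arcs join lattice points $v,v'$ with $\operatorname{dist}(v'-v,\RR(a,b))=O(\epsilon+r)$; and as the lattice points met by $\tilde\gamma_\epsilon$ lie in a fixed ball, while $\min\{\operatorname{dist}(v,\RR(a,b)):v\in\ZZ^2\setminus\{0\},\ \|v\|\le R\}>0$ for each $R$ (a minimum of finitely many positive numbers), taking $\epsilon$ and $r$ small forces $v'=v$. So $\tilde\gamma_\epsilon$ meets a single lattice point; running from $0$ to $\delta$, this yields $\delta=0$, a contradiction. Heuristically this is just the statement that a minimal-jump pseudo-orbit is, up to small jumps, a flow arc, hence carries homology nearly proportional to $(a,b)$, which a fixed nonzero integer vector not proportional to $(a,b)$ cannot be.

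The real work, and the step I expect to be the main obstacle, is the confinement estimate $N_\epsilon\epsilon=O(1)$ --- equivalently, that a minimal-jump pseudo-orbit realizing a fixed $\delta\neq0$ does not wander arbitrarily far in $\RR^2$ as $\epsilon\to0$. I would approach it by showing that if $N_\epsilon$ exceeded $|g(\delta)|/O(\epsilon)$ by more than a bounded amount one could delete some jumps, together with the flow arcs between them, without changing the homology class $\delta$, contradicting minimality; this uses that the $g$-increments of the jumps sum to the controlled quantity $g(\delta)$ and that $f=x$ is monotone along flow arcs. Should that excision argument turn out to be delicate, an a priori bound on $\sum_i L_i$ coming from the strict monotonicity of $f=x$ along nontrivial flow arcs, combined with $\Rec_\alpha=\{p_0\}$ to control the winding in the $x$-direction, should suffice instead.
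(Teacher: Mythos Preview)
Your route differs substantially from the paper's. The paper's argument is four lines: it asserts that, since the slope is irrational and $[\gamma]\neq0$, the physical length $L$ of $\gamma$ must go to infinity as $\epsilon\to0$, and that then $\int_\gamma df$ is close to $L$ times $a/(a^2+b^2)$, contradicting $\int_\gamma df=\beta(\delta)=\delta_x$ being a fixed integer. You go in the \emph{opposite} direction: via the confinement estimate $N_\epsilon\epsilon=O(1)$ you aim to \emph{bound} the physical length and the diameter of the lifted pseudo-orbit, and then use the positive minimum gap among the values $g(v)$, $v\in\ZZ^2\cap B_R$, to force the pseudo-orbit to visit a single lattice point. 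Your write-up is much more explicit about where the analytic difficulty lies.

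The confinement estimate is, however, a genuine gap, and neither of your two sketched fixes closes it as stated. For the excision idea: deleting the stretch between two jump points $p_i,p_j$ with $g(p_i)=g(p_j)$ and replacing it by a single flow arc changes the homology class by the class of the excised loop in $\TT^2$, and there is no reason that loop should be null-homologous; so you do not contradict the minimality of $N_\epsilon$ without further structure. For the alternative: the inequality $\sum_i L_i\le C(|\delta_x|+N_\epsilon\epsilon)$ is already the full content of the monotonicity of $x$ along flow arcs, and it ties $L$ to $N_\epsilon\epsilon$ rather than bounding either one independently; invoking $\Rec_\alpha=\{p_0\}$ does not obviously help, since that fact constrains $\alpha$-null periodic pseudo-orbits, not pseudo-orbits joining distinct lifts of $p_0$. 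So the argument is incomplete at exactly the point you flagged. (For comparison: the paper's claim that $L\to\infty$ is also stated without justification, and in fact your inequality shows $N\epsilon\gtrsim L-|\delta_x|$, so the approximation $\int_\gamma df\approx L\cdot\text{const}$ on which the paper relies is not obviously valid either; both proofs are gesturing at the same underlying difficulty.)
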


It has the immediate consequence.

\begin{corollary}
	Assume that $(a,b)$ has an irrational slope, that $\lambda$ converges fast enough to zero in $p_0$, and $a<0$. Then for $\alpha=dx$, we have $\alpha(D_{\psi_1})\geq 0$, and there exists~$\beta$ in $H_1(\TT^2,\germ(\Rec_\alpha),\ZZ)$, with $\beta\rmod{\emptyset}=\alpha$ and $\beta(D_{{\psi_1},\alpha})\geq 0$, that does not represent any partial cross-section.
\end{corollary}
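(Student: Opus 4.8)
The plan is to read the Corollary off directly from Lemmas~\ref{lem-compute-D1} and~\ref{lem-compute-D2}, together with the two observations recorded just before them: that $\PS_{\psi_1}(dx)=\emptyset$ and that $\Rec_\alpha=\{p_0\}$, so that $H^1(\TT^2,\germ(\Rec_\alpha),\ZZ)$ is identified with $H^1(\TT^2,p_0,\ZZ)$. No genuinely new argument is needed; the work is to check that the distinguished class $\beta$ satisfies the three numerical conditions of Theorem~\ref{thm-ps-classification-3} and yet represents no partial cross-section.

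First I would fix $\alpha=dx$. Since $(a,b)$ has irrational slope and $\lambda$ tends to zero fast enough at $p_0$, Lemma~\ref{lem-compute-D1} gives $D_{\psi_1}=\{0\}$, hence $\alpha(D_{\psi_1})=\{0\}$ and in particular $\alpha(D_{\psi_1})\geq 0$. Next I would take for $\beta$ the class in $H^1(\TT^2,\germ(\Rec_\alpha),\ZZ)$ introduced just before Lemma~\ref{lem-compute-D2}, namely the one induced by $\delta\mapsto\int_\delta df$ with $f(x,y)=x$ on $\wh\TT^2_\alpha\simeq\RR\times(\bfrac{\RR}{\ZZ})$. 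Since $df$ descends to $dx$ on $\TT^2$, one has $\beta\rmod{\emptyset}=dx=\alpha$, so in particular $\beta\neq 0$. Applying Lemma~\ref{lem-compute-D2}, the irrational slope hypothesis gives $D_{\psi_1,\alpha}=\{0\}$, so $\beta(D_{\psi_1,\alpha})=\{0\}$ and therefore $\beta(D_{\psi_1,\alpha})\geq 0$. At this point $\beta$ meets every condition appearing in Theorem~\ref{thm-ps-classification-3}.

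Finally I would argue that $\beta$ represents no partial cross-section. If it did, say $\beta=[S]_\germ$ for a partial cross-section $S$, then $[S]=\beta\rmod{\emptyset}=dx$, so $\PS_{\psi_1}(dx)\neq\emptyset$; by Theorem~\ref{thm-ps-to-subL} this would force $-dx$ to be quasi-Lyapunov. But with $a<0$ this fails: the $(-\alpha)$-equivariant map $(x,y)\mapsto -x$ on $\wh\TT^2_\alpha$ is non-decreasing along the flow and is even unbounded above along forward orbit arcs, since the forward orbit of any point other than $p_0$ lies on a line of direction $(a,b)$ with $a<0$ along which it drifts indefinitely in the negative $x$-direction. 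Hence $-dx$ is not quasi-Lyapunov (directly from the definition in Section~\ref{sec-subL}, or via Lemma~\ref{lem-part1-qL} using periodic $\epsilon$-pseudo-orbits with negative $dx$-homology), which reproves $\PS_{\psi_1}(dx)=\emptyset$ and shows no such $S$ exists.

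This is precisely what Section~\ref{sec-rem-ass} requires: a flow with $\alpha(D_{\psi_1})\geq 0$ admitting a nonzero class $\beta$ with $\beta\rmod{\emptyset}=\alpha$ and $\beta(D_{\psi_1,\alpha})\geq 0$ that nevertheless represents no partial cross-section, so the hypothesis ``$-\alpha$ quasi-Lyapunov'' in Theorem~\ref{thm-ps-classification-3} cannot be weakened to ``$\alpha(D_\varphi)\geq 0$''. There is essentially no obstacle in the Corollary itself; the only step deserving a word is that $-\alpha$ genuinely fails to be quasi-Lyapunov, and this is immediate from $a<0$ — the flow has a net drift in the $-x$ direction away from its unique rest point — and was already asserted in the paragraph preceding the two lemmas. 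All the substantive content lies in Lemmas~\ref{lem-compute-D1} and~\ref{lem-compute-D2}.
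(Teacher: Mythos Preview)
Your proposal is correct and matches the paper's approach exactly: the paper states the Corollary as an ``immediate consequence'' of Lemma~\ref{lem-compute-D2} (together with Lemma~\ref{lem-compute-D1} and the observations preceding them that $-dx$ is not quasi-Lyapunov, $\PS_{\psi_1}(dx)=\emptyset$, and $\Rec_\alpha=\{p_0\}$), and you have simply spelled out those immediate steps. The only additional content you supply is a brief re-justification that $-dx$ fails to be quasi-Lyapunov when $a<0$, which the paper asserts without argument; your explanation via the drift of the lift $(x,y)\mapsto -x$ is fine.
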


\begin{proof}
	Assume that there exists a non-zero element $\delta$ in $D_{\varphi,\alpha}$. It can be approximated by the homology class of an $\epsilon$-pseudo-orbit $\gamma$ from $p_0$ to itself. When $\epsilon$ is small enough, $\gamma$ must be very close to a curve of slope $(a,b)$. And since the flow has irrational slope, and since $[\gamma]$ is not zero, the physical length of $\gamma$ must be very large. Then $\int_\gamma df$ is close to the physical length of $\gamma$ times $\frac{a}{a^2+b^2}$, so it is larger than $\beta(\delta)$ in absolute value. It contradicts that $\beta(\delta)=\int_\gamma df$ is finite.
\end{proof}

It may seem like the counter-example rely on the fact that $D_\varphi$, $D_{\varphi,\alpha}$ are zero. We give a second example, where the two sets above are not trivial.

Take $\TT^3=\bfrac{\RR^3}{\ZZ^3}$. Let us build a flow $\psi_2$ by pieces. We equip $\TT^2\times\{0\}$ with a copy of the flow $\psi_1$ using $a<0$, $(a,b)$ with an irrational slope, and $\lambda$ going fast enough to zero on $p_0$. We equip $\TT^2\times\{\frac{1}{2}\}$ with another copy of $\psi_1$, with $(a,b)=(1,0)$. 
We equip $\TT^2\times\{\frac{1}{4},\frac{3}{4}\}$ by linear flows directed by $(1,0)$.
Then we extend the flow outside $\TT^2\times\{0,\frac{1}{4},\frac{1}{2},\frac{3}{4}\}$ by orbits that accumulates on $\TT^2\times\{0,\tfrac{1}{2}\}$ in the past and on $\TT^2\times\{\frac{1}{4},\frac{3}{4}\}$ in the future. 

As above, we take $\alpha=dx$, $f(x,y,z)=x$ and~$\beta$ in $H^1(\TT^3,\germ(\Rec_\alpha),\ZZ)$ that represent $\int_\cdot df$.

\begin{lemma}
	We have $\alpha(D_{\psi_2})\gneq0$ and $\beta(D_{\psi_2,\alpha})\gneq0$.
\end{lemma}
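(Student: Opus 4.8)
The plan is to reduce both assertions to the local behavior of $\psi_2$ near its four invariant tori, whose union is the recurrent set of $\psi_2$, namely $\TT^2\times\{0,\tfrac14,\tfrac12,\tfrac34\}$; everything off the tori is wandering, by the attractor/repeller structure in the $z$-direction. Write $e_1\in H_1(\TT^3,\ZZ)$ for the class of the $x$-circle, so $dx(e_1)=1$. First I would pin down $\Rec_\alpha$. Over $\TT^2\times\{\tfrac14\}$ and $\TT^2\times\{\tfrac34\}$, where $\psi_2$ restricts to the linear flow directed by $(1,0)$, a periodic $\epsilon$-pseudo-orbit $\gamma$ staying near these tori has $\alpha([\gamma])\geq 1$ once $\epsilon<T$: the lift of $x$ increases at unit speed along flow arcs, while the total jump contribution is at most $\epsilon\,\len(\gamma)/T$, which is smaller than the flow contribution. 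So these tori contain no $\alpha$-recurrent point. Over $\TT^2\times\{0\}$ and $\TT^2\times\{\tfrac12\}$, exactly as in the first example, the slowdown hypothesis on $\lambda$ forces a periodic $\epsilon$-pseudo-orbit through $q$ with $\alpha([\gamma])=0$ and small jump size to hug the fixed point, so only $p_0^{(0)}$ and $p_0^{(1/2)}$ remain. Hence $\Rec_\alpha\subset\TT^2\times\{0,\tfrac12\}$ carries no multiple of $e_1$, which is precisely what lets $\int_{\cdot}dx$ descend to a class $\beta\in H^1(\TT^3,\germ(\Rec_\alpha),\ZZ)$ with $\beta\rmod{\emptyset}=\alpha$; moreover $\beta([\gamma]\rmod{\germ(\Rec_\alpha)})=\int_\gamma dx$ for any pseudo-orbit $\gamma$ with endpoints on $\Rec_\alpha$. (One may instead simply cite the treatment of the first example for this step.)

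For $\alpha(D_{\psi_2})\geq 0$ I would fix a Lyapunov function $V$ for $\psi_2$ (Theorem~\ref{thm-Conley-L}). Along a periodic $\epsilon$-pseudo-orbit $\gamma$ one has $\oint dV=0$; since $V$ never increases along flow arcs, increases at each jump by at most the modulus of continuity of $V$ at $\epsilon$, and decreases at a rate bounded below outside any fixed neighborhood $\mathcal U$ of the recurrent set, the fraction of $\len(\gamma)$ spent outside $\mathcal U$ tends to $0$ as $\epsilon\to 0$. Taking $\mathcal U$ thin, so its four components are pairwise far apart, and using that transits between components are one-directional (repeller to attractor) and hence impossible along a periodic pseudo-orbit once $\epsilon$ is small, $\gamma$ lies inside a single component up to an $o(\len(\gamma))$-length piece. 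Over $\TT^2\times\{\tfrac14\},\{\tfrac12\},\{\tfrac34\}$ the flow moves in the non-negative $x$-direction, so there $\alpha([\gamma])\geq-\epsilon\,\len(\gamma)/T$; over $\TT^2\times\{0\}$ the flow is uniformly close to $\psi_1$, whose asymptotic pseudo-directions are $\{0\}$ by Lemma~\ref{lem-compute-D1}, so there $[\gamma]$ has norm $o(\len(\gamma))$. Summing, $\alpha([\gamma])\geq-o(\len(\gamma))$, so $\alpha(d)\geq 0$ for every $d\in D_{\psi_2}$; and since $\TT_x\times\{y_\ast\}\times\{\tfrac14\}$ is a genuine periodic orbit of $\psi_2$ of period $1$ and class $e_1$, we get $e_1\in D_{\psi_2}$ with $\alpha(e_1)=1$. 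Thus $\alpha(D_{\psi_2})\gneq 0$.

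For $\beta(D_{\psi_2,\alpha})\geq 0$, recall that an element of $D_{\psi_2,\alpha}$ is a limit, as $\epsilon\to 0$, of classes $[\gamma_n]\rmod{\germ(\Rec_\alpha)}$ of $\epsilon_n$-pseudo-orbits $\gamma_n$ with endpoints on $\Rec_\alpha\subset\TT^2\times\{0,\tfrac12\}$, and the $\beta$-value of such a class is $\int_{\gamma_n}dx$. The only region in which $\psi_2$ moves in the negative $x$-direction is a neighborhood of $\TT^2\times\{0\}$; so if $\int_{\gamma_n}dx<0$ then $\gamma_n$ accumulates a net $-x$ drift while tracking $\psi_1$ near $\TT^2\times\{0\}$, and the estimate from the proof of Lemma~\ref{lem-compute-D1} shows that a $-x$ drift of fixed total size costs $\psi_1$-time growing without bound as $\epsilon_n\to 0$, while a subsequent legal return to $\Rec_\alpha$ forces $[\gamma_n]$ to approximate the irrational direction $(a,b)$ at scale $\epsilon_n$, so $\|[\gamma_n]\rmod{\germ(\Rec_\alpha)}\|\to\infty$ and the sequence cannot converge. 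Hence no $\delta\in D_{\psi_2,\alpha}$ has $\beta(\delta)<0$. For non-triviality, for each $\epsilon$ let $\gamma$ jump off $p_0^{(1/2)}$ onto the circle $C=\TT_x\times\{y_{p_0}\}\times\{\tfrac12\}$, run once around $C$ along the flow, and jump back onto $p_0^{(1/2)}$; its class is the image of $e_1$ in $H_1(\TT^3,\germ(\Rec_\alpha),\ZZ)$ for every $\epsilon$, so $e_1\in D_{\psi_2,\alpha}$ with $\beta(e_1)=\int_C dx=1$. Hence $\beta(D_{\psi_2,\alpha})\gneq 0$.

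The main obstacle, in both halves, is non-negativity, and the genuinely delicate point is in each case the behavior near $\TT^2\times\{0\}$: one has to reuse, essentially verbatim, the slowdown computation from the proof of Lemma~\ref{lem-compute-D1} --- first to see that a periodic pseudo-orbit shadowing $\psi_1$ has asymptotically vanishing direction, then to see that a pseudo-orbit between $\alpha$-recurrent points with a fixed amount of $-x$ drift has homology escaping to infinity. The secondary technical ingredient, used for $\alpha(D_{\psi_2})\geq 0$, is the Conley-theoretic fact that a periodic $\epsilon$-pseudo-orbit stays within one chain recurrence class up to a vanishing fraction of its length, which rests on the one-directionality of the transits between the two repellers and the two attractors.
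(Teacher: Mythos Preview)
Your approach matches the paper's: localize pseudo-orbits to the four invariant tori, use that the flow on $\TT^2\times\{\tfrac14,\tfrac12,\tfrac34\}$ moves in the non-negative $x$-direction, invoke Lemmas~\ref{lem-compute-D1} and~\ref{lem-compute-D2} for the contribution of $\TT^2\times\{0\}$, and exhibit the same explicit witnesses for strict positivity (a closed $x$-circle on an attractor torus for $D_{\psi_2}$, the $x$-circle through $p_1$ on $\TT^2\times\{\tfrac12\}$ for $D_{\psi_2,\alpha}$). Your Conley-theoretic justification of the localization is more explicit than the paper's one-line assertion, which is fine.

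There is one muddled step in your argument for $\beta(D_{\psi_2,\alpha})\geq 0$. The assertion ``a $-x$ drift of fixed total size costs $\psi_1$-time growing without bound as $\epsilon_n\to 0$'' is not correct as stated: the $\psi_1$-time needed for a given $x$-displacement along an orbit arc depends on where the arc sits relative to the fixed point, not on $\epsilon_n$, and in any case $\psi_1$-time is not what controls the homology class. What actually makes the argument work is the attractor-blocking you already used in the periodic case but did not invoke here: since $\TT^2\times\{\tfrac14,\tfrac34\}$ are attracting and $\Rec_\alpha=\{p_0,p_1\}$ lies on the two repeller tori, an $\epsilon$-pseudo-orbit with both endpoints on $\Rec_\alpha$ cannot cross an attractor torus once $\epsilon$ is small, hence stays near a single repeller torus. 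A pseudo-orbit from $p_0$ to $p_0$ is then shadowed by one inside $\TT^2\times\{0\}$, and Lemma~\ref{lem-compute-D2} applies verbatim: a nonzero class forces $|\int_\gamma dx|\to\infty$, so only $0$ survives. A pseudo-orbit from $p_1$ to $p_1$ stays near $\TT^2\times\{\tfrac12\}$, where the flow direction is $(1,0)$, giving $\beta([\gamma])\geq 0$ directly. This is exactly the paper's organization; your version reaches the same conclusion but the intermediate justification should be rewritten along these lines.
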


It immediately follows:

\begin{corollary}
	We have $\alpha(D_{\psi_2})\gneq 0$, and there exists~$\beta$ in $H_1(\TT^3,\germ(\Rec_\alpha),\ZZ)$ with $\beta\rmod{\emptyset}=\alpha$ and $\beta(D_{\psi_2,\alpha})\gneq 0$, and that does not represent a partial cross-section.
\end{corollary}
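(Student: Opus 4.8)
The plan is to pin down the recurrence structure of $\psi_2$ and then estimate $D_{\psi_2}$ and $D_{\psi_2,\alpha}$ separately; in each case strict positivity is witnessed by one explicit (pseudo‑)orbit, so the substance is the two inequalities ``$\,{\cdot}(D)\geq 0$''. Write $T_c=\TT^2\times\{c\}$. First I would record that the chain recurrent set of $\psi_2$ is $T_0\cup T_{1/4}\cup T_{1/2}\cup T_{3/4}$, with $T_0,T_{1/2}$ repellers, $T_{1/4},T_{3/4}$ attractors, each $T_c$ a single recurrence chain, and no chain from $T_0$ to $T_{1/2}$ nor from an attractor to anything else; hence for $\epsilon$ small every periodic $\epsilon$‑pseudo‑orbit stays in an arbitrarily small neighbourhood of one $T_c$ (Conley confinement, as in the corollary to Lemma~\ref{lem-neg-pso-acc-rec}), every $\epsilon$‑pseudo‑orbit returning to a point of $T_0$, resp.\ $T_{1/2}$, stays in a fixed small neighbourhood of it (a repeller cannot be re‑entered by small jumps), and no $\epsilon$‑pseudo‑orbit joins $T_0$ to $T_{1/2}$ (an attractor neighbourhood separates them). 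It is convenient to fix $\psi_2$ so that on $\TT^2\times(-\eta_0,\eta_0)$ it is the $z=0$ copy of $\psi_1$ times a $z$‑repelling field, on $\TT^2\times(\tfrac12-\eta_0,\tfrac12+\eta_0)$ it is the $(1,0)$‑copy of $\psi_1$ times a $z$‑repelling field, and its $x$‑component is $\geq 0$ outside $\TT^2\times(-\eta_0,\eta_0)$; then projecting out $z$ turns a pseudo‑orbit confined near $T_0$ (resp.\ $T_{1/2}$) into one of $\psi_1$ (resp.\ of the $(1,0)$‑copy of $\psi_1$) with the same homology class. Using $\Rec_\alpha(\psi_1)=\{p_0\}$ (recalled just before Lemma~\ref{lem-compute-D2}), the analogous fact for the $(1,0)$‑copy, the confinement, and the strict positivity of the $x$‑winding of every periodic pseudo‑orbit on $T_{1/4},T_{1/2},T_{3/4}$, I conclude $\Rec_\alpha(\psi_2)=\{p_0^{(0)},p_0^{(1/2)}\}$, two points in distinct $\alpha$‑recurrence chains; in particular $H_1(\TT^3,\germ(\Rec_\alpha),\ZZ)=H_1(\TT^3,\Rec_\alpha,\ZZ)\cong\ZZ^4$ carries the discrete topology (cf.\ Appendix~\ref{app-practical-applications}), $\beta([\gamma]\rmod{\germ(\Rec_\alpha)})=\int_\gamma df$ for any pseudo‑orbit $\gamma$ with $\partial\gamma\subset\Rec_\alpha$, and this equals $\alpha([\gamma])\in\ZZ$ when $\gamma$ is a loop based at $\Rec_\alpha$.

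For $\alpha(D_{\psi_2})\geq 0$: by the confinement it suffices to bound $\alpha(\tfrac1{\len\gamma}[\gamma])$ for a periodic $\epsilon$‑pseudo‑orbit $\gamma$ near a single $T_c$. Near $T_0$, $\gamma$ projects to a periodic pseudo‑orbit of $\psi_1$, so Lemma~\ref{lem-compute-D1} gives $\tfrac1{\len\gamma}[\gamma]\to 0$ as $\epsilon\to 0$. Near $T_{1/4}$ or $T_{3/4}$ the flow is linear of direction $(1,0)$ with speed bounded below by some $c>0$, so $\int_\gamma df=\alpha([\gamma])\geq c\len(\gamma)-(\len(\gamma)/T)\epsilon\geq 0$ once $\epsilon<cT$. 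Near $T_{1/2}$, $\gamma$ projects to a periodic pseudo‑orbit of the $(1,0)$‑copy of $\psi_1$, which carries the $(-dx)$‑equivariant Lyapunov map $x\mapsto -x$ on $\RR_x\times\TT_y$ (its only recurrent points are the lifts $\hat p_k$ of its fixed point, ordered by $\hat p_k\recto\hat p_{k'}\iff k\leq k'$), so $-dx$ is quasi‑Lyapunov for it (Theorem~\ref{thm-spectral-decomp}) and Lemma~\ref{lem-part1-qL} forces $dx\geq 0$ on the corresponding direction set, i.e.\ $\alpha(\tfrac1{\len\gamma}[\gamma])\geq 0$. Hence $\alpha(D_{\psi_2})\subseteq[0,\infty)$; and the genuine periodic orbit $\{y=0\}\times\{\tfrac14\}$, of class $(1,0,0)$ and finite length, contributes a positive number, so $\alpha(D_{\psi_2})\gneq 0$.

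For $\beta(D_{\psi_2,\alpha})\geq 0$: since the target is discrete, $\delta\in D_{\psi_2,\alpha}$ iff for arbitrarily small $\epsilon$ there is an $\epsilon$‑pseudo‑orbit $\gamma$ with $\partial\gamma\subset\Rec_\alpha$ and $[\gamma]\rmod{\germ(\Rec_\alpha)}=\delta$, and then $\beta(\delta)=\int_\gamma df$. The mixed case ($\gamma$ from $p_0^{(0)}$ to $p_0^{(1/2)}$) is impossible for small $\epsilon$. If $\gamma$ is a loop at $p_0^{(0)}$ it stays near $T_0$ and projects to a periodic pseudo‑orbit of $\psi_1$ at $p_0$ with the same class; occurring for arbitrarily small $\epsilon$, that class lies in $D_{\psi_1,\alpha}=\{0\}$ (Lemma~\ref{lem-compute-D2}), so $\delta=0$ and $\beta(\delta)=0$. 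If $\gamma$ is a loop at $p_0^{(1/2)}$ it stays near $T_{1/2}$; lifting and projecting out $z$ gives, for arbitrarily small $\epsilon$, an $\epsilon$‑pseudo‑orbit of the $(1,0)$‑copy of $\psi_1$ from $\hat p_0$ to $\hat p_{\alpha([\gamma])}$ in $\RR_x\times\TT_y$, hence $\hat p_0\recto\hat p_{\alpha([\gamma])}$, hence $\alpha([\gamma])\geq 0$, i.e.\ $\beta(\delta)\geq 0$. Thus $\beta(D_{\psi_2,\alpha})\subseteq[0,\infty)$. Finally the loop following the orbit through $p_0^{(1/2)}$ once around $\{y=y_1\}\times\{\tfrac12\}$ and closed up by one jump of size $\leq\epsilon$ has class the nonzero image of $(1,0,0)$, lies in $D_{\psi_2,\alpha}$ for every $\epsilon$, and has $\int df=1$; so $\beta(D_{\psi_2,\alpha})\gneq 0$.

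The main obstacle is that both ``$\geq 0$'' bounds hinge on controlling $\epsilon$‑pseudo‑orbits near the two slow tori $T_0$ and $T_{1/2}$: one must make the confinement statements precise (choosing the nested neighbourhoods and $\epsilon$ in the right order from the attractor/repeller structure), reduce the behaviour near $T_0$ cleanly to the established facts $D_{\psi_1}=\{0\}$ and $D_{\psi_1,\alpha}=\{0\}$, and for $T_{1/2}$ identify the Conley order of the $(1,0)$‑copy of $\psi_1$ on its cover — i.e.\ prove its recurrent set is exactly the fixed‑point lifts $\hat p_k$. This last point is where the estimate ``$x$ cannot be driven down past a fixed point by more than the width of the slow region $\{\dot x<\epsilon/T\}$, which shrinks to $0$'' lives (equivalently, that $-dx$ is quasi‑Lyapunov for the $(1,0)$‑copy of $\psi_1$); everything else is bookkeeping, but this estimate is the crux.
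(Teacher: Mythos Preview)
Your argument is correct and follows the same strategy as the paper: confine periodic $\epsilon$-pseudo-orbits near one invariant torus, reduce the $T_0$ contribution to Lemmas~\ref{lem-compute-D1} and~\ref{lem-compute-D2}, handle $T_{1/4},T_{1/2},T_{3/4}$ directly, and witness strict positivity via an explicit periodic orbit; your treatment of $T_{1/2}$ via the Conley order on the cover is a fuller justification of what the paper compresses into ``$\alpha(\gamma)\geq 0$ holds (as long as $\epsilon$ is small enough)''. The one omission is the clause ``does not represent a partial cross-section'': you should note that $-\alpha$ is not quasi-Lyapunov for $\psi_2$ (inherited from $\psi_1$ on $T_0$, where long genuine orbit arcs drive $\int df$ to $-\infty$), hence $\PS_{\psi_2}(\alpha)=\emptyset$ by Theorem~\ref{thm-ps-to-subL}.
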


\begin{proof}
	Take $\epsilon$ very small.
	Any periodic $\epsilon$-pseudo-orbit remains very close to a torus $\TT^2\times\{t\}$ for $t$ in $\{0,\frac{1}{4},\frac{1}{2},\frac{3}{4}\}$. 
	
	By Lemma~\ref{lem-compute-D1}, the contribution of $\TT^2\times\{0\}$ to $D_{\psi_2}$ is $\{0\}$. The contributions of $\TT^2\times\{t\}$ to $D_{\psi_2}$, for $t$ in $\{\frac{1}{4},\frac{1}{2},\frac{3}{4}\}$, clearly do not have negative values by~$\alpha$. Thus, we have $\alpha(D_{\psi_2})\geq 0$. 

	We write $\delta=\bfrac{\RR}{\ZZ}\times(0,\frac{1}{4})$. 
	Notice that $\delta$ is a periodic orbit of $\psi_2$, of period one. So $D_{\psi_2}$ contains $[\delta]$, which satisfies $\alpha([\delta])=1$. It follows $\alpha(D_{\psi_2})\gneq0$.

	We now prove the similar identity for $\beta$.
	Denote by $p_0=(x_0,y_0,0)$ and $p_1=(x_1,y_1,\tfrac{1}{2})$ the two fixed points of $\psi_2$. 
	There is no periodic $\epsilon$-pseudo-orbit $\gamma$ that satisfies $\alpha(\gamma)=0$ and that get out of a small neighborhood of $\{p_0,p_1\}$. It follows $\Rec_\alpha=\{p_0,p_1\}$. 

	Take a periodic $\epsilon$-pseudo-orbit~$\gamma$ that starts and ends on~$\Rec_\alpha$. 
	Note that $\TT^2\times\{\frac{1}{4},\frac{3}{4}\}$ is attracting, so it can not be crossed by $\epsilon$-pseudo-orbit that goes from $\Rec_\alpha$ to itself.

	Assume that $\gamma$ starts on $p_0$. From above, $\gamma$ also ends on $p_0$, and it remains very close to the torus $\TT^2\times\{0\}$. Thus, it is shadowed by an $\epsilon$-pseudo-orbit that remains inside that torus. 
	It follows from Lemma~\ref{lem-compute-D2} that $\gamma$ does not lie in $D_{\psi_2,\alpha}\setminus\{0\}$. So the contribution of $\TT^2\times\{0\}$ to $D_{\psi_2,\alpha}$ is $\{0\}$. 
	
	Similarly, if $\gamma$ starts on $p_1$, it ends on $p_1$ and $\beta(\gamma)=\alpha(\gamma)\geq 0$ holds (as long as $\epsilon$ is small enough). So the contribution of $\TT^2\times\{\frac{1}{2}\}$ to $D_{\psi_2,\alpha}$ has non-negative values of~$\beta$. Therefore, $\beta(D_{\psi_2,\alpha})\geq0$ holds.
	
	We take $\delta'=\bfrac{\RR}{\ZZ}\times(y_1,\frac{1}{2})$. We can interpret $\delta'\setminus\{p_1\}$ as an $\epsilon$-pseudo-orbit (for any $\epsilon>0$) that starts and ends on $p_1$. So its homology class belongs to~$D_{\psi_2,\alpha}$. We have $\beta(\delta')=1$. So $\beta(D_{\psi_2,\alpha})\gneq0$ holds.
\end{proof}

\begin{remark}
	These flows can be made non-singular, that is non-zero everywhere, by taking their products with a non-singular flow on $\bfrac{\RR}{\ZZ}$. So we can not hope for a better assumption for non-singular flows.
\end{remark}

\appendix

\section{Smoothing a partial cross-section}\label{sec-smoothing}

In this paper, we describe partial cross-sections without any regularity assumption, so that the set of partial cross-sections is invariant under continuous conjugation. Let us describe a reason to consider the partial cross-sections that are only continuous.

\begin{remark}
	Given a $\Class^1$ flows, and a periodic orbit $\gamma$, one may want to consider the blown up manifold $M_\gamma$ at $\gamma$. The lifted flow on $M_\gamma$ is a priori only continuous. Then, one may want to blow down $M_\gamma$ in a Dehn-surgery manner, obtaining again a flow that is a priori only continuous. In both case, our theorems remain valid for these flows.
\end{remark}

Since it is more common to work with smooth partial cross-sections, we need an approximation result.

\begin{lemma}
	Assume $M$ smooth.
	Let~$S$ be a partial cross-section and assume that, on a neighborhood of~$S$, $\varphi$ is obtained by integrating a continuous and uniquely integrable vector field $X$. Then for any $\epsilon>0$, there exist a smooth partial cross-section $S'$, transverse to $X$, and an isotopy along the flow from~$S$ to $S'$, that remains in the $\epsilon$-neighborhood of~$S$.
\end{lemma}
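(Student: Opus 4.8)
The plan is to approximate $S$ by a nearby smooth hypersurface and then use a flow-box argument to make it transverse and to connect it to $S$ by an isotopy along the flow.

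First I would set up a flow box. By Lemma~\ref{lem-loc-flat}, $S$ is locally flat, and since $S$ is compact and topologically transverse to $\varphi$, there exists $d>0$ so that the map $f\colon S\times]-d,d[\to M$, $f(x,s)=\varphi_s(x)$, is an embedding onto an open neighborhood $U=\im(f)$ of $S$; write $\pi\colon U\to S$ and $\tau\colon U\to ]-d,d[$ for the two coordinate projections, so $\tau$ is continuous, vanishes exactly on $S$, and is strictly increasing along each orbit segment inside $U$ (this last point uses unique integrability of $X$, which guarantees orbits are honest integral curves that cross $S$ transversally in the topological sense). Then I would smooth $\tau$: fix a smooth exhaustion / partition of unity on $M$ and replace $\tau$ by a smooth function $\tilde\tau\colon U\to\RR$ with $\|\tilde\tau-\tau\|_{C^0}$ as small as we like, chosen moreover so that $d\tilde\tau(X)>0$ everywhere on a slightly smaller neighborhood $U'\subset\subset U$ of $S$. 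The key point making the last requirement achievable is that ``$\tau$ increases along the flow'' is an open-type condition quantitatively: because $X$ is continuous, uniquely integrable and nonvanishing near $S$, there are $\eta>0$ and $t_0>0$ with $\tau(\varphi_{t_0}(y))-\tau(y)\ge\eta$ for all $y$ in a neighborhood of $S$; mollifying $\tau$ along the flow by a narrow smooth kernel (as in the convolution-along-the-flow trick used in the proof of Theorem~\ref{thm-ps-classification-1}) produces a function that is $C^1$, still $C^0$-close to $\tau$, and genuinely has positive derivative along $X$. Then set $S'=\tilde\tau^{-1}(\{0\})$ (after a harmless translation of $\tilde\tau$ by a small regular value to guarantee $0$ is a regular value and is attained), which by $d\tilde\tau(X)>0$ is a smooth hypersurface transverse to $X$, hence a smooth partial cross-section, and which lies in the $\epsilon$-neighborhood of $S$ once the $C^0$-approximation is close enough; the condition $\partial S'\subset\partial M$ is arranged by working in the doubled manifold exactly as in the second isotopy lemma of Section~\ref{sec-partial-section}, or directly when $\partial S=\emptyset$.

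Next I would build the isotopy along the flow from $S$ to $S'$. Since both $S$ and $S'$ lie in the flow box $U$ and both are transverse to the flow, each orbit segment $\{\varphi_s(x):s\in]-d,d[\}$ through a point $x\in S$ meets $S'$ in exactly one point, say $\varphi_{\theta(x)}(x)$, with $\theta\colon S\to ]-d,d[$ continuous (this is where transversality of $S'$ to $X$ and the strict monotonicity of $\tilde\tau$ along orbits are used: $s\mapsto\tilde\tau(\varphi_s(x))$ is strictly increasing, so has a unique zero). One checks $\theta$ is small in $C^0$ because $S'$ is close to $S$. Then the homotopy $\theta_t=t\theta$ gives $S_{\theta_t}$ in the notation of Section~\ref{sec-partial-section}, and each intermediate level $\{\varphi_s(x):s\in]-d,d[\}\cap\tilde\tau^{-1}(\{ct\})$-type set is again transverse to the flow, so by the uniqueness-of-intersection argument each $S_{\theta_t}$ is embedded; hence $S_{\theta_t}$ is an isotopy along the flow from $S$ to $S'$ staying in $U$, and shrinking $d$ at the outset keeps it in the $\epsilon$-neighborhood.

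The main obstacle is the middle step: producing a smooth function whose level set is transverse to $X$ while controlling the $C^0$-distance, given only that $X$ is continuous and uniquely integrable rather than $C^1$. A naive mollification of $\tau$ need not have $d\tilde\tau(X)>0$ pointwise, since differentiating a mollified function and pairing with a merely continuous vector field is delicate; the fix is to mollify \emph{along the flow} (so that the flow-monotonicity is built in) and only afterwards mollify transversally by an amount small compared to the flow-gap $\eta/t_0$, so the positivity survives. Making this quantitative—choosing the transversal mollification scale in terms of a modulus of continuity of $X$ and of $\eta, t_0$—is the technical heart, but it is routine once the two mollifications are ordered correctly.
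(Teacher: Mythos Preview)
Your overall architecture matches the paper's: build a smooth function whose derivative along $X$ is strictly positive near $S$, take a regular level set for $S'$, and interpolate linearly to get the isotopy along the flow. The isotopy part is fine and essentially the same as the paper's (the paper interpolates the defining functions and takes zero sets; you interpolate the time function, which amounts to the same thing in the flow box).

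The gap is the smoothing step, which you correctly flag as the technical heart but then dismiss as routine. It is not, and your two-step mollification does not work in either order. Convolving $\tau$ along the flow does \emph{not} produce a $C^1$ function on $M$: it smooths only in the flow-time variable, and since the flow of a merely continuous $X$ is not differentiable in the space variable, the result remains only continuous transversally. Subsequent spatial mollification restores smoothness but gives only $C^0$ control, and a $C^0$-small perturbation does not preserve pointwise positivity of $d\tilde\tau(X)$: you keep the secant inequality $\tilde\tau(\varphi_{t_0}(y))-\tilde\tau(y)\ge\eta/2$, but that says nothing about the derivative at a single point. Reversing the order fails for the dual reason: spatially mollifying first yields a smooth function, but then convolving along a non-smooth flow destroys smoothness again. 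So there is no ``correct ordering''; the modulus-of-continuity bookkeeping you propose cannot rescue a qualitatively wrong scheme. The paper sidesteps this entirely by invoking Fathi's smoothing theorem for Lyapunov functions of continuous flows \cite[Theorem~2.1]{Fathi2019}, which directly supplies a smooth $g$ that is $C^0$-close to the given function and satisfies $dg(X)<0$ on the relevant region. That theorem is genuinely non-trivial (its proof is not a naive double mollification) and is doing the essential work in this lemma.
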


As discuss below, there exists wild continuous flows for which some partial cross-sections are not approximable by smooth partial cross-sections. 

\begin{proof}
	Similarly to the proof of Lemma~\ref{lem-loc-flat}, take $d>0$ and $f\colon S\times[-d,d]\to M$ a parametrization of a neighborhood of~$S$. 
	Denote by~$\alpha$ the cohomology class of~$S$. Note that~$S$ splits $\wh M_\alpha$ in two regions, $U_+$ in the positive side of~$S$ (for the flow) and $U_-$ in the negative side. That is every point $x$ in $U_\pm$ is in the same connected component of $\wh M_\alpha\setminus S$ than some point $f(y,\pm d)$.
	
	We define the function $h\colon\wh M_\alpha\to\RR$ by	$h\circ f(x,t)=-t$, $h(U^+)=-1$ and $h(U^-)=1$. It is non-decreasing along the flow.
	So from \cite[Theorem 2.1]{Fathi2019}, we can approximate $h$ with a smooth function $g$, which satisfies $dg(X)<0$ outside $U_+\cup U_-$. We set $S'=g^{-1}(0)$. Assume that $g$ is close enough to $h$, so that $S'$ is disjoint from $U_+\cup U_-$. Then $S'$ is a smooth, compact, hypersurface transverse to $X$. In particular $S'$ is a partial cross-section of $\varphi$. 
	
	Consider the homotopy of maps $g_t=tg+(1-t)h$, from $g_0=h$ to $g_1=g$, and $S_t=g_t^{-1}(\{0\})$. It is clear that $g_t$ is increasing along the flow close to $S_t$. So $S_t$ is an isotopy of partial cross-sections between $S_0=S$ and $S_1=S'$. From above, it is an isotopy along the flow. When $g$ is very close to $h$, $S_t$ remains in the $\epsilon$-neighborhood of~$S$.
\end{proof}

We give an example of a continuous flow $\varphi$ on $M=[-1,1]\times\RR$ which admits a partial cross-section that is not approximable by $\Class^1$ partial-sections, even only topologically transverse. Consider first the strip $N=[0,1]\times\RR$ equip with a $\Class^1$ flow $\psi$ that satisfies the following:
\begin{itemize}
	\item $\psi$ is equivariant by the action of $\ZZ$ on $N$ by vertical translation,
	\item $\varphi_t(x,y)=(x,y+t)$ for $x$ in $\{0,1\}$, and any $y,t$ in $\RR$,
	\item $\varphi$ is parametrized to go asymptotically upward at speed one, that is there exists $c$ so that for all $x,y,t,x',y',$ with $\varphi_t(x,y)=(x',y')$, we have $y'\geq y+t-c$,
	\item there exists an orbit $\gamma_0$ that, for all $n$ in $\ZZ$, passes through $(\tfrac{3}{4},n)$, then $(\tfrac{1}{4},n+3)$, then through $(\tfrac{3}{4},n+1)$ and so on.
\end{itemize}

The last item ensures that for any $\Class^1$ curve $\delta$ so that $\frac{\partial\delta(t)}{\partial t}$ is close enough to $(1,b)$, for some $b\geq 0$, $\delta$ intersects $\gamma_0$ transversally in at least two points, with the two possible co-orientations. In particular $\delta$ is not topologically transverse to $\varphi$.

We place in $M$ infinitely many copies of $N$. For $n\geq 1$ and $\sigma$ in $\{+,-\}$, consider the map $f^\sigma_n$ given by:
$$\begin{tikzcd}
	|[xshift=-0.9em]| f^\sigma_n\colon N \arrow[r,"\simeq"] & |[xshift=0.7em]| (\sigma[\frac{1}{3^n},\frac{1}{3^{n-1}}])\times\RR\subset M \\ [-0.9cm]
	(x,y) \arrow[r, mapsto] & \left(\sigma\frac{2x+1}{3^n},\frac{y}{2^n}\right)\!.
\end{tikzcd}$$
Note that their images cover $M$ minus $\{0\}\times\RR$. 

We define the flow $\varphi$ on $M$ as follows. On the image of $f^\sigma_n$, $\varphi$ coincides with the push-forward of $\psi$, reparametrization by a speed factor $2^n$ to go asymptotically upward at speed one. We also define $\varphi$ on $\{0\}\times\RR$ to go upward at constant speed one. 
We let the reader verify:

\begin{lemma}
	The flow $\varphi$ is well-defined on $M$ and continuous. Additionally, $\frac{\partial\varphi_t}{\partial t}$ is well-defined and continuous in restriction to any flow lines, but not continuous globally.
\end{lemma}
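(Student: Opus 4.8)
The statement asserts three things about the flow $\varphi$ on $M=[-1,1]\times\RR$ built by patching copies of $\psi$: that it is well-defined, that it is continuous, and that the time-derivative $\frac{\partial\varphi_t}{\partial t}$ is continuous along each orbit but not globally. I would organize the proof around these three claims in that order, using the defining properties of $\psi$ (in particular the asymptotic upward speed one, the equivariance under vertical $\ZZ$-translation, and the boundary behaviour $\psi_t(x,y)=(x,y+t)$ for $x\in\{0,1\}$).

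\textbf{Well-definedness and continuity.} First I would note that the images of the maps $f^\sigma_n$ are pairwise disjoint except along the shared vertical lines $\{\pm\frac{1}{3^n}\}\times\RR$, and that these lines correspond, in the source $N$, to the boundary components $\{0,1\}\times\RR$. On those boundary lines $\psi$ is the vertical translation flow, and the reparametrizations by the speed factors $2^n$ are chosen precisely so that the pushed-forward flows agree there: on $\{\pm\frac{1}{3^n}\}\times\RR$ each neighbouring chart gives the flow moving upward at asymptotic speed one, and more importantly the boundary condition forces exact agreement, not merely asymptotic agreement. Hence $\varphi$ is unambiguously defined on $\bigcup_n\im(f^\sigma_n)$, and extending by the constant-speed-one upward flow on $\{0\}\times\RR$ is consistent because, as $n\to\infty$, the charts $f^\sigma_n$ converge (in the $C^0$ sense) to the vertical translation flow near $\{0\}\times\RR$: the horizontal extent $\frac{1}{3^n}$ shrinks to zero while the vertical reparametrization keeps the asymptotic speed at one. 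I would make continuity at points of $\{0\}\times\RR$ precise using the asymptotic-speed-one hypothesis: given $(0,y)$ and a time $t$, points $(x',y')$ near $(0,y)$ lie in some chart with large index $n$, and there $|y'+t - \varphi_t(x',y')_{\text{2nd coord}}|$ is controlled by the constant $c/2^n$ from the third bullet (scaled by the chart), which tends to zero; the horizontal coordinate of $\varphi_t(x',y')$ stays within $\frac{1}{3^{n-1}}$ of $0$. Continuity away from $\{0\}\times\RR$ is immediate since locally $\varphi$ is a single pushed-forward copy of the $C^1$ flow $\psi$.

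\textbf{Regularity of the time-derivative.} Along any single orbit, $\frac{\partial\varphi_t}{\partial t}$ is the pushforward of the $C^1$ vector field generating $\psi$ (on $\{0\}\times\RR$ it is the constant vertical unit vector), so it is continuous in $t$; for orbits that asymptotically approach $\{0\}\times\RR$ I would again use the speed-factor normalization to see the limiting tangent vector is the unit vertical one. For global discontinuity, I would exhibit a sequence of points $p_n$ in the $n$-th chart converging to a point $p\in\{0\}\times\RR$ such that the tangent vectors $\frac{\partial\varphi_t}{\partial t}(p_n)\big|_{t=0}$ do not converge to the vertical unit vector: inside each copy of $\psi$ there are points where the orbit moves with a substantial horizontal component (the orbit $\gamma_0$ threading through $(\frac34,n)$ and $(\frac14,n+3)$ witnesses this — its velocity has a non-negligible horizontal part at some times), and pulling such a point back through $f^\sigma_n$ gives a point near $\{0\}\times\RR$ whose velocity direction stays bounded away from vertical. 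The main obstacle, and the step I would spend the most care on, is bookkeeping the two competing rescalings — the horizontal contraction by $3^{-n}$ in $f^\sigma_n$ versus the vertical contraction by $2^{-n}$ and the time-reparametrization by $2^n$ — to confirm both that continuity of $\varphi$ genuinely holds at $\{0\}\times\RR$ (the differing contraction rates could in principle spoil it) and that the velocity directions do \emph{not} flatten out to vertical, so that the discontinuity of $\frac{\partial\varphi_t}{\partial t}$ is real. Everything else is routine gluing.
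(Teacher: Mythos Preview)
The paper gives no proof of this lemma --- it is prefaced by ``We let the reader verify'' and left as an exercise. So your outline is filling a genuine gap, and the overall structure (well-definedness via matching on the shared boundary lines, continuity at $\{0\}\times\RR$ via the asymptotic-speed-one bound together with its time-reversal, orbitwise $C^1$ from the chart structure) is correct and is exactly what the verification requires.

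There is, however, a real error in your proposed mechanism for the global discontinuity of $\frac{\partial\varphi_t}{\partial t}$. Carry out the rescaling bookkeeping you flagged: if $X=(X_1,X_2)$ is the generating vector field of $\psi$ on $N$, then the pushforward via $f^\sigma_n$ with the speed factor $2^n$ gives the vector field
\[
\Bigl(\tfrac{2\cdot 2^n}{3^n}\,X_1,\ X_2\Bigr)
\]
at the corresponding point of $M$. The horizontal component is scaled by $(2/3)^n\cdot 2\to 0$, so the velocity \emph{direction does} flatten to vertical as $n\to\infty$; your proposed witnesses do not work as stated.

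The discontinuity is real, but it comes from the \emph{vertical} component, which is exactly $X_2$ with no rescaling at all. The described orbit $\gamma_0$ passes through $(\tfrac14,n+3)$ and then through $(\tfrac34,n+1)$, so along $\gamma_0$ the $y$-coordinate decreases over some time interval; hence there is a point $(x_0,y_0)\in N$ (which by $\ZZ$-equivariance may be taken in a fixed fundamental domain) where $X_2(x_0,y_0)\neq 1$. Setting $p_n=f^+_n(x_0,y_0)\to(0,0)$, the vertical component of $\frac{\partial\varphi_t}{\partial t}\big|_{t=0}(p_n)$ equals $X_2(x_0,y_0)$ for every $n$, while at $(0,0)$ it equals $1$. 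That is the discontinuity. Replace your direction argument with this magnitude argument and the proof goes through.
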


As we prove below, $\varphi$ admits a (continuous only) partial cross-section. But it admits no partial cross-section of class $\Class^1$, even only topologically transverse to the flow. 

\begin{lemma}
	The flow $\varphi$ admits a partial cross-section that contains $(0,0)$.
\end{lemma}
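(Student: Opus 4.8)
The plan is to exhibit a partial cross-section of $\varphi$ through the point $(0,0)$ by hand, using the vertical structure of $M=[-1,1]\times\RR$. The natural candidate is a curve that crosses each copy of $N$ once, passes through $(0,0)$, and stays close to the horizontal direction so as to remain topologically transverse to $\varphi$. The key observation is that $\varphi$ is asymptotically upward at unit speed on every flow line (uniformly, since each copy was reparametrized to go upward at speed one and $\{0\}\times\RR$ does so too), so any compact arc that is ``almost horizontal'' and short in the $\RR$-direction will be crossed at most once by each flow line.

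First I would work inside a single copy $N=[0,1]\times\RR$ with its $\Class^1$ flow $\psi$. Since $\psi$ restricted to the vertical boundary lines is the upward translation and $\psi$ is asymptotically upward at unit speed with the defect constant $c$, I would pick a horizontal segment $T_0=[0,1]\times\{0\}$ — or a small perturbation thereof avoiding the orbit $\gamma_0$ — and check that it is topologically transverse to $\psi$: any orbit of $\psi$ meeting $T_0$ does so exactly once, because the $\RR$-coordinate along an orbit increases by at least $t-c$ and $T_0$ has bounded $\RR$-extent, so after a uniformly bounded time any orbit has risen past $T_0$ and cannot return. (This is the only place where the quantitative asymptotic-upward hypothesis is used; it is exactly what makes an honest transverse segment exist.) Then pull this segment back through $f^\sigma_n$: the image $f^\sigma_n(T_0)$ is a segment in the vertical band $\sigma[\tfrac{1}{3^n},\tfrac{1}{3^{n-1}}]\times\RR$, transverse to the copy of $\psi$ living there, and by rescaling the $\RR$-coordinate by $2^{-n}$ its vertical extent shrinks geometrically.

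Next I would assemble these pieces. Choosing the segments $T_0$ in each copy compatibly — and, if necessary, reparametrizing within each band so the pieces glue continuously along the band boundaries $\{\pm 3^{-n}\}\times\RR$ — one obtains a continuous curve $S\subset M$ meeting each band in one transverse arc. The geometric shrinking ensures that as $x\to 0^+$ (resp. $x\to 0^-$) the vertical coordinate on $S$ tends to a limit, which I arrange to be $0$, so that $S\cup\{(0,0)\}$ is a compact arc; along $\{0\}\times\RR$ the flow is upward at unit speed, so $(0,0)$ is also a point of topological transversality. Finally $\partial S\subset\{-1,1\}\times\RR=\partial M$ by construction. So $S$ is compact, one–dimensional (codimension one in $M$), topologically transverse to $\varphi$, with boundary in $\partial M$: a partial cross-section containing $(0,0)$.

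\textbf{Main obstacle.} The delicate point is the gluing and the behavior near $x=0$: one must verify that the countably many rescaled transverse segments can be chosen to fit together into a single \emph{embedded, continuous} arc whose vertical coordinate actually converges to $0$ (not merely stays bounded), and that topological transversality — an open-type condition with a uniform time $t>0$ in its definition — survives the limit at $(0,0)$, where the flow direction degenerates from ``nearly horizontal crossing'' in the bands to ``vertical'' on $\{0\}\times\RR$. Controlling this hinges on the uniform defect constant $c$ and on the fact that the $2^{-n}$ rescaling makes the relevant crossing times on the $n$-th band tend to zero, so the transversality witnesses do not collapse in the limit.
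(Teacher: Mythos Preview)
Your approach is essentially the paper's own. The paper's proof is a three-line sketch: take a partial cross-section $S_N$ of $N$ going from $(0,0)$ to $(1,0)$, define $S$ as $\{(0,0)\}\cup\bigcup_{n,\sigma}f^\sigma_n(S_N)$, and observe that the $2^{-n}$ contraction of the $y$-coordinate makes $S$ a partial cross-section. You have unpacked exactly this, adding the justification for the existence of $S_N$ via the asymptotic-upward estimate, the gluing along band boundaries (automatic since $S_N$ ends at $(0,0)$ and $(1,0)$, which $f^\sigma_n$ sends to the band endpoints on $y=0$), and the verification of topological transversality at the limit point $(0,0)$. Your ``main obstacle'' is precisely the content the paper leaves to the reader; your observation that the defect constant $c$ scales by $2^{-n}$ in each band is the right way to see that the transversality time does not collapse at the origin.
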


\begin{proof}
	We build partial cross-section $S$ as follows. Take a partial cross-section $S_N$ of $N$ that goes from $(0,0)$ to $(1,0)$. Then define $S$ to be the union of $(0,0)$ with the sets $f^\sigma_n(S_N)$ for all $n$ and $\sigma$. Since $f^\sigma_n$ contracts the $y$-coordinates by a factor that goes to zero in $n$, $S$ is indeed a partial cross-section of $\varphi$.
\end{proof}

\begin{lemma}
	There is no $\Class^1$ embedded curve topologically transverse to $\varphi$ whose interior intersects $\{0\}\times\RR$. 
\end{lemma}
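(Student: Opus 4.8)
The statement is a non-existence result: no $\Class^1$ embedded curve $\delta$ that is topologically transverse to $\varphi$ can have its interior meet the vertical line $\{0\}\times\RR$. The idea is to argue by contradiction. Suppose such a curve $\delta$ exists, and let $p=(0,y_0)$ be a point in the interior of $\delta$ lying on $\{0\}\times\RR$. Near $p$, the curve $\delta$ is $\Class^1$, so it has a well-defined tangent direction at $p$; I would first analyze what that direction can be. Since $\varphi$ moves straight upward at unit speed along $\{0\}\times\RR$, topological transversality of $\delta$ at $p$ forces the tangent $\delta'(p)$ to have a nonzero horizontal component — say, after reparametrizing and possibly reflecting, $\delta'(p)$ is close to $(1,b)$ for some $b$. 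By continuity of $\delta'$, on a short arc of $\delta$ around $p$ the tangent stays within any prescribed cone around $(1,b)$.

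The heart of the matter is then to transport this local picture into one of the copies $f^\sigma_n(N)$ and contradict the last bullet in the construction of $\psi$ — the one producing the orbit $\gamma_0$ through the points $(\tfrac34,n)$, $(\tfrac14,n+3)$, $(\tfrac34,n+1)$, \dots, which was engineered so that \emph{any} $\Class^1$ curve whose tangent is close enough to $(1,b)$ with $b\ge 0$ must cross $\gamma_0$ transversally in (at least) two points with opposite co-orientations, hence fails to be topologically transverse to $\psi$. The key quantitative point: as $n\to\infty$, the maps $f^\sigma_n$ contract the $x$-direction by $3^{-n}$ and the $y$-direction by $2^{-n}$; since $3^{-n}/2^{-n}=(2/3)^n\to 0$, the push-forward distorts slopes so that a direction close to $(1,b)$ in $M$-coordinates pulls back, under $(f^\sigma_n)^{-1}$, to a direction in $N$-coordinates that is even \emph{closer} to being purely horizontal, i.e. close to $(1, (3/2)^n b)$ — wait, that goes the wrong way. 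I would instead argue the reverse: take $n$ large enough that the arc of $\delta$ near $p$, which lives at $x$-coordinates $O(3^{-n})$ when it is close to $p$, actually enters $f^\sigma_n(N)$ for infinitely many $n$; and there, since $f^\sigma_n$ \emph{expands} the $y$-direction relative to $x$ by the factor $(3/2)^n\to\infty$ upon pulling back, a curve that is $\Class^1$ with bounded tangent slope in $M$ pulls back to a curve in $N$ whose tangent becomes arbitrarily close to horizontal $(1,0)$. For $n$ large this brings the pulled-back arc into the regime covered by the last bullet of the construction of $\psi$, so it must cross $\gamma_0$ with both co-orientations; pushing forward, $\delta$ crosses $f^\sigma_n(\gamma_0)$ with both co-orientations, contradicting topological transversality to $\varphi$.

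**Main obstacle.** The delicate step is making precise that an arc of $\delta$ through a point of $\{0\}\times\RR$ genuinely dips into $f^\sigma_n(N)$ for arbitrarily large $n$ in a way that occupies a \emph{full horizontal traverse} of one strip $N$ — the last bullet of the $\psi$-construction only gives a contradiction for curves that cross a whole fundamental domain of $N$, not tiny sub-arcs. One has to use that $\delta$ has its interior passing through $p$ with a tangent having nonzero horizontal component, so for $x$ ranging over a small interval $[0,\eta]$ (or $[-\eta,0]$) the curve $\delta$ is (locally) a graph $x\mapsto(x,g(x))$ with $g$ of class $\Class^1$ and $g'$ bounded; then for each large $n$ with $3^{-n}<\eta$, restricting to $x\in[3^{-n},3^{-(n-1)}]$ gives a sub-arc that traverses the strip $f^\sigma_n(N)$ horizontally, and its pull-back by $(f^\sigma_n)^{-1}$ is a $\Class^1$ graph over $x\in[0,1]$ with slope $\tfrac{g'}{2^{-n}/3^{-n}}\to 0$; choosing $n$ large enough that this slope lies in the cone required by the $\psi$-construction yields the forced double crossing of $\gamma_0$ with opposite co-orientations. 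Pushing this crossing forward contradicts topological transversality of $\delta$ to $\varphi$, completing the proof.
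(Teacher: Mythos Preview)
Your overall strategy matches the paper's: pull the curve back through $(f^\sigma_n)^{-1}$, compute the derivative, and use the zig-zag orbit $\gamma_0$ to produce crossings with both co-orientations. For the case where the horizontal component $a$ of $\delta'(p)$ is nonzero, your argument is correct and essentially identical to the paper's; your ``main obstacle'' paragraph even makes explicit the full-traverse issue that the paper leaves implicit.

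There is, however, a genuine gap. You assert that topological transversality at $p$ \emph{forces} $\delta'(p)$ to have a nonzero horizontal component. This is not true: topological transversality is strictly weaker than linear transversality, and a $\Class^1$ curve such as $t\mapsto(t^3,t)$ can be topologically transverse to a flow that is vertical at the origin while having a purely vertical tangent there. The paper does \emph{not} claim $a\neq 0$; instead it splits into two cases. When $a\neq 0$ it argues exactly as you do (the pull-back becomes nearly horizontal since $3^n$ beats $2^n$). When $a=0$ and $b\neq 0$, the pull-back derivative is close to $(\sigma\tfrac{3^n a}{2},\,2^n b)$ with the first entry negligible, so the pulled-back curve has \emph{very large} slope; one then chooses the sign $\sigma$ according to the sign of $b$ and observes that a nearly-vertical curve also crosses the zig-zag orbit $\gamma_0$ with both co-orientations (since $\gamma_0$ moves both leftward and rightward across any vertical line in $(\tfrac14,\tfrac34)$). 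Your graph-over-$x$ representation breaks down precisely when $a=0$, so your argument does not cover this case. To repair the proof, drop the unjustified claim and handle $a=0$ separately as the paper does.
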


\begin{proof}
	Take a $\Class^1$ curve $\delta\colon[-1,1]\to M$ with $\delta(0)=(0,y)$ for some $y$, and so that $\frac{\partial\delta(t)}{\delta t}_{|0}$ is equal to a non-zero vector $(a,b)$. Since $\delta$ is topologically transverse to $\varphi$, it intersects the two connected components of $M\setminus\{0\}\times\RR$.

	So for all large enough $n$, $\delta$ intersects $\im(f^\sigma_n)$. Then the derivative of $(f^\sigma_n)^{-1}(\delta)$ is very close to $(\sigma\frac{3^na}{2},2^nb)$. 

	Assume first that $a\neq 0$ holds. The derivative is very close to being horizontal, so when $n$ is large enough $(f^\sigma_n)^{-1}(\delta)$ intersects $\gamma_0$ with the two possible co-orientations. So $\delta$ it is not topologically transverse to $\varphi$.

	Assume now $a=0$, and so $b\neq0$. If $b$ is positive, then similarly, $(f^+_n)^{-1}(\delta)$ has a slope very large. So it intersects $\gamma_0$ with both signs. If $b$ is negative, the same holds for $(f^-_n)^{-1}(\delta)$. Therefore, in every case, $\delta$ is not topologically transverse to $\varphi$.
\end{proof}

\section{Fried desingularization}\label{sec-Fried-sum}

Take two partial cross-sections $S_1,S_2$. When there are disjoint, $S_1\cup S_2$ is a partial cross-section. When there are not, Fried \cite{Fried1983} described a way to produce a partial cross-section corresponding to $S_1\cup S_2$. The procedure goes as follows. Put $S_1$ and $S_2$ in general position, so that they intersect topologically transversally. Cut them at their common intersection, and glue them back transversally to the flow. After a small perturbation, it yields a new partial cross-section close to $S_1\cup S_2$. This has a nice interpretation using Theorem~\ref{thm-ps-classification-3}.

Given two classes $\alpha_1,\alpha_2$ so that $\PS_\varphi(\alpha_2)$ and $\PS_\varphi(\alpha_2)$ are not empty, the set $\Rec_{\alpha_1+\alpha_2}$ lies inside $\Rec_{\alpha_1}\cap\Rec_{\alpha_2}$ (see \cite[Lemma 5.18]{martyPS1}). A first consequence is that $D_{\varphi,\alpha_1+\alpha_2}\rmod{\germ(\Rec_{\alpha_i})}$ lies inside $D_{\varphi,\alpha_i}$ for $i=1,2$.
If follows a commutative diagram:
$$\begin{tikzcd}
	\PS_\varphi(\alpha_1)\times\PS_\varphi(\alpha_2) \arrow[r]\arrow[d, "\simeqd"] & \PS_\varphi(\alpha_1+\alpha_2) \arrow[d, "\simeqd"] \\
	H^1(M,\germ(\Rec_{\alpha_1}),\ZZ)\times H^1(M,\germ(\Rec_{\alpha_2}),\ZZ) \arrow[r] & H^1(M,\germ(\Rec_{\alpha_1+\alpha_2}),\ZZ),
\end{tikzcd}$$
where the top arrow is the Fried desingularization and the bottom arrow is the sum of the natural restriction maps. 
The bottom arrow sends the pairs $(\beta_1,\beta_2)$ with $\beta_i(D_{\varphi,\alpha_i})\geq0$ onto the pairs $\beta$ with $\beta(D_{\varphi,\alpha_1+\alpha_2})\geq0$, as required by Theorem~\ref{thm-ps-classification-3}.
Note that the top and bottom maps are not necessarily injective nor surjective in general.

For the non-injectivity, take $\alpha_1=0$, assuming the flow non-chain recurrent, and $\alpha_2(D_\varphi)>0$. The top map goes from an infinite set to a singleton, so it is not injective. 
For the non-surjectivity, Example 5.19 in \cite{martyPS1} gives an example where the inclusion $\Rec_{\alpha_1+\alpha_2}\subset\Rec_{\alpha_1}\cap\Rec_{\alpha_2}$ is strict. So any point in $(\Rec_{\alpha_1}\cap\Rec_{\alpha_2})\setminus\Rec_{\alpha_1+\alpha_2}$ lies on a partial cross-section cohomologous to $\alpha_1+\alpha_2$, but it lies on no partial cross-section cohomologous to either $\alpha_1$ or $\alpha_2$ (see Theorem~\ref{thm-ps-rec-disjoint}).

\begin{question}
	When is the map $\PS_\varphi(\alpha_1)\times\PS_\varphi(\alpha_2)\to \PS_\varphi(\alpha_1+\alpha_2)$ injective? and surjective?
\end{question}

Note that for this question, it would be fair to consider the empty set as a null-homologous partial cross-section, otherwise the surjectivity in the case $\alpha_1=0$ will fail quite frequently.

\section{Practical application to simple~$\alpha$-recurrence sets}\label{app-practical-applications}

We give two homological results as practical tools. In the homological classification of partial cross-sections, we utilize the homology relative to the germ of the~$\alpha$-recurrent set $K=\Rec_\alpha$, where $-\alpha$ is quasi-Lyapunov. In many application cases, the compact set~$\Rec_\alpha$ has finitely many connected components. Under that hypothesis, we prove that the homology module is finitely generated.

A neighborhood~$U$ of~$K$ is said to be \emph{exactly $K$-connected} if any path-connected component of~$U$ contains exactly one connected component of~$K$.

\begin{lemma}\label{lem-hom-const}
	Assume that~$K$ has finitely many connected components. Take two neighborhoods $U$ and $V$ of~$K$, assumed~$K$-thin and exactly $K$-connected. Then we have $H_1(U,K,\RR)\lmod{M}=H_1(V,K,\RR)\lmod{M}$.
\end{lemma}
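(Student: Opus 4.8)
The plan is to reduce to nested neighborhoods and then run a short diagram chase built from the long exact sequences of the pairs $(U,K)$ and $(M,K)$: the hypothesis ``exactly $K$-connected'' will supply surjectivity of $H_1(U,\RR)\to H_1(U,K,\RR)$, and the hypothesis ``$K$-thin'' will let us transport an absolute homology class into a smaller neighborhood. Here $H_1(U,K,\RR)\lmod{M}$ means the image of $H_1(U,K,\RR)$ in $H_1(M,K,\RR)$.

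\emph{Reduction.} Since~$K$ is compact, for $\epsilon>0$ small enough $V_\epsilon(K)$ is a neighborhood of~$K$ contained in $U\cap V$; write $W=V_\epsilon(K)$. It then suffices to prove, for a nested pair $W\subseteq U$ with~$U$ exactly $K$-connected and $K$-thin, that $H_1(W,K,\RR)\lmod{M}=H_1(U,K,\RR)\lmod{M}$, since applying this once to $W\subseteq U$ and once to $W\subseteq V$ and chaining the two equalities gives the lemma. (The assumption that~$K$ has finitely many connected components is what guarantees that neighborhoods of~$K$ which are simultaneously exactly $K$-connected and $K$-thin exist, so that the statement is not vacuous; beyond that it plays no role in the argument.)

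\emph{Nested case.} One inclusion is formal: the inclusion of pairs $(W,K)\hookrightarrow(U,K)$ induces $H_1(W,K,\RR)\to H_1(U,K,\RR)$ compatibly with the maps to $H_1(M,K,\RR)$, hence $H_1(W,K,\RR)\lmod{M}\subseteq H_1(U,K,\RR)\lmod{M}$. For the reverse inclusion, fix $\gamma$ in $H_1(U,K,\RR)$. Since~$U$ is exactly $K$-connected, the inclusion induces an isomorphism $H_0(K,\RR)\cong H_0(U,\RR)$; plugging this into the long exact sequence of $(U,K)$ shows the connecting map $H_1(U,K,\RR)\to H_0(K,\RR)$ vanishes, so $H_1(U,\RR)\to H_1(U,K,\RR)$ is surjective. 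Lift $\gamma$ to a class $\widetilde\gamma$ in $H_1(U,\RR)$ and let $\overline\gamma$ be its image in $H_1(M,\RR)$. The square relating $(U,\emptyset)\to(U,K)$ with $(M,\emptyset)\to(M,K)$ commutes, so the image of $\gamma$ in $H_1(M,K,\RR)$ equals the image of $\overline\gamma$ in $H_1(M,K,\RR)$. Now $\overline\gamma$ lies in $H_1(U,\RR)\lmod{M}$, and since~$U$ is $K$-thin and $W\subseteq U$, the definition of $K$-thin gives $H_1(U,\RR)\lmod{M}=H_1(W,\RR)\lmod{M}$; hence $\overline\gamma=\eta\lmod{M}$ for some $\eta$ in $H_1(W,\RR)$. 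Pushing $\eta$ into $H_1(W,K,\RR)$ and then into $H_1(M,K,\RR)$, the commuting square for $(W,\emptyset)\to(W,K)\to(M,K)$ identifies this image with that of $\overline\gamma$, hence with $\gamma\lmod{M}$. Therefore $\gamma\lmod{M}$ lies in $H_1(W,K,\RR)\lmod{M}$, which completes the nested case.

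I expect the only genuinely delicate point to be the bookkeeping in this chase: one must keep ``exactly $K$-connected'' attached to the larger neighborhood~$U$ (it is precisely what makes $H_1(U,\RR)\twoheadrightarrow H_1(U,K,\RR)$), apply ``$K$-thin'' of~$U$ with~$W$ playing the role of the ``$V\subseteq U$'' in its definition, and observe that no surjectivity statement for~$W$ is needed, only the commutativity of the square $(W,\emptyset)\to(W,K)\to(M,K)$. Everything else is routine functoriality of the long exact sequences of pairs.
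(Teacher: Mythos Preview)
Your proof is correct and follows the same overall strategy as the paper---reduce to a nested pair, then transport an absolute class via $K$-thinness---but your diagram chase is more economical. The paper invokes a relative Mayer--Vietoris sequence for $(U,\emptyset)$ and $(V,K)$ to produce a preimage $(x,y)\in H_1(U)\oplus H_1(V,K)$ of a given $\delta\in H_1(U,K)$, and in doing so uses that both $U$ and $V$ are exactly $K$-connected (to identify $H_0(V)\cong H_0(U)$). You instead use only the long exact sequence of the single pair $(U,K)$: exact $K$-connectedness of $U$ alone forces $H_0(K)\to H_0(U)$ to be an isomorphism, hence $H_1(U)\twoheadrightarrow H_1(U,K)$, and then $K$-thinness of $U$ alone pushes the lifted class into $H_1(W)\lmod{M}$ for any $W\subset U$. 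This buys you a cleaner reduction: your intermediate neighborhood $W=V_\epsilon(K)$ need satisfy no hypothesis whatsoever, whereas the paper's argument requires the inner neighborhood to be exactly $K$-connected as well. Conceptually the two proofs are the same---lift to absolute homology, transport, project back---but yours isolates the roles of the two hypotheses more sharply.
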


When~$K$ has finitely many connected components, and $\epsilon>0$ is small enough, $V_\epsilon(K)$ is~$K$-thin and exactly $K$-connected. So one can apply the lemma above to $V_\epsilon(K)$.

\begin{proof} 
	Here, all coefficients are in $\RR$.
	It is enough to prove the conclusion when $V\subset U$ holds. Under that assumption, it is clear that $H_1(V,K)\lmod{M}$ lies inside $H_1(U,K)\lmod{M}$. We prove the other inclusion. 
	Consider the following commutative diagram:
	\[
	\begin{tikzcd}
		H_1(V) \arrow[r] \arrow[dr]& H_1(U)\oplus H_1(V,K) \arrow[d, shift right=0.21cm]\arrow[r] & H_1(U,K) \arrow[d]\arrow[r, "\partial"] \arrow[dr, "0"] & H_0(V) \arrow[d, "\simeqd"]\\
		& H_1(M)\oplus H_1(V,K) \arrow[r] & H_1(M,K) & H_0(U)		 
	\end{tikzcd}
	\]
	The upper line is obtained from a relative Mayer-Vietoris sequence. The map $H_0(V)\to H_0(U)$ is an isomorphism since $U$ and $V$ are both exactly $K$-connected.
	Take an element $\delta$ in $H_1(U,K)$. Its boundary $H_0(U)$ is zero. So from the discussion above, its image in $H_0(V)$ is also zero. Hence, there exists an element $(x,y)$ in $H_1(U)\oplus H_1(V,K)$ in the preimage of $\delta$. That is $x\rmod{K}+y\lmod{U}=\delta$ holds. Since $U$ is $K$-thin, there exists $z$ in $H_1(V)$ which satisfies $z\lmod{M}=x\lmod{M}$. Then we have
	$$\delta\lmod{M}=z\hmod{M,K}+y\hmod{M,K}=(z\rmod{K}+y)\lmod{M},$$
	where the class $z\rmod{K}+y$ lies in $H_1(V,K)$.
	It follows that $\delta\lmod{M}$ lies in $H_1(V,K)\lmod{M}$.
\end{proof}



It has the following consequences.

\begin{proposition}\label{prop-germ-char-R}
	Assume that~$K$ has finitely many connected components. Let~$U$ be a~$K$-thin and exactly $K$-connected neighborhood of~$K$. Then the maps 
	$$H_1(M,\germ(K),\RR)\xrightarrow{\srmod} H_1(M,U,\RR)$$
	and 
	$$H^1(M,U,\RR)\xrightarrow{\srmod} H^1(M,\germ(K),\RR)$$
	are isomorphisms.
\end{proposition}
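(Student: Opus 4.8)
The plan is to show that both claimed maps are isomorphisms by exploiting the fact that, under the hypotheses of the proposition, the directed inverse system defining the (co)homology relative to the germ is eventually constant once we pass to $K$-thin, exactly $K$-connected neighbourhoods. Concretely, let $U$ be $K$-thin and exactly $K$-connected; since $K$ has finitely many connected components, for all small enough $\epsilon>0$ the neighbourhood $V_\epsilon(K)$ is also $K$-thin and exactly $K$-connected, and these form a cofinal family among all neighbourhoods of $K$. So it suffices to prove that for any two $K$-thin, exactly $K$-connected neighbourhoods $V\subset U$ of $K$, the restriction map $H_1(M,V,\RR)\xrightarrow{\srmod}H_1(M,U,\RR)$ is an isomorphism; the statement for the germ then follows by passing to the limit (the limit of a constant system is that constant value, and the structure maps from the limit are the identity).

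First I would treat surjectivity of $H_1(M,V,\RR)\to H_1(M,U,\RR)$. Using the relative Mayer–Vietoris sequence for $(V,\emptyset)\hookrightarrow(U,\emptyset)\oplus(V,K)\hookrightarrow(U,K)$ — exactly the diagram already set up in the proof of Lemma~\ref{lem-hom-const} — together with the long exact sequence of the pair $(M,U)$ and $(M,V)$, I would run essentially the same diagram chase as in Lemma~\ref{lem-hom-const}, but now carrying classes in $H_1(M,-)$ rather than $H_1(-,K)$. The key inputs are: $H_0(V)\to H_0(U)$ is an isomorphism (exact $K$-connectedness, finitely many components of $K$), so $\partial\colon H_1(M,U)\to H_0(U)$ and $\partial\colon H_1(M,V)\to H_0(V)$ have "the same" image; and $U$ being $K$-thin lets me lift the absolute part of any class from $H_1(U,\RR)\lmod{M}$ back to $H_1(V,\RR)\lmod{M}$. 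Combining these shows every class in $H_1(M,U,\RR)$ comes from $H_1(M,V,\RR)$.

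For injectivity, suppose $\delta\in H_1(M,V,\RR)$ maps to $0$ in $H_1(M,U,\RR)$. Then by the long exact sequence of $(M,U)$ the image of $\delta$ in $H_1(M,U)$ being zero means $\delta\rmod{U}$ comes from $H_1(U,\RR)\lmod{M}$ under $H_1(M)\to H_1(M,U)$; more precisely I would write $\delta = y\rmod{V}$ for $y$ in $H_1(M,\RR)$ with $y\rmod{U}=0$, hence $y=w\rmod{M}$... — the cleaner route is: $\delta\rmod{U}=0$ gives $\delta$ in the image of the relative Mayer–Vietoris boundary, so $\delta = z\rmod{V}$ for some $z$ in $H_1(U,V,\RR)$ pushed in; but $K$-thinness of $V$ (applied to $U\supset V$) forces such a class to already die in $H_1(M,V,\RR)$, exactly as in the "claim" at the end of the proof of Proposition~\ref{prop-img-hom-finit-l}. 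I would reuse that argument almost verbatim. Finally, the cohomological statement $H^1(M,U,\RR)\xrightarrow{\srmod}H^1(M,\germ(K),\RR)$ being an isomorphism follows formally: by Proposition~\ref{prop-germ-form} it is the continuous dual of $H_1(M,\germ(K),\RR)\to H_1(M,U,\RR)$, which we have just shown to be an isomorphism of finite-dimensional spaces (finitely generated by Lemma~\ref{lem-K-con-finit-gen}), and dualizing an isomorphism of finite-dimensional $\RR$-vector spaces gives an isomorphism; alternatively one dualizes the eventually-constant direct system directly.

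The main obstacle I expect is the injectivity step: making precise that a class in $H_1(M,V,\RR)$ which becomes trivial after enlarging the "modded-out" neighbourhood to $U$ must already be trivial, and doing so using only $K$-thinness of $V$ (a statement about $H_1(V,\RR)\lmod{M}$ stabilizing) rather than any hypothesis on higher homology. Working over $\RR$ is essential here — it is what makes $H_1(U,\RR)\lmod{M}$ behave well and what makes the dualization argument clean — and I would flag that the analogous $\ZZ$-statement is false in general (the torsion discussion and Lemma~\ref{lem-tor-bounded} show only a weaker form survives).
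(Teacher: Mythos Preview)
Your plan is correct and follows essentially the same route as the paper: reduce to showing that $H_1(M,V,\RR)\xrightarrow{\srmod}H_1(M,U,\RR)$ is an isomorphism whenever $V\subset U$ are both $K$-thin and exactly $K$-connected, use cofinality of the $V_\epsilon(K)$ to pass to the limit, and then dualize via Proposition~\ref{prop-germ-form} for the cohomological statement. The only notable difference is in the injectivity step: the paper lifts the kernel class to $H_1(M,K,\RR)$ and then invokes Lemma~\ref{lem-hom-const} directly (the equality $H_1(U,K,\RR)\lmod{M}=H_1(V,K,\RR)\lmod{M}$), which is slightly cleaner than replaying the Mayer--Vietoris chase from Proposition~\ref{prop-img-hom-finit-l}; both arguments are valid and closely related, and your surjectivity sketch is more elaborate than needed (the paper gets it immediately from $H_0(U,V,\RR)=0$ via exact $K$-connectedness).
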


\begin{proposition}\label{prop-germ-char-Z}
	Assume that~$K$ has finitely many connected components. Let~$U$ be a~$K$-thin and exactly $K$-connected neighborhood of~$K$. Then the maps 
	$$\tor(H_1(M,\germ(K),\ZZ))\xrightarrow{\srmod} \tor(H_1(M,U,\ZZ))$$
	and 
	$$H^1(M,U,\ZZ)\xrightarrow{\srmod} H^1(M,\germ(K),\ZZ)$$
	are isomorphisms. If additionally~$U$ is a small enough neighborhood of~$K$, then the map 
	$$H_1(M,\germ(K),\ZZ)\xrightarrow{\srmod}H_1(M,U,\ZZ)$$
	is an isomorphism.
\end{proposition}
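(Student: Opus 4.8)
The plan is to deduce all three assertions from the real-coefficient statement (Proposition~\ref{prop-germ-char-R}), the universal coefficient theorem, and the control on torsion from Lemma~\ref{lem-tor-bounded}. Since $K$ has finitely many components, the sets $V_\epsilon(K)$ with $\epsilon$ small form a cofinal family of $K$-thin, exactly $K$-connected neighborhoods, and every class in $H^1(M,\germ(K),\ZZ)=\varinjlim_V H^1(M,V,\ZZ)$ is already represented on such a $V$. So I would first understand the restriction map $\phi\colon H_1(M,V,\ZZ)\to H_1(M,U,\ZZ)$ for two nested neighborhoods $V\subset U$ of this type, and then pass to the (co)homology relative to the germ, using Lemma~\ref{lem-inj-com-rel} and Lemma~\ref{lem-germ-surf} to control the maps into, resp. out of, the limit.

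First I would analyze $\phi$. Because $U$ and $V$ are both exactly $K$-connected, the inclusion $V\hookrightarrow U$ induces a bijection of path-components, so $H_0(U,V,\ZZ)=0$, and the long exact sequence of the triple $(M,U,V)$ then forces $\phi$ to be surjective. Applying Proposition~\ref{prop-germ-char-R} to $U$ and to $V$ shows that $\phi\otimes\RR$ is an isomorphism, hence $\ker\phi$ is a finite torsion group. Consequently $\phi$ induces an isomorphism $H_1(M,V,\ZZ)/\tor\xrightarrow{\sim}H_1(M,U,\ZZ)/\tor$ and a surjection $\tor(H_1(M,V,\ZZ))\twoheadrightarrow\tor(H_1(M,U,\ZZ))$ with kernel $\ker\phi$.

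For the cohomology claim, since $M$ is connected and $U\neq\emptyset$ the group $H_0(M,U,\ZZ)=0$ is free, so the universal coefficient theorem gives $H^1(M,W,\ZZ)\cong\mathrm{Hom}_\ZZ(H_1(M,W,\ZZ),\ZZ)$ for every neighborhood $W$ of $K$, the restriction maps corresponding to $\mathrm{Hom}_\ZZ(-,\ZZ)$ applied to the $\phi$'s. Because $\phi$ is surjective with torsion kernel and $\mathrm{Hom}_\ZZ(-,\ZZ)$ is insensitive to torsion, $\mathrm{Hom}_\ZZ(\phi,\ZZ)$ is an isomorphism. Combined with the injectivity of $H^1(M,U,\ZZ)\to H^1(M,\germ(K),\ZZ)$ (Lemma~\ref{lem-inj-com-rel}, applicable since $U$ is $K$-connected) and the representability remark above, this yields that $H^1(M,U,\ZZ)\to H^1(M,\germ(K),\ZZ)$ is an isomorphism.

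For the torsion and homology claims, the first step shows that the finite groups $\tor(H_1(M,V,\ZZ))$ form an inverse system of surjections, each a quotient of the fixed finite group $\tor(H_1(M,\ZZ))$ (Lemma~\ref{lem-tor-bounded}), so the system stabilizes; once $U$ is small enough the maps $\tor(H_1(M,V,\ZZ))\to\tor(H_1(M,U,\ZZ))$ and $\tor(H_1(M,\germ(K),\ZZ))\to\tor(H_1(M,U,\ZZ))$ become isomorphisms — which is exactly the content of Lemma~\ref{lem-tor-bounded}. Together with the isomorphism on torsion-free quotients from the first step, $\phi$ is then an isomorphism for $U$ small; and, using surjectivity from Lemma~\ref{lem-germ-surf} together with the fact that any element of the kernel of the limit map has, on each $V\subset U$, torsion image in $H_1(M,V,\ZZ)$ which dies after the (now stable) restriction to $H_1(M,U,\ZZ)$, the map $H_1(M,\germ(K),\ZZ)\to H_1(M,U,\ZZ)$ is an isomorphism. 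I expect the main obstacle to be the first step's identification of $\ker\phi$ as a torsion group: this genuinely relies on the real-coefficient Proposition~\ref{prop-germ-char-R} (itself resting on Lemma~\ref{lem-hom-const} and the finiteness of the number of components of $K$), without which one could not exclude a rank discrepancy between $H_1(M,V,\ZZ)$ and $H_1(M,U,\ZZ)$; the remaining bookkeeping — that the torsion subgroups stabilize and that "$K$-thin, exactly $K$-connected, small enough" is the correct hypothesis — is routine given Lemma~\ref{lem-tor-bounded}.
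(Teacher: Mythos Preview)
Your proposal is correct and follows essentially the same strategy as the paper: reduce the torsion-free part to Proposition~\ref{prop-germ-char-R} and handle the torsion via Lemma~\ref{lem-tor-bounded}. The paper works directly with the commutative square
\[
\begin{tikzcd}
\bfrac{H_1(M,\germ(K),\ZZ)}{\tor}\arrow[r]\arrow[d,hook] & \bfrac{H_1(M,U,\ZZ)}{\tor}\arrow[d,hook]\\
H_1(M,\germ(K),\RR)\arrow[r,"\simeq"] & H_1(M,U,\RR)
\end{tikzcd}
\]
whereas you first analyze the finite-level maps $H_1(M,V,\ZZ)\to H_1(M,U,\ZZ)$ and then pass to the limit; these are equivalent packagings of the same idea. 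The one genuine difference is the cohomology step: the paper simply invokes Proposition~\ref{prop-germ-form} (the continuous-dual description of $H^1(M,\germ(K),\ZZ)$), while you bypass it by applying the universal coefficient theorem at each finite stage and using Lemma~\ref{lem-inj-com-rel} on the direct limit. Your route is slightly more hands-on but avoids appealing to the projective-limit topology; the paper's route is shorter once Proposition~\ref{prop-germ-form} is available.
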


It immediately implies:

\begin{corollary}
	Assume that~$K$ has finitely many connected components. Then for $\A=\RR$ or $\ZZ$, the modules $H_1(M,\germ(K),\A)$ and $H^1(M,\germ(K),\A)$ are finitely generated.
\end{corollary}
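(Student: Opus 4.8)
The corollary is a formal consequence of Propositions~\ref{prop-germ-char-R} and~\ref{prop-germ-char-Z}: each of the four germ modules is identified there with an ordinary relative (co)homology module of the pair $(M,U)$ for a suitable neighborhood $U$ of $K$, so the plan is simply to (i) fix such a $U$, (ii) check that $H_1(M,U,\A)$ and $H^1(M,U,\A)$ are finitely generated, and (iii) transport finite generation back through the two propositions.

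First I would fix $U$. Since $K$ has finitely many connected components, the observation following Lemma~\ref{lem-hom-const} provides $\epsilon>0$ for which $U=V_\epsilon(K)$ is $K$-thin and exactly $K$-connected, hence in particular $K$-connected; shrinking $\epsilon$ further we may also assume $U$ is small enough for the last clause of Proposition~\ref{prop-germ-char-Z}, all of these being cofinal conditions on $\epsilon$. Lemma~\ref{lem-K-con-finit-gen} then gives that $H_0(U,\A)$ and $H_1(M,U,\A)$ are finitely generated over $\A$, for $\A=\ZZ$ and $\A=\RR$.

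Next I would upgrade this to cohomology, which is the only step that is not a direct citation. For $\A=\RR$ it is automatic, $H^1(M,U,\RR)$ being the linear dual of the finite-dimensional space $H_1(M,U,\RR)$. For $\A=\ZZ$ I would use the universal coefficient theorem for the pair $(M,U)$, which sits $H^1(M,U,\ZZ)$ in a short exact sequence between $\operatorname{Ext}^1_\ZZ(H_0(M,U,\ZZ),\ZZ)$ and $\operatorname{Hom}_\ZZ(H_1(M,U,\ZZ),\ZZ)$: the latter is finitely generated because $H_1(M,U,\ZZ)$ is, and the former because $H_0(M,U,\ZZ)$, a quotient of $H_0(M,\ZZ)=\ZZ$, is finitely generated. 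Hence $H^1(M,U,\A)$ is finitely generated for $\A=\ZZ,\RR$.

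Finally I would conclude by the two propositions. Proposition~\ref{prop-germ-char-R} identifies $H_1(M,\germ(K),\RR)$ with $H_1(M,U,\RR)$ and $H^1(M,\germ(K),\RR)$ with $H^1(M,U,\RR)$; Proposition~\ref{prop-germ-char-Z} identifies $H^1(M,\germ(K),\ZZ)$ with $H^1(M,U,\ZZ)$ and, for our small $U$, $H_1(M,\germ(K),\ZZ)$ with $H_1(M,U,\ZZ)$. Being isomorphic to finitely generated modules, all four germ (co)homology modules are finitely generated. There is no real obstacle beyond bookkeeping; the only point requiring a word rather than a reference is the finite generation of $H^1(M,U,\ZZ)$, supplied by the universal coefficient theorem as above.
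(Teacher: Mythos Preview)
Your proposal is correct and is essentially the intended argument: the paper only writes ``It immediately implies'' before the corollary, and you have supplied the natural details, namely Lemma~\ref{lem-K-con-finit-gen} for $H_1(M,U,\A)$, duality/UCT for $H^1(M,U,\A)$, and then the isomorphisms of Propositions~\ref{prop-germ-char-R} and~\ref{prop-germ-char-Z}.
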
 

\begin{proof}[Proof of Proposition~\ref{prop-germ-char-R}]
	Take two neighborhoods $V\subset U$ of~$K$, assumed exactly $K$-connected and~$K$-thin. We consider the following diagram.
	\[
	\begin{tikzcd}
		H_1(V,K,\RR) \arrow[d,"\vlmod{M}"] \arrow[r,"\vlmod{U}"] & H_1(U,K,\RR) \arrow[d,"\vlmod{M}"] \\
		H_1(M,K,\RR) \arrow[d,two heads,"\prmod{V}"] \arrow[r,"\id"] & H_1(M,K,\RR) \arrow[d,two heads,"\prmod{U}"] \\
		H_1(M,V,\RR) \arrow[r,two heads,"\prmod{U}"] \arrow[d,"\partial"] & H_1(M,U,\RR) \arrow[r,"\partial"] \arrow[d,"\partial"] & H_0(U,V,\RR)=0 \\
		H_0(V,K,\RR)=0 & H_0(U,K,\RR)=0 \\
	\end{tikzcd}
	\]
	The two vertical lines are exact. They end on $H_0(V,K,\RR)$ and $H_0(U,K,\RR)$, which are both equal to zero by exact $K$-connectedness. Thus, the two middle vertical maps are surjective. A similar argument yields that the map~$\prmod{U}$, on the third horizontal line, is surjective.
	
	Let us show that this map $H_1(M,V,\RR)\xrightarrow{\rmod{U}}H_1(M,U,\RR)$ is injective. Take $x$ in the kernel of that map. 
	From the discussion above, there exists $y$ in $H_1(M,K,\RR)$ with $y\rmod{V}=x$. We have $y\rmod{U}=x\rmod{U}=0$. So by exactness of the right vertical sequence, there exists $z$ in $H_1(U,K,\RR)$ with $z\lmod{M}=y$.
	From Lemma~\ref{lem-hom-const}, there exists $z'$ in $H_1(V,K,\RR)$ which satisfies $z'\rmod{M}=y$. It follows $x=z'\hmod{M,V}=0$ from the left vertical sequence.
	Therefore, $f$ is a bijection. 
	
	Recall that when $\epsilon>0$ is small enough, the $\epsilon$-neighborhood $V_\epsilon(K)$ of $K$ is exactly $K$-connected and~$K$-thin. Additionally, by compactness of~$K$, any neighborhood of~$K$ contains some neighborhood $V_\epsilon(K)$. It follows from the definition of limits that $H_1(M,\germ(K),\RR)\xrightarrow{\rmod{U}} H_1(M,U,\RR)$ is an isomorphism when $U$ is $K$-thin and exactly $K$-connected. 
	
	The isomorphism between the cohomology modules follows by duality (see Proposition~\ref{prop-germ-form}).
\end{proof}

\begin{proof}[Proof of Proposition~\ref{prop-germ-char-Z}]
	Let~$U$ be a neighborhood of~$K$, assumed exactly $K$-connected and~$K$-thin. Consider the following diagram:
	\[
	\begin{tikzcd}[column sep=1.6cm]
		\bfrac{H_1(M,\germ(K),\ZZ)}{\tor} \arrow[d,hook] \arrow[r,two heads, "\prmod{U}"] & \bfrac{H_1(M,U,\ZZ)}{\tor} \arrow[d,hook] \\[-0.1cm]
		H_1(M,\germ(K),\RR) \arrow[r, "\simeq"] & H_1(M,U,\RR) 
	\end{tikzcd}
	\]
	By Lemma~\ref{lem-germ-surf}, the map $\prmod{U}$ is surjective.
	It follows Lemma~\ref{lem-coeff-extension} that the two vertical maps are injective. And from Proposition~\ref{prop-germ-char-R}, the bottom map is an isomorphism. It follows that $\prmod{U}$ is an isomorphism. 

	The isomorphism between the cohomology modules follows from Proposition~\ref{prop-germ-form}.
	When $U$ is small enough, the isomorphism between the homology modules, before the quotient by the torsion, follows from Lemma~\ref{lem-tor-bounded}.
\end{proof}

\addcontentsline{toc}{section}{References}
\bibliographystyle{alpha}
\bibliography{ref}

\end{document}